\documentclass[11pt,amstex]{amsart}
\usepackage[shortlabels]{enumitem}
\usepackage{txfonts}
\usepackage{geometry}
\usepackage{amsmath, amsthm, amssymb,amsfonts}
\usepackage{epsfig}
\usepackage{amsmath}
\usepackage{amssymb}
\usepackage{amscd}
\usepackage{graphicx}
\usepackage{pstricks}
\usepackage{pst-node}
\usepackage[english]{babel}
\usepackage{float}
\topmargin=0pt \oddsidemargin=0pt \evensidemargin=0pt
\textwidth=15cm \textheight=21cm \raggedbottom

\theoremstyle{definition}
\newtheorem{thm}{Theorem}[section]
\newtheorem{lem}[thm]{Lemma}
\newtheorem{defn}[thm]{Definition}
\newtheorem{rem}[thm]{Remark}
\newtheorem{prop}[thm]{Proposition}
\newtheorem{ques}[thm]{Question}
\newtheorem{cor}[thm]{Corollary}
\newtheorem{ex}[thm]{Example}
\newtheorem{conj}[thm]{Conjecture}
\newtheorem{conv}[thm]{Convention}

\def \B {\mathcal{B}}
\def \E {\mathbb{E}}
\def \G {\mathfrak{X}=(G_{\bullet},\mathcal{X},\psi,d_{G},d_{X})}
\def \Gc {\mathfrak{X}_{c}=(G_{c,\bullet},\mathcal{X}_{c},\psi_{c},d_{c,G},d_{c,X})}
\def \K {\bold{K}=(K,\mathcal{O}_{K},D,\B=\{b_{1},\dots,b_{D}\})}

\def \N {\mathbb{N}}
\def \O {\mathcal{O}}
\def \P {\mathcal{P}}
\def \R {\mathbb{R}}
\def \T {\mathbb{T}}
\def \X {\mathfrak{X}}
\def \Y {\mathfrak{Y}}
\def \Z {\mathbb{Z}}

\def \c {\bold{c}}
\def \g {\mathfrak{g}}
\def \h {\mathfrak{h}}
\def \j {\bold{j}}

\def \m {\bold{m}}
\def \n {\bold{n}}
\def \u {\bold{u}}
\def \v {\bold{v}}
\def \w {\bold{w}}
\def \x {\bold{x}}
\def \y {\bold{y}}
\def \z {\bold{z}}

\def \Aut {\text{Aut}}
\def \Lip {\text{Lip}}
\def \rank {\text{rank}}
\def \poly {\text{poly}}


\def \e {\epsilon}
\def \d {\delta}

\def \A {\n}
\def \Bb {\m}

\def \Rr {\mathbb{Z}_{\tilde{N}}^{D}}
\def \RR {\mathbb{Z}_{N}^{D}}
\def \OO {\mathcal{O}_{K}^{\ast}}

\def \mult {\text{mult}}


\newcommand{\subscript}[2]{$#1 _ #2$}

\title[Sarnak's Conjecture on arbitrary number fields]{
	Sarnak's Conjecture for nilsequences on arbitrary number fields and applications
	}
\author{Wenbo Sun}
\address{Department of Mathematics, Virginia Tech University, 225 Stanger Street, Blacksburg VA, 24061-1026, USA}
\email{swenbo@vt.edu}

\subjclass[2010]{Primary: 11N37, 05D10; Secondary: 11B30, 11N60, 11N80, 11R04, 37A45}

\keywords{Sarnak's Conjecture, multiplicative functions, nilsequences, Gowers uniformity, partition regularity, inverse theorems.}

\begin{document}

\maketitle

\begin{abstract}
	We formulate the generalized Sarnak's M\"obius disjointness conjecture for an arbitrary number field $K$, and prove a quantitative disjointness result between polynomial nilsequences $(\Phi(g(\bold{n})\Gamma))_{\bold{n}\in\mathbb{Z}^{D}}$ and aperiodic multiplicative functions on $\mathcal{O}_{K}$, the ring of integers of $K$. Here $D=[K\colon\mathbb{Q}]$, $X=G/\Gamma$ is a nilmanifold, $g\colon\mathbb{Z}^{D}\to G$ is a polynomial sequence, and $\Phi\colon X\to \mathbb{C}$ is a Lipschitz function. This result, being a generalization of a previous theorem of the author in \cite{S}, requires a significantly different approach, which involves with multi-dimensional higher order Fourier analysis, multi-linear analysis, orbit properties on nilmanifold, and an orthogonality criterion of K\'atai in $\mathcal{O}_{K}$. 
	
	We also use variations of this result to derive applications in number theory and combinatorics: (1) we prove a structure theorem for multiplicative functions on $K$, saying that every bounded multiplicative function can be decomposed into the sum of an almost periodic function (the structural part) and a function with small Gowers uniformity norm of any degree (the uniform part); (2) we give a necessary and sufficient condition for the Gowers norms of a bounded multiplicative function in $\mathcal{O}_{K}$ to be zero; (3) we provide partition regularity results over $K$ for a large class of homogeneous equations in three variables. For example, for $a,b\in\mathbb{Z}\backslash\{0\}$, we show that for every partition of  $\mathcal{O}_{K}$  into finitely many cells, where $K=\mathbb{Q}(\sqrt{a},\sqrt{b},\sqrt{a+b})$, there exist distinct and non-zero $x,y$ belonging to the same cell and $z\in\mathcal{O}_{K}$ such that $ax^{2}+by^{2}=z^{2}$.
\end{abstract}	


\section{Introduction}\label{s:1}
%
%
%
%
%
%
%
%
%
%
%
%

\subsection{Sarnak's Conjecture on number fields}
Let $\mu\colon\Z\to\{-1,0,1\}$ be the \emph{M\"obius function}, which  is defined by $\mu(n)=(-1)^{k}$ if $\vert n\vert$ is the product of $k$ distinct prime numbers in $\N$, and $\mu(n)=0$ otherwise.\footnote{The definition of the M\"obius function  is usually stated for $\N$, but for the convenience of this paper we state it for $\Z$.}  
It is widely believed the function $\mu$ satisfies the ``M\"obius randomness law" (see Section 13.1 of \cite{39}), in the sense that $\mu$ is not
correlated with any sequence of complex numbers of ``low complexity". This vague
principle turns out to often provide heuristic asymptotics for various averages along primes (see \cite{61} for examples). In \cite{sar12}, a precise conjecture was formulated by Sarnak: 
\begin{conj}[Sarnak's Conjecture for integers]\label{sarnakz}
	Let $(X,T)$ be a topological system with zero topological entropy. Then for all $\Phi\in C(X)$ and $x\in X$, 
	$$\lim_{N\to\infty}\frac{1}{N}\sum_{n=1}^{N}\mu(n)\Phi(T^{n}x)=0.$$
\end{conj}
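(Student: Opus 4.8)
The natural plan is to strip $\mu$ from the problem with an orthogonality criterion and then lean on the structure theory of zero-entropy systems. Fix $\Phi\in C(X)$ and $x\in X$ and set $a(n)=\Phi(T^{n}x)$, a bounded sequence. Since $\mu$ is multiplicative, K\'atai's orthogonality criterion (over $\mathbb{Z}$; the $K=\mathbb{Q}$ instance of the criterion used in this paper) reduces the claim to the purely dynamical statement that the averaged prime-dilate correlation
$$\frac{1}{\pi(P)^{2}}\sum_{p\neq q\le P}\;\Bigl|\frac{1}{N}\sum_{n\le N}a(pn)\overline{a(qn)}\Bigr|$$
tends to $0$ as $N\to\infty$ and then $P\to\infty$; equivalently, two generic prime dilates of the orbit of $x$ decorrelate. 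All reference to $\mu$ has now disappeared, and what remains is to understand the orbit closure $\overline{\{T^{n}x:n\ge 1\}}$ together with the invariant measures it carries.

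To control those correlations I would pass to an invariant measure $\nu$ arising as a weak-$\ast$ limit of $\frac1N\sum_{n\le N}\delta_{T^{n}x}$ and invoke the Host--Kra--Ziegler theory: for each degree $s$ the factor $Z_{s}(X,\nu)$ is an inverse limit of $s$-step nilsystems. Combining this with the Green--Tao--Ziegler inverse theorem and a Gowers--von Neumann splitting, one expects, along $\nu$-typical orbits, a decomposition $a=a_{\mathrm{nil}}+a_{\mathrm{unif}}+a_{\mathrm{err}}$ in which $a_{\mathrm{nil}}$ is a polynomial nilsequence $F(g(n)\Gamma)$, $\|a_{\mathrm{unif}}\|_{U^{s}}$ is tiny, and $\|a_{\mathrm{err}}\|_{L^{2}(\nu)}$ is tiny. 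The first piece is annihilated by the Green--Tao theorem $\frac1N\sum_{n\le N}\mu(n)F(g(n)\Gamma)\to 0$ (the $K=\mathbb{Q}$ case of the disjointness result of this paper); the Gowers-uniform piece is annihilated by the quantitative $U^{s}$-norm disjointness estimates underlying that result; the error piece contributes at most $\varepsilon$. Letting $\varepsilon\to 0$ would then give the vanishing of the correlation above, hence Conjecture~\ref{sarnakz}.

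The reason this remains a conjecture is precisely the gap in the second step: the Host--Kra--Ziegler structure theory is \emph{measure-theoretic} and tied to a single invariant measure, whereas Conjecture~\ref{sarnakz} is \emph{topological} and must hold at every point $x$, hence uniformly over \emph{all} invariant measures of $(X,T)$ --- including the many zero-entropy systems (arbitrary low-complexity subshifts, rank-one transformations, many smooth area-preserving flows, and so on) that carry no useful nilpotent structure and for which the decomposition above simply fails to exist. Reconciling the topological and the measurable pictures here is the main obstacle; it would seem to require a uniform-in-the-orbit form of higher-order Fourier analysis, or a genuinely new idea. A partial route that avoids the structure theory is the logarithmic one: prove the logarithmically averaged version of Conjecture~\ref{sarnakz} by reducing it, following Tao and Tao--Ter\"av\"ainen and the Frantzikinakis--Host analysis of the Furstenberg systems of $\mu$, to a corrected form of the Chowla conjecture, and then upgrade logarithmic averages to Ces\`aro averages --- but that last upgrade is itself open in general. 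I therefore expect the circle of ideas in this paper to establish Conjecture~\ref{sarnakz} unconditionally only for those classes where the structure theory applies (nilsystems, systems with quasi-discrete spectrum, ``nil-plus-uniform'' systems), which is exactly the regime in which the Gowers-norm decompositions and K\'atai's criterion used here are effective.
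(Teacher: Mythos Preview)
The statement you were asked to prove is labeled \emph{Conjecture} in the paper, and the paper does not prove it; it is stated as an open problem, and the paper's actual results (Theorems~\ref{s2} and~\ref{key0}) establish only the special case where $(\Phi(T^{n}x))_{n}$ is a polynomial nilsequence. So there is no ``paper's own proof'' to compare against.

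Your write-up correctly recognizes this: you sketch the natural K\'atai-plus-structure-theory approach, identify the genuine obstruction (the Host--Kra structure theorem is measure-theoretic and tied to a fixed invariant measure, whereas Sarnak's conjecture is pointwise and topological), and conclude that the methods of the paper yield the conjecture only in the nilsequence regime. That assessment is accurate and matches exactly what the paper achieves. One small correction: in your first paragraph you say K\'atai reduces the claim to a purely dynamical decorrelation statement with ``all reference to $\mu$'' removed, but then in the second paragraph you reintroduce $\mu$ via the Green--Tao disjointness result; the two routes (K\'atai reduction to $a(pn)\overline{a(qn)}$ correlations versus direct decomposition of $a$ and disjointness of $\mu$ from nilsequences) are distinct, and you have slightly conflated them. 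Either route runs into the same wall you describe, so the conclusion stands.
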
	

Many instances of Sarnak’s Conjecture have been proven. We give a few examples
but stress that this is an incomplete list: 
\cite{EAKL16,ab,Bou13,Bou13b,BSZ13,DK15,FJ18,F17,7,HW,HLSY17,LS15,MMR14,MC19,Ml17,Pec18,Vee17,Wan17}.
It is natural to ask whether Sarnak's Conjecture holds with $\mu$ replaced by other functions which are interesting in analytic number theory. We say that a function $\chi\colon\Z\to\mathbb{C}$ is \emph{multiplicative} (written as $\chi\in\mathcal{M}_{\mathbb{Q}}$) if  $\chi(mn)=\chi(m)\chi(n)$ for all $(m,n)=1$. Let $\mathcal{M}^{a}_{\mathbb{Q}}$ denote the set of all multiplicative functions $\chi$ of modulus at most 1 which is \emph{aperiodic}, meaning that 
\begin{equation}\label{ap1}
	\lim_{N\to\infty}\frac{1}{N}\sum_{n=0}^{N-1}\chi(an+b)=0
\end{equation}	 
for all $a,b\in\Z, a\neq 0$.
It is a classical result that the M\"obius function $\mu$ is aperiodic.
 One can ask the following question:
\begin{ques}[Generalized Sarnak's Conjecture for integers]\label{sarnakzg}
	For which $\chi\in\mathcal{M}^{a}_{\mathbb{Q}}$ does the following hold:
	for every topological system $(X,T)$ with zero topological entropy, every $\Phi\in C(X)$, and every $x\in X$, we have that
	$$\lim_{N\to\infty}\frac{1}{N}\sum_{n=1}^{N}\chi(n)\Phi(T^{n}x)=0.$$
\end{ques}
It is not hard to see that the answer to Question \ref{sarnakzg} is false when $\chi$ is not aperiodic. Motived by the results in \cite{counter}, a natural conjecture is that the answer to Question \ref{sarnakzg} is affirmative if $\chi$ is ``strongly aperiodic" (which in particular includes all real-valued aperiodic multiplicative functions, see \cite{counter} for the definition).

In this paper, we enhance the scope of Sarnak's Conjecture (and related topics) to multiplicative functions on general number fields, and seek applications in this broader setting. Let $K$ be an algebraic number field and $\O_{K}$ be its ring of integers (see Section \ref{s:2} for definitions). Denote $D:=[K\colon\mathbb{Q}]$ and let $\B=\{b_{1},\dots,b_{D}\}$ be an integral basis of $\O_{K}$. For convenience we call $\K$ an \emph{integral tuple}.
In analog to the case $K=\mathbb{Q}$, for a general number field $K$,
one can define 
the set of bounded multiplicative functions $\mathcal{M}_{K}$, and that of bounded aperiodic multiplicative functions $\mathcal{M}^{a}_{K}$ in a natural way (see Section \ref{s:12} for the precise definitions). One can also formulate the  generalized Sarnak's Conjecture for algebraic number fields:


\begin{ques}[Generalized Sarnak's Conjecture for algebraic number fields]\label{sarnak}
	Let $\K$ be an integral tuple.  For which $\chi\in\mathcal{M}^{a}_{\mathbb{Q}}$ does the following hold: for every topological system $(X,T_{1},\dots,T_{d})$  with commuting transformations $T_{1},\dots,T_{d}$ with zero topological entropy, every $\Phi\in C(X)$,  every $x\in X$, and every $D$-dimensional arithmetic progression $P$,\footnote{A set $P\subseteq \Z^{D}$ is a \emph{$D$-dimensional arithmetic progression} if $P=\{\n_{0}+\sum_{i=1}^{D}M_{i}n_{i}\colon n_{i}\in\{0,\dots,N_{i}-1\}, 1\leq i\leq D\}$ for some $M_{i},N_{i}\in\N_{+}$ and $\n_{0}\in\Z^{D}$. $(N_{1},\dots,N_{D})$ is called the \emph{length} and $(M_{1},\dots,M_{D})$ the \emph{step} of $P$.} we have that
	$$\lim_{N\to\infty}\frac{1}{N^{D}}\sum_{1\leq n_{1},\dots,n_{D}\leq N}\bold{1}_{P}(n_{1},\dots,n_{D})\chi(n_{1}b_{1}+\dots+n_{D}b_{D})\Phi(T^{n_{1}}_{1}\cdot\ldots\cdot T^{n_{D}}_{D}x)=0.$$
\end{ques}

%
%

The main result of this paper is to provide an affirmative answer to Question \ref{sarnak} for nilsystems (see Section \ref{s:3} for definitions) with respect to all aperiodic functions, in a more general sense that one can replace $T^{n_{1}}_{1}\cdot\ldots\cdot T^{n_{D}}_{D}x$ by any polynomial sequence (see Section \ref{s:4} for definitions), and taking the average along any arithmetic progression:
\begin{thm}[Generalized Sarnak's Conjecture along nilsequences]\label{s2}
Let $\K$ be an integral tuple.  Let $X=G/\Gamma$ be a nilmanifold and $g\colon\Z^{D}\to \mathbb{C}$ be a polynomial sequence. Then for every $\chi\in\mathcal{M}^{a}_{K}$, every $\Phi\in C(X)$, and every $D$-dimensional arithmetic progression $P$, we have that
$$\lim_{N\to\infty}\frac{1}{(2N+1)^{D}}\sum_{-N\leq n_{1},\dots,n_{D}\leq N}\bold{1}_{P}(n_{1},\dots,n_{D})\chi(n_{1}b_{1}+\dots+n_{D}b_{D})\Phi(g(n_{1},\dots,n_{D})\cdot e_{X})=0.$$
\end{thm}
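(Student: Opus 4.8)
The plan is to reduce the statement to a one-dimensional Kátai-type orthogonality criterion on $\O_K$, combined with the equidistribution theory of polynomial sequences on nilmanifolds. First I would recast the sum: writing $\n = (n_1,\dots,n_D)$, the quantity to be estimated is $\frac{1}{(2N+1)^D}\sum_{\n\in[-N,N]^D} \bold{1}_P(\n)\,\chi\bigl(\sum_i n_i b_i\bigr)\,\Phi(g(\n)e_X)$. The map $\n\mapsto \sum_i n_i b_i$ is a bijection from $\Z^D$ onto $\O_K$, so this is an average of $\chi$ against a nilsequence, now regarded as a function of the lattice point $\n\in\Z^D$ (the condition $\bold{1}_P$ just restricts to a sub-progression, which can be absorbed by reparametrizing $\n$ and adjusting $g$, so I would treat the case $P=\Z^D$ first and recover the general case at the end). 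The core claim then becomes: if $\chi$ is aperiodic on $\O_K$, it is asymptotically orthogonal to every polynomial nilsequence in $D$ variables.

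The key tool is Kátai's orthogonality criterion adapted to $\O_K$: to show $\frac1X\sum_{|\n|\le X}\chi(\phi(\n))\,F(\n)\to 0$ for a bounded sequence $F$, it suffices to show that for all but finitely many pairs of primes $\p,\q$ of $\O_K$, the correlation $\frac1X\sum_{|\n|\le X} F(\p\cdot\n)\overline{F(\q\cdot\n)}$ is small (where multiplication by $\p$ acts on $\O_K\cong\Z^D$ as a $D\times D$ integer matrix, so $\n\mapsto \p\n$ is a linear self-map of $\Z^D$). Thus the heart of the matter is to estimate, for fixed coprime rational primes (or prime ideals of small norm) $p\ne q$, the average over $\n$ of $\Phi(g(p\n)e_X)\overline{\Phi(g(q\n)e_X)}$. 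This is a correlation of two polynomial nilsequences on $X\times X$: the sequence $\n\mapsto (g(p\n)\Gamma,\,g(q\n)\Gamma)$ is itself a polynomial sequence on the nilmanifold $X\times X$. By Leibman's equidistribution theorem (or the Green–Tao quantitative version in the multidimensional setting, which I expect to be available from Section \ref{s:4}), either this product sequence equidistributes in a subnilmanifold on which $\Phi\otimes\overline{\Phi}$ integrates to zero — giving the desired cancellation — or the orbit is trapped in a proper subnilmanifold, which forces a non-trivial linear (or polynomial) relation between the "$p$-dilated" and "$q$-dilated" coordinates of $g$; because $p\ne q$ and they are coprime, such a relation can hold for only finitely many pairs $(p,q)$, exactly the exceptional set Kátai's criterion permits. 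Then the aperiodicity of $\chi$ enters in the base case: when the product nilsequence degenerates all the way down to a locally-constant (essentially periodic) function, orthogonality is precisely the statement \eqref{ap1} lifted to $\O_K$ (here one uses that aperiodicity on $\O_K$ implies vanishing of averages over every arithmetic progression in $\Z^D$ under the identification $\n\leftrightarrow\sum n_i b_i$).

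I would organize the argument as an induction on the degree and the complexity (dimension, step) of the nilsequence, in the spirit of the standard reduction of nilsequence estimates to the abelian/linear case: a van der Corput / PET-type maneuver reduces the correlation $\sum_\n \Phi(g(p\n)e_X)\overline{\Phi(g(q\n)e_X)}$ to lower-complexity data, the bottom of the induction being the abelian case where one has honest exponential sums $e(\alpha\cdot(\text{linear form in }p\n \text{ and }q\n))$, handled by Weyl-type bounds when the frequency is "major-arc-irrational" and by the aperiodicity hypothesis when it is close to rational. Finally, to recover the general arithmetic progression $P$ with step $(M_1,\dots,M_D)$ and base $\n_0$, I would substitute $n_i = n_0^{(i)} + M_i m_i$, which replaces $g(\n)$ by a new polynomial sequence $\tilde g(\m)$ and $\chi(\sum n_i b_i)$ by $\chi(\sum (n_0^{(i)}+M_i m_i)b_i)$; the latter is a value of $\chi$ at an affine-linear image of $\m$, and since a finite-index affine sublattice of $\O_K$ is again covered by finitely many arithmetic progressions, aperiodicity is preserved and the case $P=\Z^D$ applies.

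The step I expect to be the main obstacle is controlling the correlation of the two dilated nilsequences uniformly enough that the "exceptional pairs $(p,q)$" genuinely form a finite (or at least density-zero, summably-controlled) set: one must quantify how a near-degeneracy of the orbit of $\n\mapsto(g(p\n)\Gamma,g(q\n)\Gamma)$ translates into an Archimedean or ultrametric relation between $p$ and $q$ in $\O_K$, and this requires a careful, effective version of Leibman's theorem together with bookkeeping of how the matrices representing multiplication-by-$\p$ interact with the coordinates (Mal'cev basis) of $G$. Matching this degeneracy analysis against Kátai's criterion — i.e. checking that the bad set of primes is exactly what the criterion can tolerate — is where the real work lies; the rest (the van der Corput induction and the reduction to \eqref{ap1} in the abelian base case) is, I expect, routine once the framework of Sections \ref{s:3}–\ref{s:4} is in place.
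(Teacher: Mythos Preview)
Your skeleton matches the paper: K\'atai's criterion on $\O_K$ (Lemma~\ref{katai}) reduces the problem to pair correlations $\E_{\n}\Phi(g(\n A_\B(p))e_X)\overline{\Phi}(g(\n A_\B(q))e_X)$, which one analyzes as a polynomial orbit on $X\times X$; and the endgame --- factorize $g$ so that what remains is $\chi$ against an arithmetic-progression indicator, killed by aperiodicity --- is exactly how Theorem~\ref{s3} is derived.

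The gap is in the dichotomy. Your claim that the degenerate case ``can hold for only finitely many pairs $(p,q)$'' is not the mechanism and is not true as stated: the relation between $g(p\cdot)$ and $g(q\cdot)$ comes with an automorphism that may vary with the pair, so there is no rigidity across pairs. Nor is PET/van der Corput used. Instead the paper shows (Theorem~\ref{key}) that for \emph{any single} pair with $|N_K(p)|\neq|N_K(q)|$, a large pair correlation already forces $(g(\n)e_X)_{\n}$ itself to be non-equidistributed. After reducing (vertical-character decomposition, induction on the step) to $\dim G_d=1$ with $\Phi$ a nonzero-frequency nilcharacter, one factorizes the product sequence on $X\times X$ (Theorem~\ref{5.6}); if the equidistributed piece sits on a sub-nilmanifold $Y=H/\Delta$ with $\int_Y\Phi\otimes\overline\Phi\neq 0$, then $H_d$ is forced to be the diagonal of $G_d\times G_d$. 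Theorem~\ref{auto} then shows $H$ is, modulo $G_{\ker}$, governed by a $d$-automorphism $\sigma\in\Aut_d(X)$ --- a linear map on $G/G_{\ker}$ preserving the $d$-fold bracket $[\cdot,\dots,\cdot]_d$ --- yielding $g(\n A_\B(p))\equiv\sigma(g(\n A_\B(q)))\pmod{G_{\ker}}$ up to smooth and rational errors. The algebraic core is Theorem~\ref{contri}: writing $\sigma$ as an integer matrix $A$, this relation makes $A$ nearly satisfy a polynomial $f_0$ whose roots are products of eigenvalues of $A_\B(p/q)$, all of $\overline K$-norm $\neq 1$ since $|N_K(p)|\neq|N_K(q)|$; but a matrix with only such eigenvalues cannot preserve the nondegenerate $d$-linear bracket form, so $f_0(A)\neq 0$, and a nonzero row of $f_0(A)$ produces a horizontal character $\eta$ with $\eta\circ g$ smooth --- i.e.\ $g$ is not equidistributed. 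This eigenvalue/minimal-polynomial argument, together with the $d$-automorphism description of $Y$, is the idea your outline is missing.

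One smaller point: absorbing $P$ by the substitution $n_i=n_0^{(i)}+M_im_i$ does not turn $\chi(\sum n_ib_i)$ back into $\chi\circ\iota_\B(\m)$, because a diagonal linear map on $\Z^D$ is generally not multiplication by an element of $\O_K$. The paper instead carries $\bold{1}_P$ through the whole argument (total equidistribution and Proposition~\ref{comp} are set up for arbitrary sub-progressions), and aperiodicity is formulated as~(\ref{ap2}) precisely so that the final average $\E_{\n}\bold{1}_{P'}(\n)\chi(\iota_\B(\n))$ is controlled directly.
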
	

In particular, Theorem \ref{s2} implies that 
Conjecture \ref{sarnak} holds for every integral tuple $\bold{K}$ and every nilmanifold $X$ with $T_{1},\dots,T_{d}$ being translations on $X$ (not necessarily commuting with each other).

If the sequence $(g(n_{1},\dots,n_{D})\cdot e_{X})_{(n_{1},\dots,n_{D})\in\Z^{D}}$ in Theorem \ref{s2} is \emph{totally equidistributed} on $X$, meaning that 
\begin{equation}\label{ed}
	\begin{split}
		\lim_{N\to\infty}\frac{1}{(2N+1)^{D}}\sum_{-N\leq n_{1},\dots,n_{D}\leq N}\bold{1}_{P}(n_{1},\dots,n_{D})\Phi(g(n_{1},\dots,n_{D})\cdot e_{X})=0
	\end{split}
\end{equation}	
for  every (infinite) $D$-dimensional arithmetic progression $P$ and every
$\Phi\in C(X)$ such that $\int_{X}\Phi\,d m_{X}=0$ (where $m_{X}$ is the Haar measure on $X$), then one can deduce a generalization of a result of Daboussi, which can be viewed as a variation of Theorem \ref{s2}:

\begin{thm}[Generalized Daboussi's Theorem]\label{key0}
	Let $\K$ be an integral tuple.
	Let $X=G/\Gamma$ be a nilmanifold 
	and $g\colon\Z^{D}\to\mathbb{C}$ be a polynomial sequence such that $(g(n_{1},\dots,n_{D})\cdot e_{X})_{(n_{1},\dots,n_{D})\in\Z^{D}}$ is totally equidistributed on  $X$.
Then for 
every $\Phi\in C(X)$ such that $\int_{X}\Phi\,d m_{X}=0$,\footnote{The additional assumption  $\int_{X}\Phi\,d m_{X}=0$ is necessary, as otherwise the theorem fails for $\Phi\equiv 1$ and $\chi\equiv 1$.} and every $D$-dimensional arithmetic progression $P$, we have that 
	\begin{equation}\nonumber
		\begin{split}
			\lim_{N\to\infty}\sup_{\chi\in\mathcal{M}_{K}}\Bigl\vert\frac{1}{(2N+1)^{D}}\sum_{-N\leq n_{1},\dots,n_{D}\leq N}\bold{1}_{P}(n_{1},\dots,n_{D})\chi(n_{1}b_{1}+\dots+n_{D}b_{D})\Phi(g(n_{1},\dots,n_{D})\cdot e_{X})\Bigr\vert=0.
		\end{split}
	\end{equation}		
\end{thm}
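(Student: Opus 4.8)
The plan is to deduce the Daboussi-type statement (Theorem \ref{key0}) from the Sarnak-type statement (Theorem \ref{s2}) by an averaging-over-characters argument, turning the supremum over all of $\mathcal{M}_{K}$ into a statement about a single extremal correlation. First I would fix the $D$-dimensional arithmetic progression $P$ and the Lipschitz function $\Phi$ with $\int_{X}\Phi\,dm_{X}=0$, and write, for each $N$, the correlation sum
\[
S_{N}(\chi):=\frac{1}{(2N+1)^{D}}\sum_{-N\leq n_{1},\dots,n_{D}\leq N}\bold{1}_{P}(\n)\,\chi(n_{1}b_{1}+\dots+n_{D}b_{D})\,\Phi(g(\n)\cdot e_{X}).
\]
The goal is $\lim_{N\to\infty}\sup_{\chi\in\mathcal{M}_{K}}\vert S_{N}(\chi)\vert=0$. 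The obstruction to applying Theorem \ref{s2} directly is twofold: that theorem applies only to \emph{aperiodic} $\chi\in\mathcal{M}^{a}_{K}$, whereas $\mathcal{M}_{K}$ contains non-aperiodic (e.g.\ pretentious, or even periodic-like) multiplicative functions; and even for a fixed aperiodic $\chi$ the convergence is pointwise in $\chi$, not uniform. The standard device (due to Daboussi and refined by K\'atai, and used in this circle by Frantzikinakis--Host) is a \emph{finitary} argument: suppose for contradiction that the limit is not zero, so there is $\e>0$, a subsequence $N_{k}\to\infty$, and multiplicative functions $\chi_{k}\in\mathcal{M}_{K}$ with $\vert S_{N_{k}}(\chi_{k})\vert\geq\e$ for all $k$.

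Next I would run a K\'atai-type orthogonality / concentration argument on the sequence $(\chi_{k})$ to extract structure. Concretely: the key point in proving Theorem \ref{s2} (as announced in the abstract) is an orthogonality criterion of K\'atai on $\O_{K}$, which says that if $F\colon\O_{K}\to\mathbb{C}$ is bounded and the ``pretentious'' correlations $\sum_{n}F(pn)\overline{F(qn)}$ are small for enough pairs of primes $p\neq q$, then $F$ is orthogonal to every $\chi\in\mathcal{M}_{K}$ uniformly. Applying this with $F(\n)=\bold{1}_{P}(\n)\Phi(g(\n)\cdot e_{X})$ (transported to $\O_{K}$ via the basis $\B$), the equidistribution hypothesis \eqref{ed} is exactly what makes the relevant bilinear sums along $p\n$ and $q\n$ equidistribute and hence cancel: for a polynomial orbit that is equidistributed on $X$, the ``dilated'' orbits $g(p\n)\cdot e_{X}$ and $g(q\n)\cdot e_{X}$ are jointly equidistributed on $X\times X$ for $p\neq q$ after passing to a suitable finite-index subprogression, by Leibman's theorem on polynomial orbits on nilmanifolds (the abstract's ``orbit properties on nilmanifold''). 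Since $\int_{X}\Phi\,dm_{X}=0$, the product $\Phi\otimes\bar\Phi$ integrates to $0$ on the closed orbit, so the K\'atai sums $\frac{1}{(2N+1)^{D}}\sum_{\n}F(p\n)\overline{F(q\n)}\to 0$ for all but boundedly many primes. By the quantitative form of the K\'atai criterion on $\O_{K}$ this forces $\sup_{\chi\in\mathcal{M}_{K}}\vert S_{N}(\chi)\vert\to 0$, contradicting the assumed $\e$ along $(N_{k})$.

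Assembling this: after fixing $\Phi$ and $P$, I would (i) reduce to $\Phi$ Lipschitz by density in $C(X)$ and then, if convenient, to a single $\Phi$ in a spectral decomposition; (ii) state the quantitative K\'atai orthogonality criterion on $\O_{K}$ in the uniform-over-$\mathcal{M}_{K}$ form, as presumably proved in the earlier sections; (iii) verify its hypothesis by computing, for primes $p\neq q$ in $\O_{K}$ (or rational primes split suitably), the correlation of the dilated orbits using Leibman's equidistribution theorem together with the vanishing-integral hypothesis on $\Phi$; (iv) handle the indicator $\bold{1}_{P}$ by absorbing $P$ into the polynomial sequence (restricting to the sublattice defining $P$ changes $g$ and $X$ only up to a finite cover, and equidistribution on $X$ implies equidistribution of the restriction on the associated sub-nilmanifold — or else one argues the correlation sums are unaffected on average). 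The main obstacle I expect is step (iii): one must check that equidistribution of $(g(\n)\cdot e_{X})$ on $X$ really does upgrade to \emph{joint} equidistribution of $(g(p\n)\cdot e_{X}, g(q\n)\cdot e_{X})$ on the correct sub-nilmanifold of $X\times X$ for a density-one set of prime pairs — the dilations $\n\mapsto p\n$ can destroy equidistribution on some ``bad'' arithmetic subprogressions, and one needs Leibman's criterion (in the several-variable polynomial setting over $\Z^{D}$) to identify the closed orbit of the dilated sequence and to confirm that the horizontal-torus obstructions created by $p,q$ are negligible except for finitely many primes. Once that equidistribution bookkeeping is done, the rest is the mechanical K\'atai inequality and the contradiction argument.
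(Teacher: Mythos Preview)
Your plan correctly identifies K\'atai's criterion as the engine and correctly anticipates that step (iii) is where the content lies, but the argument you sketch for that step contains a real gap. The sentence ``Since $\int_{X}\Phi\,dm_{X}=0$, the product $\Phi\otimes\bar\Phi$ integrates to $0$ on the closed orbit'' is not justified, and in fact is the heart of the whole paper. The closed orbit $Y=H/(H\cap(\Gamma\times\Gamma))$ of $(g(\n A_{\B}(p)),g(\n A_{\B}(q)))$ is typically a \emph{proper} sub-nilmanifold of $X\times X$, and knowing $\int_{X}\Phi=0$ says nothing a priori about $\int_{Y}\Phi\otimes\bar\Phi$. After reducing to a nilcharacter $\Phi$ (as in Assumption~3 of Section~\ref{sb}), whether $\Phi\otimes\bar\Phi\vert_{Y}$ has zero integral depends on $H_{d}=H\cap(G_{d}\times G_{d})$: when $H_{d}$ is the diagonal $\{(\psi^{-1}(0,\dots,0;t),\psi^{-1}(0,\dots,0;t))\}$, the restricted nilcharacter has frequency zero and its integral over $Y$ can be nonzero. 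Ruling this case out is precisely the content of Theorems~\ref{auto} and~\ref{contri}: one shows that such a diagonal forces $H$ to be the graph of a $d$-automorphism $\sigma$, and then a multi-linear argument on the faithful horizontal torus shows that the resulting relation $g(\n A_{\B}(p))\equiv\sigma(g(\n A_{\B}(q)))\bmod G_{\ker}$ is incompatible with equidistribution of $g$ when $\vert N_{K}(p)\vert\neq\vert N_{K}(q)\vert$. These obstructions are \emph{not} horizontal-torus obstructions; they live at the top commutator level $G_{d}$, and handling them is the main technical innovation here, not mere bookkeeping with Leibman's criterion.

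The paper's own proof of Theorem~\ref{key0} avoids redoing all of this by invoking Theorem~\ref{key} as a black box. That theorem already packages K\'atai's lemma together with the analysis of Sections~\ref{s:d}--\ref{s:o}, and its conclusion is quantitatively uniform in $\chi\in\mathcal{M}_{K}$: if $\vert S_{N}(\chi)\vert\geq\e$ for \emph{any} $\chi$, then $(g(\n)\cdot e_{X})_{\n\in R_{N,D}}$ fails to be totally $\delta$-equidistributed for some $\delta=\delta(\e)$ independent of $\chi$. One then argues by contradiction: if the sup over $\chi$ does not go to zero along some subsequence $N_{k}$, Theorem~\ref{key} plus Theorem~\ref{Lei} produces, for each $N_{k}$, a nontrivial horizontal character $\eta$ with $\Vert\eta\Vert_{\X}\leq C$ and $\Vert\eta\circ g\Vert_{C^{\infty}_{1}(R_{N_{k},D})}\leq C$; pigeonholing on the finitely many such $\eta$ fixes a single $\eta_{0}$ along a further subsequence, and sending $N_{k}\to\infty$ forces $\eta_{0}\circ g\equiv 0$, contradicting the assumed equidistribution of $(g(\n)\cdot e_{X})$. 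So rather than upgrading equidistribution of $g$ to joint equidistribution of dilates, the paper goes the other way: it shows that a large correlation \emph{destroys} finitary total equidistribution of $g$ itself, uniformly in $\chi$, and then passes to the qualitative limit.
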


For 
the special case $\O_{K}=\Z$ of Theorem \ref{key0}, when $X=\T$ and $g$ is a linear polynomial, this is also known as Daboussi's theorem (\cite{D1,D2,D3}).  when $X=\T$ and $g$ is a general polynomial, this was essentially proved by K\'atai \cite{katai}. The general case for  $\O_{K}=\Z$ was proved by Frantzikinakis and Host (Theorem 2.2 of \cite{FH}), and the case when $\O_{K}=\Z[i]$ and $X$ is at most 2-step was proved in Propositions 7.8 and 7.9 of \cite{S}.


In addition to Theorems \ref{s2} and \ref{key0}, we also provide quantitative versions of them. See Theorems \ref{key} and \ref{s3} for details.

\begin{rem}
	The proofs of the quantitative Theorems \ref{key} and \ref{s3}, being much more complicated than the qualitative Theorems \ref{s2} and \ref{key0}, occupy the bulk of this paper.
	The paper could be largely shortened if one is satisfied with the qualitative results only. However, in order for these results to be useful for applications, it is important to have quantitative versions of them. 
\end{rem}


\subsection{Applications}\label{s:12}
It turns out that there are many applications of Theorems \ref{s2} and \ref{key0} (and their quantitative versions), which we explain in this section. 

\subsubsection{Structure theorem for multiplicative functions}

We start with the precise definition for multiplicative functions on $\O_{K}$ (see Section \ref{s:2} for terminologies arising from algebraic number theory):

 \begin{defn}[Multiplicative functions on $\O_{K}$]\label{d7}
 	Let $K$ be a number field and $\O_{K}$ be its ring of integers. We say that a function $\chi\colon \O_{K}\to\mathbb{C}$ is  \emph{multiplicative} on  $\O_{K}$ if for all  $m,n\in \O_{K}$ such that the $K$-norm $N_{K}(m)$ of $m$ is coprime with $N_{K}(n)$ in $\Z$, we have that $\chi(mn)=\chi(m)\chi(n)$.\footnote{In fact, multiplicative functions can be defined on all the ideals of $\O_{K}$ rather than just on the principal ideals $(n)$ in a natural way. In this paper, we restrict the domain of the functions to the principal ideals only since such functions are already good enough for applications.} 
 	
 	Let $\mathcal{M}_{K}$ denote the collection of all multiplicative functions on   $\O_{K}$ with modulus at most 1, and 	$\mathcal{M}^{a}_{K}$  denote the collection of all \emph{aperiodic} functions $\chi$ in $\mathcal{M}_{K}$, meaning that 
 	\begin{equation}\label{ap2}
 		\lim_{N\to\infty}\sup_{P}\Bigl\vert\frac{1}{(2N+1)^{D}}\sum_{-N\leq n_{1},\dots,n_{D}\leq N}\bold{1}_{P}\Bigr(n_{1}b_{1}+\dots+n_{D}b_{D}\Bigl)\chi\Bigr(n_{1}b_{1}+\dots+n_{D}b_{D}\Bigl)\Bigr\vert=0,
 	\end{equation}	
 where $\sup_{P}$ is taken over all $D$-dimensional arithmetic progressions $P$. 
 \end{defn}	
 One can show that
 Definition \ref{d7} coincide with the definition in (\ref{ap1}) when $K=\mathbb{Q}$ (see  Appendix \ref{appe} for the proof).
An application of the Sarnak's Conjecture along nilsequences is to provide structure theorems for multiplicative functions $\chi$ in $\mathcal{M}_{K}$ on an arbitrary number field $K$. 
Roughly speaking, our
structure theorem says that $\chi$ can be written as the sum of two functions $\chi_{s}$ (the ``structural part") and $\chi_{u}$ (the ``uniform part"), where $\chi_{s}$ is approximately periodic and $\chi_{u}$ behaves randomly enough to have a negligible contribution for the applications we are interested in.
The uniformity of a function is measured by the Gowers norms.

\begin{defn}[Gowers uniformity norms on $\RR$]
	For $d,D,N\in\N_{+}$, we define the \emph{$d$-th Gowers uniformity norm} of $f$ on $\RR$ \footnote{$\Z_{N}:=\Z/N\Z$.} inductively by
	\begin{equation}\nonumber
		\begin{split}
			\Vert f\Vert_{U^{1}(\RR)}:=\Bigl\vert\frac{1}{N^{D}}\sum_{\A\in \RR}f(\A)\Bigr\vert
		\end{split}
	\end{equation}
	and
	\begin{equation}\nonumber
		\begin{split}
			\Vert f\Vert_{U^{d+1}(\RR)}:=\Bigl(\frac{1}{N^{D}}\sum_{\Bb\in \RR}\Vert f_{\Bb}\cdot\overline{f}\Vert_{_{U^{d}(\RR)}}^{2^{d}}\Bigr)^{1/2^{d+1}}
		\end{split}
	\end{equation}
	for $d\geq 1$, where $\overline{f}$ denotes the conjugate of $f$ and $f_{\Bb}(\A):=f(\Bb+\A)$ for all $\A\in\RR$.
\end{defn}
Gowers \cite{G} showed that this defines a norm on functions on $\mathbb{Z}_{N}$ for $d>1$. These norms were later used by Green, Tao, Ziegler and others in studying the primes (see, for example, \cite{LE,65,6}). Analogous semi-norms were defined in the ergodic setting by Host and Kra \cite{HK}. 

\begin{conv}
	For   an integral tuple $\K$, let $\iota_{\B}\colon\Z^{D}\to\O_{K}$ denote the bijection given by $$\iota_{\B}(n_{1},\dots,n_{D})=n_{1}b_{1}+\dots+n_{D}b_{D}.$$
Let $\chi\colon\O_{K}\to\mathbb{C}$ be a function and $N,\tilde{N}\in\N$ be such that $N<\tilde{N}$. We use  $\chi_{N,\tilde{N}}\colon \Rr\to\mathbb{C}$ to denote the \emph{truncated function}  given by $\chi_{N,\tilde{N}}(\n)=\chi\circ\iota_{\B}(\n)$ for all $\n\in\{1,\dots,N\}^{D}$ and $\chi_{N,\tilde{N}}(\n)=0$ otherwise. Through this paper,  we write $\chi_{N}:=\chi_{N,\tilde{N}}$ to simplify the notations of truncated functions. The quantity $\tilde{N}$ will be always clear from the context.	
\end{conv}	

We assume that the set $\mathcal{M}_{K}$ is endowed with the topology of
pointwise convergence and thus is a compact metric space. The main structure theorem we have is the following, which generalizes Theorem 1.1 of \cite{FH} and answers Problem 1 of \cite{FH}:

\begin{thm}[$U^{d}$ structure theorem for multiplicative functions]\label{U3}
	Let $\Omega\in\N$. For all $N\in\N$, let $\tilde{N}$ denote the smallest prime integer greater than $\Omega N$. 
	Let $\K$ be an integral tuple and
	$\nu$ be a probability measure  on the group $\mathcal{M}_{K}$.
	For every $\e>0$ and $d\geq 2$, there exist $Q:=Q(\B,d,\e,\Omega),$ \footnote{If $\mathcal{I}$ is a collection of parameters, then the notion $C:=C(\mathcal{I})$ means that $C$ is a quantity depending only on the parameters in $\mathcal{I}$.} $R:=R(\B,d,\e,\Omega)$ and $N_{0}:=N_{0}(\B,d,\e,\Omega)\in\N_{+}$ \footnote{If a quantity  depends on  $\B$ (such as $Q,R$ and $N_{0}$), then it also implicitly depends on $K,\O_{K}$ and $D$.}
	 such that for every
	$N\geq N_{0}$ and $\chi\in\mathcal{M}_{K}$, the truncated function $\chi_{N}\colon \Rr\to\mathbb{C}$ can be written as
	\begin{equation}\nonumber
		\begin{split}
			\chi_{N}(\A)=\chi_{N,s}(\A)+\chi_{N,u}(\A)
		\end{split}
	\end{equation}
	for all $\A\in\Rr$ such that the following holds:
	\begin{enumerate}[(i)]	
		\item $\vert\chi_{N,s}\vert\leq 1$ and $\vert\chi_{N,s}(\A+Q\bold{e}_{i})-\chi_{N,s}(\A)\vert\leq\frac{R}{N}$ for every $\A\in \Rr$ and $1\leq i\leq D$;\footnote{$\bold{e}_{i}$ denotes the vector whose $i$-th coordinate is 1 and all other coordinates are 0.}
		\item $\Vert\chi_{N,u}\Vert_{U^{d}(\Rr)}\leq \e$.
	\end{enumerate}	
\end{thm}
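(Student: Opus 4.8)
The plan is to follow the strategy of Frantzikinakis–Host (Theorem 1.1 of \cite{FH}), adapting it to the number field setting. The key input is the quantitative version of the Generalized Daboussi Theorem (Theorem \ref{key}, the quantitative counterpart of Theorem \ref{key0}), which controls correlations of multiplicative functions with equidistributed polynomial nilsequences. Combined with the inverse theorem for Gowers norms on $\RR$ (the multidimensional version over $\Z_{\tilde N}^{D}$, which one obtains from the standard $\Z_{N}$ inverse theorem of Green–Tao–Ziegler), this says essentially: if a truncated multiplicative function $\chi_{N}$ has non-negligible $U^{d}$ norm, then it correlates with a polynomial nilsequence of bounded complexity, and such correlation is \emph{ruled out} by Theorem \ref{key} unless the nilsequence fails to be equidistributed — i.e. unless it is essentially a low-complexity (almost periodic) object. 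This dichotomy is what lets us peel off the structured part.

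The concrete steps I would carry out, in order. \emph{Step 1: Set-up on $\RR = \Z_{\tilde N}^{D}$.} Fix $\e>0$, $d\geq 2$, $\Omega$, and the prime $\tilde N \in (\Omega N, 2\Omega N)$ (Bertrand). Transport $\chi_{N}$ to a function on the cyclic-type group $\Rr$ via $\iota_{\B}$; note that $\iota_{\B}$ carries the arithmetic-progression structure of $\Z^{D}$ to the relevant additive structure needed for the Gowers norms. \emph{Step 2: Energy increment / iterated projection.} Run the standard energy-increment argument: define $\chi_{N,s}$ as a conditional expectation of $\chi_{N}$ onto a $\sigma$-algebra generated by a bounded number of ``nil-characters'' (bracket-polynomial Lipschitz functions of bounded-complexity polynomial nilsequences), and $\chi_{N,u} = \chi_{N} - \chi_{N,s}$. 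At each stage, if $\Vert \chi_{N,u}\Vert_{U^{d}(\Rr)} > \e$, the inverse theorem produces a new nil-character with which $\chi_{N,u}$ correlates; adjoin it, gaining a fixed amount of $L^{2}$ energy. Since $\Vert\chi_{N}\Vert_{2}\leq 1$, this terminates after $O_{\e,d}(1)$ steps, giving (ii). \emph{Step 3: Lipschitz/quantitative-equidistribution control on the structured part.} This is where the number-field Daboussi estimate enters and where one must work. The nil-characters accumulated in $\chi_{N,s}$ cannot be ``genuinely equidistributed'' polynomial nilsequences, because Theorem \ref{key} forces the correlation of \emph{any} $\chi\in\mathcal{M}_{K}$ with such a sequence to go to $0$ uniformly — contradicting the fixed correlation threshold that triggered their selection (for $N$ large, i.e. $N\geq N_{0}$). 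By the quantitative Leibman/Green–Tao factorization theorem for polynomial sequences on nilmanifolds, a non-equidistributed polynomial nilsequence of bounded complexity factors through a lower-dimensional object, and iterating this reduces the structured part to a function that is smooth along the directions $Q\bold{e}_{i}$ with slowly varying increments; extracting the explicit modulus $R/N$ and period $Q$ from the quantitative factorization yields (i). The measure $\nu$ on $\mathcal{M}_{K}$ plays essentially no role beyond allowing the uniformity of constants over $\chi$; alternatively one does the argument for a single $\chi$ with constants independent of $\chi$, which is what Theorem \ref{key} provides.

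\emph{The main obstacle.} The hard part will be Step 3: making the passage from ``small correlation with equidistributed polynomial nilsequences'' (Theorem \ref{key}) to ``the structured part is quantitatively almost periodic with the specific bounds $Q$, $R/N$'' fully quantitative and uniform in $\chi\in\mathcal{M}_{K}$ and in the choice of $D$-dimensional arithmetic progression. This requires threading the quantitative equidistribution theory on nilmanifolds (factorization into a smooth, equidistributed, and rational part) through the multidimensional, number-field setup, and carefully bookkeeping how the complexity bounds from the inverse theorem feed into the Daboussi estimate and back. A secondary technical point is ensuring the inverse theorem is available in the precise form needed on $\Rr$ (prime modulus $\tilde N$, dimension $D$), and that the truncation $\chi_{N}$ — which is $\chi\circ\iota_{\B}$ on a box and $0$ elsewhere — interacts correctly with the cut-and-paste needed to apply it; this is handled as in \cite{FH} by comparing $\chi_{N}$ on $\{1,\dots,N\}^{D}$ with its extension and absorbing the error into $\chi_{N,u}$.
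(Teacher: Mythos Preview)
Your ingredients are right—the inverse theorem, Theorem \ref{key}, and factorization are exactly what drive the proof—but the assembly has a real gap. In Step 3, at stage $k\geq 1$ of your energy increment, the inverse theorem hands you a nilsequence correlating with $\chi_{N,u}^{(k)} = \chi_N - \mathbb{E}[\chi_N \mid \mathcal{B}_k]$, not with $\chi_N$. Theorem \ref{key} is a statement about correlations of the \emph{multiplicative} function $\chi_N$; it says nothing about $\chi_{N,u}^{(k)}$, which is not multiplicative, and the conditional-expectation structure gives you no way to transfer the correlation back. So you cannot conclude that the new nilsequence is non-equidistributed, and the mechanism by which you force $\chi_{N,s}$ to be almost periodic collapses after the first step. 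Even granting that, extracting the single period $Q$ and the Lipschitz bound $R/N$ from a conditional expectation onto a $\sigma$-algebra of nil-level-sets is not the routine bookkeeping you suggest.

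The paper avoids all of this by never projecting onto nil-characters. Its structured part is $\chi_{N,s} = \chi_N * \phi_{N,\theta}$ for an explicit Fej\'er-type kernel built on the Fourier side (see (\ref{phi})), so property (i) is immediate from the kernel's small spectrum, and—crucially—$\chi_{N,u} = \chi_N * \psi$ with $\Vert\psi\Vert_{1} \leq 2$. If $\chi_{N,u}$ correlates with a nilsequence, averaging over the convolution produces a translate $\m$ for which $\chi_N$ itself correlates with the shifted nilsequence on an arithmetic progression, and \emph{now} Theorem \ref{key} applies (this is exactly the passage (\ref{e813})--(\ref{e816}) in the proof of Proposition \ref{fin}). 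The conceptual point you are missing: for multiplicative functions, the $U^2$-structured part—a pure Fourier projection—already controls every $U^d$ norm. No higher-step nil-objects ever enter $\chi_{N,s}$, and the only energy increment in the paper (deferred to the deduction of the strong form Theorem \ref{nU3s}) is in the kernel parameter $\theta$, not in nil-characters; Theorem \ref{U3} itself follows directly from the weak $U^d$ statement, Theorem \ref{nU3w}.
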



We remark that  by Bertrand's postulate, $\Omega N<\tilde{N}< 2\Omega N$.
The reason that we work on $\Z^{D}_{\tilde{N}}$ rather than $\Z^{D}_{N}$ is that for all $a\in\O_{K}$ such that $0<\vert N_{K}(a)\vert<\tilde{N}$, the map $\n\to\n A_{\B}(a) \mod\Rr$ (see Section \ref{s:2} for the definition of $A_{\B}(a)$) is a bijection from $\Rr$ to itself (see also the discussion after Proposition \ref{c4} for the reason).

We prove Theorem \ref{U3} (and  its stronger version Theorem \ref{nU3s}) in Section \ref{s:s}.

\subsubsection{A criteria for aperiodic multiplicative functions}
Another application of the main results of the paper is to provide a criteria for aperiodic  multiplicative functions using Gowers norms. Denote $[N]:=\{1,\dots,N\}$. The Gowers norms can be extended to functions taking values in $\Z^{D}$ (as was done in \cite{FH,LE}).
\begin{defn}[Gowers uniformity norms on intervals]
	Let $d\geq 2$ and $D,N\in\N$. For every functions $f\colon [N]^{D}\to\mathbb{C}$, by Lemma A.2 of Appendix A of \cite{FH}, the quantity
	$$\Vert f\Vert_{U^{d}([N]^{D})}:=\frac{1}{\Vert \bold{1}_{[N]^{D}}\Vert_{U^{d}(\Z_{N^{\ast}})}}\cdot\Vert \bold{1}_{[N]^{D}}\cdot f\Vert_{U^{d}(\Z_{N^{\ast}})}$$
	is independent of $N^{\ast}$ if $N^{\ast}>2N$, which is called the \emph{$U^{d}([N]^{D})$-norm} of $f$.
\end{defn}

If the $U^{2}([N]^{D})$-norm of a function goes to 0 as $N\to\infty$, then it is an aperiodic function. However, the converse is not always the case. But for multiplicative functions, these two conditions are equivalent:

\begin{thm}[Structure theorem for aperiodic multiplicative functions]\label{ap}
	Let $\K$ be an integral tuple
	 and $\chi\in\mathcal{M}_{K}$. Then $\chi$ is aperiodic if and only if $\lim_{N\to\infty}\Vert\chi\circ\iota_{\B}\Vert_{U^{d}([N]^{D})}=0$ for all $d\geq 2$.
\end{thm}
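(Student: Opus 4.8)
\textbf{Proof plan for Theorem \ref{ap}.}

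The strategy is to establish the two implications separately, with the nontrivial direction relying on the structure theorem for multiplicative functions (Theorem \ref{U3}). For the easy direction, suppose $\lim_{N\to\infty}\Vert\chi\circ\iota_{\B}\Vert_{U^{d}([N]^{D})}=0$ for some $d\geq 2$; by the monotonicity of Gowers norms (the $U^{d'}$-norm is dominated by the $U^{d}$-norm for $d'\leq d$, which holds on intervals after the standard normalization in the definition above), we get $\lim_{N\to\infty}\Vert\chi\circ\iota_{\B}\Vert_{U^{2}([N]^{D})}=0$. Unpacking the definition of the $U^{2}$-norm on $[N]^{D}$ in terms of Fourier coefficients on $\Z_{N^{\ast}}^{D}$, control of the $U^{2}$-norm implies that $\sup_{\xi}\vert\widehat{\bold{1}_{[N]^{D}}\cdot(\chi\circ\iota_{\B})}(\xi)\vert\to 0$. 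Since for a $D$-dimensional arithmetic progression $P$ the normalized sum $\frac{1}{(2N+1)^{D}}\sum \bold{1}_{P}\,(\chi\circ\iota_{\B})$ can be written as a short linear combination (uniformly bounded in $N$) of such Fourier coefficients, we conclude that (\ref{ap2}) holds, i.e. $\chi$ is aperiodic.

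For the hard direction, assume $\chi\in\mathcal{M}_{K}$ is aperiodic and fix $d\geq 2$ and $\e>0$; we must show $\Vert\chi\circ\iota_{\B}\Vert_{U^{d}([N]^{D})}<\e$ for all large $N$. Apply Theorem \ref{U3} with parameter $\e'$ (to be chosen) and, say, $\Omega=3$, producing for each large $N$ a decomposition $\chi_{N}=\chi_{N,s}+\chi_{N,u}$ on $\Rr$ with $\Vert\chi_{N,u}\Vert_{U^{d}(\Rr)}\leq\e'$ and with $\chi_{N,s}$ slowly varying: $\vert\chi_{N,s}\vert\leq 1$ and $\vert\chi_{N,s}(\A+Q\bold{e}_{i})-\chi_{N,s}(\A)\vert\leq R/N$. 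Translating the $U^{d}([N]^{D})$-norm of $\chi\circ\iota_{\B}$ to the $U^{d}(\Z^{D}_{\tilde{N}})$-norm of the truncated function $\chi_{N}$ (up to the fixed normalization constant coming from $\Vert\bold{1}_{[N]^{D}}\Vert_{U^{d}}$, which is bounded below), it suffices to bound $\Vert\chi_{N}\Vert_{U^{d}(\Rr)}$. By the triangle inequality for the $U^{d}$-norm, $\Vert\chi_{N}\Vert_{U^{d}(\Rr)}\leq\Vert\chi_{N,s}\Vert_{U^{d}(\Rr)}+\e'$, so the crux is to show the structural part has small $U^{d}$-norm. Here is where aperiodicity enters: the slow-variation property says $\chi_{N,s}$ is (up to error $O(DR/N)$) constant on cosets of the subgroup $Q\Z^{D}_{\tilde{N}}$ — more precisely it is close to a function that is ``locally polynomial of low complexity'' — and so $\Vert\chi_{N,s}\Vert_{U^{d}(\Rr)}$ is, up to negligible error, controlled by the $U^{2}$-type behavior of $\chi_{N,s}$, which by $\chi_{N,s}=\chi_{N}-\chi_{N,u}$ and aperiodicity of $\chi$ must be small; quantitatively, one shows $\Vert\chi_{N,s}\Vert_{U^{d}(\Rr)}^{C}\lesssim \Vert\chi_{N,s}\Vert_{U^{1}(\text{progression})}$-type quantities plus $R/N$, and the former tends to $0$ as $N\to\infty$ because $\chi$ is aperiodic (and $\chi_{N,u}$ contributes at most $\e'$ to the relevant averages by Hölder and the $U^{d}\Rightarrow U^{1}$ bound). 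Choosing $\e'$ small enough and $N$ large enough then yields $\Vert\chi_{N}\Vert_{U^{d}(\Rr)}<\e$, completing the proof.

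The main obstacle is the final step: showing that the slowly-varying structural part $\chi_{N,s}$ has small Gowers $U^{d}$-norm purely from the hypothesis that $\chi$ (hence, approximately, $\chi_{N,s}$) is aperiodic. The subtlety is that aperiodicity is an $L^{1}$/Fourier-analytic ($U^{1}$ on all progressions) statement, whereas we need a $U^{d}$ bound; bridging this gap requires exploiting that $\chi_{N,s}$ varies on the scale $Q$ with increments $O(R/N)$, so on the torus $\Z^{D}_{\tilde{N}}$ it is well-approximated by a bounded-complexity structure (a function factoring through $(\Z/Q\Z)^{D}$ up to small error), for which the $U^{d}$-norm collapses to an averaged $U^{1}$-norm over the bounded-index subgroup $Q\Z^{D}_{\tilde{N}}$ and its translates — and each such average is small by aperiodicity of $\chi$ applied to the $Q$-fibers (which are themselves $D$-dimensional arithmetic progressions). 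One must be careful that the parameters $Q=Q(\B,d,\e',\Omega)$ and $R=R(\B,d,\e',\Omega)$ are fixed once $\e'$ is fixed, so that for $N\to\infty$ the error $R/N\to 0$ and the number of $Q$-fibers stays bounded; this is exactly why the quantitative structure theorem (rather than a qualitative decomposition with $N$-dependent complexity) is essential here.
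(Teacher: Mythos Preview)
Your easy direction (small $U^{d}$ implies aperiodic) matches the paper's: both reduce to $U^{2}$ and use Lemma~\ref{1p}. For the hard direction, however, your route via the structure theorem diverges from the paper and contains a real gap.

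The paper does not use Theorem~\ref{U3} here at all. Instead it argues by contraposition: if $\Vert\chi\circ\iota_{\B}\Vert_{U^{d}([N]^{D})}$ stays bounded away from $0$ along a subsequence, the inverse theorem (Theorem~\ref{inv2}) produces a nilmanifold $X$, a polynomial sequence $g$, and a Lipschitz $\Phi$ with
\[
\Bigl\vert\mathbb{E}_{\n\in[3N]^{D}}\bold{1}_{[N]^{D}}(\n)\,\chi\circ\iota_{\B}(\n)\,\Phi(g(\n)\cdot e_{X})\Bigr\vert\geq\delta.
\]
Then the quantitative Sarnak result Theorem~\ref{s3} converts this into a large average of $\chi$ over some arithmetic progression, contradicting aperiodicity. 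No decomposition of $\chi_{N}$ is needed.

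The gap in your plan is the claim that $\chi_{N,s}$ ``is well-approximated by a function factoring through $(\Z/Q\Z)^{D}$''. Property~(i) of Theorem~\ref{U3} says $\vert\chi_{N,s}(\n+Q\bold{e}_{i})-\chi_{N,s}(\n)\vert\leq R/N$; this is a Lipschitz bound for $Q$-steps, not approximate $Q$-periodicity. Across the full box $[N]^{D}$ the accumulated variation is $O(DR/Q)$, which is a fixed (typically large) constant. Concretely, the function $\n\mapsto e(\alpha\cdot\n)$ with $\vert\alpha\vert\sim 1/N$ satisfies the Lipschitz condition yet has $U^{d}$-norm essentially $1$. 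So there is no mechanism by which the $U^{d}$-norm of $\chi_{N,s}$ ``collapses to a $U^{1}$-norm on $Q$-fibers''. If you try instead to show $\Vert\chi_{N,s}\Vert_{L^{\infty}}\to 0$ by averaging over a short $Q$-progression of length $L$ and using aperiodicity, you must also control the $\chi_{N,u}$-contribution on that progression; Lemma~\ref{1p} gives only $\vert\E_{\n\in P}\chi_{N,u}\vert\lesssim(\tilde N/L)^{D}\e'$, and since $L\ll N$ this blows up unless $\e'$ is chosen extremely small --- but $Q,R$ depend on $\e'$, so the parameters chase each other. The paper's approach sidesteps this entirely by invoking Theorem~\ref{s3} directly.
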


Theorem \ref{ap} generalizes Theorem 2.5 of \cite{FH}, and we prove it in Section \ref{s:p}.

\subsubsection{Partition regularity for homogeneous equations}

An important question in Ramsey theory is to determine which algebraic equations, or systems of equations, are partition regular. 
As there are various formulations of the partition regular questions, we start with a technical definition in order to cover as many cases as possible:  

\begin{defn}[Partition regularity]
	Let $\K$ be an integral tuple, $s\in\N_{+}, r\in\N$, and $p\in\mathbb{C}[x_{1},\dots,x_{s};z_{1},\dots,z_{r}]$ be a polynomial. We say that $p$ is \emph{partition regular} over $\O_{K}$ with respect to $x_{1},\dots,x_{s}$ if for every finite partition $\O_{K}=\bigcup_{i=1}^{m}U_{i}$ of $\O_{K}$, there exist $1\leq i\leq m$, $x_{1},\dots,x_{s}\in U_{i}$ non-zero and pairwise distinct  such that  $p(x_{1},\dots,x_{s};z_{1},\dots,z_{r})=0$  for some $z_{1},\dots,z_{r}\in\O_{K}$. 
\end{defn}	

It was proved by Rado \cite{Rado} that for $a,b,c\in\Z\backslash\{0\}$, the linear polynomial $p(x_{1},x_{2},x_{3})=ax_{1}+bx_{2}+cx_{3}$  is partition regular over $\Z$ with respect to $x_{1}, x_{2}$ and $x_{3}$ (in this case $r=0$) if and only if one of $a+b,a+c,b+c, a+b+c$ is 0.\footnote{The original result of \cite{Rado} was stated for $\N$ but a similar argument holds for $\Z$.} 
 The situation is much less clear for second or higher degree equations $p$, or for integer rings $\O_{K}$ other than $\Z$, unless we allow some of the variables in $p$ to take values freely (namely $r>0$).
It is a classical result of Furstenberg  \cite{77} and Sark\"{o}zy \cite{25} that the equation $p(x_{1},x_{2};z_{1})=x_{1}-x_{2}-z^{2}_{1}$ is partition regular over $\mathbb{Z}$ with respect to $x_{1}$ and $x_{2}$. Bergelson and Leibman \cite{BL} provided other examples of translation invariant equations by proving a polynomial version of the van der Waerden Theorem. 
A result of Khalfalah and Szemer\'{e}di \cite{KS} showed that the equation $p(x_{1},x_{2};z_{1})=x_{1}+x_{2}-z^{2}_{1}$ is partition regular over $\mathbb{Z}$ with respect to $x_{1}$ and $x_{2}$.

In the work of Frantzikinakis and Host \cite{FH}, by using the structure theorem for multiplicative functions on $\mathbb{Z}$, they proved that certain class of quadratic equations with two restricted variables $x_{1}$ and $x_{2}$ are partition regularity (as well as scattered examples of higher degree equations). For example,  they showed that the equation 
\begin{equation}\label{p}
p(x_{1},x_{2};z_{1})=ax^{2}_{1}+bx^{2}_{2}-z^{2}_{1}, 	
\end{equation}	
 is partition regular over $\mathbb{Z}$ with respect to $x_{1}$ and $x_{2}$ if $a,b,a+b$ are non-zero square integers (for example, $a=16, b=9$).

Note that not all equations of the form (\ref{p}) are partition regular over $\mathbb{Z}$. For example, the equation $3x^{2}_{1}-x^{2}_{2}-z^{2}_{1}=0$ has even no non-trivial integer solutions. So it is natural to consider the  partition regularity problems over a larger ring of integers.
In \cite{S}, by using a partial structure theorem for multiplicative functions on $\mathbb{Z}[i]$, the author proved that (\ref{p}) is partition regular over $\mathbb{Z}[i]$ with respect to $x_{1}$ and $x_{2}$ if $\sqrt{a},\sqrt{b},\sqrt{a+b}\in\Z[i]$  (for example, $a=1, b=-1$).
We remark that the question whether $x^{2}_{1}-x^{2}_{2}-z^{2}_{1}$ is partition regular over $\Z$ with respect to $x_{1}$ and $x_{2}$  remains an open question.

In this paper, we provide partition regularity results for a larger family of polynomials over certain ring of integers. Our main result is Theorem \ref{pr}. We  postponed its precise statement to Section \ref{s:par}, but provide a few sample applications of Theorem \ref{pr} in the introduction.

The first is an example for quadric equations:

\begin{thm}\label{pr3}
	Let $p(x_{1},x_{2};z_{1})=ax_{1}^{2}+bx_{2}^{2}-z_{1}^{2}$
	for some $a,b\in\Z\backslash\{0\}$. Then $p$ is partition regular over the ring of integers of $\mathbb{Q}(\sqrt{a},\sqrt{b},\sqrt{a+b})$ with respect to $x_{1}$ and $x_{2}$. 
%
%
\end{thm}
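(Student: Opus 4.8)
\textbf{Proof proposal for Theorem \ref{pr3}.}

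The plan is to deduce Theorem \ref{pr3} as a special case of the general partition regularity result (Theorem \ref{pr}, to come in Section \ref{s:par}), whose engine is the $U^{d}$ structure theorem for multiplicative functions (Theorem \ref{U3}) together with the quantitative Sarnak/Daboussi estimates (Theorems \ref{key} and \ref{s3}). The key point is the choice of field: in $K=\mathbb{Q}(\sqrt{a},\sqrt{b},\sqrt{a+b})$ each of $\sqrt{a},\sqrt{b},\sqrt{a+b}$ lies in $\O_{K}$, so after rescaling $x_{1}\mapsto x_{1}/\sqrt{a}$, $x_{2}\mapsto x_{2}/\sqrt{b}$ the equation $ax_{1}^{2}+bx_{2}^{2}=z_{1}^{2}$ becomes equivalent, over $\O_{K}$, to the homogeneous form $x_{1}^{2}+x_{2}^{2}=z_{1}^{2}$ — but more importantly, the condition that makes the Frantzikinakis--Host method run is precisely that $a$, $b$, and $a+b$ are \emph{squares in $\O_{K}$}, which is guaranteed by our choice of $K$. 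So the first step is to verify that $p$ satisfies the hypotheses of Theorem \ref{pr} over this particular $\O_{K}$: one checks $a,b,a+b\in(\O_{K}^{\times}\cdot(\O_{K})^{2})$ or whatever the exact squareness/coprimality hypothesis of Theorem \ref{pr} turns out to be, and that $a,b,a+b\neq 0$ (given) so the relevant ``non-degeneracy'' is in force.

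The second step is the reduction, standard in this circle of ideas, of the partition regularity statement to a positivity statement about multiplicative averages. Given a finite coloring $\O_{K}=\bigcup_{i=1}^{m}U_{i}$, one passes to indicator functions and uses a pigeonhole/density argument to find a color class $U_{i}$ along which a suitable weighted count of solutions to $ax_{1}^{2}+bx_{2}^{2}=z_{1}^{2}$ with $x_{1},x_{2}\in U_{i}$ is bounded below. After a change of variables parametrizing solutions of the Pythagoras-type equation (writing $\sqrt{a}\,x_{1},\sqrt{b}\,x_{2},z_{1}$ as $\lambda(s^{2}-t^{2}),\lambda(2st),\lambda(s^{2}+t^{2})$ up to units, i.e. exploiting that $\O_{K}$ contains the Gaussian-integer-like structure needed to parametrize sums of two squares), the count becomes an average of $\mathbf{1}_{U_{i}}$ evaluated at quadratic (polynomial) expressions in the parameters. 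This is where the structure theorem enters: decompose $\mathbf{1}_{U_{i}}$ as a correlated piece with a multiplicative function plus error — or more precisely, run the Frantzikinakis--Host argument which extracts from $\mathbf{1}_{U_{i}}$ (via its spectral/multiplicative decomposition) a structural part whose contribution to the average is positive by an equidistribution/ergodic argument, and a uniform part whose contribution is negligible by Theorem \ref{U3}(ii) combined with a generalized von Neumann inequality controlling the relevant average by a Gowers norm of appropriate degree $d$.

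The third step is to push the qualitative conclusion through: the uniform error is controlled using the quantitative Sarnak's Conjecture along nilsequences (needed because the generalized von Neumann step reduces matters to correlations of a multiplicative function with polynomial nilsequences arising from the inverse theorem for the Gowers norms over $\Z^{D}_{N}$), and the structural main term is shown to be bounded away from zero using an argument on the relevant nilsystem — this is the analogue over $\O_{K}$ of the $\Z$-case in \cite{FH}. I expect the main obstacle to be the parametrization/change-of-variables step: one must show that over $\O_{K}=\O_{\mathbb{Q}(\sqrt{a},\sqrt{b},\sqrt{a+b})}$ the equation $ax_{1}^{2}+bx_{2}^{2}=z_{1}^{2}$ has enough solutions with a polynomial parametrization defined over $\O_{K}$ to feed into Theorem \ref{pr}, and that the resulting polynomial map has the homogeneity and independence properties the structure-theorem machinery requires; checking that the class group / unit group of this specific biquadratic-type field does not obstruct the parametrization, and that everything stays within the scope of the hypotheses of Theorem \ref{pr}, is the delicate part. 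The rest — pigeonholing the color classes, invoking Theorem \ref{U3}, bounding the uniform part by Theorem \ref{s3}, and handling the structural part — is an application of the general machinery developed earlier in the paper.
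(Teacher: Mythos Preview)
Your overall plan—reduce Theorem~\ref{pr3} to Theorem~\ref{pr}—is exactly what the paper does, and in fact the \emph{entire} content of Theorem~\ref{pr3} is the verification that $p$ is a $K$-type polynomial in the sense of Definition~\ref{typep}. Everything in your Steps~2 and~3 (Furstenberg correspondence, structure theorem, generalized von Neumann, positivity of the structured part) is already packaged inside the proof of Theorem~\ref{pr} and need not be reopened here.

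The genuine gap is in the parametrization. The $K$-type condition demands $x_{1}=k\prod_{i}(m+a_{i}n)$ and $x_{2}=k\prod_{i}(m+a'_{i}n)$ with the \emph{same} scalar $k$ and the same $(m,n)$, with $a_{i},a'_{i}\in\O_{K}$, the $a_{i}$ pairwise distinct, the $a'_{i}$ pairwise distinct, and $\{a_{i}\}\neq\{a'_{i}\}$. Your Pythagorean parametrization $\sqrt{a}\,x_{1}=\lambda(s^{2}-t^{2})$, $\sqrt{b}\,x_{2}=2\lambda st$ cannot be put into this shape: the factor $st$ is not of the form $(m+a'_{1}n)(m+a'_{2}n)$ for any constants $a'_{1}\neq a'_{2}$, and the two expressions for $x_{1},x_{2}$ carry different leading scalars. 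More tellingly, your parametrization involves only $\sqrt{a}$ and $\sqrt{b}$, so it gives no explanation for why $\sqrt{a+b}$ must lie in $K$. The paper (via Proposition~\ref{pr2}, essentially Appendix~C of \cite{FH}) uses a different parametrization of the conic in which the shifts $a_{1},a_{2},a'_{1},a'_{2}$ are explicit $\Z$-polynomials in $\Delta_{1}=2\sqrt{a}$, $\Delta_{2}=2\sqrt{b}$, and $\Delta_{3}=2\sqrt{a+b}$; this is precisely why all three square roots are needed.

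Finally, your concerns about the class group and unit group of $\O_{K}$ are misplaced. The $K$-type condition is a polynomial identity over $K$ (``for all $m,n,k\in K$ there exist $z_{i}\in K$\ldots''), not a statement about integral points, so no arithmetic obstruction can arise. The only checks are that the explicit $a_{i},a'_{i}$ lie in $\O_{K}$ (clear, as they are $\Z$-combinations of products of $\sqrt{a},\sqrt{b},\sqrt{a+b}$) and that the distinctness conditions hold, which reduces to the elementary observation that $a,b\neq 0$ and that at least one of $a+b$, $a-b$ is nonzero.
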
	


For example, $p(x_{1},x_{2};z_{1})=9x_{1}^{2}+16x_{2}^{2}-z_{1}^{2}$ is partition regular over $\Z$ with respect to $x_{1}$ and $x_{2}$, which reproves a result in \cite{FH}; $p(x_{1},x_{2};z_{1})=x_{1}^{2}-x_{2}^{2}-z_{1}^{2}$ is partition regular over $\Z[i]$ with respect to $x_{1}$ and $x_{2}$, recovering a theorem in \cite{S}; $p(x_{1},x_{2};z_{1})=x_{1}^{2}+m^{2}x_{2}^{2}-z_{1}^{2}$ is partition regular over the ring of integers of $\mathbb{Q}(\sqrt{m^{2}+1})$ with respect to $x_{1}$ and $x_{2}$ for all $m\in\Z$.

The second is an example for a polynomial $p(x,y;z_{1},\dots,z_{r})$ with $r\geq 2$, where more flexibility is allowed. We have

\begin{cor}\label{45}
	The equation
	$p(x_{1},x_{2};z_{1},z_{2}):=x^{2}_{1}-2x_{2}^{2}+z^{2}_{1}-z^{2}_{2}$ is  partition regular over $\Z[\frac{1+\sqrt{-3}}{2}]$  with respect to $x_{1}$ and $x_{2}$.
\end{cor}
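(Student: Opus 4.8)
\textbf{Proof proposal for Corollary \ref{45}.}

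The plan is to derive Corollary \ref{45} from the general partition regularity result, Theorem \ref{pr}, in exactly the same manner that Theorem \ref{pr3} is obtained. First I would identify the ring of integers: since $-3 \equiv 1 \pmod 4$, the ring of integers of $K = \mathbb{Q}(\sqrt{-3})$ is $\O_K = \Z[\frac{1+\sqrt{-3}}{2}]$, which is a PID (indeed Euclidean), a fact that will be needed to apply the machinery built on integral tuples. I would fix an integral basis, say $\B = \{1, \frac{1+\sqrt{-3}}{2}\}$, so that $\bold{K}$ becomes a legitimate integral tuple to which Theorems \ref{s2}, \ref{key0} and their quantitative counterparts apply.

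The heart of the argument is to check that $p(x_1,x_2;z_1,z_2) = x_1^2 - 2x_2^2 + z_1^2 - z_2^2$ satisfies the hypotheses of Theorem \ref{pr} over this particular $K$. The key structural point is that the two free variables $z_1, z_2$ enter as a difference of squares $z_1^2 - z_2^2 = (z_1-z_2)(z_1+z_2)$, so the constraint $p = 0$ reads $x_1^2 - 2x_2^2 = z_2^2 - z_1^2$, i.e. one needs $2x_2^2 - x_1^2$ to be representable as a difference of two squares in $\O_K$; over a ring where enough elements are so representable (in a PID with suitable unit and residue structure, every element — or every element in a set of full density — is a difference of two squares, since $w = \left(\frac{w+1}{2}\right)^2 - \left(\frac{w-1}{2}\right)^2$ whenever $2$ is invertible, which fails here, but $w = ab$ with $a,b$ chosen via $w$'s factorization still works), this reduces the problem to finding $x_1, x_2$ in a common cell with $2x_2^2 - x_1^2$ in the appropriate multiplicatively-defined set. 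Concretely I expect Theorem \ref{pr} to require that the relevant ``coefficient data'' of $p$ — here the pair $(1, -2)$ together with the quadratic form $z_1^2 - z_2^2$ used to absorb the free variables — satisfy a solvability/square condition over $K$; the choice $K = \mathbb{Q}(\frac{1+\sqrt{-3}}{2})$ is presumably engineered precisely so that $\sqrt{2}$, or the discriminant-type quantity attached to $x_1^2 - 2x_2^2$, becomes a square (or a norm) in $\O_K$, unlocking the hypothesis. I would verify this by computing the relevant algebraic invariant and exhibiting the required square root or factorization explicitly in $\Z[\frac{1+\sqrt{-3}}{2}]$.

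With the hypotheses of Theorem \ref{pr} verified, the conclusion is immediate: for every finite partition of $\O_K$ there exist distinct non-zero $x_1, x_2$ in one cell and $z_1, z_2 \in \O_K$ with $x_1^2 - 2x_2^2 + z_1^2 - z_2^2 = 0$, which is exactly the assertion of Corollary \ref{45}. The main obstacle I anticipate is not the deduction itself but the bookkeeping in matching $p$ to the precise normal form demanded by Theorem \ref{pr}: one must correctly account for the two auxiliary variables $z_1, z_2$ (as opposed to the single $z_1$ in Theorem \ref{pr3}), check that the extra freedom genuinely relaxes rather than tightens the constraint, and confirm that the arithmetic condition on the coefficients $(1,-2)$ over $\mathbb{Q}(\sqrt{-3})$ is met — in particular that the relevant element is a square in $\O_K$ and not merely in some larger ring. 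Once the correspondence with Theorem \ref{pr}'s hypotheses is pinned down, everything else is a direct application.
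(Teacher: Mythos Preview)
Your high-level plan is right: one verifies that $p$ is $K$-type in the sense of Definition~\ref{typep} and then invokes Theorem~\ref{pr}. You also correctly identify $\O_K=\Z[\tfrac{1+\sqrt{-3}}{2}]$ as the ring of integers of $\mathbb{Q}(\sqrt{-3})$. The gap is in the actual verification, and your guess about \emph{why} $\mathbb{Q}(\sqrt{-3})$ is the right field is off.

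Your speculation that ``$\sqrt{2}$, or the discriminant-type quantity attached to $x_1^2-2x_2^2$, becomes a square (or a norm) in $\O_K$'' cannot be the mechanism: $\sqrt{2}\notin\mathbb{Q}(\sqrt{-3})$, and $2$ is not a square there. Likewise, your difference-of-squares heuristic (``$z_1^2-z_2^2$ can absorb anything'') does not by itself pick out $\mathbb{Q}(\sqrt{-3})$; if it were the whole story, $p$ would already be $\mathbb{Q}$-type. What Definition~\ref{typep} actually demands is a parametrization $x_1=k\prod_i(m+a_in)$, $x_2=k\prod_i(m+a'_in)$ with the $a_i,a'_i$ lying in $\O_K$ and satisfying the distinctness conditions, together with $z_1,z_2\in K$ making $p$ vanish identically in $m,n,k$.

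The paper supplies exactly this missing input via G\'erardin's identity
\[
(m^{2}-n^{2})^{4}+(2mn+m^{2})^{4}+(2mn+n^{2})^{4}=2(m^{2}+mn+n^{2})^{4},
\]
and takes $x_1=k(m+n)(m-n)$, $x_2=k\bigl(m+\tfrac{-1+\sqrt{-3}}{2}\,n\bigr)\bigl(m+\tfrac{-1-\sqrt{-3}}{2}\,n\bigr)$. The point is that the quadratic form $m^2+mn+n^2$ factors as $(m-\omega n)(m-\omega^2 n)$ over the Eisenstein integers, where $\omega=\tfrac{-1+\sqrt{-3}}{2}$ is a primitive cube root of unity. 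So the parameters $a'_1,a'_2$ are the cube roots of unity, and these lie in $\O_K$ precisely when $K\supseteq\mathbb{Q}(\sqrt{-3})$ --- that, not anything about $\sqrt{2}$, is the reason for the choice of field. Your proposal correctly anticipates that the ``bookkeeping in matching $p$ to the precise normal form demanded by Theorem~\ref{pr}'' is the crux, but it does not supply the identity or the parametrization, and the arithmetic condition you plan to check is not the one that is actually relevant.
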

We remark it was proved by \cite{FH} that a similar polynomial $p(x_{1},x_{2};z_{1},z_{2}):=x^{2}_{1}+x_{2}^{2}-2z^{2}_{1}-z^{2}_{2}$ is partition regular over the $\Z$ with respect to $x_{1}$ and $x_{2}$. 

We prove Theorem \ref{pr3} and Corollary \ref{45} in Section \ref{s:p2}.

\subsection{Methods and organizations}

The first part of this paper is the background material used in this paper. In Section \ref{s:2}, we provide all the results we need from algebraic number theory. In particular, we provide the K\'atai's Lemma on algebraic number fields (Lemma \ref{katai}), which is a useful tool for the study of Sarnak's Conjecture on an arbitrary number field. In Sections \ref{s:3} and \ref{s:4}, we provide basic properties on nilmanifolds and equidistribution properties for polynomial sequences, respectively. The material in these two sections is a mixture of classical knowledges and original results.

The second part of the paper is
devoted to the proof of Theorem \ref{key} (the quantitative version of Theorem \ref{key0}), which is the central result in this paper. Theorem \ref{key} can be viewed as a generalization of the main result in \cite{S}. However, the proof of  Theorem \ref{key} is significantly more difficult, and we will use a completely different approach.

To be more precise,
Sections \ref{s:d}, \ref{s:m} and \ref{s:o} are the main novelties of this paper. 
 In Section \ref{s:o}, we use the material in Sections \ref{s:2}, \ref{s:3} and \ref{s:4} to reduce Theorem \ref{key} to two questions (which are also the two main innovations of this paper): (i) the description of a special sub nilmanifold of the product space $X\times X$, which is carried out in Section \ref{s:d} (Theorem \ref{auto}); and (ii) a problem in multi-linear algebra, which is answered in Section \ref{s:m} (Theorem \ref{contri}).

The last part of this paper is to use Theorem \ref{key} to prove all other results. In Section \ref{s:p}, we prove some immediate consequences of Theorem \ref{key}, including Theorem \ref{key0}, the generalized Daboussi's Theorem, Theorem \ref{s3} (the quantitative version of Theorem \ref{s2}), the Sarnak's Conjecture for nilsequences, and Theorem \ref{ap}, the criteria for aperiodic multiplicative functions.
In Section \ref{s:s}, we prove the structure theorem for multiplicative functions, namely Theorem \ref{nU3s} (which is a stronger version of Theorem \ref{U3}). In Section \ref{s:par}, we prove the partition regularity results (i.e. Theorem \ref{pr3} in its full generality) by using the structure theorem.

Roughly speaking, by Katai's criteria (Lemma \ref{katai}), one can reduce Theorems \ref{key0} and  \ref{key} to the study of averages of the expression 
\begin{equation}\label{idea}
\Phi\otimes\overline{\Phi}(g(pn)\cdot e_{X}, g(qn)\cdot e_{X}),
\end{equation}
where $n$ ranges over $\mathcal{O}_{K}$, $p,q\in \mathcal{O}_{K}$, $X=G/\Gamma$ is an $s$-step nilmanifold, $\Phi\in C(X)$, and $g\colon\mathcal{O}_{K}\to G$. 
In \cite{FH}, where the case $K=\mathbb{Q}$ was studied, the authors obtained some partial information on the orbit closure $Y_{g}=H_{g}/(H_{g}\cap(\Gamma\times\Gamma))$ of $(g(pn)\cdot e_{X}, g(qn)\cdot e_{X})_{n\in\Z}$, showing that 
\begin{equation}\label{uuqq}
 (u^{p^{s}},u^{q^{s}})\in H_{g} \text{ for all } u\in G_{s}, 
\end{equation}
and they then used (\ref{uuqq}) to study (\ref{idea}).  However, in the case when $[K\colon \mathbb{Q}]\geq 2$, we do not have an analog of (\ref{uuqq}).
In fact, in this case
the structure of $Y_{g}$ is much more complicated, as it depends not only on $p,q$, but also on the coefficients of $g$. In \cite{S}, we were able to partially classify the structure of $Y_{g}$ for the case when $K=\mathbb{Q}(i)$ and $g$ is of degree 2. Even in this case, the computations were already very heavy. The task of classifying the structure of $Y_{g}$ for the general case is even more difficult.

The major innovation of this paper is that we find a new way to extract partial information on $Y_{g}$ which is substantially different from the work of \cite{FH}. Instead of considering the orbit closure $Y_{g}$ for all polynomial sequences $g$, we restrict ourselves to those $g$ such that $\int_{Y_{g}}\Phi\otimes\overline{\Phi}\neq 0$. The special structure of the function  $\Phi\otimes\overline{\Phi}$ provides us some extra information which can be used to describe $Y_{g}$.
To be more precise, in Section \ref{s:d}, we show that if the average of  (\ref{idea}) is bounded away from zero, then $Y$ must satisfy some algebraic condition (Theorem \ref{auto}).  Although this condition is not enough to provide a complete description of the structure of $Y_{g}$, it reduces the problem to solving a specific algebraic equation $g(pn)=\sigma\circ g(qn) \mod G_{\ker}$, where $\sigma$ is a $d$-automorphism (See section \ref{s:d} for the definitions).
By  using of the multi-linearity property of $\sigma$, we then show 
  that this equation has no solution is $g(n)$ is equidistributed (Theorem \ref{contri}), which leads to a contradiction to the initial assumption of $g$.

\begin{rem}[Overlapping with literature]
Due to the unavoidable formalism in the proofs of the results, many parts of this paper have overlapping with \cite{FH,GT,S}. To be more precise, Sections \ref{s:3}, \ref{s:4}, \ref{s:o}, \ref{s:p}, \ref{s:s} and \ref{s:par} partially overlap with \cite{FH,S} (Sections \ref{s:3} and \ref{s:4} also partially overlap with \cite{GT}); Sections \ref{s:2}, \ref{s:d} and \ref{s:m} are completely new and have no counterparts in  \cite{FH,GT,S}.   	
\end{rem}	

\subsection{Open questions}\label{12345} 
For multiplicative functions on number fields,
there are many natural questions in addition to  Sarnak's Conjecture (Conjecture \ref{sarnak}). For example, one can ask the logarithm  Sarnak's Conjecture:
\begin{ques}[Generalized logarithm  Sarnak's Conjecture]\label{C1}
	Let $\K$ be an integral tuple. For which $\chi\in\mathcal{M}^{a}_{K}$ does the following hold: for every topological system $(X,T_{1},\dots,T_{d})$  with commuting transformations $T_{1},\dots,T_{d}$ with zero topological entropy, every $\Phi\in C(X)$ and every $x\in X$, we have that
	$$\lim_{N\to\infty}\frac{1}{(\log N)^{D}}\sum_{1\leq n_{1},\dots,n_{D}\leq N}\frac{\chi(n_{1}b_{1}+\dots+n_{D}b_{D})\Phi(T^{n_{1}}_{1}\cdot\ldots\cdot T^{n_{D}}_{D}x)}{n_{1}n_{2}\dots n_{D}}=0.$$
\end{ques}
	
 It is worth noting that for the case $K=\mathbb{Q}$, Question \ref{C1} (and Question \ref{C2} below) is not true for all aperiodic multiplicative functions (to see this, one can use the example in Theorem B.1 of \cite{counter}). 
For the case $K=\mathbb{Q}$, under certain ergodicity assumption of the system, Conjecture \ref{C1} was proved by Frantzikinakis and Host in \cite{FH1} when $\chi$ is the M\"obius function, and then in \cite{FH2} when $\chi$ is strongly aperiodic.

It is also natural to ask the analog of  Chowla's Conjecture:
\begin{ques}[Generalized Chowla's (and logarithm  Chowla's) Conjecture]\label{C2}
Let $\bold{K}=(K,\mathcal{O}_{K},D,$ $\B=\{b_{1},\dots,b_{D}\})$ be an integral tuple. For which $\chi\in\mathcal{M}^{a}_{K}$ does the following hold: for every $k\in\N_{+}$ and $m_{1},\dots,m_{k}\in\O_{K}$ which are pairwise distinct, we have that
$$\lim_{N\to\infty}\frac{1}{N^{D}}\sum_{n=n_{1}b_{1}+\dots+n_{D}b_{D},1\leq n_{1},\dots,n_{D}\leq N}\chi(n+m_{1})\chi(n+m_{2})\dots \chi(n+m_{k})=0$$	
(or
$$\lim_{N\to\infty}\frac{1}{(\log N)^{D}}\sum_{n=n_{1}b_{1}+\dots+n_{D}b_{D},1\leq n_{1},\dots,n_{D}\leq N}\frac{\chi(n+m_{1})\chi(n+m_{2})\dots \chi(n+m_{k})}{n_{1}n_{2}\dots n_{D}}=0.$$
for the logarithm version).	
\end{ques}
For $K=\mathbb{Q}$, it is not hard to show that Chowla's Conjecture for $\chi=\mu$ implies Sarnak's Conjecture (see for example \cite{web3}). Moreover, if $\chi$ is the M\"obius function $\mu$ on $\Z$, then logarithm  Chowla's Conjecture is equivalent to the logarithm Sarnak's Conjecture \cite{66}, the former of which is known to be true when $k=2$ \cite{65} and when $k$ is an odd number \cite{67}. 

Another natural question is whether Sarnak's Conjecture holds in the measure theoretic setting: 
\begin{conj}[Generalized measurable  Sarnak's Conjecture]\label{C4}
	Let $\bold{K}=(K,\mathcal{O}_{K},D,\B=\{b_{1},\dots,$ $b_{D}\})$ be an integral tuple and $\chi\in\mathcal{M}^{a}_{K}$. Then for every measure preserving system $(X,\mu,T_{1},\dots,T_{d})$  with commuting transformations $T_{1},\dots,T_{d}$,\footnote{Note that there is no assumption on entropy in this conjecture.} every $\Phi\in L^{\infty}(\mu)$ and $\mu$-a.e. $x\in X$,
	$$\lim_{N\to\infty}\frac{1}{N^{D}}\sum_{1\leq n_{1},\dots,n_{D}\leq N}\chi(n_{1}b_{1}+\dots+n_{D}b_{D})\Phi(T^{n_{1}}_{1}\cdot\ldots\cdot T^{n_{D}}_{D}x)=0.$$
\end{conj}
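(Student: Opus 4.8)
Since Conjecture \ref{C4} is an open problem, what follows is a proposed line of attack rather than a proof. The $L^{2}(\mu)$-norm version is essentially within reach of the results of this paper: a $D$-variable van der Corput estimate gives, for every $\Phi\in L^{\infty}(\mu)$ and every commuting measure preserving system $(X,\mu,T_{1},\dots,T_{D})$,
$$\Bigl\|\frac{1}{N^{D}}\sum_{1\le n_{1},\dots,n_{D}\le N}\chi(n_{1}b_{1}+\dots+n_{D}b_{D})\,\Phi(T_{1}^{n_{1}}\cdots T_{D}^{n_{D}}\cdot)\Bigr\|_{L^{2}(\mu)}\ll_{\Phi}\bigl\|\chi\circ\iota_{\B}\bigr\|_{U^{2}([N]^{D})},$$
and the right-hand side tends to $0$ as $N\to\infty$ by Theorem \ref{ap}, since $\chi\in\mathcal{M}^{a}_{K}$ is aperiodic. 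Thus the real content of Conjecture \ref{C4} lies in upgrading this $L^{2}$ statement to pointwise, $\mu$-almost everywhere convergence.

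For the pointwise statement I would follow the standard strategy of pointwise ergodic theory, supplied with the quantitative inputs of this paper. Fix $k\in\N$ and use the Host--Kra structure theory for commuting transformations to write $\Phi=\Phi_{k}+\Phi_{k}'$, where $\Phi_{k}$ is measurable with respect to the $k$-th Host--Kra factor $Z_{k}$ (an inverse limit of $k$-step pro-nilsystems) and $\Phi_{k}'$ has vanishing $(k+1)$-st Host--Kra seminorm. For the structured part, one approximates $\Phi_{k}$ in $L^{2}(Z_{k})$ by a Lipschitz function on a nilmanifold; by Leibman's theorem the orbit $\n\mapsto\Phi_{k}(T_{1}^{n_{1}}\cdots T_{D}^{n_{D}}x)$ is, for $\mu$-a.e.\ $x$, a $D$-dimensional polynomial nilsequence, and the quantitative Theorem \ref{s3} (together with Theorem \ref{key}) yields an effective decay rate for $\frac{1}{N^{D}}\sum_{\n}\chi(\iota_{\B}(\n))\Phi_{k}(T_{1}^{n_{1}}\cdots T_{D}^{n_{D}}x)$; Borel--Cantelli along $N_{j}=2^{j}$ then gives a.e.\ convergence to $0$ along $(N_{j})$, and the quantitative equidistribution of the nilsequence fills the ranges $N_{j}\le N<N_{j+1}$. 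For the uniform part one needs a pointwise Wiener--Wintner estimate, namely that $\frac{1}{N^{D}}\sum_{\n}\chi(\iota_{\B}(\n))\Phi_{k}'(T_{1}^{n_{1}}\cdots T_{D}^{n_{D}}x)\to 0$ for $\mu$-a.e.\ $x$, obtained by iterating a van der Corput / Gowers--Cauchy--Schwarz argument in $\n$ that replaces $\Phi_{k}'$ by its (vanishing) $(k+1)$-st Host--Kra seminorm while keeping the weight controlled by $\|\chi\circ\iota_{\B}\|_{U^{k+1}([N]^{D})}$, followed by a maximal inequality (available since $|\chi|\le 1$) and an oscillation estimate to pass from $L^{2}$ to almost-everywhere control. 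A diagonal argument over $k$ then completes the proof.

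The main obstacle is precisely this last step: passing from $L^{2}(\mu)$-convergence to a.e.\ convergence for the uniform part. That passage is the hard core of any weighted pointwise ergodic theorem, and here it is aggravated by two features. First, Theorem \ref{ap} provides no effective rate for $\|\chi\circ\iota_{\B}\|_{U^{d}([N]^{D})}$, so one cannot simply feed the crude van der Corput bound into Borel--Cantelli; what one really needs is a quantitative, multi-parameter version of the Host--Kra pointwise Wiener--Wintner theorem for Host--Kra-uniform observables, which is not available off the shelf and whose proof over a general number field would have to carry along the K\'atai-type manipulation (Lemma \ref{katai}) that is needed whenever $\chi$ fails to be Gowers-uniform in a quantitatively usable way --- in effect one needs a pointwise analogue of the quantitative Theorem \ref{key}, which the methods developed here do not provide. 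Second, the absence of any entropy hypothesis forces one to work with each $k$ separately, retaining an error that is small only in the $U^{k+1}$ seminorm rather than in $L^{2}$, which is what makes the oscillation analysis delicate. It is these points, rather than any new phenomenon about multiplicative functions, that would have to be resolved to settle Conjecture \ref{C4}.
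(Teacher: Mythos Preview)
The statement is a conjecture, and the paper does not prove it; it only sketches a possible approach for $K=\mathbb{Q}$ and explicitly records that the case $K\neq\mathbb{Q}$ remains open. Your proposal is likewise a plan of attack, and you correctly flag the pointwise upgrade for the Host--Kra-uniform component as the principal obstruction.

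Your route differs from the paper's in one essential respect. The paper's suggested approach for $K=\mathbb{Q}$ is to apply K\'atai's Lemma \emph{first}, which eliminates the multiplicative weight $\chi$ at the cost of replacing the single weighted average by unweighted double averages $\frac{1}{N}\sum_{n}\Phi(T^{pn}x)\overline{\Phi}(T^{qn}x)$ for prime pairs $p\neq q$; then invoke Bourgain's pointwise double recurrence theorem to obtain a.e.\ convergence of these double averages; then use the Host--Kra structure theorem to identify the limit on a nilsystem; and finally apply the orthogonality result (Theorem~\ref{s2}) to see that the limit vanishes. The point is that Bourgain's theorem supplies the pointwise convergence directly, so the Wiener--Wintner/oscillation obstacle you isolate simply never arises along this route when $D=1$. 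You instead decompose $\Phi$ via Host--Kra from the outset and carry the weight $\chi$ throughout, which forces you into the hard pointwise Wiener--Wintner step for the uniform component; your diagnosis of why that step is delicate is accurate, but it is an obstacle created by your choice of order of operations.

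For general number fields ($D>1$) the paper's route is also blocked, but by a different wall: after K\'atai one lands on a multi-parameter double average over $\Z^{D}$, and no multi-dimensional analogue of Bourgain's pointwise double recurrence theorem is available. That is presumably why the paper leaves the $K\neq\mathbb{Q}$ case open. So both strategies stall in general, for different reasons; the K\'atai--Bourgain route at least settles the $\mathbb{Q}$ case cleanly, which your proposal does not.
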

One possible approach to prove Conjecture \ref{C4} for 
the case $K=\mathbb{Q}$ is to combine K\'atai's Lemma (see for example Lemma \ref{katai}), Bourgain's double pointwise convergence theorem \cite{Bourgain}, the Host-Kra structure theorem (Theorem 10.1 of \cite{HK}), and the orthogonality of multiplicative functions and nilsequences (see Theorem 2.5 of \cite{FH}, or Theorem \ref{s2} in this paper). In  Theorem 3.1 of \cite{ab}, by using a result from Green and Tao \cite{7}, another proof was given for Conjecture \ref{C4} for 
the case when $K=\mathbb{Q}$ and $\chi$ is the M\"obius function.  The case when $K$ is a number field other than $\mathbb{Q}$ remains open.

Finally, one can also ask all the above questions for some specific functions which are interesting in number theory and additive combinatorics. For example,
one can define the M\"obius function $\mu_{K}\colon K\to \{-1,0,1\}$ on $\O_{K}$ by letting $\mu_{K}(n)=(-1)^{k}$ if the ideal $(n)$ is the product of $k$ distinct prime ideals, and $\mu_{K}(n)=0$ otherwise. The function $\mu_{K}$ is well defined since $\O_{K}$ is a Dedekind domain, and is always multiplicative on $\O_{K}$, yet it is apriori unclear whether $\mu_{K}$ is aperiodic. 
So it is natural to ask:
\begin{ques}
	On which integral tuple $\K$ is the M\"obius function $\mu_{K}$ aperiodic? For such $\bold{K}$, are the answers to  Questions   \ref{sarnak}, \ref{C1} and  \ref{C2} affirmative for $\chi=\mu_{K}$?  Does Conjecture \ref{C4} hold for the special case $\chi=\mu_{K}$?
\end{ques}	 

\subsection{Notations}
We introduce the notations we use in this paper.
\begin{itemize}
	\item In this paper, unless a vector is written explicitly in the vertical way, all vectors are assumed to be horizontal.
	\item $\mathcal{M}_{K}$ and $\mathcal{M}^{a}_{K}$ are the sets of multiplicative and aperiodic multiplicative functions on $\O_{K}$ with modulus at most 1, respectively. 
	\item For $N\in\N_{+}$, denote $[N]:=\{1,\dots,N\}$ and $\Z_{N}:=\Z/N\Z$. 
	\item For $D\in\N_{+}$ and $N\geq 0$, denote 
	$$R_{N,D}:=\{(n_{1},\dots,n_{D})\in\Z^{D}\colon \vert n_{i}\vert\leq N, 1\leq i\leq D\}.$$
	\item Let $a\colon V\to\mathbb{C}$ be a map with $V$ being a finite set, denote 
	$$\E_{x\in V}a(x):=\frac{1}{\vert V\vert}\sum_{x\in V}a(x).$$
	\item Let $a,b\colon \N\to\mathbb{C}$ and $\mathcal{I}$ be a collection of parameters.
	The notion $C:=C(\mathcal{I})$ means that $C$ is a quantity depending only on the parameters in $\mathcal{I}$.
	We write $a\ll_{\mathcal{I}} b$ if there exist $C:=C(\mathcal{I})>0, N_{0}:=N_{0}(\mathcal{I})>0$ such that $a(N)\leq Cb(N)$ for all $N\geq N_{0}$. We write $a(N)=o_{\mathcal{I}}(b(N))$ if for every $\epsilon>0$, there exists $N_{0}:=N_{0}(\mathcal{I},\epsilon)>0$ such that $\vert a(N)\vert\leq \epsilon b(N)$ for all $N\geq N_{0}$.
	\item For $x\in\R$, $\lfloor x \rfloor$ is the largest integer which is not larger than $x$, and $\lceil x\rceil$ is the smallest integer which is not smaller than $x$.
	\item Let $e\colon\R\to\mathbb{C}$ denote the function $e(x):=e^{2\pi ix}$.
	\item For $i\in\N_{+}$, $\bold{e}_{i}$ denotes the vector whose $i$-th coordinate is 1 and all other coordinates are 0 (the dimension of $\bold{e}_{i}$ will be clear from the context). 
	\item For a vector $\v=(v_{1},\dots,v_{m})\in\mathbb{C}^{m}$ for some $m\in\N_{+}$, denote $\vert \v\vert:=\vert v_{1}\vert+\dots+\vert v_{m}\vert$.
	\item For $\v=(v_{1},\dots,v_{m})\in\mathbb{R}^{m}$, let $$\Vert \v\Vert_{\T^{m}}:= \inf_{\n=(n_{1},\dots,n_{m})\in\Z}\vert \v-\n\vert.$$ For $\w=\bold{u}+\bold{v}i\in\mathbb{C}^{m}$ for some $\bold{u},\bold{v}\in\R^{m}$, let
	$\Vert \w\Vert_{\T^{m}}:=\Vert \bold{u}\Vert_{\T^{m}}+\Vert \bold{v}\Vert_{\T^{m}}$.
	\item Let $m,n\in\Z$ and $R$ be a ring. Then $M_{m,n}(R)$ denote all the $m\times n$ matrices whose entries take values from $R$. Let $\mathcal{O}_{m\times n}$ denote the $m\times n$ matrix whose all entries are 0. 
	\item  Throughout this paper, for $A\in M_{s\times s}(\mathbb{R})$, we use $A\colon\mathbb{R}^{s}\to\mathbb{R}^{s}$ to denote the map given by $A(\x):=\x\cdot A$, $\x\in\mathbb{R}^{s}$, i.e. the \textbf{right} multiplication of $A$.
\end{itemize}	

\textbf{Acknowledgment.} We thank Bernard Host, Nikos Frantzikinakis and Bryna Kra for helpful comments. We thank Nikos Frantzikinakis for the discussion on the precise formulations of the Sarnak's Conjecture related questions stated in Section \ref{s:1}.  We also thank the anonymous referees for their suggestions, which were very helpful in improving the presentation of the paper. 

\section{Ingredients from algebraic number theory}\label{s:2}
\subsection{Algebraic number field and minimal polynomials}
	
\begin{defn}[Integral tuple]
	An \emph{(algebraic) number field} $K$ is a finite degree and (hence algebraic) field extension of the field of rational numbers $\mathbb{Q}$. The \emph{ring of integers} $\mathcal{O}_{K}$ of a number field $K$ is the ring of all integral elements contained in $K$.\footnote{An \emph{integral element} is a root of polynomial with integer coefficients and leading coefficient 1.}
	Denote
	$D=[K\colon \mathbb{Q}]$.\footnote{If $K$ is a field extension of $L$, then $[K\colon L]$ denotes the degree of this extension.} It is classical that there exists an \emph{integral basis} $\mathcal{B}=\{b_{1},\dots,b_{D}\}$ of $\mathcal{O}_{K}$, i.e. a basis $b_{1},\dots,b_{D}\in\mathcal{O}_{K}$ of the $\mathbb{Q}$-vector space $K$ such that each element $x\in \mathcal{O}_{K}$ can be uniquely represented as 
	$x=\sum_{i=1}^{D}a_{i}b_{i}$ for some $a_{i}\in\mathbb{Z}$.  
	We call $\K$ an \emph{integral tuple}.	
\end{defn}

Let $\K$ be an integral tuple.  Recall that $\iota_{\B}\colon\Z^{D}\to\O_{K}$ is the bijection given by $\iota_{\B}(n_{1},\dots,n_{D})=n_{1}b_{1}+\dots+n_{D}b_{D}$.
For $x\in K$, let $A_{\mathcal{B}}(x)\in M_{D\times D}(\mathbb{Q})$ be the unique matrix such that 
$\begin{bmatrix} 
xb_{1}\\
xb_{2}\\
\dots\\
xb_{D}
\end{bmatrix}=A_{\mathcal{B}}(x)\begin{bmatrix} 
b_{1}\\
b_{2}\\
\dots\\
b_{D}
\end{bmatrix}
$. This implies that for all $x,y\in K$,
\begin{equation}\label{iota}
	\iota^{-1}_{\B}(xy)=\iota^{-1}_{\B}(y)A_{\B}(x).
\end{equation}	
We remark that $A_{\B}(x)\in M_{D\times D}(\mathbb{Z})$ if $x\in \O_{K}$. 

\begin{defn}[$K$-norm]\label{knorm}
	The \emph{$K$-norm}  
	of $x\in K$ is $N_{K}(x):=\det(A_{\mathcal{B}}(x))$. 
\end{defn}	
Note that $N_{K}(x)$ is independent of the choice of the basis $\mathcal{B}$.

We say that a polynomial $f\in\mathbb{Q}[x]$ is \emph{monic} if the leading coefficient of $f$ is 1. We say that $f\in\mathbb{Q}[x]$ is \emph{irreducible} if $f=gh, g,h\in\mathbb{Q}[x]$ implies that one of $g$ and $h$ is a constant. We say that $f\in\mathbb{Q}[x]$ is the \emph{minimal polynomial} of an algebraic number $x$ (or a matrix $A\in M_{s\times s}(\mathbb{Q})$) if $f$ is a monic polynomial of the smallest possible positive degree such that $f(x)=0$ (or $f(A)=\mathcal{O}_{s\times s}$).

The following lemma is classical and we omit the proof:
\begin{lem}[Properties on the $K$-norm]\label{norm}
	Let $\K$ be an integral tuple. Then 
	\begin{enumerate}[(i)]
		\item 	
		For all $x,y\in K$ and $q\in\mathbb{Q}$, we have that $A_{\B}(x)A_{\B}(y)=A_{\B}(y)A_{\B}(x)=A_{\B}(xy)$, $A_{\B}(x)+A_{\B}(y)=A_{\B}(x+y)$ and $A_{\B}(qx)=qA_{\B}(x)$. In particular,
		$N_{K}(xy)=N_{K}(x)N_{K}(y)$;
		\item If $K/\mathbb{Q}$ is a normal extension and $f$ is the minimal polynomial of some $x\in K\backslash\{0\}$, then $N_{K}(x)=(-1)^{[K\colon\mathbb{Q}]}f(0)^{[K\colon\mathbb{Q}]/\deg(f)}$. 
	\end{enumerate}	
\end{lem}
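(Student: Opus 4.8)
The plan is to prove Lemma \ref{norm} in two parts, tackling the purely algebraic identities first and the norm formula second.

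\textbf{Part (i).} The key observation is that for fixed $x \in K$, the map $m_x \colon K \to K$, $m_x(y) = xy$, is a $\mathbb{Q}$-linear endomorphism of the $D$-dimensional $\mathbb{Q}$-vector space $K$, and $A_{\B}(x)$ is precisely the matrix of $m_x$ with respect to the basis $\B$ (with our right-multiplication convention). First I would record that $x \mapsto m_x$ is a ring homomorphism from $K$ into $\mathrm{End}_{\mathbb{Q}}(K)$: indeed $m_{x+y} = m_x + m_y$, $m_{qx} = q\, m_x$ for $q \in \mathbb{Q}$, and $m_{xy} = m_x \circ m_y = m_y \circ m_x$ since $K$ is commutative. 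Translating composition of linear maps into matrix multiplication (being careful that the right-multiplication convention reverses the usual order, but this causes no trouble here since everything in sight commutes) yields $A_{\B}(x)A_{\B}(y) = A_{\B}(y)A_{\B}(x) = A_{\B}(xy)$, $A_{\B}(x)+A_{\B}(y) = A_{\B}(x+y)$, and $A_{\B}(qx) = q A_{\B}(x)$. The multiplicativity of the $K$-norm is then immediate from $N_K(x) = \det A_{\B}(x)$ and the multiplicativity of the determinant: $N_K(xy) = \det(A_{\B}(xy)) = \det(A_{\B}(x)A_{\B}(y)) = \det(A_{\B}(x))\det(A_{\B}(y)) = N_K(x)N_K(y)$.

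\textbf{Part (ii).} Here the hypothesis should be read as: $K$ (regarded inside an algebraic closure) together with its embeddings, or more precisely we compute $N_K(x)$ via the characteristic polynomial of $m_x$. The approach: let $f$ be the minimal polynomial of $x$ over $\mathbb{Q}$, of degree $e = \deg f$, and note $[K:\mathbb{Q}(x)] = D/e$ so that $e \mid D$. The characteristic polynomial of $m_x$ acting on $K$ equals $f(t)^{D/e}$ — this is the standard fact that the characteristic polynomial of multiplication-by-$x$ on $\mathbb{Q}(x)$ is $f$ itself (since $1, x, \dots, x^{e-1}$ is a basis on which $m_x$ acts by the companion matrix of $f$), and passing to the larger field $K \cong \mathbb{Q}(x)^{D/e}$ as a $\mathbb{Q}(x)$-vector space multiplies characteristic polynomials. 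Then $N_K(x) = \det(m_x) = (-1)^D \cdot (\text{constant term of char.\ poly.}) = (-1)^D f(0)^{D/e}$, which is exactly the claimed formula with $D = [K:\mathbb{Q}]$ and $e = \deg f$. One should double-check the sign bookkeeping: for a monic polynomial $g$ of degree $m$ with roots $\lambda_i$, the constant term is $g(0) = (-1)^m \prod \lambda_i$, and $\det$ of the corresponding matrix is $\prod \lambda_i = (-1)^m g(0)$; applying this to $g = f^{D/e}$ of degree $D$ gives the stated sign $(-1)^D$.

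\textbf{Main obstacle.} The algebraic identities in (i) are genuinely routine once the ``matrix $=$ linear map'' dictionary is set up, and the only mild care needed is the right-multiplication convention. The real subtlety is in (ii): precisely pinning down what ``$K$ is algebraically closed'' should mean (the natural statement is about computing $N_K$ via characteristic polynomials, valid for any $K$ with the caveat that $f$ has all roots available, i.e.\ one works over a splitting field or simply argues with the companion-matrix description which needs no roots at all), and getting the exponent $D/\deg(f)$ and the sign exactly right. I would handle this by avoiding roots entirely: use the companion matrix of $f$ as the matrix of $m_x$ on $\mathbb{Q}(x)$, deduce $\det$ and char.\ poly.\ directly, and then tensor up to $K$. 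Since the authors explicitly say ``this lemma is classical and we omit the proof,'' a full write-up is not expected, but the above is the proof I would give.
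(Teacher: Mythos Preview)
Your argument is correct and is exactly the standard one; the paper itself says ``this lemma is classical and we omit the proof,'' so there is nothing to compare against. Your observation that the hypothesis ``$K$ is algebraically closed'' in (ii) is both awkward (no number field is algebraically closed) and unnecessary (the companion-matrix/characteristic-polynomial argument you give works over $\mathbb{Q}$ without ever factoring $f$) is well taken.
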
	

The following are some basic properties about minimal polynomials:

\begin{lem}[Minimal polynomials]\label{minimal}
	Let $\K$ be an integral tuple and $\overline{K}$ be the normal closure of $K$.  Let $x\in K\backslash\{0\}$ and $f$ denote the minimal polynomial of the matrix $A_{\B}(x)$. Then 
	\begin{enumerate}[(i)]
		\item $f$ is also the minimal polynomial of $x$;
		\item $f$ is irreducible, and has no repeated roots;
		\item $y\in \overline{K}$ is a root of $f$ if and only if $y$ is an eigenvalue of $A_{\B}(x)$. In particular, by (ii), all the eigenvalues of $A_{\B}(x)$ are distinct; 
			\item All the roots of $f$ have the same $\overline{K}$-norm as $x$.	
	\end{enumerate}		
\end{lem}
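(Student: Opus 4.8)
The plan is to leverage the fact that the assignment $x\mapsto A_{\B}(x)$ is an injective $\mathbb{Q}$-algebra homomorphism from $K$ into $M_{D\times D}(\mathbb{Q})$. Injectivity is immediate ($A_{\B}(y)=\mathcal{O}_{D\times D}$ forces $yb_{1}=0$, hence $y=0$ since $K$ is a field), and the algebra-homomorphism property, together with $A_{\B}(1)=I_{D}$ and $A_{\B}(qy)=qA_{\B}(y)$ for $q\in\mathbb{Q}$, is exactly Lemma \ref{norm}(i). Consequently, for every $g\in\mathbb{Q}[t]$ one has $g(A_{\B}(x))=A_{\B}(g(x))$ (interpreting the constant term of $g$ as a scalar matrix). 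Since $A_{\B}$ is injective, $g(A_{\B}(x))=\mathcal{O}_{D\times D}$ if and only if $g(x)=0$; thus the ideals $\{g\in\mathbb{Q}[t]\colon g(A_{\B}(x))=\mathcal{O}_{D\times D}\}$ and $\{g\in\mathbb{Q}[t]\colon g(x)=0\}$ coincide, and so do their monic generators. This is part (i).

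For part (ii) I would use the standard argument that a minimal polynomial over a field is irreducible: a nontrivial factorization $f=gh$ would give $g(x)h(x)=0$ in the field $K$, so $g(x)=0$ or $h(x)=0$, contradicting the minimality of $\deg f$. Irreducibility plus $\mathrm{char}\,\mathbb{Q}=0$ (so $f'\neq 0$ and $0\leq\deg f'<\deg f$) forces $\gcd(f,f')=1$, whence $f$ is separable and has no repeated roots in $\overline{K}$. For part (iii) I would invoke the classical fact that the minimal and characteristic polynomials of a matrix have the same roots: if $\lambda\in\overline{K}$ is an eigenvalue with eigenvector $v\neq 0$, then $0=f(A_{\B}(x))v=f(\lambda)v$ gives $f(\lambda)=0$; conversely, if $f(\lambda)=0$, write $f(t)=(t-\lambda)h(t)$ over $\overline{K}$, note $h(A_{\B}(x))\neq\mathcal{O}_{D\times D}$ by minimality of $f$, pick $w$ with $v:=h(A_{\B}(x))w\neq 0$, and observe $(A_{\B}(x)-\lambda I_{D})v=f(A_{\B}(x))w=0$. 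Combined with (ii), the eigenvalues of $A_{\B}(x)$ are precisely the (pairwise distinct) roots of $f$.

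Finally, for part (iv) I would use that an irreducible polynomial is the minimal polynomial of each of its roots: by (ii), $f$ is irreducible, so every root $y\in\overline{K}$ of $f$ has minimal polynomial $f$, and by (i) so does $x$. Lemma \ref{norm}(ii) expresses the $\overline{K}$-norm of a nonzero algebraic element solely in terms of $f(0)$ and $\deg f$; hence $x$ and all roots of $f$ have the same $\overline{K}$-norm. I do not expect a genuine obstacle here, since the lemma is essentially a repackaging of standard facts about minimal polynomials and matrix representations. The only points demanding a little care are, in (iii), reading ``all the eigenvalues are distinct'' as the assertion that the roots of $f$ (equivalently the eigenvalues of $A_{\B}(x)$) are simple, rather than that $A_{\B}(x)$ necessarily has $D$ distinct eigenvalues; and, in (iv), checking that the expression of Lemma \ref{norm}(ii) indeed depends only on $f$ and not on the particular root chosen.
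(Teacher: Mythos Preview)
Your proof is correct and follows essentially the same route as the paper: both use Lemma~\ref{norm}(i) to identify the two minimal polynomials via the $\mathbb{Q}$-algebra embedding $x\mapsto A_{\B}(x)$, derive irreducibility and separability in the standard way, and apply Lemma~\ref{norm}(ii) for part (iv). Your treatment of (iii) is in fact more detailed than the paper's, which simply cites the standard fact that a matrix's minimal and characteristic polynomials share the same roots.
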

\begin{proof}
	(i) Let $g\in\mathbb{Q}[x]$ be any polynomial. 
	By Lemma \ref{norm} (i),
	$A_{\B}(g(x))=g(A_{\B}(x))$. Then
	$$g(x)=0 \Leftrightarrow A_{\B}(g(x))=\O_{D\times D} \Leftrightarrow g(A_{\B}(x))=\O_{D\times D}.$$
	  So the minimal polynomial $f$ of $A_{\B}(x)$ is also the minimal polynomial of $x$. 
	
	(ii) Since the minimal polynomial of $x$ is irreducible, by (i), $f$ is irreducible. Since $f'\not\equiv 0$, $f$ is coprime with $f'$ and so $f$ has no repeated roots.
	
	(iii) If $y$ is an eigenvalue of $A_{\mathcal{B}}(x)$, then we may assume that $vA_{\mathcal{B}}(x)=yv$ for some nonzero $v\in\mathbb{C}^{D}$. So $f(y)v=vf(A_{\mathcal{B}}(x))=\bold{0}$. Since $v$ is nonzero, we have that $f(y)=0$.
	
	Conversely, suppose $f(y)=0$. Let $g(\lambda):=\det(\lambda I_{D}-A_{\mathcal{B}}(x))$.
	By the  Cayley-Hamilton Theorem, $g(A_{\mathcal{B}}(x))=0$. By the minimality of $f$, we have that $f\vert g$. So $f(y)=0$ implies that $g(y)=0$, meaning that $y$ is an eigenvalue of $A_{\mathcal{B}}(x)$.

	(iv) By Lemma \ref{norm} (ii), the  $\overline{K}$-norm of all roots of $f$ equal to $(-1)^{[\overline{K}\colon\mathbb{Q}]}f(0)^{[\overline{K}\colon\mathbb{Q}]/\deg(f)}$, which are the same.
\end{proof}

The next is a characterization for diagonalizable matrices, which is used in later sections.

\begin{lem}[A characterization for diagonalizable matrices]\label{jordan}
	Let $f\in\mathbb{C}[x]$ be a non-constant polynomial with no repeated roots (in $\mathbb{C}$).
	Let $s\in\N_{+}$ and $B$ be an $s\times s$ matrix such that $f(B)=\O_{s\times s}$ for some $s\in\N_{+}$. Then there exist an $s\times s$ invertible matrix $S$ and a  diagonal matrix $J=\begin{bmatrix}
	\mu_{1}\\
	& \mu_{2}\\
	& & \dots\\
	& & & \mu_{s}
	\end{bmatrix}$ with $f(\mu_{1})=\dots=f(\mu_{d})=0$ such that $B=SJS^{-1}$.
\end{lem}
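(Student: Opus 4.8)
\textbf{Proof proposal for Lemma \ref{jordan}.}
The plan is to show that $B$ is diagonalizable by verifying that its minimal polynomial has no repeated roots, and then quoting the standard criterion from linear algebra. First I would let $m_{B}\in\mathbb{C}[x]$ denote the minimal polynomial of $B$. Since $f(B)=\O_{s\times s}$, the polynomial $m_{B}$ divides $f$ in $\mathbb{C}[x]$. Because $f$ has no repeated roots by hypothesis, any divisor of $f$ also has no repeated roots; hence $m_{B}$ has no repeated roots. The classical fact (e.g. from the theory of the Jordan canonical form, or directly from the primary decomposition theorem applied to $\mathbb{C}^{s}$ viewed as a $\mathbb{C}[x]$-module via $B$) is that an $s\times s$ complex matrix is diagonalizable if and only if its minimal polynomial splits into distinct linear factors over $\mathbb{C}$; over $\mathbb{C}$ splitting is automatic, so the condition is exactly that $m_{B}$ have no repeated roots. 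Therefore $B$ is diagonalizable: there is an invertible $S$ and a diagonal matrix $J=\mathrm{diag}(\mu_{1},\dots,\mu_{s})$ with $B=SJS^{-1}$.

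It remains to check that each diagonal entry $\mu_{i}$ satisfies $f(\mu_{i})=0$. The $\mu_{i}$ are precisely the eigenvalues of $B$ (with multiplicity), so each $\mu_{i}$ is a root of the characteristic polynomial, hence a root of $m_{B}$, hence a root of $f$ since $m_{B}\mid f$. Equivalently, from $B=SJS^{-1}$ one gets $\O_{s\times s}=f(B)=Sf(J)S^{-1}$, so $f(J)=\O_{s\times s}$; but $f(J)=\mathrm{diag}(f(\mu_{1}),\dots,f(\mu_{s}))$, whence $f(\mu_{i})=0$ for all $i$. This gives exactly the asserted form.

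I do not anticipate a genuine obstacle here, since the statement is a routine consequence of the diagonalizability criterion; the only mild care needed is to state that criterion correctly (minimal polynomial squarefree, over an algebraically closed field) and to justify that a divisor of a squarefree polynomial is squarefree. If one prefers a self-contained argument avoiding an explicit appeal to the minimal-polynomial criterion, I would instead argue directly by induction on $s$: pick a root $\mu_{1}$ of $f$ that is an eigenvalue of $B$, split $\mathbb{C}^{s}=\ker(B-\mu_{1}I)\oplus \mathrm{im}(B-\mu_{1}I)$ using that $f$ is divisible by $(x-\mu_{1})$ exactly once so that $\gcd\bigl(x-\mu_{1},\,f(x)/(x-\mu_{1})\bigr)=1$, observe both summands are $B$-invariant, and apply the inductive hypothesis to the restriction of $B$ to $\mathrm{im}(B-\mu_{1}I)$, whose annihilating polynomial $f(x)/(x-\mu_{1})$ still has no repeated roots. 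Either route yields the lemma; I would go with the first, shorter one.
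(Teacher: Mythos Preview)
Your proof is correct, but the paper takes a somewhat different, more hands-on route. Rather than quoting the minimal-polynomial criterion for diagonalizability as a black box, the paper passes to the Jordan normal form of $B$ and reduces to a single Jordan block $B=\mu I_{s}$ or $B=\mu I_{s}+E_{1}$ (where $E_{1}$ is the superdiagonal nilpotent). It then expands $f(B)$ explicitly: for a nontrivial block $\mu I_{s}+E_{1}$ with $s\geq 2$, the constant and $E_{1}$-coefficients of $f(B)$ are $f(\mu)$ and $f'(\mu)$ respectively, so $f(B)=\O_{s\times s}$ forces $f(\mu)=f'(\mu)=0$, contradicting squarefreeness of $f$. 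Your argument is shorter and more conceptual, leaning on a standard theorem; the paper's argument is essentially a self-contained proof of that very theorem specialized to this situation, with the virtue of making transparent exactly where the ``no repeated roots'' hypothesis enters (through $f'(\mu)\neq 0$). Either is perfectly adequate here.
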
	
\begin{proof}
	The case when $s=1$ is straightforward. So we assume that $s\geq 2$. 
	
 Let $E_{i}$ denote the $s\times s$ matrix whose $(k,k+i)$-th entry is 1 for all $1\leq k\leq s-i$ and all the other entries are 0.
	Converting $B$ to the Jordan normal form, it suffices to show that if  $f(B)=\O_{s\times s}$ and $B=\mu I_{s}$ or $B=\mu I_{s}+E_{1}$ for some $\mu\in\mathbb{C}$, then $B=\mu I_{s}$ and $f(\mu)=0$.
	
	In fact, if $B=\mu I_{s}$, then $f(B)=f(\mu)I_{s}$. So $f(\mu)=0$. If  $B=\mu I_{s}+E_{1}$, note that $E_{i}E_{j}=E_{i+j}$ for all $1\leq i,j\leq s-1$ (for convenience denote $E_{i}=\O_{s\times s}$ if $i\geq s$). Writing $f(x)=\sum_{n=0}^{M}a_{n}x^{n}$ for some $M\geq 1$, we have that 
	\begin{equation}\nonumber
		\begin{split}
			&\quad f(B)=\sum_{n=0}^{M}a_{n}\sum_{i=0}^{n}\mu^{n-i}\binom{n}{i}E_{i}
			=\sum_{i=0}^{M}E_{i}\Bigl(\sum_{n=i}^{M}\binom{n}{i}a_{n}\mu^{n-i}\Bigr)=0,
		\end{split}
	\end{equation}
	where $E_{0}=I_{s}$.
	This implies that $\sum_{n=i}^{M}\binom{n}{i}a_{n}\mu^{n-i}=0$ for all $0\leq i\leq \min\{M,s-1\}$. By assumption, $\min\{M,s-1\}\geq 1$.  Setting $i=0$, $f(\mu)=\sum_{n=0}^{M}a_{n}\mu^{n}=0$. Setting $i=1$, $f'(\mu)=\sum_{n=1}^{M}na_{n}\mu^{n-1}=0$. A contradiction to the fact that $f$ has no repeated roots. This finishes the proof.
\end{proof}

\subsection{Ideals and unique factorization}
\begin{defn}[Units]
	We say that $\epsilon$ is a \emph{unit} of $\O_{K}$ if there exists $\epsilon'\in\O_{K}$ such that $\epsilon\epsilon'=1$. Since $N_{K}(x)\in\mathbb{Z}$ for all $x\in \O_{K}$ and $N_{K}(1)=1$, it is not hard to see that the $K$-norm of a unit is $\pm 1$. 	
\end{defn}

Let $K$ be a number field and $\mathcal{O}_{K}$ be its ring of integers.
A subset $I\subseteq \O_{K}$ is an \emph{ideal} of $\O_{K}$ if for all $x,y\in I, z\in \O_{K}$, we have that $x-y, xz\in I$. An ideal $I$ of $\O_{K}$ is  \emph{principal} if there exists $c\in\O_{K}$ such that $I=\{cx\colon x\in\O_{K}\}$. If $I$ is generated by $a_{1},\dots,a_{k}\in \O_{K}$, meaning that $I=\{c_{1}a_{1}+\dots+c_{k}a_{k}\colon c_{1},\dots,c_{k}\in\O_{K}\}$, we then write $I=(a_{1},\dots,a_{k})$ for short. Since $\O_{K}$ is Noetherian, by Theorem 5.2.3 of \cite{problem}, every ideal of $\O_{K}$ is finitely generated, and so we may always write $I$ as $I=(a_{1},\dots,a_{k})$ for some $k\in\N_{+}$ and $a_{1},\dots,a_{k}\in\O_{K}$.

There are many different concepts of primes in a number field, which we clarify as follows:

\begin{defn}[Primes]\label{prime}
	Let $\K$ be an integral tuple.
	\begin{itemize}
		\item $x\in\N_{+}$ is a \emph{prime integer} if $x$ is a prime in the field $\mathbb{Q}$.
		\item An ideal $I$ of $\O_{K}$ is a \emph{prime ideal} if $I\neq \O_{K}$ for all $x,y\in \O_{K}$ such that $xy\in I$, either $x\in I$ or $y\in I$.
		\item $p\in\O_{K}$ is a \emph{prime element} if $(p)$ is a prime ideal.
		\item for $p,q \in\O_{K}$, we say that $p$ \emph{divides} $q$ (written as $q\vert p$) if $pq^{-1}\in\O_{K}$. 
	\end{itemize}	
\end{defn}

Let $I\subseteq \O_{K}$ be an ideal  of $\O_{K}$. Then $\O_{K}/ I$ is a finite set (see for example Exercise 4.4.3 of \cite{problem}), and the cardinality of this set is called the \emph{index} of $I$ in $\O_{K}$, or the \emph{$K$-norm} of the ideal $I$ (denoted as $N(I)$). 

The following lemma is standard (see for example  Exercise 5.3.15 of \cite{problem}):

\begin{lem}[Norms of ideals]\label{normii}
	Let $\K$ be an integral tuple and $I=(a)$ be a principal ideal of $\O_{K}$ for some $a\in\O_{K}$. The $K$-norm $N(I)$ of $I$ coincides with the absolute value of the $K$-norm $\vert N_{K}(a)\vert$ of $a$. 
\end{lem}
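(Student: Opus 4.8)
The plan is to translate the statement into a fact about the index of a sublattice in $\Z^{D}$ and then evaluate that index via the Smith normal form. We may assume $a\neq 0$, since otherwise $I=(a)=\{0\}$ and the claim is vacuous (both $N(I)$ and $|N_{K}(a)|$ being understood in the degenerate way). As $K$ is a field, multiplication by $a$ is injective on $K$, so $A_{\B}(a)$ is invertible over $\mathbb{Q}$ and $N_{K}(a)=\det(A_{\B}(a))\neq 0$.

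First I would use the $\mathbb{Z}$-linear bijection $\iota_{\B}\colon\Z^{D}\to\O_{K}$ to identify $\O_{K}$ with the lattice $\Z^{D}$. Under this identification, by (\ref{iota}) the principal ideal $I=(a)=a\O_{K}$ corresponds to $\{\iota^{-1}_{\B}(x)A_{\B}(a)\colon x\in\O_{K}\}=\Z^{D}A_{\B}(a)$, the image of $\Z^{D}$ under the right multiplication by $A_{\B}(a)$. Since $a\in\O_{K}$ we have $A_{\B}(a)\in M_{D\times D}(\Z)$, so $\Z^{D}A_{\B}(a)$ is a sublattice of $\Z^{D}$, and it has full rank because $\det A_{\B}(a)\neq 0$. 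Consequently
\[
N(I)=\bigl\vert\O_{K}/I\bigr\vert=\bigl[\Z^{D}:\Z^{D}A_{\B}(a)\bigr].
\]

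Next I would compute this index using the Smith normal form of $A_{\B}(a)$: there exist $U,V\in M_{D\times D}(\Z)$ with $\det U,\det V\in\{1,-1\}$ and a diagonal integer matrix $\Lambda=\mathrm{diag}(d_{1},\dots,d_{D})$ such that $A_{\B}(a)=U\Lambda V$, and since $\det A_{\B}(a)\neq 0$ each $d_{i}\neq 0$. Right multiplication by the unimodular matrices $U$ and $V$ is an automorphism of the lattice $\Z^{D}$, hence preserves the index of sublattices, so
\[
\bigl[\Z^{D}:\Z^{D}A_{\B}(a)\bigr]=\bigl[\Z^{D}:\Z^{D}\Lambda\bigr]=\prod_{i=1}^{D}\vert d_{i}\vert=\vert\det\Lambda\vert=\vert\det A_{\B}(a)\vert=\vert N_{K}(a)\vert,
\]
where the middle equality uses $\Z^{D}/\Z^{D}\Lambda\cong\bigoplus_{i=1}^{D}\Z/d_{i}\Z$. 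This is the desired identity. The argument is entirely routine; the only points requiring care are the bookkeeping with the ``right multiplication'' convention $A(\x)=\x\cdot A$ fixed in the notation section and the trivial-ideal edge case, so I do not expect any substantive obstacle.
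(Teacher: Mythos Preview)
Your proof is correct. The paper does not give its own proof of this lemma; it simply records it as standard and points to Exercise~5.3.15 of \cite{problem}. Your argument---identifying $\O_{K}$ with $\Z^{D}$ via $\iota_{\B}$, observing that $(a)$ corresponds to the sublattice $\Z^{D}A_{\B}(a)$ by (\ref{iota}), and computing the index via Smith normal form---is exactly the standard proof one would expect behind that citation, and all the bookkeeping with the right-multiplication convention is handled correctly.
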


It is well known that every ideal $I$ of $\O_{K}$ can be factorized into the form $$I=\prod_{i=1}^{m}I^{e_{i}}_{i}$$
for some $m\in\N_{+}$, $e_{i}\in \N_{+}$, prime ideal $I_{i}$ for all $1\leq i\leq m$ 
in a unique way (modulo the order of the ideals $I_{i}$).

\begin{lem}[Properties of prime ideals]\label{pi}
	For every prime integer $p\in\N$, let  $\mathcal{J}_{p}$ denote the collection (possibly an empty collection) of prime ideals of $\O_{K}$ which contains $(p)$. Then 
	\begin{enumerate}[(i)]
		\item for all $J\in\mathcal{J}_{p}$, $N(J)=p^{f}$ for some $1\leq f\leq D$.
		\item  $\mathcal{J}_{p}$ is a finite set of cardinality at most $D$.
		\item every principal prime ideal of $\O_{K}$ belongs to some $\mathcal{J}_{p}$. In particular, the $K$-norm of every principal prime ideal is a power of a prime integer, and the $K$-norms of principal prime ideals from different $\mathcal{J}_{p}$ are coprime (in $\Z$).
	\end{enumerate}	
\end{lem}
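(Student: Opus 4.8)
The plan is to prove the three items in turn, using the unique factorization of ideals of $\O_K$ into prime ideals stated above, the identity $N((a))=\vert N_K(a)\vert$ of Lemma \ref{normii}, and the standard fact that a nonzero prime ideal $J$ of $\O_K$ is maximal (indeed $\O_K/J$ is then a finite integral domain, hence a field). First I would record that $N((p))=\vert N_K(p)\vert=\vert\det A_{\B}(p)\vert=\vert\det(pI_D)\vert=p^{D}$, using $A_{\B}(p)=pA_{\B}(1)=pI_D$ from Lemma \ref{norm}(i). For (i): if $J\in\mathcal{J}_p$, then $J$ is prime and $p\in J$, so $J\neq(0)$ and $\O_K/J$ is a finite field in which the class of $p$ vanishes; hence its characteristic equals $p$ and $N(J)=\vert\O_K/J\vert=p^{f}$ for some $f\geq 1$. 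The inclusion $(p)\subseteq J$ gives a surjection $\O_K/(p)\to\O_K/J$, so $p^{f}=N(J)\leq N((p))=p^{D}$, and thus $f\leq D$.

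For (ii): I would write the prime factorization $(p)=\prod_{i=1}^{m}I_i^{e_i}$ with the $I_i$ distinct prime ideals and $e_i\geq 1$ (which exists with $m\geq 1$ since $p$ is not a unit), and first show $\mathcal{J}_p=\{I_1,\dots,I_m\}$. The inclusion $\supseteq$ is immediate from $(p)\subseteq I_j$, and for $\subseteq$, if $J$ is a prime ideal with $(p)\subseteq J$ then $\prod_i I_i^{e_i}\subseteq J$ forces some $I_i\subseteq J$ by primality of $J$, whence $I_i=J$ since $I_i$ is maximal and $J$ proper. Therefore $\vert\mathcal{J}_p\vert=m$, and writing $N(I_i)=p^{f_i}$ with $f_i\geq 1$ from (i), the multiplicativity of the ideal norm gives $p^{D}=N((p))=\prod_i p^{e_i f_i}$, so $m\leq\sum_i e_i f_i=D$. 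One can avoid invoking general norm multiplicativity here by instead applying the Chinese Remainder Theorem to the pairwise comaximal maximal ideals $I_i/(p)$ inside $\O_K/(p)$, a ring of cardinality $p^{D}$.

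For (iii): let $(\pi)$ be a principal prime ideal; since $N((\pi))$ is a finite index, $\pi\neq 0$, so $\O_K/(\pi)$ is a finite field, of order $n:=N((\pi))=\vert N_K(\pi)\vert>1$ (the strict inequality because $\pi$ is not a unit). A finite field has prime power order, so $n=q^{f}$ for a prime integer $q$ and $f\geq 1$, which is exactly the asserted statement that the norm of a principal prime ideal is a prime power; moreover the characteristic of $\O_K/(\pi)$ being $q$ means $q\in(\pi)$, i.e. $(q)\subseteq(\pi)$, i.e. $(\pi)\in\mathcal{J}_q$. For the last clause, if $(\pi)\in\mathcal{J}_p$ and $(\pi')\in\mathcal{J}_{p'}$ with $p\neq p'$ distinct prime integers, then by (i) $N((\pi))$ and $N((\pi'))$ are powers of $p$ and $p'$ respectively, hence coprime in $\Z$. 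The argument is a routine assembly of standard facts; the only step reaching slightly beyond what is recorded in the excerpt is the norm-multiplicativity input used for the bound $\vert\mathcal{J}_p\vert\leq D$ in (ii), which is entirely standard and, as noted, can be replaced by a short direct computation in $\O_K/(p)$.
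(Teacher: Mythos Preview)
Your proof is correct. Part (ii) matches the paper's argument essentially verbatim (unique factorization $(p)=\prod_i I_i^{e_i}$, then $p^D=\prod_i N(I_i)^{e_i}$ forces $m\le D$), though you justify $\mathcal{J}_p=\{I_1,\dots,I_m\}$ more carefully via maximality than the paper does. For (i) and especially (iii), however, you take a different route: you invoke the finite-field structure of $\O_K/J$ (nonzero prime ideals are maximal, so the quotient is a finite field whose characteristic is the desired prime $p$ or $q$), which gives $N(J)=p^f$ and $q\in(\pi)$ in one stroke. The paper instead derives (i) as a byproduct of the norm computation in (ii), and for (iii) proves by hand, via the minimal polynomial of the generator $a$, that $a$ divides some positive integer $n$, and then uses primality of $(a)$ to extract a rational prime $p\mid n$ with $(p)\subseteq(a)$. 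Your characteristic argument is shorter and avoids the minimal-polynomial detour; the paper's version stays closer to the tools it has already set up (Lemma~\ref{minimal}) and does not need to quote the maximality of nonzero prime ideals.
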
	
\begin{proof}
	(i) and (ii). Suppose that $$(p)=\prod_{i=1}^{m}I^{e_{i}}_{i}$$
	for some $m\in\N_{+}$, $e_{i}\in \N_{+}$, prime ideal $I_{i}$ for all $1\leq i\leq m$. By the unique factorization of $(p)$, $\mathcal{J}_{p}=\{I_{1},\dots,I_{m}\}$ and $\vert\mathcal{J}_{P}\vert=m$. Since $p\in\N$,
	$$p^{D}=\vert N_{K}(p)\vert=\prod_{i=1}^{m}N(I_{i})^{e_{i}}.$$
	So $N(I_{i})=p^{f_{i}}$ for some $f_{i}\in\N_{+}$ for all $1\leq i\leq m$. Therefore,
	$$m\leq \sum_{i=1}^{m}e_{i}f_{i}=D.$$
	
	(iii) Let $J=(a)$ be a principal prime ideal for some $a\in\O_{K}$. 
	We first claim that there exists $n\in\N_{+}$ such that $a\vert n$. 	
	Let $f(x)=\sum_{n=0}^{M}a_{n}x^{n}\in\mathbb{Q}[x]$ be the minimal polynomial of $a$. Pick $C\in\N_{+}$ such that $Cf\in\Z[x]$.
	Then $$-a(\sum_{n=1}^{M}Ca_{n}a^{n-1})=Cf(0)\in\Z.$$
	Since $Ca_{n},Cf(0)$ and $a\in\O_{K}$, we have that $ay\in\Z\backslash{\{0\}}$ for   $y:=\sum_{n=1}^{M}Ca_{n}a^{n-1}\in\O_{K}$. This implies that $a\vert n$ and finishes the proof of the claim.

	Since $J$ is a prime ideal, by the unique factorization of $(n)$, there exists a prime integer $p$ (dividing $n$) such that $J$ contains $(p)$. So $J\in\mathcal{J}_{p}$. 
\end{proof}

\subsection{Regularization of algebraic numbers}

Let $\K$ be an integral tuple. For $N\in\N_{+}$ and $a\in \O_{K}$, denote
$$R_{N,\B}:=\{z\in\B\colon \iota_{\B}^{-1}(z)\in R_{N,D}\}$$
and
$$a^{-1}R_{N,\B}:=\{z\in\O_{K}\colon az\in R_{N,\B}\}$$
throughout this section.\footnote{We use the notation $R_{N,D}$ in all other sections, but use $R_{N,\B}$ in this section as it is more convenient.} We caution the reader that the set $a^{-1}R_{N,\B}$ is a subset of $\O_{K}$, and is NOT the set of $z\in K$ such that $az\in R_{N,\B}$. 

We need to use the following estimate of the density of ideals frequently in this section:

\begin{lem}[Density of ideals]\label{normi}
	Let $\K$ be an integral tuple and $I$ be an ideal of $\O_{K}$.  We have that 
		\begin{equation}\nonumber
			\lim_{N\to\infty}\frac{\vert I\cap R_{N,\B}\vert}{(2N+1)^{D}}=\frac{1}{N(I)}.
		\end{equation}
\end{lem}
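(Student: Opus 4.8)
The plan is to prove Lemma \ref{normi} (density of ideals) by reducing the problem to counting lattice points of a full-rank sublattice of $\Z^D$ inside a large box. The key observation is that, under the coordinate identification $\iota_\B\colon\Z^D\to\O_K$, an ideal $I$ of $\O_K$ corresponds to a subgroup $\Lambda_I:=\iota_\B^{-1}(I)$ of $\Z^D$ of finite index, and that index is exactly $N(I)=[\O_K:I]$ by the very definition of the norm of an ideal (the cardinality of $\O_K/I$). So the statement becomes: for a finite-index subgroup $\Lambda\subseteq\Z^D$ of index $m$, one has $\lim_{N\to\infty}\vert\Lambda\cap R_{N,D}\vert/(2N+1)^D=1/m$.

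First I would justify that $\Lambda_I$ really is a subgroup of $\Z^D$ of finite index equal to $N(I)$: it is a subgroup because $I$ is closed under subtraction and $\iota_\B$ is a group isomorphism $(\Z^D,+)\to(\O_K,+)$; it has index $N(I)$ because $\iota_\B$ descends to a bijection $\Z^D/\Lambda_I\to\O_K/I$. Then I would invoke the standard lattice-point-counting fact: any full-rank sublattice $\Lambda\subseteq\Z^D$ of index $m$ has $\vert\Lambda\cap B\vert=\mathrm{vol}(B)/m+O(\partial B)$ for nice convex bodies $B$; applied to $B=R_{N,D}=[-N,N]^D$, which has volume $(2N+1-1)^D$... more precisely one counts integer points, so it is cleanest to write $R_{N,D}$ as the intersection with $\Z^D$ of the cube $[-N-\tfrac12,N+\tfrac12]^D$ of volume $(2N+1)^D$, giving $\vert\Lambda\cap R_{N,D}\vert=(2N+1)^D/m+O(N^{D-1})$, and dividing by $(2N+1)^D$ yields the claim. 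I have to check $\Lambda_I$ is full rank, i.e. $m<\infty$: this is exactly the finiteness of $\O_K/I$, quoted in the excerpt (Exercise 4.4.3 of \cite{problem}).

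Alternatively — and perhaps more self-contained — I would argue by a direct counting/periodicity argument avoiding any appeal to geometry of numbers: pick a basis of the sublattice and, using the Smith normal form, write $\Lambda=\bigoplus_i d_i\Z\,\v_i$ in suitable coordinates with $\prod d_i=m$; count directly. Or even more elementarily: the indicator $\mathbf{1}_{\Lambda}$ on $\Z^D$ is $\Lambda$-periodic hence periodic modulo some $L\in\N_+$ (e.g.\ $L$ the exponent of $\Z^D/\Lambda$, or just $L=m!$), and the average of a periodic function over $R_{N,D}$ converges to its average over one period $\{0,\dots,L-1\}^D$, which is $\vert\Lambda\cap\{0,\dots,L-1\}^D\vert/L^D=1/m$. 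This version is attractive because it needs nothing beyond the finiteness of $\O_K/I$ and an elementary averaging lemma for periodic functions (the error from boundary cells is $O(N^{D-1})$).

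The main obstacle is essentially bookkeeping rather than conceptual: making the identification "ideal $\leftrightarrow$ finite-index sublattice, with matching indices" airtight, and controlling the boundary error term $O(N^{D-1})$ uniformly so that it is genuinely $o((2N+1)^D)$. There is also a mild subtlety if $I$ is allowed to be the zero ideal, in which case $\Lambda_I=\{0\}$ has infinite index and $N(I)$ is conventionally $\infty$, so $1/N(I)=0$ and the statement holds trivially; I would either exclude this degenerate case at the outset or note it separately. Beyond that, once the reduction to "average of a periodic indicator over a box" is in place, the proof is routine, and I would present it in the periodicity form to keep the paper self-contained.
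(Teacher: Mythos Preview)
Your proposal is correct, and your second (periodicity) approach is essentially what the paper does: it picks a positive integer $M$ with $M\O_K\subseteq I$ (equivalently $M\Z^D\subseteq\Lambda_I$), observes that every coset $d_i+I$ therefore meets each translate of the box $R_{M,\B}$ in the same number of points, tiles $R_{N,\B}$ by such boxes to show each coset has the same limiting density $C$, and concludes $C=1/N(I)$ since the $N(I)$ cosets partition $\O_K$. Your lattice-point-counting alternative is also fine and yields the same $O(N^{D-1})$ error.
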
	
\begin{proof}
There exist $d_{1},\dots,d_{N(I)}\in \O_{K}$ with $d_{1}=0$ such that $d_{i}+I$ are disjoint subsets of $\O_{K}$ and their union is $\O_{K}$.
	Suppose that $I=(a_{1},\dots,a_{k})$ 
	for some $k\in\N_{+}$ and $a_{1},\dots,a_{k}\in\O_{K}$. Let $M$ be a positive integer such that $a_{i}^{-1}M\in\O_{K}$ for all $1\leq i\leq k$. Then there exists a constant $C>0$ such that for all $x\in\O_{K}$, the cardinality of the set $(x+I)\cap R_{M,\B}$ equals to $(2M+1)^{D}C$. For $N\in\N$, by partitioning $R_{N,\B}$ into smaller cubes of the form $R_{M,\B}$, it is easy to see that the limit $\lim_{N\to\infty}\frac{\vert (d_{i}+I)\cap R_{N,\B}\vert}{(2N+1)^{D}}$  exists and equals to $C$ for all $1\leq i\leq N(I)$. Since the union of $d_{i}+I, 1\leq i\leq N(I)$ is $\O_{K}$, we have that $C=\frac{1}{N(I)}$. 
\end{proof}

If $a\in\Z\backslash\{0\}$, then clearly $a^{-1}R_{N,\B}$ is a subset of $R_{N/a,\B}$,  which is a cube with $\vert N_{K}(a)\vert=a^{-D}$ of the size of $R_{N,\B}$. However, this is not the case when $a\notin\Z$.
\begin{ex}
	Consider the integral tuple $$\K=(\mathbb{Q}(\sqrt{2}),\mathbb{Z}[\sqrt{2}],2,\{1,\sqrt{2}\}).$$
	For all $m,n\in\Z$, $A_{\B}(m+n\sqrt{2})=\begin{bmatrix}
	m & n\\
	2n & m
	\end{bmatrix}$ and it has two real eigenvalues $m\pm n\sqrt{2}$. Let $a=(2+\sqrt{2})^{4}$. Then $\det(A_{\B}(a))=16$, and the two eigenvalues of  $A_{\B}(a)$ are respectively $(2+\sqrt{2})^{4}\approx 135.882$ and $(2-\sqrt{2})^{4}\approx 0.118$.
	Although the ``volume" of $a^{-1}R_{N,\B}$ is approximately 1/16 of that of $R_{N,\B}$, $a^{-1}R_{N,\B}$ is not contained in $R_{N,\B}$ (it is only contained in a much larger rectangle $R_{136N,\B}$).
	
	On the other hand, if we multiply $a$ with the unit $\epsilon=(-1+\sqrt{2})^{4}$ and denote $a':=a\epsilon=4$, then ${a'}^{-1}R_{N,\B}\subseteq R_{N/4,\B}$, which is $\vert N_{K}(a)\vert=1/16$ the size of $R_{N,\B}$.
\end{ex}

We prove the following theorem in this section, which generalizes the phenomena appeared in the previous example. 

\begin{defn}[$C$-regular number]
	Let $\K$ be an integral tuple and $C>0$. We say that $a\in\O_{K}$ is \emph{$C$-regular} if for all $N\in\N$, we have that 
	$$a^{-1}R_{N,\B}\subseteq R_{C\vert N_{K}(a)\vert^{-\frac{1}{D}}N,\B}.\footnote{We are unaware whether $C$-regular number is a common notion in algebraic number theory. 
		The notation of $C$-regular numbers will be used essentially in Lemma \ref{katai0}.}$$
\end{defn}

\begin{thm}[Regularization of algebraic numbers]\label{mink2}
	Let $\K$ be an integral tuple. There exists a constant $C_{\B}>0$ depending only on $\B$ such that
	for every $a\in K$, there exists a unit $\epsilon$ of $\O_{K}$ such that $\epsilon a$ is $C_{\B}$-regular.
\end{thm}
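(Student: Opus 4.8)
The plan is to use the geometry of numbers, specifically Minkowski's theorem on successive minima (or equivalently Minkowski's second theorem), applied to the lattice $\iota_{\B}^{-1}(a\O_K) \subseteq \Z^D$, and to exploit the logarithmic embedding of the units into the trace-zero hyperplane (Dirichlet's unit theorem). First I would recall that via the $D$ archimedean embeddings $\sigma_1,\dots,\sigma_D$ of $K$ into $\C$ (with $r_1$ real and $r_2$ pairs of complex ones, $r_1+2r_2=D$), an element $x\in K$ has $\vert N_K(x)\vert = \prod_{i=1}^D \vert\sigma_i(x)\vert$. The set $a^{-1}R_{N,\B}$ consists of those $z\in\O_K$ with $az\in R_{N,\B}$; since $R_{N,\B}$ corresponds to a fixed box in $\Z^D$, and the $\sigma_i$ are linear, control of $z$ in terms of $N$ and $\vert N_K(a)\vert^{-1/D}$ amounts to bounding $\max_i\vert\sigma_i(z)\vert$ when $\max_i\vert\sigma_i(az)\vert$ is bounded — i.e., bounding $\max_i \vert\sigma_i(z)\vert$ given that $\sigma_i(z) = \sigma_i(a)^{-1}\sigma_i(az)$.

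The key point is that $\max_i\vert\sigma_i(a^{-1})\vert$ can be enormous even when $\vert N_K(a)\vert$ is small (as the $\Z[\sqrt2]$ example shows), but it is exactly the job of a unit $\epsilon$ to rebalance the sizes $\vert\sigma_i(\epsilon a)\vert$ so that they are all comparable, hence each is of order $\vert N_K(a)\vert^{1/D}$ up to a constant. Concretely, the logarithmic embedding $L\colon K^\times \to \R^D$, $x\mapsto (\log\vert\sigma_1(x)\vert,\dots,\log\vert\sigma_D(x)\vert)$, sends units into the hyperplane $H=\{v : \sum v_i = 0\}$ (with the standard identification doubling the complex coordinates); by Dirichlet's unit theorem the image $L(\OO)$ is a full-rank lattice in $H$, hence has a bounded covolume and a bounded "covering radius" $\rho_{\B}$ depending only on $K$ (hence only on $\B$). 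Given $a\in K^\times$, the projection of $L(a)$ onto $H$ (subtracting the constant vector $\tfrac{1}{D}\log\vert N_K(a)\vert \cdot(1,\dots,1)$, which lies along the normal direction) is within distance $\rho_{\B}$ of some point $L(\epsilon^{-1})$ with $\epsilon$ a unit; then $L(\epsilon a)$ is within $\rho_{\B}$ of the vector $\tfrac{1}{D}\log\vert N_K(a)\vert\cdot(1,\dots,1)$, which gives $e^{-\rho_{\B}}\vert N_K(a)\vert^{1/D} \le \vert\sigma_i(\epsilon a)\vert \le e^{\rho_{\B}}\vert N_K(a)\vert^{1/D}$ for every $i$. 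Consequently $\vert\sigma_i((\epsilon a)^{-1})\vert \le e^{\rho_{\B}}\vert N_K(a)\vert^{-1/D}$ for all $i$, and since $z = (\epsilon a)^{-1}(\epsilon a z)$ with $\epsilon a z \in R_{N,\B}$, linearity of the coordinate-change between the $\sigma_i$-embedding and the $\iota_{\B}$-coordinates (a fixed invertible matrix, contributing a constant depending only on $\B$) yields $z\in R_{C_{\B}\vert N_K(a)\vert^{-1/D}N,\B}$ for a constant $C_{\B}$ absorbing $e^{\rho_{\B}}$ and the embedding comparison constants. This says precisely that $\epsilon a$ is $C_{\B}$-regular.

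I would organize the write-up as: (1) set up the archimedean embeddings and record $\vert N_K(x)\vert = \prod\vert\sigma_i(x)\vert$ and the comparison $\Vert\iota_{\B}^{-1}(x)\Vert_\infty \asymp_{\B} \max_i\vert\sigma_i(x)\vert$; (2) invoke Dirichlet's unit theorem to get that $L(\OO)$ is a lattice of full rank in the trace-zero hyperplane $H$, with bounded covering radius $\rho_{\B}$; (3) for arbitrary $a\in K^\times$, decompose $L(a) = \tfrac{1}{D}\log\vert N_K(a)\vert\cdot\mathbf 1 + w$ with $w\in H$, choose a unit $\epsilon$ with $\Vert w + L(\epsilon)\Vert \le \rho_{\B}$, and conclude the uniform bounds on $\vert\sigma_i(\epsilon a)\vert$; (4) translate these into the containment $a^{-1}R_{N,\B}\subseteq R_{C_{\B}\vert N_K(a)\vert^{-1/D}N,\B}$ after replacing $a$ by $\epsilon a$.

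**The main obstacle** I anticipate is purely bookkeeping rather than conceptual: keeping the constants genuinely dependent only on $\B$ (equivalently on $K$), and carefully handling the complex embeddings — the logarithmic embedding conventionally uses $2\log\vert\sigma_i\vert$ on the complex places, and one must make sure the "subtract the normal component" step and the norm formula are consistent with whatever convention is fixed. There is also a minor subtlety that $a\in K$ need not lie in $\O_K$, so $\epsilon a$ need not either, but the statement of Theorem \ref{mink2} only asserts $\epsilon a$ is $C_{\B}$-regular (the definition of $C$-regular is stated for $a\in\O_K$, so one should either extend the definition to $K^\times$ or note that the relevant applications use $a\in\O_K$); I would address this with a one-line remark. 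None of these presents a real difficulty — the heart of the matter is Dirichlet's unit theorem supplying a bounded-covering-radius lattice in the trace-zero hyperplane, and everything else is the standard dictionary between norms, embeddings, and the $\iota_{\B}$-coordinates.
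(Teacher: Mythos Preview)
Your proposal is correct and follows essentially the same approach as the paper. The paper works with the eigenvalues $\lambda_i(x)$ of the multiplication matrix $A_{\B}(x)$ (via Lemma \ref{di}) rather than the archimedean embeddings $\sigma_i$ directly, but these are the same data: the eigenvalues of multiplication-by-$x$ on $K\otimes_{\mathbb{Q}}\mathbb{R}$ are exactly the $\sigma_i(x)$. Both arguments then invoke Dirichlet's unit theorem (the paper's Proposition \ref{lattice}) to obtain a full-rank lattice in the trace-zero hyperplane, use its bounded covering radius to choose a unit $\epsilon$ making all $|\sigma_i(\epsilon a)|$ comparable to $|N_K(a)|^{1/D}$, and finish by a linear change of coordinates back to the $\iota_{\B}$-basis. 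Your mention of Minkowski's second theorem in the opening line is a red herring---the actual argument you outline (and the paper's) uses only Dirichlet and the covering-radius step, not successive minima.
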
	

We start with a structure theorem of the eigenspaces of $A_{\B}(x)$. 

\begin{lem}[Structures for the eigenspaces of $A_{\B}(x)$]\label{di}
	Let $\K$ be  an integral tuple. Then there exist $r_{1},r_{2}\in\N$ with $r_{1}+2r_{2}=D$, and a decomposition of $\mathbb{C}^{D}$ into 1 dimensional subspaces $$\mathbb{C}^{D}=\mathbb{C}v_{1} \oplus\dots\oplus \mathbb{C}v_{r_{1}}\oplus (\mathbb{C}v_{r_{1}+1}\oplus \mathbb{C}\overline{v}_{r_{1}+1})\oplus\dots\oplus(\mathbb{C}v_{r_{1}+r_{2}}\oplus \mathbb{C}\overline{v}_{r_{1}+r_{2}})$$
	for some $v_{1},\dots,v_{r_{1}}\in\mathbb{R}^{D}$ and $v_{r_{1}+1},\dots,v_{r_{1}+r_{2}}\in \mathbb{C}^{D}$
	such that for all $x\in K$, there exist $\lambda_{1}(x),\dots,$ $\lambda_{r_{1}}(x)\in\mathbb{R}$ and $\lambda_{r_{1}+1}(x),\dots,\lambda_{r_{1}+r_{2}}(x)\in\mathbb{C}$ such that $v_{i}A_{\B}(x)=\lambda_{i}(x)v_{i}$ for all $1\leq i\leq r_{1}+r_{2}$. Moreover, $\lambda_{i}(x_{1})\lambda_{i}(x_{2})=\lambda_{i}(x_{1}x_{2})$ for all $x_{1},x_{2}\in K$ and $1\leq i\leq r_{1}+r_{2}$.
\end{lem}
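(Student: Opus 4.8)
The plan is to exploit the commutativity established in Lemma \ref{norm}(i): the matrices $\{A_{\B}(x)\colon x\in K\}$ form a commuting family (in fact a commutative ring isomorphic to $K$), and the matrix $A_{\B}(\theta)$ of a primitive element $\theta$ of $K$ has, by Lemma \ref{minimal}(ii)--(iii), a minimal polynomial equal to the minimal polynomial of $\theta$ — in particular it is squarefree, so $A_{\B}(\theta)$ is diagonalizable over $\mathbb{C}$ by Lemma \ref{jordan}, with $D$ distinct eigenvalues. First I would fix such a primitive $\theta\in\O_{K}$ (so $K=\mathbb{Q}(\theta)$) and diagonalize $A_{\B}(\theta)$; since its entries are rational, its non-real eigenvalues come in complex-conjugate pairs, and the corresponding eigenvectors can be chosen in conjugate pairs as well. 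This gives the claimed decomposition $\mathbb{C}^{D}=\mathbb{C}v_{1}\oplus\dots\oplus\mathbb{C}v_{r_1}\oplus(\mathbb{C}v_{r_1+1}\oplus\mathbb{C}\overline{v}_{r_1+1})\oplus\dots$, with $r_1+2r_2=D$, where $v_1,\dots,v_{r_1}$ are real eigenvectors (real eigenvalues) and $v_{r_1+j},\overline v_{r_1+j}$ span the conjugate pairs.

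Next I would promote this to a simultaneous eigendecomposition for the whole family. Since each $A_{\B}(x)$ commutes with $A_{\B}(\theta)$ and the latter has $D$ distinct eigenvalues, every $A_{\B}(x)$ preserves each one-dimensional eigenline $\mathbb{C}v_i$ of $A_{\B}(\theta)$; hence $v_i$ is automatically an eigenvector of $A_{\B}(x)$, say $v_i A_{\B}(x)=\lambda_i(x)v_i$. (Here I use the paper's convention that matrices act by right multiplication, so I work with left eigenvectors throughout; this is purely bookkeeping.) For $i\le r_1$ the eigenvalue $\lambda_i(x)$ is real when $x\in K$: indeed $v_i$ is real and $A_{\B}(x)$ is rational, so $v_i A_{\B}(x)$ is real, forcing $\lambda_i(x)\in\R$. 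For the conjugate pairs, applying complex conjugation to $v_{r_1+j}A_{\B}(x)=\lambda_{r_1+j}(x)v_{r_1+j}$ and using that $A_{\B}(x)$ has rational (hence real) entries shows $\overline{v}_{r_1+j}$ is an eigenvector with eigenvalue $\overline{\lambda_{r_1+j}(x)}$, consistent with the pairing; for $x\in K$ one has $\lambda_{r_1+j}(x)\in\mathbb{C}$ with no further constraint. Finally, multiplicativity $\lambda_i(x_1 x_2)=\lambda_i(x_1)\lambda_i(x_2)$ follows immediately from $A_{\B}(x_1 x_2)=A_{\B}(x_1)A_{\B}(x_2)$ (Lemma \ref{norm}(i)): apply both sides to $v_i$ on the left and read off eigenvalues. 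Additivity $A_{\B}(x_1)+A_{\B}(x_2)=A_{\B}(x_1+x_2)$ similarly gives $\lambda_i(x_1+x_2)=\lambda_i(x_1)+\lambda_i(x_2)$, so the $\lambda_i$ are ring homomorphisms $K\to\mathbb{C}$ — in fact they are exactly the $D$ embeddings of $K$ into $\mathbb{C}$, though we do not need that identification for the statement.

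I do not anticipate a serious obstacle here; the only mild subtleties are (a) ensuring the eigenvectors of $A_{\B}(\theta)$ can genuinely be chosen in conjugate pairs — this is standard because the characteristic polynomial is real, so $\overline{v}$ is a left eigenvector for $\overline{\lambda}$ whenever $v$ is one for $\lambda$, and distinct eigenvalues give one-dimensional eigenspaces so the choice is forced up to scalars — and (b) keeping the left-versus-right action bookkeeping consistent with the paper's convention that $A\colon\x\mapsto\x A$. The essential input is entirely supplied by the earlier lemmas: squarefreeness of the minimal polynomial (Lemma \ref{minimal}(ii)), the diagonalizability criterion (Lemma \ref{jordan}), and the ring homomorphism property $x\mapsto A_{\B}(x)$ (Lemma \ref{norm}(i)). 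The step I would double-check most carefully is the reality claim $\lambda_i(x)\in\R$ for $i\le r_1$ when $x\in K$ (as opposed to $x\in\overline K$), since that is where the hypothesis $x\in K$ is actually used.
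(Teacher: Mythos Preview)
Your proposal is correct and follows essentially the same approach as the paper: pick a primitive element, use Lemma~\ref{minimal} to see that $A_{\B}(\theta)$ has $D$ distinct eigenvalues and hence a basis of (conjugate-paired) eigenvectors, then use the commutativity from Lemma~\ref{norm}(i) to conclude that every $A_{\B}(x)$ acts by scalars on those same lines, with multiplicativity of the scalars following from $A_{\B}(x_1x_2)=A_{\B}(x_1)A_{\B}(x_2)$. The paper's proof is the same argument, phrased slightly more tersely (it does not explicitly route through Lemma~\ref{jordan}, since $D$ distinct eigenvalues already forces diagonalizability), and your added remark that the $\lambda_i$ are the field embeddings is a nice bonus not needed for the statement.
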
	
\begin{proof}
	Let $y\in K$ be any number the degree of whose minimal polynomial is $D$ (the existence of such $y$ is guaranteed by the Theorem of the Primitive Element, see for example Theorem 3.3.2 of \cite{problem}). By Lemma \ref{minimal}, $A_{\B}(y)$ has 
	$D$ distinct eigenvalues. So we may  decompose $\mathbb{C}^{D}$ into 1 dimensional subspaces $$\mathbb{C}^{D}=\mathbb{C}v_{1} \oplus\dots\oplus \mathbb{C}v_{D}$$
	for some $v_{1},\dots,v_{D}\in\mathbb{C}^{D}$
	such that $v_{i}A_{\B}(y)=\lambda_{i}(y)v_{i}$ for some $\lambda_{i}(y)\in\mathbb{C}$ for all $1\leq i\leq D$. 
	Since $A_{\B}(x)$ is a matrix with real coefficients, complex eigenvalues and eigenvectors come in pairs, and so we may assume that 
	there exist $r_{1},r_{2}\in\N$ with $r_{1}+2r_{2}=D$, such that $$\mathbb{C}^{D}=\mathbb{C}v_{1} \oplus\dots\oplus \mathbb{C}v_{r_{1}}\oplus (\mathbb{C}v_{r_{1}+1}\oplus \mathbb{C}\overline{v}_{r_{1}+1})\oplus\dots\oplus(\mathbb{C}v_{r_{1}+r_{2}}\oplus \mathbb{C}\overline{v}_{r_{1}+r_{2}})$$
	for some $v_{1},\dots,v_{r_{1}}\in\mathbb{R}^{D}$ and $v_{r_{1}+1},\dots,v_{r_{1}+r_{2}}\in \mathbb{C}^{D}$, $\lambda_{1}(y),\dots,\lambda_{r_{1}}(y)\in\mathbb{R}$ and $\lambda_{r_{1}+1}(y),\dots,$ $\lambda_{r_{1}+r_{2}}(y)\in\mathbb{C}$ such that $v_{i}A_{\B}(y)=\lambda_{i}(y)v_{i}$ for all $1\leq i\leq r_{1}+r_{2}$.

	Let $x\in K$. By Lemma \ref{norm} (i), for all $1\leq i\leq r_{1}+r_{2}$,
	$$(v_{i}A_{\B}(x))A_{\B}(y)=(v_{i}A_{\B}(y))A_{\B}(x)=\lambda_{i}(y)(v_{i}A_{\B}(x)).$$
	So both $v_{i}$ and $v_{i}A_{\B}(x)$ are eigenvectors of eigenvalue $\lambda_{i}(y)$ for the matrix $A_{\B}(y)$.
	Since $\lambda_{1}(y),\dots,$ $\lambda_{D}(y)$ are distinct and the eigenspace of every eigenvalue of $A_{\B}(y)$ is 1-dimensional, we have that $v_{i}A_{\B}(x)\in\mathbb{R}v_{i}$
	for $1\leq i\leq r_{1}$ and  $v_{i}A_{\B}(x)\in\mathbb{C}v_{i}$
	for $r_{1}\leq i\leq r_{1}+r_{2}$.
	So for all $1\leq i\leq r_{1}+r_{2}$, there exist  $\lambda_{1}(x),\dots,\lambda_{r_{1}}(x)\in\mathbb{R}$ and $\lambda_{r_{1}+1}(x),\dots,\lambda_{r_{1}+r_{2}}(x)\in\mathbb{C}$ such that $v_{i}A_{\B}(x)=\lambda_{i}(x)v_{i}$. 
	
	For $x_{1},x_{2}\in K$ and $1\leq i\leq r_{1}+r_{2}$, note that 
	$$\lambda_{i}(x_{1}x_{2})v_{i}=v_{i}A_{\B}(x_{1}x_{2})=v_{i}A_{\B}(x_{1})A_{\B}(x_{2})=\lambda_{i}(x_{1})v_{i}A_{\B}(x_{2})=\lambda_{i}(x_{1})\lambda_{i}(x_{2})v_{i}.$$
	So $\lambda_{i}(x_{1})\lambda_{i}(x_{2})=\lambda_{i}(x_{1}x_{2})$.
\end{proof}	

Let the notations be as in Lemma \ref{di}, and
 denote $r=r_{1}+r_{2}-1.$
Let
 $W\colon K\to \mathbb{R}^{r}$ be the map given by
$$
W(x)=(\log\vert\lambda_{1}(x)\vert,\dots,\log\vert\lambda_{r}(x)\vert).$$
Then
\begin{equation}\label{7568}
	\prod_{i=1}^{r_{1}}\lambda_{i}(x)\cdot \prod_{i=r_{1}+1}^{r+1}\vert\lambda_{i}(x)\vert^{2}=\det(A_{\B}(x))=N_{K}(x),
\end{equation}	
and so the value of $\log\vert\lambda_{r+1}(x)\vert$ is uniquely determined by $N_{K}(x)$ and $W(x)$.
The following result is essentially proved in Theorem 8.1.6 of \cite{problem}:
\begin{prop}\label{lattice}
	Let $U_{K}$ denote the group of units in $\O_{K}$.
	Then 
	there exist $\epsilon_{1},\dots,\epsilon_{r}\in U_{K}$ such that the $\mathbb{R}$-span of $W(\epsilon_{1}),\dots,W(\epsilon_{r})$ is $\mathbb{R}^{r}$.
\end{prop}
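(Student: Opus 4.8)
The plan is to reduce the statement to the classical Dirichlet unit theorem, which is stated (essentially) as Theorem 8.1.6 of \cite{problem}. The bridge we need is the identification of our map $W$ with the usual logarithmic embedding of $U_K$, so I would first make the dictionary explicit. By Lemma \ref{di}, for each $x\in K$ the numbers $\lambda_1(x),\dots,\lambda_{r_1+r_2}(x)$ together with $\overline{\lambda_{r_1+1}(x)},\dots,\overline{\lambda_{r_1+r_2}(x)}$ are exactly the eigenvalues of $A_{\B}(x)$, i.e. the roots of the characteristic polynomial of the multiplication-by-$x$ map on $K$; these are precisely the images of $x$ under the $D$ embeddings $K\hookrightarrow\mathbb{C}$, with $r_1$ of them real and $r_2$ complex-conjugate pairs. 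Hence the map $x\mapsto(\log|\lambda_1(x)|,\dots,\log|\lambda_{r_1}(x)|,2\log|\lambda_{r_1+1}(x)|,\dots,2\log|\lambda_{r_1+r_2}(x)|)\in\mathbb{R}^{r_1+r_2}$ is, up to reordering coordinates and the harmless rescaling of the complex coordinates by $2$, the standard logarithmic embedding $L\colon K^{\ast}\to\mathbb{R}^{r_1+r_2}$.

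The multiplicativity $\lambda_i(x_1x_2)=\lambda_i(x_1)\lambda_i(x_2)$ from Lemma \ref{di} shows $L$ is a group homomorphism on $K^{\ast}$, so its restriction to $U_K$ is a homomorphism into $\mathbb{R}^{r_1+r_2}$. For a unit $\epsilon$ we have $N_K(\epsilon)=\pm1$, so by \eqref{7568} the coordinates of $L(\epsilon)$ sum to $\log|N_K(\epsilon)|=0$; thus $L(U_K)$ lies in the trace-zero hyperplane $H=\{\,(t_1,\dots,t_{r_1+r_2}):\sum t_i=0\,\}$, which has dimension $r=r_1+r_2-1$. Dirichlet's unit theorem (Theorem 8.1.6 of \cite{problem}) asserts precisely that $U_K\cong \mu_K\times\mathbb{Z}^{r}$, where $\mu_K$ is the finite group of roots of unity in $\O_K$, and that $L(U_K)$ is a full-rank lattice in $H$; equivalently there are $\epsilon_1,\dots,\epsilon_r\in U_K$ whose images $L(\epsilon_1),\dots,L(\epsilon_r)$ are $\mathbb{R}$-linearly independent and span $H$. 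Projecting $H$ onto its first $r$ coordinates (i.e. forgetting $\log|\lambda_{r+1}|$, which by the remark after \eqref{7568} is determined by the others together with $N_K$) is a linear isomorphism $H\xrightarrow{\sim}\mathbb{R}^{r}$, and this projection carries $L$ to $W$ after the coordinate rescaling noted above. Therefore $W(\epsilon_1),\dots,W(\epsilon_r)$ span $\mathbb{R}^{r}$ over $\mathbb{R}$, which is the assertion.

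The only genuine content here is Dirichlet's unit theorem itself, which we are entitled to quote; the main thing to get right is the bookkeeping identifying $W$ with the restriction of the standard logarithmic embedding to the trace-zero hyperplane and checking that the rescaling of the complex places by $2$ does not affect the lattice property (it is an invertible diagonal change of coordinates, so it preserves both "lattice" and "full rank"). I would expect the eigenvalue/embedding dictionary — i.e. verifying that the $\lambda_i$ of Lemma \ref{di} really are the archimedean absolute values coming from the real and complex places — to be the step requiring the most care, but it is forced by the fact, recorded in Lemma \ref{minimal}(iii), that the roots of the minimal polynomial of $A_{\B}(x)$ are exactly its eigenvalues, combined with the standard description of the embeddings of $K$ via the roots of a primitive element's minimal polynomial.
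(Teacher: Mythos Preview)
Your proposal is correct and matches the paper's approach exactly: the paper does not give a proof but simply states that the result ``is essentially proved in Theorem 8.1.6 of \cite{problem}'' (Dirichlet's unit theorem). You have just supplied the bookkeeping that identifies $W$ with the projection of the standard logarithmic embedding onto the first $r$ coordinates, which is precisely what ``essentially'' is covering.
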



\begin{proof}[Proof of Theorem \ref{mink2}]
	Let $\epsilon_{1},\dots,\epsilon_{r}\in U_{K}$ be such that the $\mathbb{R}$-span of  $W(\epsilon_{1}),\dots,W(\epsilon_{r})$ is $\mathbb{R}^{r}$. By Proposition \ref{lattice}, there exist a constant $C_{1}:=C_{1}(K)>0$ and $x_{1},\dots,x_{r}\in\mathbb{Z}$ such that denoting $\epsilon=\epsilon^{x_{1}}_{1}\dots \epsilon^{x_{r}}_{r}$, we have that
	$$\log\vert\lambda_{i}(\epsilon a)\vert=\log\vert\lambda_{i}(a)\vert+\sum_{j=1}^{r}x_{j}\log\vert\lambda_{i}(\epsilon_{j})\vert\in [\frac{1}{D}\log\vert N_{K}(a)\vert,\frac{1}{D}\log\vert N_{K}(a)\vert+C_{1}]$$
	for all $1\leq i\leq r$. Note that $\epsilon$ is a unit of $\O_{K}$ and so $\vert N_{K}(\epsilon)\vert=1$.
	By (\ref{7568}),
	$$\log\vert\lambda_{r+1}(\epsilon a)\vert=\frac{1}{\alpha_{r+1}}(\log\vert N_{K}(\epsilon a)\vert-\sum_{i=1}^{r}\alpha_{i}\log\vert\lambda_{i}(\epsilon a)\vert)=\frac{1}{\alpha_{r+1}}(\log\vert N_{K}( a)\vert-\sum_{i=1}^{r}\alpha_{i}\log\vert\lambda_{i}(\epsilon a)\vert),$$
	where $\alpha_{i}=1$ if $1\leq i\leq r_{1}$ and $\alpha_{i}=2$ if $r_{1}+1\leq i\leq r+1$. So $$\log\vert\lambda_{r+1}(\epsilon a)\vert\in [\frac{1}{D}\log\vert N_{K}(a)\vert-rC_{1},\frac{1}{D}\log\vert N_{K}(a)\vert].$$
	
	Now let $m_{1}b_{1}+\dots+m_{D}b_{D}\in (\epsilon a)^{-1}R_{N,\B}$ for some $m_{1},\dots,m_{D}\in\Z$. By definition, there exists $n_{1}b_{1}+\dots+n_{D}b_{D}\in R_{N,\B}$ for some $n_{1},\dots,n_{D}\in\Z$ such that 
	$$n_{1}b_{1}+\dots+n_{D}b_{D}=(\epsilon a)(m_{1}b_{1}+\dots+m_{D}b_{D})=(m_{1},\dots,m_{D})A_{\B}(\epsilon a)\begin{bmatrix} 
	b_{1}\\
	b_{2}\\
	\dots\\
	b_{D}
	\end{bmatrix}.$$
	In other words, $(m_{1},\dots,m_{D})=(n_{1},\dots,n_{D})A^{-1}_{\B}(\epsilon a)$. Suppose that $$(n_{1},\dots,n_{D})=\sum_{i=1}^{r_{1}}c_{i}v_{i}+\sum_{i=r_{1}+1}^{r+1}(c_{i}v_{i}+c'_{i}\overline{v}_{i})$$ for some $c_{i},c'_{i}\in\mathbb{C}$, where $v_{i}$ is defined in Lemma \ref{di}. Then 
	$$(m_{1},\dots,m_{D})=\sum_{i=1}^{r_{1}}c_{i}\lambda^{-1}_{i}(\epsilon a)v_{i}+\sum_{i=r_{1}+1}^{r+1}(c_{i}\lambda^{-1}_{i}(\epsilon a)v_{i}+c'_{i}(\overline{\lambda}_{i})^{-1}(\epsilon a)\overline{v}_{i}).$$
	Since   $-N\leq n_{1},\dots,n_{D}\leq N$ and the basis $v_{1},\dots,v_{r+1},\overline{v}_{r_{1}+1},\dots,\overline{v}_{r+1}$ depends only on $\B$, there exists  $C_{2}:=C_{2}(\B)>0$ such that $\vert c_{i}\vert, \vert c'_{i}\vert\leq C_{2}N$ for all $1\leq i\leq r+1$. Then all of $\vert c_{i}\lambda^{-1}_{i}(\epsilon a)\vert$, $\vert c'_{i}(\overline{\lambda}_{i})^{-1}(\epsilon a)\vert$ are at most $C_{2}Ne^{2rC_{1}}\vert N_{K}(a)\vert^{-\frac{1}{D}}.$ Again there exists  $C_{3}:=C_{3}(\B)>0$  such that $\vert m_{i}\vert\leq C_{2}C_{3}e^{2rC_{1}}\vert N_{K}(a)\vert^{-\frac{1}{D}}N$ for all $1\leq i\leq D$. Setting $C_{\B}:=C_{2}C_{3}e^{2rC_{1}}$, we have that $(m_{1},\dots,m_{D})\in R_{C_{\B}\vert N_{K}(a)\vert^{-\frac{1}{D}}N,\B}$, and so $(\epsilon a)^{-1}R_{N,\B}\subseteq R_{C_{\B}\vert N_{K}(a)\vert^{-\frac{1}{D}}N,\B}$.
\end{proof}	

\begin{rem}\label{lattice2}
	The dimension $r$ of the $\mathbb{R}$-span of $W(U_{K})$ equals to 0 if and only if $r_{1}=1, r_{2}=0$ or $r_{1}=0, r_{2}=1$, which implies that $K=\mathbb{Q}$ or $\mathbb{Q}(\sqrt{-d})$ for some square-free positive integer $d$.
	In this case,  there exists $C_{\B}>0$ such that every $a\in\O_{K}$ is $C_{\B}$-regular. 
\end{rem}

The following is another property of $C$-regular numbers:

\begin{lem}\label{reg2}
	Let $\K$ be an integral tuple and $C,N\geq 0$. Then there exists $C':=C'(\B,C)>0$ such that for every prime element $a\in\O_{K}$ which is $C$-regular, we have that  
	$$\Bigl\vert\Bigl\{z\in a^{-1}R_{N,\B}\colon N_{K}(a) \text{ is not coprime with } N_{K}(z) \text{ in $\Z$}\Bigr\}\Bigr\vert\leq C'\cdot\frac{N^{D}}{\vert N_{K}(a)\vert^{1+\frac{1}{D}}}+o_{\B,a}(N^{D}).$$
\end{lem}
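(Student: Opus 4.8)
The plan is to reduce the count to the number of elements of $a^{-1}R_{N,\B}$ lying in one of the finitely many prime ideals above the rational prime attached to $a$, and then to apply the density estimate of Lemma~\ref{normi} together with the $C$-regularity hypothesis. First I would note that, since $a$ is a prime element, $(a)$ is a principal prime ideal, so by Lemma~\ref{pi}(iii) there is a unique prime integer $p$ with $(a)\in\mathcal{J}_{p}$, and by Lemma~\ref{pi}(i) and Lemma~\ref{normii} we have $|N_{K}(a)|=N((a))=p^{f}$ with $1\le f\le D$. In particular the only prime integer dividing $N_{K}(a)$ is $p$, so for $z\in\O_K$ the numbers $N_{K}(a)$ and $N_{K}(z)$ fail to be coprime in $\Z$ precisely when $p\mid N_{K}(z)$. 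Factoring the ideal $(z)$ into prime ideals and using the standard facts that a nonzero prime ideal $I$ of $\O_K$ has finite index, contains a unique rational prime (the characteristic of the residue field $\O_K/I$), and has norm a power of that prime, one checks that $\{z\in\O_K\colon p\mid N_{K}(z)\}\subseteq\bigcup_{J\in\mathcal{J}_{p}}J$, the case $z=0$ being trivial since $0$ lies in every ideal. Hence
$$\Bigl\{z\in a^{-1}R_{N,\B}\colon N_{K}(a)\text{ not coprime with }N_{K}(z)\text{ in }\Z\Bigr\}\subseteq\bigcup_{J\in\mathcal{J}_{p}}\bigl(a^{-1}R_{N,\B}\cap J\bigr),$$
which by Lemma~\ref{pi}(ii) is a union of at most $D$ sets.

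Next, since $a$ is $C$-regular, $a^{-1}R_{N,\B}\subseteq R_{M,\B}$ with $M:=C|N_{K}(a)|^{-1/D}N$, so it suffices to bound $|R_{M,\B}\cap J|$ for each $J\in\mathcal{J}_{p}$. By Lemma~\ref{normi}, $|R_{M,\B}\cap J|=(2M+1)^{D}/N(J)+o_{\B,a}((2M+1)^{D})$ as $M\to\infty$; since $M$ is a fixed positive multiple of $N$ (the multiplier depending on $\B$ and $a$), the error term is $o_{\B,a}(N^{D})$. For the main term, once $N$ is large enough that $M\ge 1$ we have $(2M+1)^{D}\le 3^{D}M^{D}=3^{D}C^{D}|N_{K}(a)|^{-1}N^{D}$; and by Lemma~\ref{pi}(i), $N(J)=p^{g}$ with $g\ge 1$, while $|N_{K}(a)|^{1/D}=p^{f/D}\le p$ because $f\le D$, so $N(J)\ge|N_{K}(a)|^{1/D}$. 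Therefore $|R_{M,\B}\cap J|\le 3^{D}C^{D}N^{D}/|N_{K}(a)|^{1+\frac{1}{D}}+o_{\B,a}(N^{D})$, and summing over the at most $D$ ideals $J\in\mathcal{J}_{p}$ yields the lemma with $C':=3^{D}D\,C^{D}$, which depends only on $D$ (hence on $\B$) and on $C$.

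Everything here is bookkeeping once the first step is in place; the one point requiring care is the inclusion $\{z\colon p\mid N_{K}(z)\}\subseteq\bigcup_{J\in\mathcal{J}_{p}}J$, where one must isolate $z=0$ and invoke that the residue field of a nonzero prime ideal is a finite field of prime-power order. The only place where the exponent $1+\frac{1}{D}$ is genuinely forced is the inequality $N(J)\ge|N_{K}(a)|^{1/D}$, which uses precisely that $(a)$ lies above $p$ with $1\le f\le D$ and that every $J\in\mathcal{J}_{p}$ has norm at least $p$.
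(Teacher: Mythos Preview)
Your proof is correct and follows essentially the same approach as the paper's: reduce to the inclusion $\{z:p\mid N_K(z)\}\subseteq\bigcup_{J\in\mathcal{J}_p}J$, pass to the box $R_{C|N_K(a)|^{-1/D}N,\B}$ via $C$-regularity, apply Lemma~\ref{normi} to each of the at most $D$ ideals, and use $N(J)\ge p\ge|N_K(a)|^{1/D}$. The only differences are cosmetic (your constant $3^{D}DC^{D}$ versus the paper's $D2^{2D-1}C^{D}$, and your slightly more explicit handling of $z=0$ and the error term).
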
	
\begin{proof}
	Let $\mathcal{J}_{p}$ be defined as in Lemma \ref{pi}.
	Since $a$ is a prime element, $J:=(a)$ is a prime ideal. 
	By Lemma \ref{pi}, exists a prime integer $p\in\N$ such that $N(J)=p^{i}$ for some $1\leq i\leq D$ and $J\in \mathcal{J}_{p}$. Again by Lemma \ref{pi}, for all $J'\in \mathcal{J}_{p}$,
	$$N(J')\geq p\geq \vert N_{K}(a)\vert^{\frac{1}{D}}.$$

	Let $z\in \O_{K}$ be such that   $\vert N_{K}(a)\vert=N(J)=p^{i}$ is not coprime with  $N_{K}(z)$  in $\Z$. 
	Then $N_{K}(z)$ is divisible by $p$. By the unique factorization of $(z)$, there exists
	a prime ideal $J'$ of $\O_{K}$ such that $z\in J'$ and $N(J')$ divides $p$. By Lemma \ref{pi}, $J'\in\mathcal{J}_{p}$. Then

	\begin{equation}\nonumber
		\begin{split}
			&\quad	\Bigl\vert\Bigl\{z\in a^{-1}R_{N,\B}\colon N_{K}(a) \text{ is not coprime with } N_{K}(z) \text{ in $\Z$}\Bigr\}\Bigr\vert
			\\&\subseteq	\Bigl\vert\Bigl\{z\in R_{C\vert N_{K}(a)\vert^{-\frac{1}{D}}N,\B}\colon N_{K}(a) \text{ is not coprime with } N_{K}(z) \text{ in $\Z$}\Bigr\}\Bigr\vert  \text{ (since $a$ is $C$-regular)}
			\\&\leq\sum_{J'\in\mathcal{J}_{p}}\Bigl\vert R_{C\vert N_{K}(a)\vert^{-\frac{1}{D}}N,\B}\cap J'\Bigr\vert  \text{ (by the discussion above)}
			\\&\leq\sum_{J'\in\mathcal{J}_{p}}\frac{(2C\vert N_{K}(a)\vert^{-\frac{1}{D}} N+1)^{D}}{N(J')}+o_{\B,a}(N^{D})  \text{ (by Lemma \ref{normi})}
			\\&\leq D\cdot\frac{(2C\vert N_{K}(a)\vert^{-\frac{1}{D}} N+1)^{D}}{\vert N_{K}(a)\vert^{\frac{1}{D}}}+o_{\B,a}(N^{D})  \text{ (by Lemma \ref{pi} (ii))}
			\\&\leq D2^{D-1}((2C)^{D}N^{D}\vert N_{K}(a)\vert^{-(1+\frac{1}{D})}+1)+o_{\B,a}(N^{D}).
		\end{split}	
	\end{equation}	
	This finishes the proof.
\end{proof}

\subsection{K\'atai's Lemma on algebraic number fields}
K\'atai's Lemma is an important tool in the study of correlations between a multiplicative function and an arbitrary sequence in the integer ring $\Z$. It was first proved by K\'atai \cite{katai} for $K=\mathbb{Q}$ and generalized to $K=\mathbb{Q}(\sqrt{-d})$ for all positive square-free integer $d$ by Frantzikinakis and Host \cite{FH}. In this section, we introduce a version of  K\'atai's Lemma for general number fields.

The proof of the following theorem can be found on pages 148--149 of \cite{problem}.  

\begin{thm}\label{dirich}
	Let $K$ be a number field and $\mathcal{O}_{K}$ be its ring of integers. Then 
	$$\sum_{p\in \O_{K} \text{ is a prime element}}\frac{1}{\vert N_{K}(p)\vert}=\infty \text{ and } \sum_{p\in \O_{K} \text{ is a prime element}}\frac{1}{\vert N_{K}(p)\vert^{1+c}}<\infty \text{ for all $c>0$.}$$ 
\end{thm}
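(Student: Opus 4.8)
The plan is to reduce the two sums over prime \emph{elements} of $\O_{K}$ to the corresponding sums over prime \emph{ideals}, for which the statement is the classical consequence of the analytic properties of the Dedekind zeta function $\zeta_{K}(s)$. First I would recall that $\zeta_{K}(s)=\prod_{\mathfrak{p}}(1-N(\mathfrak{p})^{-s})^{-1}$, where the product is over all nonzero prime ideals of $\O_{K}$, converges for $\mathrm{Re}(s)>1$, extends meromorphically to a neighborhood of $s=1$ with a simple pole there, and is nonzero and finite in a punctured neighborhood. Taking logarithms and using $\log(1-x)^{-1}=x+O(x^{2})$ together with $\sum_{\mathfrak{p}}N(\mathfrak{p})^{-2s}<\infty$ for $\mathrm{Re}(s)>1/2$ (which in turn follows from Lemma \ref{pi}, since each rational prime $p$ contributes at most $D$ prime ideals, each of norm a power of $p$ that is at least $p$, so the sum is dominated by $D\sum_{p}p^{-2s}$), one obtains $\sum_{\mathfrak{p}}N(\mathfrak{p})^{-s}=\log\frac{1}{s-1}+O(1)$ as $s\to1^{+}$. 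Letting $s\to1^{+}$ gives $\sum_{\mathfrak{p}}N(\mathfrak{p})^{-1}=\infty$; and for the convergent statement, for fixed $c>0$ the series $\sum_{\mathfrak{p}}N(\mathfrak{p})^{-1-c}$ is again dominated by $D\sum_{p}p^{-1-c}<\infty$ by Lemma \ref{pi}, so in fact the convergence over prime ideals needs no zeta machinery at all — only the divergence does.

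Next I would pass from prime ideals to prime elements. The subtlety is that not every prime ideal is principal, so the sum over prime elements $p$ (with $\vert N_{K}(p)\vert=N((p))$ by Lemma \ref{normii}) is, up to the unit ambiguity, a sum over the \emph{principal} prime ideals only, and moreover a single principal prime ideal $\mathfrak{p}=(p)$ is generated by infinitely many associates $\epsilon p$ with $\epsilon$ a unit. To handle the second issue I would restrict attention to one generator per principal prime ideal (e.g., via a fundamental domain for the unit action, or simply by noting the sum over prime elements in the statement should be read as one term per associate class — this matches the usage of $\mu_{K}$ elsewhere in the paper, which depends only on the ideal $(p)$). So $\sum_{p\text{ prime elt}}\vert N_{K}(p)\vert^{-t}=\sum_{\mathfrak{p}\text{ principal prime}}N(\mathfrak{p})^{-t}$ for $t\in\{1,1+c\}$. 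The convergence of the $(1+c)$-sum is then immediate since it is a subsum of the convergent full ideal sum. For divergence of the $t=1$ sum, I would invoke that the principal primes have Dirichlet density $1/h_{K}$ among all primes (where $h_{K}=\vert\mathrm{Cl}(K)\vert<\infty$ is the class number): by class field theory / Chebotarev applied to the Hilbert class field, each ideal class contains prime ideals of density $1/h_{K}$, so in particular the principal class does; hence $\sum_{\mathfrak{p}\text{ principal}}N(\mathfrak{p})^{-s}\sim \frac{1}{h_{K}}\log\frac{1}{s-1}\to\infty$ as $s\to1^{+}$.

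The main obstacle is the divergence half of the principal-prime statement: one genuinely needs that principal prime ideals are not too sparse, and the cleanest route is the positive Dirichlet density of each ideal class. An alternative, more self-contained route that avoids the full density statement is to observe that $\O_{K}$ is Noetherian with finite class group, so $h_{K}$-th powers of ideals are principal; combined with the fact (already essentially present in Lemma \ref{pi}(iii), whose proof shows every principal prime ideal divides some $(n)$ with $n\in\N_{+}$, and conversely infinitely many rational primes split off principal prime factors) one can extract divergence directly, but I expect invoking the standard density theorem is the shortest honest argument. I would therefore present the ideal-sum estimates via $\zeta_{K}$ and Lemma \ref{pi}, and cite \cite{problem} for both the analytic continuation of $\zeta_{K}$ and the density of primes in ideal classes, noting that pages 148--149 of that reference carry out exactly this computation.
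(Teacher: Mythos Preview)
The paper does not give its own argument for this theorem at all; it simply records that ``the proof of the following theorem can be found on pages 148--149 of \cite{problem}.'' So there is nothing to compare your proposal against except the cited reference, and your outline is exactly the standard argument carried out there: analyticity and the simple pole of $\zeta_{K}$ at $s=1$ give divergence of $\sum_{\mathfrak{p}}N(\mathfrak{p})^{-1}$, the bound via Lemma~\ref{pi} gives convergence of $\sum_{\mathfrak{p}}N(\mathfrak{p})^{-1-c}$, and the passage from prime ideals to prime elements goes through the principal ideal class and its positive Dirichlet density.

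One point you flag deserves emphasis, because it is genuinely needed and the paper leaves it implicit: the sums in the statement must be read as one term per associate class of prime elements (equivalently, one term per principal prime ideal). Otherwise, whenever $\O_{K}^{\ast}$ is infinite---which by Dirichlet's unit theorem is every case except $K=\mathbb{Q}$ or an imaginary quadratic field---each principal prime ideal would contribute infinitely many identical terms and even the ``convergent'' sum $\sum |N_{K}(p)|^{-1-c}$ would diverge. The way the theorem is later applied (e.g.\ in the estimate $\mathcal{A}_{\mathcal{P}_{W}}\geq \frac{1}{D}\sum_{a\text{ prime element}}|N_{K}(a)|^{-1}$ in Section~\ref{s:o}, and the constants $C$, $C_{2}$ in Lemmas~\ref{tk} and~\ref{katai0}) confirms this reading. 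Your proposal is correct once this convention is fixed, and your observation that the convergence half needs only Lemma~\ref{pi} and no zeta machinery is a nice simplification.
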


Let $\mathcal{P}$ be a finite subset of $\O_{K}$ and $z\in\O_{K}$. Denote 
\begin{equation}\label{a}
	\mathcal{A}_{\mathcal{P}}=\sum_{p\in\mathcal{P}}\frac{1}{\vert N_{K}(p)\vert} \text{ and } \omega_{\mathcal{P}}(z)=\sum_{p\in\mathcal{P}\colon p\vert z}1.
\end{equation}

\begin{lem}[Tur\'an-Kubilius' Lemma]\label{tk}
	Let $\K$ be an integral tuple.
	For every $N\in\N$ and every finite subset of prime elements $\mathcal{P}$ of $\O_{K}$ whose $K$-norms are pairwise coprime (in $\Z$), we have that 
	$$\sum_{z\in R_{N,\B}}\vert\omega_{\mathcal{P}}(z)-\mathcal{A}_{\mathcal{P}}\vert\ll_{\B} \sqrt{\mathcal{A}_{\mathcal{P}}+1}\cdot N^{D}+o_{\B,\P}(N^{D}).$$
\end{lem}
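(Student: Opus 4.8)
The plan is to mimic the classical Turán--Kubilius inequality via a second-moment (variance) computation, now carried out in the box $R_{N,\B}$ instead of an interval. First I would expand
\[
\sum_{z\in R_{N,\B}}\bigl(\omega_{\mathcal{P}}(z)-\mathcal{A}_{\mathcal{P}}\bigr)^{2}
=\sum_{z\in R_{N,\B}}\omega_{\mathcal{P}}(z)^{2}-2\mathcal{A}_{\mathcal{P}}\sum_{z\in R_{N,\B}}\omega_{\mathcal{P}}(z)+\mathcal{A}_{\mathcal{P}}^{2}\,\lvert R_{N,\B}\rvert,
\]
and then apply Cauchy--Schwarz: $\sum_{z\in R_{N,\B}}\lvert\omega_{\mathcal{P}}(z)-\mathcal{A}_{\mathcal{P}}\rvert\le \lvert R_{N,\B}\rvert^{1/2}\bigl(\sum_{z}(\omega_{\mathcal{P}}(z)-\mathcal{A}_{\mathcal{P}})^{2}\bigr)^{1/2}$, so it suffices to show the variance is $\ll_{\B}(\mathcal{A}_{\mathcal{P}}+1)\lvert R_{N,\B}\rvert+o_{\B,\mathcal{P}}(N^{D})$, after which $\lvert R_{N,\B}\rvert=(2N+1)^{D}$ gives the claim.

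The main input is the counting of $z\in R_{N,\B}$ divisible by a fixed prime element $p$, or by a product $pq$ of two distinct primes from $\mathcal{P}$. Writing $\omega_{\mathcal{P}}(z)=\sum_{p\in\mathcal{P}}\bold{1}_{p\mid z}$, one has $\sum_{z\in R_{N,\B}}\bold{1}_{p\mid z}=\lvert (p)\cap R_{N,\B}\rvert$, which by Lemma \ref{normi} equals $\frac{1}{N(p)}(2N+1)^{D}+o_{\B,p}(N^{D})=\frac{1}{\lvert N_{K}(p)\rvert}(2N+1)^{D}+o_{\B,p}(N^{D})$ using Lemma \ref{normii}. Summing over $p\in\mathcal{P}$ gives $\sum_{z}\omega_{\mathcal{P}}(z)=\mathcal{A}_{\mathcal{P}}(2N+1)^{D}+o_{\B,\mathcal{P}}(N^{D})$ (the error is a finite sum of $o$-terms, hence $o_{\B,\mathcal{P}}(N^{D})$). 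For the cross terms in $\omega_{\mathcal{P}}(z)^{2}=\sum_{p}\bold{1}_{p\mid z}+\sum_{p\neq q}\bold{1}_{p\mid z}\bold{1}_{q\mid z}$, I use that the $K$-norms of distinct $p,q\in\mathcal{P}$ are coprime in $\Z$, so $p\mid z$ and $q\mid z$ together force $pq\mid z$ (the ideals $(p),(q)$ being coprime), and then $N\bigl((pq)\bigr)=\lvert N_{K}(p)\rvert\,\lvert N_{K}(q)\rvert$ by multiplicativity of the norm (Lemma \ref{norm}(i)), so Lemma \ref{normi} gives $\sum_{z}\bold{1}_{p\mid z}\bold{1}_{q\mid z}=\frac{1}{\lvert N_{K}(p)\rvert\lvert N_{K}(q)\rvert}(2N+1)^{D}+o_{\B,p,q}(N^{D})$.

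Assembling these, the variance becomes
\[
\Bigl(\sum_{p\in\mathcal{P}}\frac{1}{\lvert N_{K}(p)\rvert}+\sum_{p\neq q}\frac{1}{\lvert N_{K}(p)\rvert\lvert N_{K}(q)\rvert}-2\mathcal{A}_{\mathcal{P}}^{2}+\mathcal{A}_{\mathcal{P}}^{2}\Bigr)(2N+1)^{D}+o_{\B,\mathcal{P}}(N^{D}),
\]
and since $\sum_{p\neq q}\frac{1}{\lvert N_{K}(p)\rvert\lvert N_{K}(q)\rvert}\le\mathcal{A}_{\mathcal{P}}^{2}$, the bracket is $\le\mathcal{A}_{\mathcal{P}}(2N+1)^{D}$, which is $\ll_{\B}(\mathcal{A}_{\mathcal{P}}+1)(2N+1)^{D}$; combined with Cauchy--Schwarz this yields the stated bound. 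The only real obstacle I anticipate is bookkeeping the error terms: the $o$-notation hides dependence on $N(p)$, and one must be careful that for a \emph{fixed} finite set $\mathcal{P}$ the total error genuinely is $o_{\B,\mathcal{P}}(N^{D})$ — this is fine since $\mathcal{P}$ is finite, but it means the estimate is not uniform in $\mathcal{P}$, which is exactly why the statement carries the $o_{\B,\mathcal{P}}(N^{D})$ term rather than an absolute error. A secondary subtlety is verifying that coprimality of the $K$-norms of $p,q$ in $\Z$ really implies coprimality of the ideals $(p),(q)$ in $\O_{K}$ (so that $p,q\mid z\Rightarrow pq\mid z$); this follows from Lemma \ref{pi}(iii), since prime elements with coprime norms lie over distinct rational primes and hence generate distinct, and therefore coprime, prime ideals.
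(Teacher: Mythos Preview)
Your proposal is correct and follows essentially the same approach as the paper: reduce to a second-moment bound via Cauchy--Schwarz, expand the variance, count lattice points in $R_{N,\B}$ divisible by $p$ or $pq$ using Lemma~\ref{normi}, and use coprimality of the $K$-norms to ensure $p\mid z,\ q\mid z\Rightarrow pq\mid z$. The paper argues this last step directly (a common prime ideal factor would have norm dividing both $N_K(p)$ and $N_K(q)$), while you invoke Lemma~\ref{pi}(iii); both are valid, and your final bound $\sum_{p\neq q}\frac{1}{|N_K(p)||N_K(q)|}\le\mathcal{A}_{\mathcal{P}}^2$ is in fact slightly cleaner than the paper's, which unnecessarily introduces the absolute constant $C=\sum_{p}\frac{1}{|N_K(p)|^2}$.
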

\begin{proof}
	By the Cauchy-Schwartz inequality, it suffices to show that 
	\begin{equation}\label{tk1}
		\sum_{z\in R_{N,\B}}(\omega_{\mathcal{P}}(z)-\mathcal{A}_{\mathcal{P}})^{2}\ll_{\B} (\mathcal{A}_{\mathcal{P}}+1)\cdot N^{D}+ o_{\B,\P}(N^{D}).
	\end{equation}	
	Note that 
	\begin{equation}\label{tk2}
		\sum_{z\in R_{N,\B}}\mathcal{A}_{\mathcal{P}}^{2}=\mathcal{A}_{\mathcal{P}}^{2}\cdot (2N+1)^{D}.
	\end{equation}	
	By Lemma \ref{normi},
	\begin{equation}\label{tk3}
		\begin{split}
			&\quad	\sum_{z\in R_{N,\B}}2\mathcal{A}_{\mathcal{P}}\cdot\omega_{\mathcal{P}}(z)
			=2\mathcal{A}_{\mathcal{P}}\sum_{z\in R_{N,\B}}\sum_{p\in\mathcal{P}\colon p\vert z}1=2\mathcal{A}_{\mathcal{P}}\sum_{p\in\mathcal{P}}\vert  p^{-1}R_{N,\B}\vert
			\\&=2\mathcal{A}_{\mathcal{P}}\sum_{p\in\mathcal{P}}(\frac{(2N+1)^{D}}{\vert N_{K}(p)\vert}+o_{\B,\P}(N^{D}))=2\mathcal{A}_{\mathcal{P}}^{2}\cdot (2N+1)^{D}+ o_{\B,\P}(N^{D}).
		\end{split}	
	\end{equation}
	
	We claim that if $z\in\O_{K}$, $p\vert z$ and $q\vert z$ for some $p,q\in\mathcal{P}, p\neq q$, then $pq\vert z$.  It suffices to show that there is no prime ideal $I$ such that $I\supseteq(p)$ and $I\supseteq(q)$. If not, by the unique factorization of $(p)$ and $(q)$, both $N_{K}(p)$ and $N_{K}(q)$ are divisible by $N(I)$. Since  $N_{K}(p)$ is coprime with $N_{K}(q)$ in $\Z$ by assumption, we have that $\vert N(I)\vert=1$, a contradiction. This proves the claim.

	By Lemma \ref{normi} and the claim,
	\begin{equation}\label{tk4}
		\begin{split}
			&\quad	\sum_{z\in R_{N,\B}}\omega^{2}_{\mathcal{P}}(z)
			=\sum_{z\in R_{N,\B}}(\sum_{p,q\in\mathcal{P}, p\neq q\colon  p\vert z, q\vert z}1+\sum_{p\in\mathcal{P}\colon  p\vert z}1)
			=\sum_{p,q\in\mathcal{P}, p\neq q}\vert  (pq)^{-1}R_{N,\B}\vert+\sum_{p\in\mathcal{P}}\vert p^{-1} R_{N,\B}\vert
			\\&=\sum_{p,q\in\mathcal{P}, p\neq q}(\frac{(2N+1)^{D}}{\vert N_{K}(pq)\vert}+o_{\B,\P}(N^{D}))+\sum_{p\in\mathcal{P}}(\frac{(2N+1)^{D}}{\vert N_{K}(p)\vert}+o_{\B,\P}(N^{D}))
			\\&\leq \mathcal{A}^{2}_{\mathcal{P}}\cdot (2N+1)^{D}+(\mathcal{A}_{\mathcal{P}}+C)\cdot (2N+1)^{D}+o_{\B,\P}(N^{D}),
		\end{split}	
	\end{equation}	
	where $C:=\sum_{p\in \O_{K} \text{ is a prime element}}\frac{1}{\vert N_{K}(p)\vert^{2}}<\infty$ by Theorem \ref{dirich}.
	Then (\ref{tk2}), (\ref{tk3}) and (\ref{tk4}) implies (\ref{tk1}).
\end{proof}	

We are now ready to state K\'atai's Lemma on arbitrary algebraic number fields. An important difference between K\'atai's Lemma for $K=\mathbb{Q}$ or $\mathbb{Q}(\sqrt{-d}), d\in\N$ and that for arbitrary number field is that we require some regularity condition in the latter case (whereas the regularity condition always holds in the former case as is mentioned in Remark \ref{lattice2}).  

\begin{lem}[K\'atai's Lemma on algebraic number fields (multiplicative version)
	]\label{katai0}
	Let $\K$ be an integral tuple and $C>0$.
	Let $\chi\in\mathcal{M}_{K}$, and $h\colon\O_{K}\to\mathbb{C}$ be a function  with modulus at most 1.
	Let  $\mathcal{P}$ be a finite collection of $C$-regular prime elements of $\O_{K}$ whose $K$-norms are pairwise coprime in $\Z$.
	For $N\in\N_{+}$, let
	$$S(N):=\sum_{z\in R_{N,\B}}\chi(z)h(z)$$
	and
	$$C_{\mathcal{P}}(N):=\sum_{p,q\in \mathcal{P}, p\neq q}\Bigl\vert\sum_{z\in p^{-1}R_{N,\B}\cap  q^{-1}R_{N,\B}}h(pz)\overline{h}(qz)\Bigr\vert.$$
	Then	
	$$\Bigl\vert\frac{S(N)}{N^{D}}\Bigr\vert^{2}\ll_{C,\B} \frac{1}{\mathcal{A}^{2}_{\mathcal{P}}}\cdot\frac{C_{\mathcal{P}}(N)}{N^{D}}+(\frac{1}{\mathcal{A}_{\mathcal{P}}}+\frac{1}{\mathcal{A}^{2}_{\mathcal{P}}})+o_{C,\B,\P}(1)$$
\end{lem}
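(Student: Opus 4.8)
The plan is to carry out the classical K\'atai argument over $\O_{K}$: use the Tur\'an--Kubilius estimate (Lemma \ref{tk}) to trade the constant weight $\mathcal{A}_{\mathcal{P}}$ for the divisor-counting function $\omega_{\mathcal{P}}$, then use the multiplicativity of $\chi$ together with the regularity estimate of Lemma \ref{reg2} to factor out $\chi(p)$, and finally apply Cauchy--Schwarz to reduce to the bilinear quantity $C_{\mathcal{P}}(N)$. Since $\omega_{\mathcal{P}}(w)=\#\{p\in\mathcal{P}\colon p\mid w\}$,
$$\sum_{p\in\mathcal{P}}\ \sum_{\substack{w\in R_{N,\B}\\ p\mid w}}\chi(w)h(w)=\sum_{w\in R_{N,\B}}\omega_{\mathcal{P}}(w)\chi(w)h(w),$$
so subtracting $\mathcal{A}_{\mathcal{P}}S(N)=\sum_{w\in R_{N,\B}}\mathcal{A}_{\mathcal{P}}\chi(w)h(w)$, using $|\chi|,|h|\le 1$ and Lemma \ref{tk}, and writing each $w$ with $p\mid w$ uniquely as $w=pz$ with $z\in p^{-1}R_{N,\B}$, I get
$$\mathcal{A}_{\mathcal{P}}\,S(N)=\sum_{p\in\mathcal{P}}\ \sum_{z\in p^{-1}R_{N,\B}}\chi(pz)h(pz)+O_{\B}\bigl(\sqrt{\mathcal{A}_{\mathcal{P}}+1}\,N^{D}\bigr)+o_{\B,\mathcal{P}}(N^{D}).$$

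Next, whenever $N_{K}(p)$ is coprime to $N_{K}(z)$ in $\Z$, multiplicativity gives $\chi(pz)=\chi(p)\chi(z)$, and by Lemma \ref{reg2} the number of $z\in p^{-1}R_{N,\B}$ failing this coprimality is $\ll_{C,\B}N^{D}\vert N_{K}(p)\vert^{-1-1/D}+o_{\B,p}(N^{D})$; since $\sum_{p}\vert N_{K}(p)\vert^{-1-1/D}<\infty$ by Theorem \ref{dirich} (with $c=1/D$) and $\mathcal{P}$ is finite, replacing $\chi(pz)$ by $\chi(p)\chi(z)$ costs only $O_{C,\B}(N^{D})+o_{\B,\mathcal{P}}(N^{D})$. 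Thus $\mathcal{A}_{\mathcal{P}}S(N)=U(N)+O_{C,\B}(\sqrt{\mathcal{A}_{\mathcal{P}}+1}\,N^{D})+o_{\B,\mathcal{P}}(N^{D})$, where $U(N):=\sum_{p\in\mathcal{P}}\chi(p)\sum_{z\in p^{-1}R_{N,\B}}\chi(z)h(pz)$. The crucial step is then to rewrite
$$U(N)=\sum_{z}\chi(z)\sum_{p\in\mathcal{P}}\chi(p)h(pz)\mathbf{1}_{\{z\in p^{-1}R_{N,\B}\}};$$
since each $p\in\mathcal{P}$ is $C$-regular and $\vert N_{K}(p)\vert\ge 1$, we have $p^{-1}R_{N,\B}\subseteq R_{\max\{C,1\}N,\B}$, so the inner sum vanishes outside a set of $\ll_{C,\B}N^{D}$ values of $z$. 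Using $\vert\chi(z)\vert\le 1$ and Cauchy--Schwarz over these $z$,
$$\vert U(N)\vert^{2}\ll_{C,\B}N^{D}\sum_{z}\Bigl\vert\sum_{p\in\mathcal{P}}\chi(p)h(pz)\mathbf{1}_{\{z\in p^{-1}R_{N,\B}\}}\Bigr\vert^{2}=N^{D}\sum_{p,q\in\mathcal{P}}\chi(p)\overline{\chi(q)}\sum_{z\in p^{-1}R_{N,\B}\cap q^{-1}R_{N,\B}}h(pz)\overline{h(qz)}.$$
In this last sum the diagonal $p=q$ contributes at most $\sum_{p\in\mathcal{P}}\vert p^{-1}R_{N,\B}\vert\ll_{C,\B}N^{D}\mathcal{A}_{\mathcal{P}}+\vert\mathcal{P}\vert$ (using $p^{-1}R_{N,\B}\subseteq R_{C\vert N_{K}(p)\vert^{-1/D}N,\B}$), while the off-diagonal part is at most $C_{\mathcal{P}}(N)$ in absolute value; hence $\vert U(N)\vert^{2}\ll_{C,\B}N^{D}C_{\mathcal{P}}(N)+N^{2D}\mathcal{A}_{\mathcal{P}}+N^{D}\vert\mathcal{P}\vert$.

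To finish, I square the identity for $\mathcal{A}_{\mathcal{P}}S(N)$, substitute the bound for $\vert U(N)\vert^{2}$, absorb $N^{D}\vert\mathcal{P}\vert=o_{\mathcal{P}}(N^{2D})$ and $N^{2D}\mathcal{A}_{\mathcal{P}}$ into $(\mathcal{A}_{\mathcal{P}}+1)N^{2D}$, and divide by $\mathcal{A}_{\mathcal{P}}^{2}N^{2D}$; this yields exactly the claimed inequality (the surviving error $(\mathcal{A}_{\mathcal{P}}+1)/\mathcal{A}_{\mathcal{P}}^{2}=1/\mathcal{A}_{\mathcal{P}}+1/\mathcal{A}_{\mathcal{P}}^{2}$). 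The routine part is the error bookkeeping --- keeping track of which terms depend only on $C$ and $\B$ and which are $o_{C,\B,\mathcal{P}}(\cdot)$ --- and checking that Lemmas \ref{tk} and \ref{reg2} are applied under their hypotheses (prime elements with pairwise coprime $K$-norms, and $C$-regularity). The one genuinely new difficulty compared with the case $K=\mathbb{Q}$ in \cite{FH}, where every element is automatically regular (Remark \ref{lattice2}), is discarding the elements $z$ for which $N_{K}(p)$ and $N_{K}(z)$ share a prime factor; this is exactly what the $C$-regularity hypothesis (attainable after multiplying by a unit, by Theorem \ref{mink2}) together with the convergence in Theorem \ref{dirich} is designed to handle, and I expect it to be the main point to get right.
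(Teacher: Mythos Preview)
Your proposal is correct and follows essentially the same approach as the paper: Tur\'an--Kubilius (Lemma \ref{tk}) to pass from $\mathcal{A}_{\mathcal{P}}S(N)$ to the divisor-weighted sum, Lemma \ref{reg2} with Theorem \ref{dirich} to factor $\chi(pz)=\chi(p)\chi(z)$ up to an $O_{C,\B}(N^{D})$ error, Cauchy--Schwarz over $z$ (using $C$-regularity to confine the support to $R_{CN,\B}$), and then separating the diagonal from the off-diagonal $C_{\mathcal{P}}(N)$. The only cosmetic difference is notation (your $U(N)$ is the paper's $S''(N)$), and your $\max\{C,1\}$ is in fact just $C$ since $\vert N_{K}(p)\vert\ge 2$ for a prime element.
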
	
\begin{proof}
	Let $$S'(N):=\sum_{z\in R_{N,\B}}\chi(z)h(z)\omega_{\mathcal{P}}(z).$$ By Lemma \ref{tk},
	$\vert S'(N)-\mathcal{A}_{\mathcal{P}}S(N)\vert\ll_{\B} \sqrt{\mathcal{A}_{\mathcal{P}}+1}\cdot N^{D}+o_{\B,\P}(N^{D})$. We may rewrite $S'(N)$ as
	$$S'(N)=\sum_{z\in R_{N,\B}}\sum_{p\in\mathcal{P}, p\vert z}\chi(z)h(z)=\sum_{p\in\mathcal{P}}\sum_{z\in p^{-1}R_{N,\B}}\chi(pz)h(pz).$$
	In this sum, the term $\chi(pz)h(pz)$ is equal to $\chi(p)\chi(z)h(pz)$ unless $N_{K}(p)$ is not coprime with $N_{K}(z)$ in $\Z$. By the $C$-regularity of $p$ and Lemma \ref{reg2}, if we set
	$$S''(N):=\sum_{p\in\mathcal{P}}\sum_{z\in p^{-1}R_{N,\B}}\chi(p)\chi(z)h(pz),$$
	then there exists $C_{1}:=C_{1}(C,\B)>C$ such that
	\begin{equation}\nonumber
		\begin{split}
			&\quad\vert S'(N)-S''(N)\vert\leq 2\sum_{p\in\mathcal{P}}\Bigl\vert\Bigl\{z\in p^{-1}R_{N,\B}\colon N_{K}(p) \text{ is not coprime with } N_{K}(z) \text{ in $\Z$}\Bigr\}\Bigr\vert
			\\&\leq 2C_{1}\sum_{p\in\mathcal{P}}\frac{N^{D}}{\vert N_{K}(p)\vert^{1+\frac{1}{D}}}+o_{\B,\P}(N^{D})=2C_{1}C_{2}N^{D}+o_{\B,\P}(N^{D}),
		\end{split}	
	\end{equation}	
	where $$C_{2}=\sum_{a\in \O_{K} \text{ is a prime element}}\frac{1}{\vert N_{K}(a)\vert^{1+\frac{1}{D}}}<\infty$$ is a constant depending only on $K$. 
	Let $R'_{N,\B}:=\bigcup_{p\in\mathcal{P}}p^{-1}R_{N,\B}$. By the $C$-regularity of $p\in\mathcal{P}$, $R'_{N,\B}\subseteq R_{CN,\B}$.
	By Cauchy-Swartz inequality, 
	\begin{equation}\nonumber
		\begin{split}
			&\quad	\vert S''(N)\vert^{2}=\Bigl\vert\sum_{z\in R'_{N,\B}}\chi(z)\sum_{p\in\mathcal{P}\colon z\in p^{-1}R_{N,\B}}\chi(p)h(pz)\Bigr\vert^{2}
			\\&\leq (2CN+1)^{D}\sum_{z\in R'_{N,\B}}\Bigl\vert\sum_{p\in\mathcal{P}\colon z\in p^{-1}R_{N,\B}}\chi(p)h(pz)\Bigr\vert^{2} 				
			\\&=(2CN+1)^{D}\sum_{p,q\in\mathcal{P}}\sum_{z\in p^{-1}R_{N,\B}\cap  q^{-1}R_{N,\B}}\chi(p)h(pz)\overline{\chi}(q)\overline{h}(qz)
			\\&\leq (2CN+1)^{D}\sum_{p,q\in\mathcal{P}}\Bigl\vert\sum_{z\in p^{-1}R_{N,\B}\cap  q^{-1}R_{N,\B}}h(pz)\overline{h}(qz)\Bigr\vert.
		\end{split}	
	\end{equation}
	Again by the $C$-regularity of $p\in\mathcal{P}$,
		\begin{equation}\nonumber
			\begin{split}
				&\quad	(2CN+1)^{D}\sum_{p\in\mathcal{P}}\Bigl\vert\sum_{z\in  p^{-1}R_{N,\B}}h(pz)\overline{h}(pz)\Bigr\vert\leq (2CN+1)^{D}\sum_{p\in\mathcal{P}}\Bigl\vert R_{C\vert N_{K}(p)\vert^{-\frac{1}{D}} N,\B}\Bigr\vert
				\\&=(2CN+1)^{D}\sum_{p\in\mathcal{P}}(2C\vert N_{K}(p)\vert^{-\frac{1}{D}} N+1)^{D}
				\ll_{C,\B}  \mathcal{A}_{\mathcal{P}}\cdot N^{2D}+\vert\mathcal{P}\vert\cdot N^{D}.
			\end{split}	
		\end{equation}
	Combining all the previous estimates, we have that
	\begin{equation}\nonumber
		\begin{split}
			&\quad	\vert \mathcal{A}_{\mathcal{P}}S(N)\vert^{2}\leq \vert S'(N)-\mathcal{A}_{\mathcal{P}}S(N)\vert^{2}+\vert S''(N)-S'(N)\vert^{2}+\vert S''(N)\vert^{2}
			\\&\ll_{C,\B} \mathcal{A}_{\mathcal{P}}\cdot N^{2D}+ o_{\B,\P}(N^{2D})+ N^{2D}+\vert\mathcal{P}\vert^{2}+N^{D}\cdot (C_{\mathcal{P}}(N)+\vert\mathcal{P}\vert).
		\end{split}	
	\end{equation}
	This finishes the proof by dividing both sides by $(\mathcal{A}_{\mathcal{P}}\cdot N^{D})^{2}$.
\end{proof}	

By using (\ref{iota}), we have the following additive version of Lemma \ref{katai}:

\begin{lem}[K\'atai's Lemma on algebraic number fields (additive version)]\label{katai}
	Let $\bold{K}=(K,\mathcal{O}_{K},D,$ $\B=\{b_{1},\dots,b_{D}\})$ be an integral tuple and $C>0$.
	Let $\chi\in\mathcal{M}_{K}$, and $h\colon\Z^{D}\to\mathbb{C}$ be a function with modulus at most 1.
	Let  $\mathcal{P}$ be a finite collection of  $C$-regular prime elements of $\O_{K}$ whose $K$-norms are pairwise coprime in $\Z$.
	For $N\in\N$, let
	$$S(N):=\sum_{\n\in R_{N,D}}\chi(\iota_{\B}(\n))h(\n)$$
	and
	$$C_{\mathcal{P}}(N):=\sum_{p,q\in\mathcal{P}, p\neq q}\Bigl\vert\sum_{\n\in\Z^{D}, \n A_{\B}(p),\n A_{\B}(q)\in  R_{N,D}}h(\n A_{\B}(p))\overline{h}(\n A_{\B}(q))\Bigr\vert.$$
	Then	
	$$\Bigl\vert\frac{S(N)}{(2N+1)^{D}}\Bigr\vert^{2}\ll_{C,\B} \frac{1}{\mathcal{A}^{2}_{\mathcal{P}}}\cdot\frac{C_{\mathcal{P}}(N)}{(2N+1)^{D}}+(\frac{1}{\mathcal{A}_{\mathcal{P}}}+\frac{1}{\mathcal{A}^{2}_{\mathcal{P}}})+o_{C,\B,\P}(1).$$
\end{lem}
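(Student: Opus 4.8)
The plan is to derive Lemma~\ref{katai} (the additive version) directly from Lemma~\ref{katai0} (the multiplicative version) by transporting all the data along the bijection $\iota_{\B}\colon\Z^{D}\to\O_{K}$. The key observation is that the two lemmas are really the same statement, phrased once on $\O_{K}$ and once on $\Z^{D}$, and that identity~\eqref{iota} is the dictionary between them: for $x\in K$, $\iota_{\B}^{-1}(xz)=\iota_{\B}^{-1}(z)A_{\B}(x)$, so right multiplication by $A_{\B}(x)$ on $\Z^{D}$ corresponds to multiplication by $x$ on $\O_{K}$. Thus I would set $\tilde h\colon\O_{K}\to\mathbb{C}$ to be $\tilde h:=h\circ\iota_{\B}^{-1}$ (which has modulus at most $1$), keep the same $\chi\in\mathcal{M}_{K}$ and the same $C$-regular collection $\mathcal{P}$ of prime elements with pairwise coprime $K$-norms, and apply Lemma~\ref{katai0} to $\chi$, $\tilde h$, $\mathcal{P}$.

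The second step is to match up the three quantities appearing in the two statements. For $S(N)$: since $\iota_{\B}$ restricts to a bijection between $R_{N,D}\subseteq\Z^{D}$ and $R_{N,\B}\subseteq\O_{K}$ (this is precisely how $R_{N,\B}$ was defined), the sum $\sum_{z\in R_{N,\B}}\chi(z)\tilde h(z)$ equals $\sum_{\n\in R_{N,D}}\chi(\iota_{\B}(\n))h(\n)$, which is the $S(N)$ of Lemma~\ref{katai}; and $N^{D}$ versus $(2N+1)^{D}$ is harmless since their ratio is bounded and the error terms absorb $o(1)$ discrepancies (or one simply notes $|R_{N,\B}|=(2N+1)^{D}$ and replaces $N^{D}$ by $(2N+1)^{D}$ throughout, changing only the implicit constant). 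For $C_{\mathcal{P}}(N)$: given $p\in\mathcal{P}$, the set $p^{-1}R_{N,\B}=\{z\in\O_{K}\colon pz\in R_{N,\B}\}$ corresponds under $\iota_{\B}^{-1}$ to $\{\n\in\Z^{D}\colon \n A_{\B}(p)\in R_{N,D}\}$, again by~\eqref{iota}; and $\tilde h(pz)=h(\iota_{\B}^{-1}(pz))=h(\n A_{\B}(p))$ where $\n=\iota_{\B}^{-1}(z)$. So the inner sum $\sum_{z\in p^{-1}R_{N,\B}\cap q^{-1}R_{N,\B}}\tilde h(pz)\overline{\tilde h}(qz)$ becomes exactly $\sum_{\n\in\Z^{D},\,\n A_{\B}(p),\n A_{\B}(q)\in R_{N,D}}h(\n A_{\B}(p))\overline{h}(\n A_{\B}(q))$, which is the summand defining $C_{\mathcal{P}}(N)$ in Lemma~\ref{katai}. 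Since $\mathcal{A}_{\mathcal{P}}$ depends only on $\mathcal{P}$ and $K$, not on the ambient coordinatization, it is literally unchanged.

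Once these three identifications are in place, the conclusion of Lemma~\ref{katai0} is verbatim the conclusion of Lemma~\ref{katai} (after rewriting $N^{D}$ as $(2N+1)^{D}$ in the normalizations, which only affects the implied constants and the $o(1)$ terms), so the proof is complete. This argument is almost entirely bookkeeping; there is no real analytic obstacle, since Lemma~\ref{katai0} has already done the work (the Tur\'an--Kubilius input, the Cauchy--Schwarz, and the $C$-regularity estimates from Lemma~\ref{reg2}). The one point that requires a little care — and which I would state explicitly — is the translation of the dilated cubes $p^{-1}R_{N,\B}$ and their intersections into the matrix language $\{\n\colon \n A_{\B}(p)\in R_{N,D}\}$ via~\eqref{iota}; one should check that $\iota_{\B}^{-1}$ carries intersections to intersections (trivially, being a bijection) and that $A_{\B}(p)\in M_{D\times D}(\Z)$ for $p\in\O_{K}$ (noted after~\eqref{iota}), so that $\n A_{\B}(p)$ indeed lies in $\Z^{D}$. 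Beyond that the derivation is immediate.
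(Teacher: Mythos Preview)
Your proposal is correct and is exactly the argument the paper intends: the paper's own proof consists of the single sentence ``By using \eqref{iota}, we have the following additive version of Lemma~\ref{katai0},'' and your write-up simply unpacks that bijection $\iota_{\B}$ carefully, matching $S(N)$, $C_{\mathcal{P}}(N)$, and the dilated cubes term by term. There is nothing to add.
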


\section{Nilmanifolds}\label{s:3}

We provide the background material and the notations we use for nilmanifolds in this section. Some of the notions we use follow from \cite{FH,GT,S}.

\subsection{Nilmanifolds and nil-structures}

Let $G$ be a connected and simply connected Lie group with the identity element $e_{G}$.\footnote{In this paper, we only concern connected and simply connected Lie groups as we will eventually reduce  all the results to this special case.} For $a,b\in G$, denote $[a,b]:=aba^{-1}b^{-1}$. For subgroups $H_{1}$ and $H_{2}$ of $G$, let $[H_{1},H_{2}]$ denote the smallest subgroup of $G$ generated by $[a,b], a\in H_{1}, b\in H_{2}$.


\begin{defn}[Nilpotent groups and Filtrations]
		Let $G$ be a connected and simply connected Lie group with the identity element $e_{G}$. The \emph{natural filtration} (or the \emph{lower central series}) $G_{c,\bullet}:=(G_{i})_{i\in\N}$ is the sequence of subgroups of $G$ defined by $G_{0}:=G_{1}:=G$, $G_{i+1}:=[G,G_{i}]$ for all $i\in\N_{+}$. We say that $G$ is  \emph{nilpotent} if there exists $d\in\N_{+}$ such that $G_{d+1}=\{e_{X}\}$. The smallest such $d\in\N_{+}$ is called the \emph{natural step} of $G$.
		
		A \emph{pre-filtration} $G_{\bullet}:=(G^{(i)})_{0\leq i\leq k+1}$ of  a nilpotent Lie group  $G$ is a sequence of subgroups $G^{(i)}$ of $G$ and some $k\in\N$ such that 
		$$G=G^{(0)}=G^{(1)}\supseteq G^{(2)}\supseteq\dots\supseteq G^{(k+1)}=\{e_{G}\}$$ 
		and $[G^{(i)},G^{(j)}]\subseteq G^{(i+j)}$ for all $i,j\in \N$, where we denote $G^{(i)}=\{e_{G}\}$ for all $i\geq d+1$ for convenience.
		We say that $G_{\bullet}$ is a \emph{filtration} if in addition $G^{(i)}\in \{G_{1},\dots,G_{d},G_{d+1}=\{e_{G}\}\}$ for all $i\in\N$.
		The smallest $k\in\N$ such that $G^{(k+1)}=\{e_{G}\}$ is called the \emph{degree} of $G_{\bullet}$. 
		It is easy to see that $k\geq d$. 
%
%
\end{defn}	


\begin{rem}
	Note that what we define as a ``pre-filtration" is called a ``filtration" in literature.
	In this paper, we only work with filtrations instead of the more general pre-filtrations, since the Mal'cev basis adapted to a filtration (see Definition \ref{Mal}) is compatible with the natural filtration. 
\end{rem}

\begin{defn}[Nilmanifold]
Let $G$ be a connected and simply connected  nilpotent Lie group and $\Gamma$ be a discrete, cocompact subgroup of $G$. Denote $X=G/\Gamma$, and let $\mathcal{B}$ and $m_{X}$ be the Borel $\sigma$-algebra and Haar measure of $X$, respectively. The probability space $(X,\mathcal{B},m_{X})$ is called a \emph{nilmanifold}. When there is no confusion, we also say that $(X,m_{X})$ or simply $X$ is a nilmanifold.	
\end{defn}

	\begin{conv}
		For convenience, in this paper, when we say that ``$X=G/\Gamma$ is a  nilmanifold", we implicitly assume that $G$ is a nilpotent connected and simply connected Lie group, and $\Gamma$ is a  discrete and cocompact subgroup of $G$.
		
		If $X=G/\Gamma$ is a nilmanifold, then we use $m_{X}$ to denote the Haar measure on $X$, and $e_{X}:=e_{G}\Gamma=\Gamma$ the identity element in $X$.
	\end{conv}

Let $X=G/\Gamma$ be a nilmanifold and $G'$ be a subgroup of $G$. 
We say that $G'$ is \emph{rational} for $\Gamma$ if $G'$ is connected, simply connected, closed, and $\Gamma':=G'\cap \Gamma$ is cocompact in $G'$.
We say that a filtration $G_{\bullet}:=(G^{(i)})_{0\leq i\leq k+1}$ of $G$ is \emph{rational} for $\Gamma$ if $G^{(i)}$ is rational for $\Gamma$ for all $\leq i\leq k+1$. It was shown in \cite{10} that the natural filtration of $G$ is rational for $\Gamma$.


 We say that $X'$ is a \emph{sub nilmanifold} of $X=G/\Gamma$ if $X'=G'/\Gamma':=G'/(G'\cap \Gamma)$ for some $G'<G$ rational for $\Gamma$. 
 
Every nilmanifold has an explicit algebraic description by using the Mal'cev basis:

\begin{defn} [Mal'cev basis]\label{Mal}
 Let $X=G/\Gamma$ be a  nilmanifold and  $G_{\bullet}:=(G^{(i)})_{0\leq i\leq k+1}$ be a filtration of $G$ for some $k\in\N$. Let $\dim(G)=m$ and $\dim(G^{(i)})=m_{i}$ for all $0\leq i\leq k+1$. A basis $\mathcal{X}:=\{\xi_{1},\dots,\xi_{m}\}$ for the Lie algebra $\g$ of $G$ (over $\mathbb{R}$) is a \emph{Mal'cev basis} for $X$ adapted to the filtration $G_{\bullet}$ if
 	\begin{itemize}
 		\item for all $0\leq j\leq m-1$, $\h_{j}:=\text{Span}_{\mathbb{R}}\{\xi_{j+1},\dots,\xi_{m}\}$ is a Lie algebra ideal of $\g$ and so $H_{j}:=\exp(\h_{j})$\footnote{$\exp\colon\g\to G$ is the exponential map.} is a normal Lie subgroup of $G;$
 		\item $G^{(i)}=H_{m-m_{i}}$ for all $0\leq i\leq k$;
 		\item the map $\psi^{-1}\colon \mathbb{R}^{m}\to G$ given by
 		\begin{equation}\nonumber
 			\psi^{-1}(t_{1},\dots,t_{m})=\exp(t_{1}\xi_{1})\dots\exp(t_{m}\xi_{m})
 		\end{equation}	
 	  is a bijection;
 	  \item $\Gamma=\psi^{-1}(\Z^{m})$.
 	\end{itemize}	
 	We call $\psi$ the  \emph{Mal'cev coordinate map} with respect to the Mal'cev basis $\mathcal{X}$.
   If $g=\psi^{-1}(t_{1},\dots,t_{m})$, we say that $(t_{1},\dots,t_{m})$ are the \emph{Mal'cev coordinates} of $g$ with respect to $\mathcal{X}$. 
   \end{defn}
   
   It is known that for every filtration $G_{\bullet}$ which is rational for $\Gamma$, there exists a Mal'cev basis adapted to it. See for example the discussion on pages 11--12 of \cite{GT}.
   
   Let $\g$ be endowed with an Euclidean structure such that the Mal'cev basis $\mathcal{X}$ is an orthogonal basis. This induces a Riemann structure on $G$ which is invariant under the right translations. We use $d_{G}$ to denote the distance on the group $G$ endowed with the corresponding geodesic distance (which is again invariant under the right translations).  
   
   Let $X=G/\Gamma$ be a nilmanifold and $p\colon G\to X$ be the projection. Let $d_{X}$ denote the metric on $X$ given by
   $$d_{X}(x,y):=\inf_{g,h\in G}\{d_{G}(g,h)\colon p(g)=x, p(h)=y\}.$$
   By the right invariance of $d_{G}$, it is not hard to show that $d_{X}$ is indeed a metric on $X$. Note that the infimum in the definition of $d_{X}$ can always be obtained since $\Gamma$ is discrete. We say that $d_{X}$ and $d_{G}$ are metrics \emph{induced} by $G_{\bullet}$ (or $\mathcal{X}$).
   
   In order to simplify the notations of all the structures imposed above on a nilmanifold, we introduce the following notation:
   
   \begin{defn}[Nil-structure]
   Let $X=G/\Gamma$ be a nilmanifold. If $G_{\bullet}$ is a filtration of $X$ rational for $\Gamma$, $\mathcal{X}$  is a Mal'cev basis adapted to $G_{\bullet}$, $\psi\colon\mathbb{R}^{m}\to G$ is the Mal'cev coordinate map with respect to $\mathcal{X}$, and $d_{G},d_{X}$ are the metrics induced by $G_{\bullet}$, we then say that the tuple
   $\G$ is a \emph{nil-structure} of $X$. 
   We say that $X$ is a \emph{$k$-step} nilmanifold with respect to $\X$ if the degree of $G_{\bullet}$ is $k$. 
   
   We say that $\mathfrak{X}=(G_{\bullet},\mathcal{X},\psi,d_{G},d_{X})$ is a \emph{natural nil-structure} of $X$ if $G_{\bullet}=G_{c,\bullet}$ is the natural filtration of $G$. 
   
   \end{defn}
   
   We define some special nil-structures which are used in later sections:
   
   \begin{defn}[Variations of nil-structures]\label{vn}
      Let $X=G/\Gamma$ be a nilmanifold with a nil-structure $\X=(G_{\bullet}=(G^{(i)})_{0\leq i\leq k+1},\mathcal{X},\psi,d_{G},d_{X})$ and suppose that $\dim (G)=m$. 
      
      \emph{Quotient nilmanifold.} Let $G'$ be a normal subgroup of $G$ rational for $\Gamma$. Let $\pi\colon G\to G'$ be the quotient map. Denote $G_{\pi}:=G/G'$ and $\Gamma_{\pi}:=\Gamma/(G'\cap\Gamma)$. Then $X_{\pi}:=G_{\pi}/\Gamma_{\pi}$ is a nilmanifold. Then we use $G_{\pi,\bullet}:=({G_{\pi}}^{(i)})_{i\in\N}$\footnote{When we do not wish to specify the number of subgroups contained in a filtration $G_{\bullet}$, we simply write $G_{\bullet}:=(G^{(i)})_{i\in\N}$, meaning that there exists $k\in\N$ such that $G_{\bullet}:=(G^{(i)})_{0\leq i\leq k+1}$ and $G^{(s)}=\{e_{G}\}$ for all $s>k$.} to denote the filtration of $G_{\pi}$ given by ${G_{\pi}}^{(i)}:=G^{(i)}/G', i\in\N$. We say that any nil-structure of $X_{\pi}$ of the form $\X_{\pi}=(G_{\pi,\bullet},\mathcal{X}_{\pi},\psi_{\pi},d_{G_{\pi}},d_{X_{\pi}})$ (i.e. the filtration of $\X_{\pi}$ is $G_{\pi,\bullet}$) is a nil-structure \emph{induced} by the quotient $\pi$ from $\X$.\footnote{We remark that in general there is no natural method to induced $\mathcal{X}',\psi',d_{G'},d_{X'}$ from $\X$.}
     
      \emph{Sub nilmanifold.} Let $G'$ be a subgroup of $G$ rational for $\Gamma$, and $X':=G'/(G'\cap\Gamma)$ be a sub nilmanifold of $X$. Then we use $G'_{\bullet}:=({G'}^{(i)})_{i\in\N}$ to denote the filtration of $G'$ given by ${G'}^{(i)}:=G^{(i)}\cap G', i\in\N$. We say that any nil-structure of $X'$ of the form $\X'=(G'_{\bullet},\mathcal{X}',\psi',d_{G'},d_{X'})$ (i.e. the filtration of $\X'$ is $G'_{\bullet}$) is a nil-structure \emph{induced} by $\X$ (or by $G_{\bullet}$).
      	
      	 \emph{Conjugated sub nilmanifold.} 
      	Let $X':=G'/(G'\cap\Gamma)$ be a sub nilmanifold of $X$ with a nil-structure $\X'=(G'_{\bullet},\mathcal{X}',\psi',d_{G'},d_{X'})$ induced by $\X$ and suppose that $\dim(G')=m'$.
      	Let $a\in G$ be \emph{rational} for $\Gamma$, meaning that $a^{m}\in\Gamma$ for some $m\in\Z\backslash\{0\}$. Denote $G'_{a}:=a^{-1}G'a$, $X'_{a}:=G'_{a}/(G'_{a}\cap \Gamma)$,\footnote{Lemma B.4 of \cite{FH} implies that $G_{a}$ is a subgroup of $G$ rational for $\Gamma$, and so $X_{a}$ is a sub nilmanifold of $X$.} and let $G'_{a,\bullet}:=({G'}_{a}^{(i)})_{i\in\N}$ be the filtration given by ${G'}_{a}^{(i)}:=a^{-1} {G'}^{(i)}a, i\in\N$. We say that any nil-structure of $X'_{a}$ of the form $\X'_{a}=(G'_{a,\bullet},\mathcal{X}_{a}',\psi_{a}',d_{G_{a}'},d_{X_{a}'})$ (i.e. the filtration of $\X'_{a}$ is $(G'_{a})_{\bullet}$) is a nil-structure \emph{induced} by $a$-conjugate from $\X'$.
      	
       \emph{Product nilmanifold.} Let $X\times X:=G\times G/(\Gamma\times\Gamma)$ be the \emph{product nilmanifold} of $X$. Then we use $(G\times G)_{\bullet}:=((G\times G)^{(i)})_{0\leq i\leq d+1}$ to denote the filtration of $G\times G$ given by $(G\times G)^{(i)}:=G^{(i)}\times G^{(i)}, i\in\N$, $\psi\times\psi\colon G\times G\to\mathbb{R}^{2s}$ the 
      	Mal'cev coordinate map such that for $\psi(g)=(x_{1},\dots,x_{m})$ and $\psi(g')=(x'_{1},\dots,x'_{m})$, $\psi\times\psi (g,g'):=(x_{1},\dots,x_{m};x'_{1},\dots,x'_{m})$,\footnote{Strictly speaking, we should define $\psi\times\psi (g,g')$ as $(x_{1},x'_{1},\dots,x_{m},x'_{m})$ instead of $(x_{1},\dots,x_{m};x'_{1},\dots,x'_{m})$ in order to comply with the definition of the Mal'cev basis. Nevertheless, with a slight abuse of the word  ``Mal'cev basis", we still use the latter one as the definition  since it is much more convenient.} $d_{G\times G}:=d_{G}\times d_{G}$ and $d_{X\times X}:=d_{X}\times d_{X}$. We use $\X\times \X$ to denote the nil-structure $((G\times G)_{\bullet},\psi\times\psi,d_{G\times G},d_{X\times X})$. 	
   \end{defn}	
   

Let $X$ be a nilmanifold with a nil-structure $\G$.
    For $s\in\N$ and $f\colon X\to\mathbb{C}$, let $\Vert f\Vert_{C^{s}(\X)}$ denote the usual $C^{s}$-norm and  $\Vert f\Vert_{\Lip(\X)}$ the Lipschitz norm 
   of $f$ (with respect to the metric $d_{X}$).
     Let $C^{s}(\X)$ and $\Lip(\X)$ denote the collection of all $f\colon X\to\mathbb{C}$ for which the corresponding norm is finite. It is easy to see that $\Vert f\Vert_{\Lip(\X)}\leq \Vert f\Vert_{C^{1}(\X)}$ for all $f\in C^{1}(\X)$.
    

    We summarize some facts regarding the metrics on nilmanifolds (see Section 4.2 of \cite{FH} for the proof):
    \begin{lem}\label{dist}
    	Let $X=G/\Gamma$ be a $k$-step nilmanifold with a nil-structure $\G$ for some $k\in\N_{+}$. Let $X'=G'/(G'\cap\Gamma)$ be a sub nilmanifold of $X$ and $\mathfrak{X}'=(G'_{\bullet},\mathcal{X}',\psi',d_{G'},d_{X'})$  be a nil-structure of $X'$ induced by $\X$. Then
    	\begin{enumerate}[(i)]
    		\item For every  bounded subset $F$ of $G$, there exists $C>0$ such that for all $g,h,h'\in F$, $d_{G}(gh,gh')\leq C d_{G}(h,h')$;
    		\item  For every  bounded subset $F$ of $G$, there exists $C>0$ such that for all $x,y\in X$ and $g\in F$, $d_{X}(g\cdot x,g\cdot y)\leq C d_{X}(x,y)$;
    		\item  For every  bounded subset $F$ of $G$, there exists $C_{s}>0$ for every $s\in\N$ such that for every $f\in C^{s}(\X)$ and $g\in F$, writing $f_{g}(x):=f(g\cdot x)$, we have that $\Vert f_{g}\Vert_{C^{s}(\X)}\leq C_{s}\Vert f\Vert_{C^{s}(\X)}$;
    		\item There exists $\delta>0$ such that for all $1\leq i\leq k$, $\gamma\in\Gamma$ and $g\in G^{(i)}$, $d_{G}(\gamma,g)<\delta$ implies that $\gamma\in G^{(i)}$;\footnote{Property (iv) is stated for the natural filtration in \cite{FH}, but its proof applies easily to any filtration rational for $\Gamma$ (i.e. the filtration $G'_{\bullet}$).}
    		\item   There exists $C\geq 1$ such that for all $x,y\in X'$, $C^{-1}d_{X}(x,y)\leq d_{X'}(x,y)\leq Cd_{X}(x,y)$.
    	\end{enumerate}	
    \end{lem}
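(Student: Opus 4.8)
The plan is to deduce all five items from two standard facts about the Mal'cev coordinate map $\psi\colon G\to\mathbb{R}^{m}$ of the nil-structure $\X$: (a) in these coordinates the multiplication $G\times G\to G$, the inversion $G\to G$, and the adjoint action $g\mapsto\mathrm{Ad}_{g}$ are all polynomial in $\psi(g)$; and (b) the right-invariant Riemannian metric defining $d_{G}$ is, expressed in the chart $\psi$, a smooth field of positive-definite quadratic forms, so that on every bounded $B\subseteq G$ there is $c_{B}\geq 1$ with $c_{B}^{-1}\vert\psi(g)-\psi(h)\vert\leq d_{G}(g,h)\leq c_{B}\vert\psi(g)-\psi(h)\vert$ for $g,h\in B$. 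The lower bound in (b) is the only non-formal point, and it is easy: a right-invariant metric is complete and homogeneous, so a bounded set has finite diameter and a length-minimizing geodesic between two of its points stays in a fixed bounded (hence $\psi$-relatively-compact) set on which the metric tensor is uniformly elliptic; integrating the ellipticity bound along the geodesic gives the claim.

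For item (i): for $g,h,h'\in F$ the points $gh,gh'$ lie in $F\cdot F$, which is bounded, and the map $u\mapsto\psi(g\cdot\psi^{-1}(u))$ is polynomial, hence Lipschitz on $\psi(F)$ with a constant uniform over $\psi(g)\in\psi(F)$; chaining this with estimate (b) on $F$ and on $F\cdot F$ gives $d_{G}(gh,gh')\leq C\,d_{G}(h,h')$. Item (ii) follows from (i): lift $x$ to $\tilde{x}$ in a fixed compact fundamental domain and lift $y$ to $\tilde{y}$ realising $d_{X}(x,y)$ (the infimum is attained, as recalled before the lemma, and $d_{X}\leq\mathrm{diam}(X)<\infty$, so $\tilde{y}$ lies in a fixed bounded set), and apply (i) on a bounded set containing $F,\tilde{x},\tilde{y}$ to get $d_{X}(g\cdot x,g\cdot y)\leq d_{G}(g\tilde{x},g\tilde{y})\leq C\,d_{G}(\tilde{x},\tilde{y})=C\,d_{X}(x,y)$. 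Item (iii) is the higher-order analogue: writing $\tilde{\xi}_{1},\dots,\tilde{\xi}_{m}$ for the right-invariant vector fields extending $\mathcal{X}$ (which descend to $X$, since $\Gamma$ acts on the right and these fields are right-invariant), one has $\tilde{\xi}_{i}(f\circ L_{g})=\sum_{j}a_{ij}(g)\,(\tilde{\xi}_{j}f)\circ L_{g}$, where $\mathrm{Ad}_{g}\xi_{i}=\sum_{j}a_{ij}(g)\xi_{j}$ and $a_{ij}(g)$ is polynomial in $\psi(g)$ by (a); iterating, using that products of at most $s$ of the $a_{ij}$ are bounded on $F$ and the identity $\Vert h\circ L_{g}\Vert_{\infty}=\Vert h\Vert_{\infty}$, and recalling that the $C^{s}$-norm is equivalently computed via the right-invariant derivatives $\tilde{\xi}_{i_{1}}\cdots\tilde{\xi}_{i_{r}}$ with $r\leq s$, we obtain $\Vert f_{g}\Vert_{C^{s}(\X)}\leq C_{s}\Vert f\Vert_{C^{s}(\X)}$.

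For item (iv): as $G_{\bullet}$ is a filtration, $G^{(i)}$ is one of the terms of the lower central series of $G$, hence normal, and it is rational for $\Gamma$ (the natural filtration is rational by \cite{10}, and $G_{\bullet}$ is rational since a Mal'cev basis is adapted to it), so $\Gamma G^{(i)}$ is closed and its image $\bar{\Gamma}$ in the nilpotent Lie group $\bar{G}:=G/G^{(i)}$ is discrete. Fix an open $\bar{U}\ni e_{\bar{G}}$ with $\bar{U}\cap\bar{\Gamma}=\{e_{\bar{G}}\}$ and, by continuity of the projection $\pi\colon G\to\bar{G}$ at $e_{G}$, a $\delta_{i}>0$ with $\pi(B_{d_{G}}(e_{G},\delta_{i}))\subseteq\bar{U}$. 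If $\gamma\in\Gamma$, $g\in G^{(i)}$ and $d_{G}(\gamma,g)<\delta_{i}$, then $d_{G}(\gamma g^{-1},e_{G})<\delta_{i}$ by right-invariance, so $\pi(\gamma g^{-1})\in\bar{U}\cap\bar{\Gamma}=\{e_{\bar{G}}\}$, i.e. $\gamma g^{-1}\in\ker\pi=G^{(i)}$, whence $\gamma\in G^{(i)}$; take $\delta:=\min_{1\leq i\leq k}\delta_{i}$. For item (v): by (b) applied inside $X'$ (with the nil-structure $\X'$), $d_{G'}$ is, on bounded sets, two-sidedly comparable to $\vert\psi'(\cdot)-\psi'(\cdot)\vert$, and the same holds for $d_{G}$ restricted to $G'$ because $\psi\circ\iota\circ(\psi')^{-1}\colon\mathbb{R}^{\dim G'}\to\mathbb{R}^{m}$ (with $\iota\colon G'\hookrightarrow G$ the inclusion) is a smooth embedding, hence bi-Lipschitz onto its image on bounded sets — the upper bound is the Lipschitz bound of a smooth map on a bounded set, the lower bound holds because its differential is injective everywhere and its least singular value is bounded below on compacta; chaining these comparisons, descending to the quotient by the cocompact lattice $G'\cap\Gamma$, and using compactness of $X'$ to turn the local estimates into uniform ones yields $C^{-1}d_{X}\leq d_{X'}\leq C\,d_{X}$ on $X'$.

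There is no single hard step; the proof is bookkeeping around (a) and (b). The subtlest input is the lower-distance control in (v) — that a closed connected Lie subgroup is metrically undistorted inside the ambient simply connected nilpotent group — but in the Riemannian (as opposed to sub-Riemannian) setting this reduces to the non-degeneracy of the differential of the coordinate embedding $\psi\circ\iota\circ(\psi')^{-1}$, and everything else flows from the right-invariance of $d_{G}$ together with the polynomial nature of the group operations.
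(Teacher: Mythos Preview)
Your proof is correct. The paper does not actually prove this lemma; it merely records it with the remark ``see Section 4.2 of \cite{FH} for the proof,'' adding a footnote that the argument for (iv) in \cite{FH} (stated there for the natural filtration) extends to any filtration rational for $\Gamma$. Your direct argument via the two standard facts (a) and (b) about Mal'cev coordinates is essentially the approach one finds in \cite{FH}, so there is no real divergence to report.

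The one place your sketch is slightly compressed is item (v): when passing from the bounded-set comparison of $d_{G}|_{G'}$ and $d_{G'}$ to the comparison of $d_{X}|_{X'}$ and $d_{X'}$, one must address that the infimum defining $d_{X}(x,y)$ ranges over all of $\Gamma$, whereas that defining $d_{X'}(x,y)$ ranges only over $G'\cap\Gamma$. The point you need is that for lifts $\tilde{x},\tilde{y}$ in a fixed compact fundamental domain for $G'\cap\Gamma$ in $G'$, there is $\epsilon>0$ such that $d_{G}(\tilde{x},\tilde{y}\gamma)<\epsilon$ with $\gamma\in\Gamma$ forces $\gamma\in G'\cap\Gamma$; this follows by a one-line compactness/discreteness argument (if not, a subsequence $\gamma_{n}\in\Gamma\setminus G'$ would converge to a point of $G'$, contradicting discreteness of $\Gamma$). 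For $d_{X}(x,y)\geq\epsilon$ the ratio $d_{X'}/d_{X}$ is trivially bounded by $\mathrm{diam}_{d_{X'}}(X')/\epsilon$. This is implicit in your phrase ``using compactness of $X'$ to turn the local estimates into uniform ones,'' but it is worth making explicit, since this --- rather than the undistortion of $G'$ in $G$ at large scales --- is what actually closes the argument.
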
	
    
    \subsection{Properties on the Lie bracket}\label{lb}
    \begin{defn}[Iterated Lie bracket]
    	Let $G$ be a Lie group. For $d\in\N_{+}$ and $g_{1},\dots,g_{d}\in G$, denote
    	$$[g_{1},\dots,g_{d}]_{d}:=[[\dots[[g_{1},g_{2}],g_{3}]\dots],g_{d}].$$
    \end{defn}	
    When $d=1$, we denote $[g_{1}]_{1}:=g_{1}$. When $d=2$, we have that $[g_{1},g_{2}]_{2}=[g_{1},g_{2}]$.
    
    We provide a lemma regarding to the map $[\cdot,\dots,\cdot]_{d}$ for later uses.
    
    \begin{lem}\label{lie}
    	Let $d\in\N_{+}$ and $G$ be a nilpotent Lie group of natural step $d$ with the natural filtration $(G_{i})_{0\leq i\leq d+1}$.\footnote{This lemma also holds for any filtration $G_{\bullet}$ of $G$. But we do not need it.}
    	\begin{enumerate}[(i)]
    		\item Let $n\in\N$ and $a_{1},\dots,a_{n}\in\N_{+}$. For all $g_{i}\in G_{a_{i}}, 1\leq i\leq n$, $[g_{1},\dots,g_{i}]_{i}\in G_{a_{1}+\dots+a_{n}}$.
    		\item For all $g'_{1},g_{1},\dots,g_{d}\in G$, 
    		$$[g_{1},\dots,g_{d}]_{d}\cdot [g'_{1},\dots,g_{d}]_{d}=[g_{1}\cdot g'_{1},\dots,g_{d}]_{d}.$$
    		\item For all $g_{1},\dots,g_{d}\in G$, 
    		$$[g_{1},\dots,g_{d}]_{d}^{-1}=[g^{-1}_{1},\dots,g_{d}]_{d};$$
    	\end{enumerate}	
    \end{lem}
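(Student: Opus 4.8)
The plan is to prove the three assertions in the order (i), then (iii), then (ii), since (ii) and (iii) both rely on bilinearity/antisymmetry identities of the commutator that are cleanest to set up after (i) has fixed the relevant containments. For part (i), I would argue by induction on $n$. The base case $n=1$ is the definition $[g_1]_1 = g_1 \in G_{a_1}$. For the inductive step, suppose $[g_1,\dots,g_{n-1}]_{n-1} \in G_{a_1+\dots+a_{n-1}}$; then $[g_1,\dots,g_n]_n = [[g_1,\dots,g_{n-1}]_{n-1}, g_n]$, which lies in $[G_{a_1+\dots+a_{n-1}}, G_{a_n}]$. By the defining property of a filtration, $[G^{(i)},G^{(j)}] \subseteq G^{(i+j)}$, so this is contained in $G_{a_1+\dots+a_n}$, completing the induction. (Here I use that the natural filtration is indeed a filtration in the sense defined in the excerpt, so the inclusion $[G_i,G_j]\subseteq G_{i+j}$ is available.)

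For parts (ii) and (iii), the key input is that $G$ has natural step $d$, so $G_{d+1} = \{e_G\}$, and therefore any commutator involving an element of $G_2$ in the "wrong" slot of an iterated bracket of length $d$ already lands in $G_{d+1} = \{e_G\}$. Concretely, for (iii) I would first establish the pointwise identity $[ab,c] = a[b,c]a^{-1}\cdot [a,c]$ (a standard commutator manipulation), and observe that when this is applied inside $[\,\cdot,\dots,\cdot]_d$ with the first argument being a product, the "correction terms" are commutators of length $\geq d+1$ in the $G_i$ grading — hence trivial by nilpotency — so that $[\,\cdot,g_2,\dots,g_d]_d$ is a genuine homomorphism in its first variable. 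From $[e_G, g_2,\dots,g_d]_d = e_G$ and homomorphy in the first slot we get $[g_1,\dots,g_d]_d \cdot [g_1^{-1},\dots,g_d]_d = [g_1 g_1^{-1},\dots,g_d]_d = e_G$, which is precisely (iii). Part (ii) is then literally the statement that this first-slot map is a homomorphism, i.e. $[g_1,\dots,g_d]_d\cdot[g_1',\dots,g_d]_d = [g_1 g_1',\dots,g_d]_d$, so (ii) and (iii) are proved together once homomorphy in the first variable is in hand.

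The main obstacle is the bookkeeping in establishing that the first-slot map $g_1 \mapsto [g_1,g_2,\dots,g_d]_d$ is a homomorphism: one must expand $[\,ab, c\,]$, push the conjugation $a(\cdot)a^{-1}$ through the remaining $d-2$ bracket operations, and check at each stage that every term other than the "main" ones is a commutator whose total weight in the lower central series grading exceeds $d$ and hence vanishes. This is where nilpotency of step exactly $d$ is essential, and it is the step I would write out most carefully; everything else is a routine induction or a direct substitution. I would phrase the weight-counting using part (i): if one of the bracket entries is replaced by something in $G_2$, then the resulting length-$d$ iterated bracket lies in $G_{d+1} = \{e_G\}$.
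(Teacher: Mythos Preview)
Your proposal is correct and follows essentially the same approach as the paper. Both argue (i) by induction on the length using the filtration property $[G_i,G_j]\subseteq G_{i+j}$; both reduce (ii) and (iii) to showing that $g_1\mapsto[g_1,g_2,\dots,g_d]_d$ is a homomorphism, using the identity $[ab,c]=[a,[b,c]]\cdot[b,c]\cdot[a,c]$ (equivalently your $[ab,c]=a[b,c]a^{-1}[a,c]$) together with the observation from (i) that the correction term lies in $G_{d+1}=\{e_G\}$; and both obtain (iii) from (ii) by setting $g_1'=g_1^{-1}$. The only difference is presentational: the paper compresses the argument for (ii) into a one-line reference to (i) and the single commutator identity, whereas you spell out the inductive weight-counting needed to propagate the identity through the $d-1$ nested brackets. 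Your more explicit version is arguably clearer, since the left-nested definition $[g_1,\dots,g_d]_d=[[g_1,\dots,g_{d-1}]_{d-1},g_d]$ means one cannot simply take $c=[g_2,\dots,g_d]_{d-1}$ in a single application of the identity.
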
	
    \begin{proof}
    	(i) is straightforward by induction. 
    	(iii) is a corollary of (ii) by setting $g'_{1}=g^{-1}_{1}$. 
    	
    	We now prove (ii).  By (i), $[g_{2},\dots,g_{d}]_{d-1}\in G_{d-1}$. So it suffices to show that for all $a,b,c\in G$, we have that
    	$$[ab,c]=[a,[b,c]]\cdot [b,c]\cdot [a,c],$$
    	which can be verified by a direct computation.
    	%
    \end{proof}

 \subsection{Special factors of a nilmanifold}
 We introduce three special factors of a nilmanifold in this section. The first one
 is the lower horizontal torus, which plays an important role in equidistribution properties:
 
 \begin{defn}[horizontal torus and characters]
 	Let  $X=G/\Gamma$ be a nilmanifold with a natural nil-structure $\Gc$ with $d$ being the natural step of $X$. Suppose that $\dim(G)=m$ and $\dim(G_{2})=m_{2}$. 
 	Then $\psi_{c}\colon G\to\mathbb{R}^{m}$ induces an isometric identification between the \emph{horizontal torus} $G/(G_{2}\Gamma)$ (endowed with the quotient metric) and $\T^{m-m_{2}}$ (endowed with the canonical metric). 
 	A \emph{horizontal character} is a continuous group homomorphism $\eta\colon G\to\T$ such that $\eta(\Gamma)=\{0\}$. Then every horizontal character $\eta$ vanishes on $G_{2}$ and induces a  continuous group homomorphism between $G/G_{2}$  and $\mathbb{R}^{m-m_{2}}$.

 	Let $\G$ be any nil-structure of $X$. Under the Mal'cev basis $\mathcal{X}$, we can write $$\eta\circ\psi^{-1} (x_{1},\dots,x_{m})=\ell_{1}x_{1}+\dots+\ell_{m}x_{m} \mod \mathbb{Z}$$
 	for some $\ell_{1},\dots,\ell_{m}\in\Z$ (called the \emph{coordinates} of $\eta$ with respect to $\X$)
 	for all $(x_{1},\dots,x_{m})\in\mathbb{R}^{m}$ in a unique way. Denote the \emph{$\X$-norm} of $\eta$ by
 	$$\Vert\eta\Vert_{\X}:=\vert\ell_{1}\vert+\dots+\vert\ell_{m-m_{2}}\vert.$$
 \end{defn}	
 
  The second special factor is a sub torus of the horizontal torus $G/G_{2}\Gamma$ which we call the upper horizontal torus. This concept is uncommon in literature, but is essential in understanding certain sub nilmanifolds of the product space $X\times X$. 
 
  \begin{defn}[Upper horizontal torus]
  	Let  $X=G/\Gamma$ be a nilmanifold of natural step $d\in\N_{+}$ with the natural filtration $G_{c,\bullet}=(G_{i})_{0\leq i\leq d+1}$. 
  	Let $G_{\ker}$ be the collection of all $g\in G$ such that for all $g_{2},\dots,g_{d}\in G$, $[g,g_{2},\dots,g_{d}]_{d}=e_{G}$. By Lemma \ref{lie}, it is easy to see that
  	$G_{\ker}$ is a normal subgroup of $G$ and contains $G_{2}$. We say that $G/G_{\ker}\Gamma$ is the \emph{upper horizontal torus} of $X$ (endowed with the quotient metric). 
  \end{defn}
  	 
  	We remark that if $(Z_{0})_{0\leq i\leq d+1}$ is the \emph{upper central series} of $G$, meaning that $Z_{0}=\{e_{G}\}$ and $Z_{i+1}=\{x\in G\colon [x,y]\in Z_{i} \text{ for all } y\in G\}$, then $G_{\ker}$ is equal to $Z_{d}$.
 The upper horizontal torus $G/G_{\ker}\Gamma$ is a sub torus of the horizontal torus $G/G_{2}\Gamma$, but the converse may not be true. 
  
  \begin{ex}\label{exHei}
  	Let $H=\mathbb{R}^{3}$ be endowed with a group structure given by
  	$$(x,y;z)\cdot (x',y';z'):=(x+x',y+y';z+z'+xy')$$
  	for all $(x,y;z), (x',y';z')\in\mathbb{R}^{3}$. It is easy to see that $(H,\cdot)$ is a group, and $H_{2}=\{0\}\times\{0\}\times \R$, $H_{3}=\{(0,0;0)\}$. This group is called the \emph{Heisenberg group}. 
  	
  	Let $G=\mathbb{R}\times H$, $\Gamma=\Z^{4}$ and $X=G/\Gamma$. Then $G_{2}=\{0\}\times\{0\}\times\{0\}\times \R$ and the horizontal torus $G/G_{2}\Gamma$ is $\T^{3}\times\{0\}$. On the other hand, $G_{\ker}=\R\times\{0\}\times\{0\}\times \R$, and so the upper horizontal torus $G/G_{\ker}\Gamma$ is $\{0\}\times\T^{2}\times\{0\}$.      	
  \end{ex}	
  
  We postpone further properties of the upper horizontal torus to Section \ref{s:d}.
   Given a filtration $G_{\bullet}$ of a nilmanifold $X=G/\Gamma$ of natural step $d$, it is convenient for us to work on a Mal'cev adapted to $G_{\bullet}$ where the subgroups $G_{\ker}$ and $G_{d}$ of $G$ can be expressed in a nice way. 
   
   \begin{defn}[Standard Mal'cev basis]
   	Let $X=G/\Gamma$ be a  nilmanifold of natural step $d$ for some $d\in\N_{+}$ with $G_{c,\bullet}=(G_{i})_{0\leq i\leq d+1}$ being its natural filtration. Let  $G_{\bullet}=(G^{(i)})_{0\leq i\leq k+1}$ be another filtration of $G$ for some $k\in\N_{+}$. Suppose that $\dim(G)=m$, $\dim(G_{\ker})=m'_{2}$ and $\dim(G_{d})=r$. Let $\mathcal{X}:=\{\xi_{1},\dots,\xi_{m}\}$ be a Mal'cev basis for $X$ adapted to the filtration $G_{\bullet}$ with $\psi\colon G\to\mathbb{R}^{m}$ being the Mal'cev coordinate map. We say that $\mathcal{X}$ is \emph{standard} if $G_{\ker}=\psi^{-1}(\{0\}^{m-m'_{2}}\times\mathbb{R}^{m'_{2}})$ and $G_{d}=\psi^{-1}(\{0\}^{m-r}\times\mathbb{R}^{r})$.
   	
   	We say that a nil-structure $\G$ is \emph{standard} if $\mathcal{X}$ is standard.
   \end{defn}

   It is easy to see that every filtration $G_{\bullet}$ admits one (but not necessarily unique) standard Mal'cev basis, as $G^{(i)}\in \{G_{1},\dots,G_{d},G_{d+1}=\{e_{G}\}\}$ for all $i\in\N$.
   
   The last special factor is the vertical torus, a concept which allows us to conduct Fourier analysis on nilmanifolds.

     \begin{defn}[Vertical torus and nilcharacters (or vertical characters)]
     	Let $X=G/\Gamma$ be a  nilmanifold of natural step $d$ for some $d\in\N_{+}$ with $G_{c,\bullet}=(G_{i})_{0\leq i\leq d+1}$ being its natural filtration. 
     	Suppose that $\dim(G_{d})=r$. Then $G_{d}$ lies in the center of $G$. We call $G_{d}/(G_{d}\cap\Gamma)$ the \emph{vertical torus} on $X$.
     	For a standard nil-structure $\G$ of $X$,
     	we say that $\Phi\colon X\to\mathbb{C}$ is a \emph{nilcharacter} (or \emph{vertical character}) with \emph{frequency} $(h_{1},\dots,h_{r})\in\mathbb{Z}^{r}$ with respect to $\X$ if $$\Phi(g\cdot x)=e(h_{1}y_{1}+\dots+h_{r}y_{r})\Phi(x)$$ for all $g=\psi^{-1}(0,\dots,0;y_{1},\dots,y_{r})\in G_{d}$\footnote{$g\in G_{d}$ because $\X$ is standard.} and $x\in X$.
     \end{defn}
     

     The following are some basic properties of nilcharacters, which will be used in later sections: 
     
     \begin{lem}[Translation invariance of nilcharacters]\label{good2}
     	Let $X=G/\Gamma$ be a  nilmanifold of natural step $d$ for some $d\in\N_{+}$ with $G_{c,\bullet}=(G_{i})_{0\leq i\leq d+1}$ being its natural filtration. 
     	 Let $\G$ be a standard nil-structure of $X$ and $\Phi$ be a nilcharacter of $X$ with respect to $\X$. For $g_{0}\in G$, let $\Phi_{g_{0}}(x):=\Phi(g_{0}\cdot x)$ for all $x\in X$. Then $\Phi_{g_{0}}$ is also a nilcharacter of $X$ with the same frequency as $\Phi$ with respect to $\X$.
     	\end{lem}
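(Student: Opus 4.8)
The plan is to reduce the statement to the centrality of $G_{d}$ in $G$. Since $G_{c,\bullet}=(G_{i})_{0\leq i\leq d+1}$ is the natural filtration and $d$ is the natural step of $X$, we have $[G,G_{d}]\subseteq G_{d+1}=\{e_{G}\}$, so every element of $G_{d}$ commutes with every element of $G$; in particular $g_{0}$ commutes with each $g\in G_{d}$. This is the only structural input, and it is already recorded in the excerpt (in the definition of the vertical torus, where it is noted that $G_{d}$ lies in the center of $G$).

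Now fix $g=\psi^{-1}(0,\dots,0;y_{1},\dots,y_{r})\in G_{d}$ and $x\in X$, and compute directly:
\begin{align*}
\Phi_{g_{0}}(g\cdot x)=\Phi(g_{0}\cdot g\cdot x)&=\Phi(g\cdot g_{0}\cdot x)\\
&=e(h_{1}y_{1}+\dots+h_{r}y_{r})\,\Phi(g_{0}\cdot x)=e(h_{1}y_{1}+\dots+h_{r}y_{r})\,\Phi_{g_{0}}(x),
\end{align*}
where the second equality uses the centrality of $G_{d}$ and the third equality is the defining relation of a nilcharacter of frequency $(h_{1},\dots,h_{r})$ with respect to $\X$, applied at the point $g_{0}\cdot x\in X$ rather than at $x$. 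Since $h_{1},\dots,h_{r}$ and the Mal'cev coordinates $y_{1},\dots,y_{r}$ of $g$ are unchanged, this is exactly the defining relation for $\Phi_{g_{0}}$ to be a nilcharacter of frequency $(h_{1},\dots,h_{r})$ with respect to $\X$, and the lemma follows.

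There is essentially no obstacle: the argument is a one-line computation. The only points worth flagging are that the nilcharacter identity must be invoked at the general point $g_{0}\cdot x$ — legitimate, since that identity holds at all points of $X$ — and that the standardness of $\X$ enters only to guarantee that the elements with Mal'cev coordinates of the form $(0,\dots,0;y_{1},\dots,y_{r})$ are precisely those lying in $G_{d}$, so that $\Phi_{g_{0}}$ inherits the nilcharacter property in the form stated in the definition.
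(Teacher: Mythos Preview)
Your proof is correct and is essentially identical to the paper's own argument: both use the centrality of $G_{d}$ to commute $g_{0}$ past $g$, then apply the nilcharacter identity at the point $g_{0}\cdot x$. Your additional remarks on where standardness enters are accurate and consistent with the paper's conventions.
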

     \begin{proof}
     	Suppose that $\dim(G_{d})=r$ and   $\Phi$ is with frequency $(h_{1},\dots,h_{r})\in\mathbb{Z}^{r}$ with respect to $\X$. Since $\X$ is standard,  $$\Phi(g\cdot x)=e(h_{1}y_{1}+\dots+h_{r}y_{r})\Phi(x)$$ for all $g=\psi^{-1}(0,\dots,0;y_{1},\dots,y_{r})\in G_{d}$ and $x\in X$. Since $g\in G_{d}$ is in the center of $G$, 
     	$$\Phi_{g_{0}}(g\cdot x)=\Phi(g_{0}g\cdot x)=\Phi(gg_{0}\cdot x)=e(h_{1}y_{1}+\dots+h_{r}y_{r})\Phi(g_{0}x)=e(h_{1}y_{1}+\dots+h_{r}y_{r})\Phi_{g_{0}}(x).$$
     	This implies that $\Phi_{g_{0}}$ is also a nilcharacter of $X$ with frequency $(h_{1},\dots,h_{r})$ with respect to $\X$.
     \end{proof}		
   
  
     \begin{lem}[Nilcharacters on $X\times X$]\label{good}
     		Let $X=G/\Gamma$ be a  nilmanifold of natural step $d$ for some $d\in\N_{+}$ with $G_{c,\bullet}=(G_{i})_{0\leq i\leq d+1}$ being its natural filtration. 
     		Suppose that $\dim(G_{d})=1$. Let $\G$ be a standard nil-structure of $X$ and $\Phi$ be a nilcharacter of $X$ with frequency $\ell$ with respect to $\X$ for some $\ell\in\Z\backslash\{0\}$. Let $H$ be a subgroup of $G\times G$ rational for $\Gamma\times \Gamma$ and $Y:=H/(H\cap(\Gamma\times\Gamma))$ be a sub nilmanifold of $X\times X$ with a standard nil-structure $\mathfrak{Y}$ induced by $\X\times\X$. Then
     	\begin{enumerate}[(i)]
     		\item  $\Phi\otimes\overline{\Phi}$ is a  nilcharacter of $X\times X$ with frequency $(\ell,-\ell)$ with respect to $\X\times\X$.
     		\item If $\dim(H_{d})=2$ (i.e. $H_{d}=G_{d}\times G_{d}$), then  $\Phi\otimes\overline{\Phi}\Big\vert_{Y}$ is a  nilcharacter of $Y$ with frequency $(\ell,-\ell)$ with respect to $\mathfrak{Y}$.
     			\item If  $\dim(H_{d})=1$, and suppose that $$H_{d}=\{(\psi^{-1}(0,\dots, 0;\ell_{1}t),\psi^{-1}(0,\dots, 0;\ell_{2}t))\in G_{d}\times G_{d}\colon t\in\mathbb{R}\}$$ for some $\ell_{1},\ell_{2}\in\mathbb{Z}$ not all equal to 0, then $\Phi\otimes\overline{\Phi}\Big\vert_{Y}$ is a  nilcharacter of $Y$ with respect to $\mathfrak{Y}$. Moreover, its frequency is non-zero if and only if $\ell_{1}\neq \ell_{2}$.
     	\end{enumerate}	
     \end{lem}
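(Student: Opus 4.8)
The plan is to verify the three claims directly from the defining property of a nilcharacter, reducing everything to how the vertical torus $G_d$ (of dimension $1$) sits inside $H_d\subseteq G_d\times G_d$, together with Lemma \ref{lie} to handle the standardness of the induced Mal'cev bases. Throughout, write $\Phi(\psi^{-1}(0,\dots,0;t)\cdot x)=e(\ell t)\Phi(x)$ for $\psi^{-1}(0,\dots,0;t)\in G_d$ and $x\in X$, which is the standard hypothesis on $\Phi$.

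For (i): given $(g,g')=(\psi^{-1}(0,\dots,0;s),\psi^{-1}(0,\dots,0;s'))\in G_d\times G_d=(G\times G)_d$ (the last equality holds because $(G\times G)_{c,\bullet}$ is taken coordinatewise, and since $\X\times\X$ is standard the coordinates of $(G\times G)_d$ are the last two), one computes
\begin{equation}\nonumber
(\Phi\otimes\overline\Phi)\bigl((g,g')\cdot(x,x')\bigr)=\Phi(g\cdot x)\,\overline{\Phi(g'\cdot x')}=e(\ell s)\Phi(x)\cdot e(-\ell s')\overline{\Phi(x')}=e(\ell s-\ell s')\,(\Phi\otimes\overline\Phi)(x,x'),
\end{equation}
so $\Phi\otimes\overline\Phi$ is a nilcharacter of $X\times X$ with frequency $(\ell,-\ell)$ with respect to $\X\times\X$. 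This is routine.

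For (ii) and (iii): the point is that $\mathfrak Y$ is a \emph{standard} nil-structure of $Y$ induced by $\X\times\X$, so its vertical torus coordinates are exactly those cutting out $H_d=(H)_d$ (the natural step-$d$ piece of $H$), and one must re-express the action of an element of $H_d$ in the Mal'cev coordinates of $\mathfrak Y$. In case (ii), $H_d=G_d\times G_d$, so an element of $H_d$ is $(\psi^{-1}(0,\dots,0;s),\psi^{-1}(0,\dots,0;s'))$ and the computation in (i) restricts verbatim to $Y$, giving frequency $(\ell,-\ell)$. In case (iii), a general element of $H_d$ is $(\psi^{-1}(0,\dots,0;\ell_1 t),\psi^{-1}(0,\dots,0;\ell_2 t))$ with $t\in\R$ a single parameter; applying the computation of (i) with $s=\ell_1 t$, $s'=\ell_2 t$ yields
\begin{equation}\nonumber
(\Phi\otimes\overline\Phi)\bigl((\psi^{-1}(0,\dots,0;\ell_1 t),\psi^{-1}(0,\dots,0;\ell_2 t))\cdot(x,x')\bigr)=e\bigl(\ell(\ell_1-\ell_2)t\bigr)\,(\Phi\otimes\overline\Phi)(x,x'),
\end{equation}
so $\Phi\otimes\overline\Phi\big\vert_Y$ is a nilcharacter of $Y$ with respect to $\mathfrak Y$ whose frequency is (up to the normalization fixed by the standard Mal'cev basis of $\mathfrak Y$) the integer $\ell(\ell_1-\ell_2)$; since $\ell\neq 0$, this vanishes if and only if $\ell_1=\ell_2$.

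The only genuinely delicate point — and the step I expect to be the main obstacle — is bookkeeping the \emph{standardness} of $\mathfrak Y$: one must know that $H_d$ is exactly the $d$-th term of the natural filtration of $H$ and that a standard Mal'cev basis of $\mathfrak Y$ places $H_d$ in the last coordinate(s), so that "nilcharacter with respect to $\mathfrak Y$" is being tested against precisely the elements described in the hypotheses. This uses that $H$ is rational for $\Gamma\times\Gamma$, that the induced filtration on a sub nilmanifold is $(H^{(i)})=( (G\times G)^{(i)}\cap H)$ as in Definition \ref{vn}, and Lemma \ref{lie}(i) to see $(G\times G)_d\cap H$ contains all iterated commutators of length $d$ from $H$; combined with $\dim(G_d)=1$ this pins down $\dim(H_d)\in\{1,2\}$ and the two cases above are exhaustive. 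Once the coordinate normalization is in place, the three computations are immediate.
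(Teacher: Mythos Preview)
Your proof is correct and follows essentially the same approach as the paper: a direct computation of $(\Phi\otimes\overline\Phi)((g,g')\cdot(x,x'))$ for $(g,g')\in G_d\times G_d$, then restricting to $H_d$ in each case. The paper's proof is terser and does not spell out the standardness bookkeeping or invoke Lemma~\ref{lie}; also, your side remark that $\dim(H_d)\in\{1,2\}$ exhausts the possibilities is not quite right (the case $\dim(H_d)=0$ is handled separately in Section~\ref{complex}), but this does not affect the lemma itself.
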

     \begin{proof}
     	By assumption, $\Phi(g\cdot x)=e(\ell y)\Phi(x)$ for all $g=\psi^{-1}(0,\dots,0;y)\in G_{d}$ and $x\in X$. 
     	
     	(i) For all $g=\psi^{-1}(0,\dots,0;y),g'=\psi^{-1}(0,\dots,0;y')\in G_{d}$ and $(x,x')\in X\times X$, 
     	\begin{equation}\label{46}
     		\begin{split}
     		\Phi\otimes\overline{\Phi}((g,g')\cdot (x,x'))=\Phi(g x)\overline{\Phi}(g' x')=e(\ell y-\ell y')\Phi(x)\overline{\Phi}(x')
     		=e((\ell,-\ell)\cdot (y,y'))\Phi\otimes\overline{\Phi}(x,x').
     		\end{split}
     	\end{equation}	
     	So $\Phi\otimes\overline{\Phi}$ is a  nilcharacter of $X\times X$ with frequency $(\ell,-\ell)$ with respect to $\X\times\X$.
     	
     	(ii) If $H_{d}=G_{d}\times G_{d}$, then (\ref{46}) holds for all $(g,g')\in H_{d}$. So $\Phi\otimes\overline{\Phi}\Big\vert_{Y}$ is a  nilcharacter of $Y$ with frequency $(\ell,-\ell)$ with respect to $\mathfrak{Y}$.
     	
     	(iii) Let $h=(\psi^{-1}(0,\dots, 0;\ell_{1}t),(\psi^{-1}(0,\dots, 0;\ell_{2}t))\in H_{d}$ for some
     	$t\in\mathbb{R}$. Then for all $(x,x')\in X\times X$, by (\ref{46}),
     	\begin{equation}\nonumber
     		\begin{split}
     			&\quad\Phi\otimes\overline{\Phi}(h\cdot (x,x'))=
     			e(\ell(\ell_{1}-\ell_{2})t)\Phi\otimes\overline{\Phi}(x,x').
     		\end{split}
     	\end{equation}	
     	So $\Phi\otimes\overline{\Phi}\vert_{Y}$ is a  nilcharacter of $Y$ with respect to $\mathfrak{Y}$, and its frequency is zero if and only if $\ell_{1}-\ell_{2}=0$ (since $\ell\neq 0$).
     \end{proof}

    \section{Equidistribution properties for polynomial sequences on nilmanifolds}\label{s:4}
    In this section, we collect all the equidistribution results we need in this paper.
    \subsection{Polynomial sequences and smooth norms}
     We start with the definition of polynomial
     sequences.
     \begin{defn}[Polynomial sequences]
     	Let $G$ be a group endowed with a pre-filtration $G_{\bullet}=(G^{(i)})_{0\leq i\leq k+1}$ for some $k\in\N$. Let $D\in\N_{+}$ and $g\colon\Z^{D}\to G$ be a map. For $\bold{h}\in\Z^{D}$, define $\partial_{\bold{h}}g\colon\Z^{D}\to G$ by $\partial_{\bold{h}}g(\n):=g(\n+\bold{h})g^{-1}(\n)$ for all $\n\in\Z^{D}$. Let $\poly_{D}(G_{\bullet})$ denote the collection of all $g\colon\Z^{D}\to G$ such that for all $i\in\N$, and $\n,\bold{h}_{1},\dots,\bold{h}_{i}\in\Z^{D}$, we have that $\partial_{\bold{h}_{i}}\dots\partial_{\bold{h}_{1}}g(\n)\in G^{(i)}$. We call functions in $\poly_{D}(G_{\bullet})$ \emph{polynomial sequences} with respect to  $G_{\bullet}$. 
     	
        We say that $g\colon \Z^{D}\to G$ is a \emph{polynomial sequence} on $G$  (written as $g\in\poly_{D}(G)$ without specifying the pre-filtration) if $g\in\poly_{D}(G_{\bullet})$ for some pre-filtration $G_{\bullet}$ of $G$. The \emph{degree} of $g$ is the smallest degree of all the filtrations $G_{\bullet}$ of $G$ such that $g\in\poly_{D}(G_{\bullet})$.
     \end{defn}

     \begin{rem}
     Clearly, if $g\in\poly_{D}(G_{\bullet})$ for some pre-filtration $G_{\bullet}$, then $g\in\poly_{D}(G'_{\bullet})$ for some filtration $G'_{\bullet}$. So the definition of polynomial sequences	in this paper coincides with the one used in \cite{FH} and \cite{GT}.
     
       Note that there is an implicitly upper bound for the ``degree" of every polynomial sequence in $\poly_{D}(G_{\bullet})$, namely the degree of the pre-filtration $G_{\bullet}$. 
     \end{rem}	
     
     \begin{rem}\label{2m}
     	As we shall see later in this paper, in many theorems, we endow two filtrations (and two nil-structures adapted to them) on a nilmanifold simultaneously: a natural filtration $G_{c,\bullet}$ through which the horizontal, upper horizontal and vertical toruses are defined, and a filtration $G_{\bullet}$ through which the polynomial sequence is defined.
     \end{rem}

     For $D\in\N_{+}$, $\n=(n_{1},\dots,n_{D})\in\Z^{D}$, and $\j=(j_{1},\dots,j_{D})\in\N^{D}$, recall that
     $\vert \j\vert:=j_{1}+\dots+j_{D}$. Denote $\n^{\j}:=n^{j_{1}}_{1}\dots n^{j_{D}}_{D}$ and
     $$\binom{\n}{\j}:=\prod_{i=1}^{D}\binom{n_{i}}{j_{i}}.$$
     The following description of polynomial sequences is Lemma 6.7 of \cite{GT} (or Section 4 of \cite{G24}):
     
     \begin{lem}[Polynomials in Mal'cev basis]\label{6.7}
     	Let $X=G/\Gamma$ be a nilmanifold with a nil-structure $\mathfrak{X}=(G_{\bullet}=(G^{(i)})_{0\leq i\leq k+1},\mathcal{X},\psi,d_{G},d_{X})$. Suppose that $\dim(G)=m$ and $\dim(G^{(i)})=m_{i}$ for all $0\leq i\leq k+1$.  Then $g\in\poly_{D}(G_{\bullet})$ if and only if 
     	$$\psi\circ g(\bold{n})=\sum_{\j\in\N^{D}}\alpha_{\j}\binom{\n}{\j}$$
     	for some $\alpha_{\j}=(\alpha_{\j,1},\dots,\alpha_{\j,m})\in\mathbb{R}^{m}$ for all $\j\in\N^{D}$ such that $\alpha_{\j,i}=0$ for all $i\leq m-m_{\vert \j\vert}$. 
     \end{lem}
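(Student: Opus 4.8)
The plan is to prove the two implications separately, establishing the ``if'' direction first since it is used in the converse. Throughout, write $\psi(g(\n))=(c_{1}(\n),\dots,c_{m}(\n))$, so that by definition of the Mal'cev coordinate map $g(\n)=\exp(c_{1}(\n)\xi_{1})\cdots\exp(c_{m}(\n)\xi_{m})$; thus the statement amounts to characterizing, coordinate by coordinate, the scalar functions $c_{l}\colon\Z^{D}\to\R$ that can occur. For the ``if'' direction, assume $\psi\circ g(\n)=\sum_{\j}\alpha_{\j}\binom{\n}{\j}$ with $\alpha_{\j,l}=0$ whenever $l\le m-m_{|\j|}$, and set $w(l):=\max\{i:m_{i}\ge m-l+1\}$ (so $1\le w(l)\le k$ for $1\le l\le m$, using $m_{1}=m$). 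The hypothesis says exactly that $\alpha_{\j,l}\ne 0$ forces $m_{|\j|}\ge m-l+1$, i.e.\ $|\j|\le w(l)$; hence $c_{l}(\n)=\sum_{\j}\alpha_{\j,l}\binom{\n}{\j}$ is a polynomial of total degree $\le w(l)$, and moreover $\exp(\R\xi_{l})\subseteq\exp(\h_{l-1})=H_{l-1}\subseteq H_{m-m_{w(l)}}=G^{(w(l))}$. For such $l$ the one-parameter sequence $\n\mapsto\exp(c_{l}(\n)\xi_{l})$ lies in $\poly_{D}(G_{\bullet})$: since $\exp(\R\xi_{l})$ is an abelian subgroup, $\partial_{\bold{h}_{i}}\cdots\partial_{\bold{h}_{1}}\exp(c_{l}(\n)\xi_{l})=\exp\bigl((\partial_{\bold{h}_{i}}\cdots\partial_{\bold{h}_{1}}c_{l})(\n)\,\xi_{l}\bigr)$ with the differences on $c_{l}$ taken in $\R$, which is $e_{G}\in G^{(i)}$ when $i>w(l)$ (the $i$-th finite difference of a polynomial of total degree $\le w(l)<i$ vanishes) and lies in $\exp(\R\xi_{l})\subseteq G^{(w(l))}\subseteq G^{(i)}$ when $i\le w(l)$. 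As $g(\n)=\prod_{l=1}^{m}\exp(c_{l}(\n)\xi_{l})$ and $\poly_{D}(G_{\bullet})$ is closed under pointwise products (the Lazard--Leibman theorem), $g\in\poly_{D}(G_{\bullet})$.

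For the converse I would induct on $m=\dim G$, the case $m=0$ being trivial. For $m\ge 1$ the degree $k$ of $G_{\bullet}$ is at least $1$ and $m_{k}\ge 1$, so $Z:=H_{m-1}=\exp(\R\xi_{m})$ satisfies $Z\subseteq G^{(k)}$; hence $[G,Z]\subseteq[G^{(1)},G^{(k)}]\subseteq G^{(k+1)}=\{e_{G}\}$, so $Z$ is central, and $\h_{m-1}=\R\xi_{m}$ being an ideal it is also normal. Put $\bar G:=G/Z$ with projection $\pi$; one checks routinely from the Mal'cev axioms that $\{d\pi(\xi_{1}),\dots,d\pi(\xi_{m-1})\}$ is a Mal'cev basis for $\bar G/\pi(\Gamma)$ adapted to the induced filtration $\bar G_{\bullet}=(\pi(G^{(i)}))_{i}$, that the Mal'cev coordinates of $\pi(g(\n))$ are $(c_{1}(\n),\dots,c_{m-1}(\n))$, and that $\bar g:=\pi\circ g\in\poly_{D}(\bar G_{\bullet})$ since $\partial_{\bold{h}_{i}}\cdots\partial_{\bold{h}_{1}}\bar g(\n)=\pi(\partial_{\bold{h}_{i}}\cdots\partial_{\bold{h}_{1}}g(\n))\in\pi(G^{(i)})$. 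Because $Z\subseteq G^{(i)}$ for all $i\le k$, one has $\dim\pi(G^{(i)})=m_{i}-1$ there, so the vanishing pattern for $\bar G_{\bullet}$ is precisely the restriction to $l\le m-1$ of the one for $G_{\bullet}$; the inductive hypothesis therefore yields the desired binomial expansion of $c_{1},\dots,c_{m-1}$. For the top coordinate, set $\tilde g(\n):=\psi^{-1}(c_{1}(\n),\dots,c_{m-1}(\n),0)$; its coordinate vector satisfies the full vanishing condition of $G_{\bullet}$ (trivially in the last slot), so $\tilde g\in\poly_{D}(G_{\bullet})$ by the ``if'' direction already proved, and hence $h:=g\cdot\tilde g^{-1}\in\poly_{D}(G_{\bullet})$ by Lazard--Leibman. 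Since $g(\n)=\tilde g(\n)\exp(c_{m}(\n)\xi_{m})$ and $Z$ is central, $h(\n)=\exp(c_{m}(\n)\xi_{m})$ takes values in the abelian group $Z$, so $\partial_{\bold{h}_{k+1}}\cdots\partial_{\bold{h}_{1}}h(\n)=\exp\bigl((\partial_{\bold{h}_{k+1}}\cdots\partial_{\bold{h}_{1}}c_{m})(\n)\,\xi_{m}\bigr)\in G^{(k+1)}=\{e_{G}\}$; thus all $(k+1)$-fold finite differences of $c_{m}$ vanish, so $c_{m}$ is a polynomial $\Z^{D}\to\R$ of total degree $\le k$ and hence $c_{m}(\n)=\sum_{|\j|\le k}\alpha_{\j,m}\binom{\n}{\j}$. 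Finally $\alpha_{\j,m}\ne 0$ implies $|\j|\le k$, i.e.\ $m_{|\j|}\ge 1$, i.e.\ $m>m-m_{|\j|}$, which is the required vanishing condition for $l=m$; together with the first $m-1$ coordinates this closes the induction.

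The routine but unavoidable technical point is verifying that $\bar G=G/\exp(\R\xi_{m})$ inherits a Mal'cev basis adapted to the induced filtration (so the induction on $\dim G$ is legitimate) and the bookkeeping matching the filtration dimensions $m_{i}$ to the coordinate weights $w(l)$. The genuine obstacle, used essentially in both directions, is the input that $\poly_{D}(G_{\bullet})$ is a group under pointwise multiplication and inversion --- the Lazard--Leibman theorem --- which is a substantial result of its own; one would either invoke it (e.g.\ from \cite{GT}) or interleave its (also inductive) proof. Everything else reduces to the elementary finite-difference characterization of polynomial maps $\Z^{D}\to\R$ and to the abelian one-parameter-subgroup computations above.
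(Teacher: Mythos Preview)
The paper does not give its own proof of this lemma; it simply cites Lemma 6.7 of \cite{GT} (and Section 4 of \cite{G24}). Your argument is correct and is essentially the same proof that appears in those references: the ``if'' direction by exhibiting $g$ as a product of one-parameter polynomial sequences and invoking the Lazard--Leibman theorem, and the converse by induction on $\dim G$ via the central quotient $G/\exp(\R\xi_{m})$, again using Lazard--Leibman to isolate the top coordinate.
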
	 
     
     Let $D,k,r\in\N_{+}$ and $g\in\poly_{D}(\mathbb{R}_{\bullet}^{r})$, where the filtration $\mathbb{R}_{\bullet}^{r}:=((\R^{r})^{(i)})_{0\leq i\leq k+1}$ of $\R$ is given by $(\R^{r})^{(i)}:=\R^{r}$ for all $0\leq i\leq k$ and $(\R^{r})^{(k+1)}:=\{0\}$. By Lemma \ref{6.7}, $g$ can be expressed alternatively in two different ways (in each way there is a unique expression):
     $$g(\n)=\sum_{\j\in\N^{D}}\alpha_{\j}\binom{\n}{\j} \text{ or } g(\n)=\sum_{\j\in\N^{D}}\alpha'_{\j}\n^{\j}$$
     for some $\alpha_{\j},\alpha'_{\j}\in\R^{r}$ for all $\j\in\N^{D}$ such that $\alpha_{\j}=\alpha'_{\j}=\bold{0}$ whenever $\vert\j\vert>k$. 
     
     \begin{defn}[Smooth norms]\label{sn}
     	Let the notations be as above.
     	 For all $N\in\N$, we define the \emph{smooth norms} of $g\in\poly_{D}(\mathbb{R}_{\bullet}^{r})$ as 
     	 $$\Vert g\Vert_{C_{r}^{\infty}(R_{N,D})}:=\max_{\j\neq \bold{0}}(2N+1)^{\vert\j\vert}\cdot\Vert\alpha_{\j}\Vert_{\T^{r}} \text{ and } \Vert g\Vert'_{C_{r}^{\infty}(R_{N,D})}:=\max_{\j\neq \bold{0}}(2N+1)^{\vert\j\vert}\cdot\Vert\alpha'_{\j}\Vert_{\T^{r}}.$$
     \end{defn}	
    
     It is easy to check that there exists $C:=C(k,D)>1$ such that
     $$C^{-1}\Vert g\Vert_{C_{r}^{\infty}(R_{N,D})}\leq \Vert g\Vert'_{C_{r}^{\infty}(R_{N,D})}\leq C\Vert g\Vert_{C_{r}^{\infty}(R_{N,D})}$$
     for all $r\in\N_{+}$ and $g\in\poly_{D}(\mathbb{R}_{\bullet}^{r})$. So we can use both norms alternatively without affecting our proofs. Roughly speaking, it was shown in \cite{FH,GT} that the smallness of the smooth norms of $g$ indicates that $g$ is a slow-varying function.
     
     Obviously, the smallness of the $\Vert\cdot\Vert_{\T^{r}}$-norms of the coefficients $\alpha_{\j}$ (or $\alpha'_{\j}$) implies the smallness of the smooth norm of $g$. Conversely, we have the following lemma:
    
     \begin{lem}\label{popo}
     	Let $D,m\in\N_{+}$ and $g\colon \Z^{D}\to\R$ be a homogeneous polynomial of the form
     	$$g(\n)=\sum_{\vert\j\vert=m}a'_{\j}\n^{\j}$$
     	for some $a'_{\j}\in\R$ for all $\n\in \Z^{D}$. There exist $C:=C(D,m)>0$ and $Q:=Q(D,m)\in\N_{+}$ such that if $\Vert g(\n)\Vert_{\T}\leq C_{0}$ for all $\n\in\Z^{D}, \vert\n\vert\leq m$, then $\Vert Qa'_{\j}\Vert_{\T}\leq C_{0}C$ for all $\vert\j\vert=m$.
     \end{lem}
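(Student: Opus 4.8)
The plan is to recover the coefficients $a'_{\j}$ from the finitely many values $g(\n)$ with $\vert\n\vert\le m$ through a fixed \emph{integer} linear transformation, and then to read off the conclusion modulo $\Z$.

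First I would fix the evaluation set $S:=\{\n\in\N^{D}\colon\vert\n\vert\le m\}$, which is a subset of $\{\n\in\Z^{D}\colon\vert\n\vert\le m\}$ and has cardinality $\binom{D+m}{m}$, equal to the number of multi-indices $\j\in\N^{D}$ with $\vert\j\vert\le m$. Let $V$ be the square integer matrix with rows indexed by $\n\in S$, columns indexed by such $\j$, and entry $V_{\n,\j}=\n^{\j}$. The key step is to show that $V$ is invertible, equivalently that the only polynomial in $D$ variables over $\R$ of total degree at most $m$ vanishing on $S$ is the zero polynomial. I would prove this by induction on $D$. For $D=1$ it is the fact that a univariate polynomial of degree $\le m$ with the $m+1$ roots $0,1,\dots,m$ is zero. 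For the inductive step, assuming the statement for $D-1$ and all degrees, I argue by an inner induction on $m$, the case $m=0$ being trivial: if $p$ has total degree $\le m$ and vanishes on $S$, then $\partial_{\bold{e}_{D}}p(\x):=p(\x+\bold{e}_{D})-p(\x)$ has total degree $\le m-1$ and vanishes on $\{\n\in\N^{D}\colon\vert\n\vert\le m-1\}$, so $\partial_{\bold{e}_{D}}p\equiv 0$ by the inner induction hypothesis; hence $p$ does not depend on $x_{D}$, say $p=\tilde p(x_{1},\dots,x_{D-1})$ with $\deg\tilde p\le m$, and $\tilde p$ vanishes on $\{\n'\in\N^{D-1}\colon\vert\n'\vert\le m\}$, so $\tilde p\equiv 0$ by the outer induction hypothesis. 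Thus $V$ has trivial kernel and, being square, is invertible.

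Next I would put $Q:=\vert\det V\vert\in\N_{+}$; this depends only on $D$ and $m$. By Cramer's rule $QV^{-1}$ is, up to sign, the adjugate of $V$, hence has integer entries. Writing $g$ in the monomial basis, its coefficient vector $\bold{a}$ (with $\bold{a}_{\j}=a'_{\j}$ for $\vert\j\vert=m$ and $\bold{a}_{\j}=0$ for $\vert\j\vert<m$) satisfies $V\bold{a}=(g(\n))_{\n\in S}$, so $Q\bold{a}=QV^{-1}(g(\n))_{\n\in S}$; in particular, for each $\j$ with $\vert\j\vert=m$,
\[
Qa'_{\j}=\sum_{\n\in S}(QV^{-1})_{\j,\n}\,g(\n).
\]

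Finally, for $\n\in S$ we have $\vert\n\vert\le m$, so the hypothesis gives $\Vert g(\n)\Vert_{\T}\le C_{0}$; write $g(\n)=k_{\n}+\delta_{\n}$ with $k_{\n}\in\Z$ and $\vert\delta_{\n}\vert\le C_{0}$. Since all $(QV^{-1})_{\j,\n}$ and all $k_{\n}$ are integers, $\sum_{\n\in S}(QV^{-1})_{\j,\n}k_{\n}\in\Z$, so using $\Vert x+k\Vert_{\T}=\Vert x\Vert_{\T}$ for $k\in\Z$ and $\Vert x\Vert_{\T}\le\vert x\vert$,
\[
\Vert Qa'_{\j}\Vert_{\T}=\Bigl\Vert\sum_{\n\in S}(QV^{-1})_{\j,\n}\delta_{\n}\Bigr\Vert_{\T}\le\sum_{\n\in S}\bigl\vert(QV^{-1})_{\j,\n}\bigr\vert\,\vert\delta_{\n}\vert\le C_{0}\cdot C,
\]
where $C:=\max_{\vert\j\vert=m}\sum_{\n\in S}\vert(QV^{-1})_{\j,\n}\vert$ depends only on $D$ and $m$. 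This is exactly the asserted bound $\Vert Qa'_{\j}\Vert_{\T}\le C_{0}C$. The one step that is not pure bookkeeping is the invertibility of $V$, i.e.\ the unisolvence of the simplex lattice $S$ for polynomials of degree $\le m$; I expect that to be the main point, though the inductive argument above makes it entirely elementary, and everything after it is Cramer's rule together with the basic properties of $\Vert\cdot\Vert_{\T}$.
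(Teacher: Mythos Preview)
Your proof is correct, but it takes a different route from the paper's. The paper extracts each coefficient directly via iterated discrete derivatives: since $g$ is homogeneous of degree $m$ and $|\j|=m$, one has
\[
\partial^{j_{1}}_{\bold{e}_{1}}\cdots\partial^{j_{D}}_{\bold{e}_{D}}g(\bold{0})=(j_{1}!\cdots j_{D}!)\,a'_{\j},
\]
and the left side is an integer-weighted sum of values $g(\n)$ with $0\le n_{i}\le j_{i}$, hence $|\n|\le m$; this immediately gives $\Vert(j_{1}!\cdots j_{D}!)a'_{\j}\Vert_{\T}\le 2^{m}C_{0}$, and one takes $Q=(m!)^{D}$ since each $j_{1}!\cdots j_{D}!$ divides $(m!)^{D}$. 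So the paper writes down the relevant rows of an integer left-inverse explicitly, while you prove abstractly that the interpolation matrix $V$ on the simplex lattice is invertible and then invoke Cramer's rule to get integrality. Your unisolvence argument (differencing in one variable to reduce the degree) is in spirit the same tool the paper applies directly. The paper's approach is shorter and yields explicit constants $Q=(m!)^{D}$, $C=2^{m}Q$; your approach is more conceptual and would adapt with no change to recovering \emph{all} coefficients of a not-necessarily-homogeneous polynomial of degree $\le m$ from its values on $S$, though that extra generality is not needed here.
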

     \begin{proof}
     	Recall that $\partial_{\m}g(\n):=g(\n+\m)-g(\n)$. Let $\j=(j_{1},\dots,j_{D})\in\N^{D}$ be any vector with $\vert\j\vert=m$. Then it is easy to check that 
     	$$\partial^{j_{1}}_{\bold{e}_{1}}\dots\partial^{j_{D}}_{\bold{e}_{D}}g(\bold{0})= (j_{1}!\cdot\ldots\cdot j_{D}!)a'_{\j}.$$
     	Since  $\Vert g(\n)\Vert_{\T}\leq C_{0}$ for all $\vert\n\vert\leq m$,
     	$$\Vert (j_{1}!\cdot\ldots\cdot j_{D}!)a'_{\j}\Vert_{\T}\leq 2^{D}C_{0}.$$
     	Let $Q=(m!)^{D}$, which divides $j_{1}!\cdot\ldots\cdot j_{D}!$. We have that 
     	$$\Vert Qa'_{\j}\Vert_{\T}\leq 2^{D}C_{0}Q/(j_{1}!\cdot\ldots\cdot j_{D}!)\leq 2^{D}C_{0}Q$$
     	for all $\vert\j\vert=m$. This finishes the proof by setting $C=2^{D}Q$.   
     \end{proof}	
     
%

\subsection{Smooth norms on the upper horizontal torus}

Let  $X=G/\Gamma$ be a nilmanifold with a standard nil-structure $\G$. Suppose that $\dim(G)=m$, $\dim(G_{\ker})=m'_{2}$ and let $s'=m-m'_{2}$.
For convenience, we use the same notation $\pi_{\ker}$ to denote the following two  different maps, the meaning of which will always be clear from the context: (i) $\pi_{\ker}\colon\mathbb{R}^{m}\to\mathbb{R}^{s'}$, the projection from $\mathbb{R}^{m}$ to its first $s'$ coordinates; (ii) $\pi_{\ker}\colon G\to G/G_{\ker}$, the quotient map of $G$ by $G_{\ker}$.

Clearly, the Mal'cev coordinate map $\psi$ induces an isometric identification $\psi_{\ker}\colon G/G_{\ker}\to\mathbb{R}^{s'}$ between $G/G_{\ker}$ and $\mathbb{R}^{s'}$ such that $\psi_{\ker}\circ\pi_{\ker}=\pi_{\ker}\circ\psi\colon G\to\mathbb{R}^{s'}$. $\psi_{\ker}$ also    induces an isometric identification between the upper horizontal torus $G/(G_{\ker}\Gamma)$ and $\T^{s'}$ (endowed with the canonical metric). We define the smooth norm on the upper horizontal torus as follows, which will be used in later sections. 

\begin{defn}[Smooth norm on the upper horizontal torus]
	Let  $X=G/\Gamma$ be a nilmanifold with a standard nil-structure $\G$. Suppose that $\dim(G)=m$, $\dim(G_{\ker})=m'_{2}$ and let $s'=m-m'_{2}$. Let $D,N\in\N_{+}$ and $g\in \poly_{D}(G_{\bullet})$. Then $\pi_{\ker}\circ\psi\circ g\colon \Z^{D}\to \mathbb{R}^{s'}$ can be written as 
	$$\pi_{\ker}\circ\psi\circ g(\n)=\sum_{\j\in\N^{D},\vert\j\vert\leq k}\alpha_{\j}\binom{\n}{\j} \text{ or } \pi_{\ker}\circ\psi\circ g(\n)=\sum_{\j\in\N^{D},\vert\j\vert\leq k}\alpha'_{\j}\n^{\j}$$
	for some $d\in\N,\alpha_{\j},\alpha'_{\j}\in\mathbb{R}^{r}$ for all $\j\in\N^{D},\vert\j\vert\leq k$. We define the \emph{smooth norm} of $g$ on the upper horizontal torus by
	$$\Vert g\Vert_{C_{\ker,\X}^{\infty}(R_{N,D})}:=\Vert \pi_{\ker}\circ\psi\circ g\Vert_{C_{s'}^{\infty}(R_{N,D})} \text{ and } \Vert g\Vert'_{C_{\ker,\X}^{\infty}(R_{N,D})}:=\Vert \pi_{\ker}\circ\psi\circ g\Vert'_{C_{s'}^{\infty}(R_{N,D})},$$
	where $\Vert\cdot\Vert_{C_{s'}^{\infty}(R_{N,D})}$ and $\Vert\cdot\Vert'_{C_{s'}^{\infty}(R_{N,D})}$ are the norms defined in Definition \ref{sn}.
	%
\end{defn}	

\subsection{Leibman's Theorem and total equidistribution}
By the quantitative nature of the results in this paper, we need to use
the concept of total $\e$-equidistribution first introduced in \cite{GT}, which can be viewed as a quantitative version of (\ref{ed}).

     \begin{defn}[Total $\e$-equidistribution]
     Let $(X=G/\Gamma,m_{X})$ be a nilmanifold with a nil-structure $\X$. Let $D,N\in\N_{+}$, $\epsilon>0$ and $g\colon \Z^{D}\to G$. We say that the sequence $(g(\n)\cdot e_{X})_{\n\in R_{N,D}}$ is \emph{totally $\epsilon$-equidistributed} on $X$ with respect to $\X$ 
     if for every $D$-dimensional arithmetic progression $P$, every function $f$ on $X$ with $\Vert f\Vert_{\Lip(\X)}\leq 1$ and $\int_{X} f\,d m_{X}=0$, we have that
     \begin{equation}\label{33}
     	\Bigl\vert\E_{\n\in R_{N,D}}\bold{1}_{P}(\n)f(g(\n)\cdot e_{X})\Bigr\vert\leq \epsilon.
     \end{equation}	
    \end{defn}
     
     
     The next result is a variation of Theorem 8.6 in ~\cite{GT}, which provides a convenient criteria for establishing equidistribution properties
     of polynomial sequences on nilmanifolds (see also Theorem 7.3 of \cite{S}):
     
     \begin{thm}[A variation of the quantitative Leibman's Theorem]\label{Lei}
     	Let $\epsilon>0, D\in\N_{+}$ and $X=G/\Gamma$ be a nilmanifold with a nil-structure $\G$. There exists $C:=C(\X,\epsilon,D)>0$\footnote{If a quantity  depends on $\X$ (such as $C$), then it also implicitly depends on the nilmanifold $X$.} such that for every $N\in\mathbb{N}$ and polynomial sequence $g\in\poly_{D}(G_{\bullet})$, if $(g(\n)\cdot e_{X})_{\n\in R_{N,D}}$ is not totally $\epsilon$-equidistributed on $X$ with respect to $\X$, then there exists a horizontal character $\eta$ such that $$0<\Vert\eta\Vert_{\X}\leq C \text{ and }\Vert \eta\circ g\Vert_{C_{1}^{\infty}(R_{N,D})}\leq C.$$
     \end{thm}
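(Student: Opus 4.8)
The plan is to deduce the statement from the quantitative Leibman theorem of Green and Tao in dimension $D$ (Theorem 8.6 of \cite{GT}, see also Theorem 7.3 of \cite{S}) by absorbing the arithmetic progression appearing in the definition of total $\e$-equidistribution into an affine reparametrization of the index cube. First I would show that failure of total $\e$-equidistribution forces the offending progression to be ``thick''. Suppose $(g(\n)\cdot e_{X})_{\n\in R_{N,D}}$ is not totally $\e$-equidistributed, and fix a $D$-dimensional progression $P$ with step $(M_{1},\dots,M_{D})$ and a function $f$ with $\Vert f\Vert_{\Lip(\X)}\leq 1$, $\int_{X}f\,dm_{X}=0$, violating (\ref{33}). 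Since $\vert f\vert\leq 1$, the density $\vert P\cap R_{N,D}\vert/(2N+1)^{D}$ must exceed $\e$. Writing $P\cap R_{N,D}$ as the image of a box $\prod_{i=1}^{D}\{0,\dots,L_{i}-1\}$ under an affine map $\n\mapsto\m_{0}+\sum_{i}M_{i}n_{i}\bold{e}_{i}$ with $\m_{0}\in R_{N,D}$, and using $L_{i}\leq 2N/M_{i}+1$, $L_{i}\leq 2N+1$, and $\prod_{i}L_{i}=\vert P\cap R_{N,D}\vert>\e(2N+1)^{D}$, one gets, for $N$ large enough in terms of $\e,D$, that $M_{i}\ll_{\e,D}1$ and $L_{i}\gg_{\e}N$ for every $i$.

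Next I would set $\tilde g(\n):=g(\m_{0}+\sum_{i}M_{i}n_{i}\bold{e}_{i})$, which again lies in $\poly_{D}(G_{\bullet})$ since that class is closed under affine reparametrization of the domain (Lemma \ref{6.7}); by construction $\vert\E_{\n\in\prod_{i}\{0,\dots,L_{i}-1\}}f(\tilde g(\n)\cdot e_{X})\vert>\e$, so $(\tilde g(\n)\cdot e_{X})$ is not $\e$-equidistributed on the box $\prod_{i}\{0,\dots,L_{i}-1\}$. Applying the $D$-dimensional quantitative Leibman theorem of \cite{GT,S} to $\tilde g$ on this box yields a horizontal character $\tilde\eta$ with $0<\Vert\tilde\eta\Vert_{\X}\leq C_{1}$ and, writing $\tilde\eta\circ\tilde g(\n)=\sum_{\j}\alpha'_{\j}\n^{\j}$, the estimate $\prod_{i}L_{i}^{j_{i}}\Vert\alpha'_{\j}\Vert_{\T}\leq C_{1}$ for all $\j\neq\bold{0}$, where $C_{1}:=C_{1}(\X,\e,D)$; since $L_{i}\gg_{\e}N$ this upgrades to $\Vert\tilde\eta\circ\tilde g\Vert'_{C_{1}^{\infty}(R_{N,D})}\leq C_{1}'$ with $C_{1}':=C_{1}'(\X,\e,D)$, in the normalization of Definition \ref{sn}.

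It remains to transfer this bound from $\tilde g$ to $g$ itself. Write $\tilde\eta\circ g(\m)=\sum_{1\leq\vert\j\vert\leq k}\gamma_{\j}\m^{\j}$ modulo $1$, with $k$ the degree of $G_{\bullet}$, and substitute $\m=\m_{0}+\sum_{i}M_{i}n_{i}\bold{e}_{i}$; equating coefficients of $\n^{\j'}$ gives, modulo $1$,
$$\Bigl(\prod_{i}M_{i}^{j'_{i}}\Bigr)\gamma_{\j'}=\alpha'_{\j'}-\Bigl(\prod_{i}M_{i}^{j'_{i}}\Bigr)\sum_{\j\geq\j',\,\j\neq\j'}\binom{\j}{\j'}\m_{0}^{\j-\j'}\gamma_{\j},$$
where the integers $\prod_{i}M_{i}^{j'_{i}}$ and $\prod_{i}M_{i}^{j'_{i}}\binom{\j}{\j'}\m_{0}^{\j-\j'}$ are, respectively, bounded and of size $\ll_{\X,\e,D}N^{\vert\j\vert-\vert\j'\vert}$ (using $\vert(\m_{0})_{i}\vert\leq N$ and boundedness of the $M_{i}$). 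Fixing a positive integer $Q:=Q(\X,\e,D)$ divisible by $\prod_{i}M_{i}^{j'_{i}}$ for every $\vert\j'\vert\leq k$, multiplying the identity by $Q/\prod_{i}M_{i}^{j'_{i}}\in\N_{+}$, and running a downward induction on $\vert\j'\vert$ — using $\Vert\alpha'_{\j'}\Vert_{\T}\leq C_{1}'(2N+1)^{-\vert\j'\vert}$, the inductive bounds $(2N+1)^{\vert\j\vert}\Vert Q\gamma_{\j}\Vert_{\T}\ll_{\X,\e,D}1$ for $\vert\j\vert>\vert\j'\vert$, and subadditivity of $\Vert\cdot\Vert_{\T}$ under integer multiples — yields $(2N+1)^{\vert\j'\vert}\Vert Q\gamma_{\j'}\Vert_{\T}\ll_{\X,\e,D}1$ for all $1\leq\vert\j'\vert\leq k$. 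Taking $\eta:=Q\tilde\eta$ (still a horizontal character, with $0<\Vert\eta\Vert_{\X}=Q\Vert\tilde\eta\Vert_{\X}\leq QC_{1}$), and noting $\eta\circ g$ has coefficients $Q\gamma_{\j}$, we obtain $\Vert\eta\circ g\Vert_{C_{1}^{\infty}(R_{N,D})}\ll_{\X,\e,D}1$; letting $C$ be the largest constant produced gives $0<\Vert\eta\Vert_{\X}\leq C$ and $\Vert\eta\circ g\Vert_{C_{1}^{\infty}(R_{N,D})}\leq C$, as required.

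I expect the transfer in the last paragraph to be the main obstacle: one must run the triangular recursion so that only a \emph{bounded} multiple of $\tilde\eta$ is needed (rather than one growing with $N$, which the offset $\m_{0}$ of size $\asymp N$ might naively suggest), and keep the dependence of every constant on $(\X,\e,D)$ transparent. The reduction of total $\e$-equidistribution to ordinary $\e$-equidistribution on a sub-box, and the appeal to the multidimensional Leibman theorem, are comparatively routine; it is exactly the upgrade from the one-dimensional formulation of \cite{GT} to dimension $D$ (and to the symmetric cubes $R_{N,D}$) that constitutes the ``variation'', which is why \cite{S} is cited alongside.
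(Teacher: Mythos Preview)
Your argument is correct, and it is a genuinely different route from the one the paper points to. The paper does not give a proof but says to combine Theorem 5.2 of \cite{FH} (the one-dimensional total-equidistribution Leibman theorem) with the slicing argument of Lemma 3.1 in \cite{er}: one restricts to lines in many directions, gets a horizontal character on each line from the $1$-dimensional theorem, and then pieces these together into a single $D$-dimensional character. This is exactly the mechanism behind the paper's Lemma \ref{GTer} and Proposition \ref{comp}.

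You instead stay in dimension $D$ throughout: you exploit the fact that the progressions in this paper are \emph{axis-aligned}, so $P\cap R_{N,D}$ is itself a box, observe that the density bound forces all step sizes $M_i$ to be $O_{\e}(1)$ and all side-lengths $L_i$ to be $\gg_{\e}N$, reparametrize, apply the $D$-dimensional Green--Tao theorem (Theorem 8.6 of \cite{GT}) directly on the sub-box, and then undo the affine change of variables by a clean downward induction on $|\j'|$, clearing the bounded denominators $\prod_i M_i^{j'_i}$ with a single integer $Q=Q(\X,\e,D)$. The key point you identify --- that the offset $\m_0$ of size $N$ contributes only through integer multiples of the already-controlled higher coefficients $Q\gamma_{\j}$, so no $N$-dependent scaling of $\eta$ is needed --- is exactly right.

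Your approach is more direct for the statement as written, but it relies on the axis-aligned structure of $P$; the paper's slicing route is more robust and is what allows the generalization to the sets in Proposition \ref{comp} (where $P\cap\ell$ is a $1$-dimensional progression for every line $\ell$, but $P$ need not be a box).
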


     	This theorem is stated in \cite{er} and \cite{GT} under the stronger hypothesis that the sequence is not ``$\e$-equidistributed on $X$", meaning that (\ref{33}) fails for $P=R_{N,D}$.
     	The stronger result Theorem \ref{Lei} can be obtained by using Theorem 5.2 of \cite{FH} combined with a similar argument in Lemma 3.1 in \cite{er}. 
     	We omit the proof.

     The following is a partial converse of the above result (see also Lemma 5.3 of \cite{FH} and Theorem 7.5 of \cite{S}):
     
     \begin{thm}[Inverse Leibman's Theorem]\label{inv}
     	Let $D\in\mathbb{N}_{+}, C_{0}>0$ and $X=G/\Gamma$ be a nilmanifold with a nil-structure $\G$. There exist $C:=C(\X,D), N_{0}:=N_{0}(\X,C_{0},D)>0$ such that for every $C_{0}>0$, every $N\geq N_{0}$, and every polynomial sequence $g\in\poly_{D}(G_{\bullet})$, if there exists a non-trivial horizontal character $\eta$ of $X$ with $\Vert\eta\Vert_{\X}\leq C_{0}$ and $\Vert \eta\circ g\Vert_{C_{1}^{\infty}(R_{N,D})}\leq C_{0}$, then the sequence $(g(\n)\cdot e_{X})_{\n\in R_{N,D}}$ is not totally $CC_{0}^{-(D+1)}$-equidistributed on $X$ with respect to $\X$.
     \end{thm}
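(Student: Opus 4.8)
The plan is to exploit the freedom built into the definition of total $\epsilon$-equidistribution: to witness that $(g(\n)\cdot e_{X})_{\n\in R_{N,D}}$ is \emph{not} totally $\epsilon$-equidistributed it is enough to exhibit \emph{one} $D$-dimensional arithmetic progression $P\subseteq R_{N,D}$ and \emph{one} function $f$ on $X$ with $\Vert f\Vert_{\Lip(\X)}\le 1$ and $\int_{X}f\,dm_{X}=0$ for which the left-hand side of (\ref{33}) exceeds $\epsilon$. I would take $f$ to be a normalization of $F:=e\circ\bar\eta\colon X\to\mathbb{C}$, where $\bar\eta\colon X\to\T$ is the function induced by the horizontal character $\eta$ (well defined because $\eta(\Gamma)=\{0\}$), and take $P$ to be a small sub-cube of $R_{N,D}$ of side length comparable to $N/C_{0}$, on which the phase $e(\eta\circ g)$ is nearly constant. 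Since $\eta$ is a homomorphism into an abelian group it kills $G_{2}$, and being non-trivial it descends to a non-zero character of the horizontal torus $G/(G_{2}\Gamma)$, onto which $m_{X}$ pushes to Haar measure; hence $\int_{X}F\,dm_{X}=0$ by orthogonality of characters. Because the coordinates of $\eta$ in the Mal'cev basis of $\X$ are bounded in absolute value by $\Vert\eta\Vert_{\X}\le C_{0}$, one gets $\Vert F\Vert_{\Lip(\X)}\le C'(\X)C_{0}$ for a constant $C'(\X)\ge 1$. We may assume $C_{0}\ge 1$, since a non-trivial horizontal character has $\Vert\eta\Vert_{\X}\ge 1$.

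Next I would make the phase explicit. As $\eta$ kills $G_{2}$ and $g\in\poly_{D}(G_{\bullet})$, the map $p:=\eta\circ g\colon\Z^{D}\to\T$ is a polynomial of degree at most $k$; by Lemma \ref{6.7} write $p(\n)=\sum_{\j\in\N^{D},\,|\j|\le k}\alpha_{\j}\binom{\n}{\j}$, so that the hypothesis $\Vert\eta\circ g\Vert_{C_{1}^{\infty}(R_{N,D})}\le C_{0}$ becomes $\Vert\alpha_{\j}\Vert_{\T}\le C_{0}(2N+1)^{-|\j|}$ for every $\j\neq\bold{0}$. Fix a constant $\delta=\delta(\X,D)\in(0,1)$, to be pinned down below, set $M:=\lfloor\delta N/C_{0}\rfloor$, and take $P:=R_{M,D}$; this is a genuine $D$-dimensional arithmetic progression (step $(1,\dots,1)$, length $(2M+1,\dots,2M+1)$, base point $(-M,\dots,-M)$) contained in $R_{N,D}$. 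Using $\bigl|\binom{\n}{\j}\bigr|\le(M+k)^{|\j|}/\prod_{i=1}^{D}j_{i}!$ for $\n\in R_{M,D}$, and assuming $N$ large enough (in terms of $\X,C_{0},D$, in fact $N\gg_{\X,D}C_{0}$) that $M+k\le 2\delta N/C_{0}$, one gets
\begin{align*}
\Vert p(\n)-\alpha_{\bold{0}}\Vert_{\T}
&\le\sum_{\j\neq\bold{0},\,|\j|\le k}\bigl|\binom{\n}{\j}\bigr|\,\Vert\alpha_{\j}\Vert_{\T}
\le C_{0}\sum_{\j\neq\bold{0},\,|\j|\le k}\frac{1}{\prod_{i=1}^{D}j_{i}!}\cdot\frac{(M+k)^{|\j|}}{(2N+1)^{|\j|}}\\
&\le\delta\sum_{\j\neq\bold{0}}\frac{1}{\prod_{i=1}^{D}j_{i}!}\le\delta\,e^{D}
\end{align*}
for all $\n\in R_{M,D}$; choosing $\delta:=\tfrac1{10}e^{-D}$ makes this at most $1/10$, so $\mathrm{Re}\bigl(e(p(\n))\overline{e(\alpha_{\bold{0}})}\bigr)=\cos(2\pi(p(\n)-\alpha_{\bold{0}}))\ge\cos(\pi/5)>\tfrac12$ for every $\n\in R_{M,D}$.

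Finally I would assemble the estimate. With $f:=F/\Vert F\Vert_{\Lip(\X)}$ and $P=R_{M,D}\subseteq R_{N,D}$,
$$\Bigl|\E_{\n\in R_{N,D}}\bold{1}_{P}(\n)f(g(\n)\cdot e_{X})\Bigr|=\frac{(2M+1)^{D}}{(2N+1)^{D}}\cdot\frac{1}{\Vert F\Vert_{\Lip(\X)}}\Bigl|\E_{\n\in R_{M,D}}e(p(\n))\Bigr|\ge\frac{(2M+1)^{D}}{(2N+1)^{D}}\cdot\frac{1}{2C'(\X)C_{0}},$$
and, taking $N\ge N_{0}(\X,C_{0},D)$ large enough that $M\ge\delta N/(2C_{0})$ (which lets one take $N_{0}$ of the form $c(\X,D)C_{0}$), the density factor $(2M+1)^{D}/(2N+1)^{D}$ is at least $(\delta/(4C_{0}))^{D}$. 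Hence the correlation is at least $C(\X,D)\,C_{0}^{-(D+1)}$ with $C(\X,D):=\tfrac12(\delta/4)^{D}/C'(\X)$, so the sequence is not totally $C(\X,D)C_{0}^{-(D+1)}$-equidistributed (halving $C$ if one insists on strict inequality). I expect the only mildly delicate points to be bookkeeping: checking that $P$ is an admissible $D$-dimensional arithmetic progression sitting inside $R_{N,D}$; tracking how the two powers of $C_{0}$ in $C_{0}^{-(D+1)}$ arise — one from the density $\vert P\vert/\vert R_{N,D}\vert\asymp C_{0}^{-D}$, the other from $\Vert e\circ\bar\eta\Vert_{\Lip(\X)}\ll_{\X}C_{0}$; and handling the edge case $C_{0}<1$. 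There is no genuine structural difficulty, this being the (easy) converse to the quantitative Leibman theorem.
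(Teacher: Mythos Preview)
Your proposal is correct and follows essentially the same approach as the paper's proof: both take $f$ to be a normalization of $e\circ\eta$, take $P$ to be a sub-cube $R_{M,D}$ with $M\asymp N/C_{0}$ on which the phase $e(\eta\circ g)$ is nearly constant, and obtain the $C_{0}^{-(D+1)}$ from the density $|P|/|R_{N,D}|\asymp C_{0}^{-D}$ together with $\Vert e\circ\eta\Vert_{\Lip(\X)}\ll_{\X}C_{0}$. Your write-up is in fact slightly more careful with the bookkeeping (explicit binomial-coefficient bounds, explicit $\delta$) than the paper's.
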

     \begin{proof}
     	Since $\Vert \eta\circ g\Vert_{C_{1}^{\infty}(R_{N,D})}\leq C_{0}$, we have that
     	\begin{equation}\nonumber
     		\begin{split}
     			\eta\circ g(\n)=\sum_{\j\in\N^{D}}\alpha_{\j}\binom{\n}{\j},
     		\end{split}
     	\end{equation}
     	where $\Vert a_{\j}\Vert_{\T}\leq\frac{C_{0}}{(2N+1)^{\vert\j\vert}}$ for all $0<\vert\j\vert\leq k$ with $k$ being the degree of $G_{\bullet}$ which depends only on $\X$. Thus
     	$\vert e(\eta\circ g(\n))-e(\eta\circ g(\bold{0}))\vert\leq 1/2$ for all $\n\in R_{\frac{C_{1}N}{C_{0}},D}$ for some $C_{1}:=C_{1}(k,D)>0$. 
     	Then for all $N\in\N$,
     	\begin{equation}\nonumber
     		\begin{split}
     			\Bigl\vert\mathbb{E}_{\n\in R_{\frac{C_{1}N}{C_{0}},D}}e(\eta\circ g(\n))\Bigr\vert\geq\frac{1}{2},
     		\end{split}
     	\end{equation}
     	which implies that
     	\begin{equation}\label{tempp}
     		\begin{split}
     			\Bigl\vert\mathbb{E}_{\n\in R_{N,D}}\bold{1}_{R_{\frac{C_{1}N}{C_{0}},D}}(\n)
     			e(\eta\circ g(\n))\Bigr\vert\geq \frac{C_{1}^{D}}{2C_{0}^{D}}-\frac{C_{2}}{C_{0}^{D-1}N}
     		\end{split}
     	\end{equation}
     	for some $C_{2}:=C_{2}(\X,D)>0$. So if $N>4C_{2}C_{0}/C_{1}^{D}$, then the left hand side of (\ref{tempp}) is at least $\frac{C_{1}^{D}}{4C_{0}^{D}}$.
     	
     	Since $\Vert\eta\Vert_{\X}\leq C_{0}$, the function $x\rightarrow e(\eta(x))$ defined on $X$ is Lipschitz with respect to $\X$ with Lipschitz constant at most $C_{0}C_{3}$ for some $C_{3}:=C_{3}(\X,D)>0$, and has integral 0 since $\eta$ is non-trivial. Therefore, the sequence $(g(\n)\cdot e_{X})_{\n\in R_{N,D}}$ is not totally $C_{4}C_{0}^{-(D+1)}$-equidistributed with $C_{4}:=C_{1}^{D}/4C_{3}$ for all $N>4C_{2}C_{0}/C_{1}^{D}$.
     \end{proof}

We also need the following alternative description of  total equidistribution:     
    
\begin{prop}[Total equidistribution on general subsets]\label{comp}
	Let $X=G/\Gamma$ be a nilmanifold with a nil-structure $\G$. Let 
	$D\in\mathbb{N}_{+}$ and $\epsilon>0$. There exist $\delta:=\delta(\X,\epsilon)>0$ and $N_{0}:=N_{0}(\X,\epsilon)>0$ 
	such that for every $g\in\poly_{D}(G_{\bullet})$, if there exist  $N\in\mathbb{N}, N\geq N_{0}$, 
	a set $P\subseteq R_{N,D}$ such that for any line $\ell\subseteq \R^{D}$, $P\cap \ell$ is a 1-dimensional arithmetic progression (possibly an empty set), 
	and a function $\Phi\colon X\to\mathbb{C}$ with $\int_{X}\Phi\,d\mu=0, \Vert \Phi\Vert_{\Lip(\X)},\vert \Phi\vert\leq 1$ such that
	\begin{equation}\label{91}
		\Bigl\vert\E_{\n\in R_{N,D}}\bold{1}_{P}(\n)\Phi(g(\n)\cdot e_{X})\Bigr\vert>\epsilon,
	\end{equation}	
	then $(g(\n)\cdot e_{X})_{\n\in R_{N,D}}$ is not totally $\delta$-equidistributed on $X$ with respect to $\X$.
\end{prop}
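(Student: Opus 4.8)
The plan is to prove the contrapositive: assuming $(g(\n)\cdot e_{X})_{\n\in R_{N,D}}$ is totally $\delta$-equidistributed on $X$ with respect to $\X$, I will show that $\bigl\vert\E_{\n\in R_{N,D}}\bold{1}_{P}(\n)\Phi(g(\n)\cdot e_{X})\bigr\vert\leq\epsilon$ for every admissible $\Phi$ and every $P$ as in the statement. Since this average is bounded by $\vert P\vert/(2N+1)^{D}$, we may assume $\vert P\vert\geq\epsilon(2N+1)^{D}$. Fix a small parameter $\rho=\rho(\epsilon,D)>0$ (to be chosen) and partition $R_{N,D}$ into $M\asymp\rho^{-D}$ sub-cubes $C_{1},\dots,C_{M}$, each a translate of $R_{\lfloor\rho N\rfloor,D}$ (so each $C_{m}$ is itself a $D$-dimensional arithmetic progression with all steps equal to $1$). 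Writing $\bold{1}_{P}=\sum_{m}\bold{1}_{P\cap C_{m}}$ and using the triangle inequality, the proposition will follow once each $\bold{1}_{P\cap C_{m}}$ is replaced, up to a small $\ell^{1}$-error, by a bounded sum of indicators of $D$-dimensional arithmetic progressions: then $\bigl\vert\E_{\n}\bold{1}_{Q}(\n)\Phi(g(\n)\cdot e_{X})\bigr\vert\leq\delta$ for each such progression $Q$ by total $\delta$-equidistribution (note $\Phi$ is a legitimate test function, being Lipschitz of norm $\leq 1$ with $\int_{X}\Phi\,dm_{X}=0$), and summing gives a bound of the shape $C(\epsilon,D)\bigl(\rho^{-D}\delta+\rho\bigr)$; choosing $\rho\asymp\epsilon$ and then $\delta$ small enough in terms of $\epsilon$ (and taking $N\geq N_{0}(\epsilon,D)$ so the partition makes sense) yields $\leq\epsilon$.

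The content therefore reduces to the following structural statement about sets $P$ meeting every line in an arithmetic progression: if $\vert P\vert\geq\epsilon(2N+1)^{D}$, then for each sub-cube $C_{m}$ there is a partition $P\cap C_{m}=\bigl(\bigsqcup_{j=1}^{B_{m}}Q_{m,j}\bigr)\sqcup r_{m}$ with $B_{m}\leq B(\epsilon,D)$, each $Q_{m,j}$ a $D$-dimensional arithmetic progression contained in $C_{m}$, and $\sum_{m}\vert r_{m}\vert\leq C(\epsilon,D)\,\rho\,(2N+1)^{D}$. The idea is that the line-wise-AP hypothesis is extremely rigid: it forbids, for instance, unions of many parallel slabs, and forces $P$ (when of positive density) to coincide, away from a thin boundary layer, with the intersection of a convex region and a coset of a sublattice of $\Z^{D}$ of index $O_{\epsilon}(1)$. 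Granting such a description, a sub-cube $C_{m}$ either lies in this thin boundary layer (these contribute the error $r_{m}$, whose total size is controlled because a convex region inside a cube of side $N$ has boundary of $(D-1)$-measure $O(N^{D-1})$, and a sublattice of bounded index introduces only boundedly many "walls") or else $P\cap C_{m}$ is exactly a coset of a bounded-index sublattice intersected with $C_{m}$, which is a disjoint union of at most $O_{\epsilon}(1)$ many $D$-dimensional arithmetic progressions. Supplying a correct and quantitative proof of this combinatorial/geometric description of line-wise-AP sets — in particular extracting the bounded index of the relevant sublattice and controlling the boundary uniformly in $N$ — is the step I expect to be the main obstacle; everything else is bookkeeping around the definition of total equidistribution.

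An alternative route, which trades the combinatorics for higher-order-Fourier machinery, would be to slice $R_{N,D}$ along the lines parallel to $\bold{e}_{D}$: on the positive-density set of $\n'\in R_{N,D-1}$ on which the inner average over the AP-fibre $\{n_{D}\colon(\n',n_{D})\in P\}$ is $>\epsilon/2$, the one-variable sequence $(g(\n',n_{D})\cdot e_{X})_{\vert n_{D}\vert\leq N}$ is not totally $(\epsilon/2)$-equidistributed, so Theorem \ref{Lei} (with $D=1$) supplies a bounded horizontal character $\eta_{\n'}$, and a pigeonhole fixes a single non-trivial $\eta$ valid on a positive-density subset; one then has to upgrade the resulting family of one-dimensional smooth-norm bounds on $\eta\circ g$ into a global bound $\Vert(Q\eta)\circ g\Vert_{C_{1}^{\infty}(R_{N,D})}\ll_{\X,\epsilon}1$ for a bounded integer $Q$ (a multidimensional Weyl/polynomial-recurrence argument, carefully coordinated across coordinate directions using the full line-wise-AP hypothesis) and finish via Theorem \ref{inv}. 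Either way, the technical heart is the same: converting the hypothesis on $P$ along every line into information that the available equidistribution machinery can consume.
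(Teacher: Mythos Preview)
Your primary approach rests on a structural claim that is false as stated. Take $D=2$ and $P=\{(n_{1},n_{2})\in R_{N,2}: n_{1}\text{ even or }n_{2}\text{ even}\}$. For any primitive direction $(a,c)$, the condition ``$at+b$ even or $ct+d$ even'' is, after reducing mod $2$, either all $t$, or $t$ of a fixed parity, so every line meets $P$ in an arithmetic progression; yet $P$ has density $3/4$ and differs from \emph{any} set of the form (convex region)$\cap$(single coset of a bounded-index sublattice) on a positive proportion of $R_{N,2}$ (the closest candidates are the full lattice, off by the density-$1/4$ set $\{n_{1},n_{2}\text{ both odd}\}$, or a single coset of $2\Z\times 2\Z$, which has density only $1/4$). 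So the ``convex $\cap$ coset'' description you propose cannot be the mechanism; at best one would need a bounded \emph{union} of such pieces, and bounding that number uniformly is exactly the unproved step. In short, this route does not reduce to bookkeeping.

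Your alternative route is the correct one and is what the paper does, with one important difference: the paper does not slice only along $\bold{e}_{D}$ but along \emph{every} direction $\v\in[L]^{D}$ simultaneously. From (\ref{91}) and the Lipschitz bound one gets, for each $\v\in[L]^{D}$, a set $J_{\v}\subseteq R_{N,D}$ of density $>\epsilon/4$ of base points $\m$ for which the one-dimensional sequence $n\mapsto g(\m+n\v)$ is not totally $(\epsilon/2)$-equidistributed on the window $[-N/L^{2},N/L^{2}]$; here the line-wise-AP hypothesis is used exactly once, to ensure $\{n:\m+n\v\in P\}$ is a genuine $1$-dimensional progression so that the definition of total equidistribution applies. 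Having this for \emph{all} $\v\in[L]^{D}$ is precisely the input to Lemma~\ref{GTer} (the argument on pp.\ 6--9 of Green--Tao), which outputs a single bounded horizontal character $\eta$ with $\Vert\eta\circ g\Vert_{C_{1}^{\infty}(R_{N,D})}$ bounded; Theorem~\ref{inv} then finishes. Slicing in a single coordinate direction, as you suggest, does not feed directly into that lemma, and the ``upgrade'' you allude to would have to reprove a variant of it; the clean move is to slice in all directions from the start.
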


To prove this proposition, we need the following technical lemma, whose proof is the argument on pages 6--9 of \cite{GT}.
\begin{lem}\label{GTer}
	Let 
	$D,N\in\mathbb{N}_{+}$ and $\epsilon>0$. 
	Let $X=G/\Gamma$ be a nilmanifold with a nil-structure $\G$ and
	$g\in\poly_{D}(G_{\bullet})$ be a polynomial sequence.
	 Let $N,L>0$ be such that $N>L^{2}$ and $L>C/\e$ for some $C$ sufficiently large depending only on $\X$, $D$ and $\e$. Suppose that for all $\v\in [L]^{D}$, there exist $J_{\v}\subseteq R_{N,D}$ with $\vert J_{\v}\vert>\frac{1}{4}\e N^{D}$ such that $(g(\m+n\v)\cdot e_{X})_{n\in[-N/L^{2},N/L^{2}]}$ is not totally $\e/2$-equidistributed on $X$ with respect to $\X$ for all $\m\in J_{\v}$. Then there exists  $W:=W(\X,\e)>0$ (independent of $\v$) and a horizontal character $\eta$ such that 
		 $$0<\Vert\eta\Vert_{\X}\leq W \text{ and }\Vert \eta\circ g\Vert_{C_{1}^{\infty}(R_{N,D})}\leq W.\footnote{The conclusion of Lemma \ref{GTer} is exactly the same as that of Lemma 3.1 in \cite{er}.}$$
\end{lem}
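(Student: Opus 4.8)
The plan is to reproduce the argument of Green and Tao (\cite{GT}, pp.~6--9), which we now outline. Throughout, $k$ denotes the degree of the filtration $G_{\bullet}$ (a quantity depending only on $\X$) and $N':=\lfloor N/L^{2}\rfloor$. First I would pass to the restricted one-parameter orbits. For $\v\in[L]^{D}$ and $\m\in J_{\v}$, set $g_{\m,\v}(n):=g(\m+n\v)$; then $g_{\m,\v}\in\poly_{1}(G_{\bullet})$, since the class $\poly$ is stable under affine substitutions without changing the filtration, and by hypothesis the sequence $(g_{\m,\v}(n)\cdot e_{X})_{n\in R_{N',1}}$ is not totally $\e/2$-equidistributed on $X$ with respect to $\X$. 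Applying the one-dimensional case ($D=1$) of Theorem \ref{Lei} with parameter $\e/2$ therefore produces a horizontal character $\eta_{\m,\v}$ of $X$ with
$$0<\Vert\eta_{\m,\v}\Vert_{\X}\leq C_{1}\quad\text{and}\quad \Vert\eta_{\m,\v}\circ g_{\m,\v}\Vert_{C_{1}^{\infty}(R_{N',1})}\leq C_{1},$$
where $C_{1}:=C_{1}(\X,\e)>0$.

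Next I would pigeonhole to a single character. The number of horizontal characters of $\X$-norm at most $C_{1}$ is bounded by some $M:=M(\X,\e)$, since their coordinates are integers in a bounded range. As $\sum_{\v\in[L]^{D}}|J_{\v}|>\frac{\e}{4}N^{D}L^{D}$, a double pigeonholing --- first over the finitely many characters, then separating the contribution according to $\v$ --- produces a single horizontal character $\eta$ with $0<\Vert\eta\Vert_{\X}\leq C_{1}$, a set $V\subseteq[L]^{D}$ with $|V|\gg_{\X,\e}L^{D}$, and for each $\v\in V$ a set $J_{\v}'\subseteq J_{\v}$ with $|J_{\v}'|\gg_{\X,\e}N^{D}$, such that $\eta_{\m,\v}=\eta$ for all $\v\in V$ and $\m\in J_{\v}'$.

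Finally I would recover the smooth norm of $\eta\circ g$ on the full box $R_{N,D}$. Write $\eta\circ g(\n)=\sum_{\j\in\N^{D},\,|\j|\leq k}\beta_{\j}\n^{\j}$. Expanding $\eta\circ g_{\m,\v}(n)=\eta\circ g(\m+n\v)$ in powers of $n$, the coefficient $c_{i}(\m,\v)$ of $n^{i}$ is a polynomial in $(\m,\v)$, homogeneous of degree $i$ in $\v$ and of degree $\leq k-i$ in $\m$, whose coefficients are explicit integer combinations of the $\beta_{\j}$ with $|\j|\geq i$; the bound from the first step gives, using $2N'+1\gg N/L^{2}$, that $\Vert c_{i}(\m,\v)\Vert_{\T}\ll_{\X,\e}(L^{2}/N)^{i}$ for all $\v\in V$, $\m\in J_{\v}'$ and $1\leq i\leq k$. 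One then recovers the $\beta_{\j}$ by a descending induction on $|\j|$: at each stage, with $\v\in V$ fixed, apply the quantitative Green--Tao recurrence lemma (the multidimensional form of Lemma~3.2 of \cite{GT}, stating that a polynomial on $\Z^{D}$ which is $\delta$-close to $\Z$ at $\gg N^{D}$ points of $R_{N,D}$ has, after multiplication by a bounded integer, every coefficient within $\ll\delta$ of $\Z$ in the natural rescaled sense) to the polynomial $\m\mapsto c_{i}(\m,\v)$ on the dense set $J_{\v}'$, and then apply the lemma a second time in the $\v$-variable over $V\subseteq[L]^{D}$ to isolate the individual $\beta_{\j}$. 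Each of the at most $k$ stages introduces only a denominator bounded in terms of $\X,D,\e$, which is absorbed by replacing $\eta$ with a bounded multiple of itself --- this keeps $\Vert\eta\Vert_{\X}$ bounded and merely rescales the $\beta_{\j}$. After these stages one obtains $0<\Vert\eta\Vert_{\X}\leq W$ and $\Vert\eta\circ g\Vert_{C_{1}^{\infty}(R_{N,D})}\leq W$ for a suitable $W:=W(\X,\e)>0$.

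I expect the main obstacle to be the quantitative bookkeeping in this last step: ensuring that the implied constants and the bounded denominators accumulated over the $k$ stages of the descent depend only on $\X$, $D$ and $\e$ --- not on $N$ or $L$ --- and carefully relating the short-box smooth-norm bounds on $R_{N',1}$ to the desired full-box bound on $R_{N,D}$. This is exactly the argument carried out on pp.~6--9 of \cite{GT}, to which we refer for the remaining details.
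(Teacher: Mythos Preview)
Your proposal is correct and follows exactly the approach the paper takes: the paper does not give its own proof of this lemma but simply states that ``the proof is the argument on pages 6--9 of \cite{GT}'', and your outline (apply the one-dimensional Theorem \ref{Lei} along each restricted orbit, pigeonhole to a single character, then recover the coefficients $\beta_{\j}$ by descending induction using the Green--Tao quantitative recurrence lemma) is precisely that argument. There is nothing to add.
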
	

\begin{proof}[Proof of Proposition \ref{comp}]
	Let $C:=C(\X,\e)>0$ be sufficiently large to be chosen latter. Let $N>(C\e^{-1})^{2}$ and pick $C\e^{-1}<L<N^{2}$. 
	Since $\Vert \Phi\Vert_{\Lip(\X)}\leq 1$,  for all $\v\in [L]^{D}$, we have that 
	\begin{equation}\label{92}
		\E_{\n\in R_{N,D}}\bold{1}_{P}(\n)\Phi(g(\n)\cdot e_{X})=\E_{\m\in R_{N,D}}\E_{-N/L^{2}\leq n\leq N/L^{2}}\bold{1}_{P}(\m+n\v)\Phi(g(\m+n\v)\cdot e_{X})+O(\frac{1}{L}).
	\end{equation}
	So if $C\e^{-1}>4$, then (\ref{91}) and (\ref{92}) imply that there exists a set $J_{\v}\subseteq R_{N,D}$ with $\vert J_{\v}\vert>\e(2N+1)^{D}/4$ such that for all $\m\in J_{\v}$,
	\begin{equation}\label{93}
		\Bigl\vert\E_{-N/L^{2}\leq n\leq N/L^{2}}\bold{1}_{P}(\m+n\v)\Phi(g(\m+n\v)\cdot e_{X})\Bigr\vert>\epsilon/2.
	\end{equation}
	By assumption, the set  $\{n\in\Z\colon \m_{\v}+n\v\in P\}$  is a 1-dimensional arithmetic progression. So (\ref{93}) implies that
	 the sequence 
	$$(g_{\m,\v}(n)\cdot e_{X})_{n\in [-N/L^{2},N/L^{2}]}:=(g(\m+n\v)\cdot e_{X})_{n\in [-N/L^{2},N/L^{2}]}$$
	is not  totally $\e/2$-equidistributed on $X$ with respect to $\X$ for all $\v\in [L]^{D}$ and $\m\in J_{\v}$. We may then use Lemma \ref{GTer} to conclude that 
	there exist  $W:=W(\X,\e)>0$ and a horizontal character $\eta$ such that 
	$$0<\Vert\eta\Vert_{\X}\leq W \text{ and }\Vert \eta\circ g\Vert_{C_{1}^{\infty}(R_{N,D})}\leq W.$$
	By Theorem \ref{inv}, 
	$(g(\n)\cdot e_{X})_{\n\in R_{N,D}}$ is not totally $\delta$-equidistributed on $X$ with respect to $\X$ for some $\delta:=\delta(\X,\e)>0$ and $N$ sufficiently large depending only on $\X,\e$.
\end{proof}	

 Let $X=G/\Gamma$ be a nilmanifold.
 Recall that $a\in G$ is rational for $\Gamma$ if $a^{m}\in\Gamma$ for some $m\in\N_{+}$.
 The following is an application of Theorems \ref{Lei} and \ref{inv} which is used in later sections. 
 
 \begin{cor}[Changing the base point]\label{5.5}
 	Let $X=G/\Gamma$ be a nilmanifold with a nil-structure $\G$. Let $G'$ be a subgroup of $G$ rational for $\Gamma$ and $X'=G'/(G'\cap\Gamma)$ be a nilmanifold with a nil-structure  $\X'=(G'_{\bullet},\mathcal{X}',\psi',d_{G'},d_{X'})$ induced by $\X$.
 	Let $a\in G$ be rational for $\Gamma$ and denote $G'_{a}:=a^{-1}G'a$. Let $X'_{a}:=G'_{a}/(G'_{a}\cap\Gamma)$ be with a nil-structure $\X'_{a}=((G'_{a})_{\bullet},\mathcal{X}_{a}',\psi_{a}',d_{G_{a}'},d_{X_{a}'})$ induced by $a$-conjugate from $\X'$. Let $D\in\N_{+}$. Then there exist a function $\rho:=\rho_{\X,\X',\X'_{a},D}\colon\mathbb{R}_{+}\to\mathbb{R}_{+}$ with $\lim_{t\to 0^{+}}\rho(t)=0$ and $N_{0}:=N_{0}(\X,\X',\X'_{a},D)\in\N$ such that for all $g\in\poly_{D}(G'_{\bullet})$ and $N\geq N_{0}$, if $(g(\n)\cdot e_{X'})_{\n\in R_{N,D}}$ is totally $t$-equidistributed on $X'$ with respect to $\X'$, then  $(a^{-1}g(\n)a\cdot e_{X'_{a}})_{\n\in R_{N,D}}$ is totally $\rho(t)$-equidistributed on $X'_{a}$ with respect to $\X'_{a}$.   
 \end{cor}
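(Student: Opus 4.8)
\emph{Plan.} I would argue by contraposition, transporting a failure of total equidistribution on $X'_{a}$ to a failure of total equidistribution on $X'$ through the horizontal characters of the two nilmanifolds. The basic device is the map $\phi_{a}\colon G'\to G'_{a}$, $\phi_{a}(h):=a^{-1}ha$: it is a Lie group isomorphism, it carries the lower central series of $G'$ to that of $G'_{a}$, and (by the very definition of $G'_{a,\bullet}$ in Definition~\ref{vn}) it carries the filtration $G'_{\bullet}$ to $G'_{a,\bullet}$. Consequently $\phi_{a}$ maps $\poly_{D}(G'_{\bullet})$ bijectively onto $\poly_{D}(G'_{a,\bullet})$; in particular $g'(\n):=a^{-1}g(\n)a$ lies in $\poly_{D}(G'_{a,\bullet})$, and for every continuous homomorphism $\eta'\colon G'_{a}\to\T$ one has $\eta'\circ g'=(\eta'\circ\phi_{a})\circ g$ as functions $\Z^{D}\to\T$.

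Now suppose $(g'(\n)\cdot e_{X'_{a}})_{\n\in R_{N,D}}$ is not totally $\epsilon$-equidistributed on $X'_{a}$ with respect to $\X'_{a}$. By Theorem~\ref{Lei} there is a horizontal character $\eta'$ of $X'_{a}$ with $0<\Vert\eta'\Vert_{\X'_{a}}\leq C_{1}$ and $\Vert\eta'\circ g'\Vert_{C_{1}^{\infty}(R_{N,D})}\leq C_{1}$, where $C_{1}:=C_{1}(\X'_{a},\epsilon,D)$. Put $\eta:=\eta'\circ\phi_{a}\colon G'\to\T$. Since $\T$ is abelian, $\eta$ vanishes on $[G',G']=(G')_{2}$; and since $\phi_{a}$ maps $\Gamma'':=\Gamma'\cap a\Gamma a^{-1}$ into $G'_{a}\cap\Gamma=\Gamma'_{a}$, the character $\eta$ also vanishes on $\Gamma''$, so $\eta$ is a horizontal character of the nilmanifold $G'/\Gamma''$. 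As $a$ is rational for $\Gamma$, the subgroup $\Gamma''$ has finite index in $\Gamma'$; if $K$ denotes the index of its normal core in $\Gamma'$ (a quantity depending only on $a$, $G'$ and $\Gamma$), then $\eta$ factors through a finite quotient of $\Gamma'$ of order $K$, so $\eta(\Gamma')$ is a subgroup of $\T$ of order dividing $K$, and therefore $\tilde\eta:=K\eta$ is an honest horizontal character of $X'$; it is non-trivial because $\phi_{a}$ is an isomorphism and $\eta'\neq 0$. Moreover $\Vert\tilde\eta\Vert_{\X'}\ll_{\X',\X'_{a}}\Vert\eta'\Vert_{\X'_{a}}$ — the implied constant only records how the fixed linear isomorphism induced by $\phi_{a}$ on the horizontal tori distorts the two (fixed) Mal'cev bases, together with the factor $K$ — and $\Vert\tilde\eta\circ g\Vert_{C_{1}^{\infty}(R_{N,D})}\leq K\,\Vert\eta'\circ g'\Vert_{C_{1}^{\infty}(R_{N,D})}\leq KC_{1}$. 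Choosing $C_{0}:=C_{0}(\X',\X'_{a},D,\epsilon)$ to dominate both $\Vert\tilde\eta\Vert_{\X'}$ and $\Vert\tilde\eta\circ g\Vert_{C_{1}^{\infty}(R_{N,D})}$, Theorem~\ref{inv} shows that for all $N\geq N_{0}(\X',C_{0},D)$ the sequence $(g(\n)\cdot e_{X'})_{\n\in R_{N,D}}$ is not totally $C_{2}C_{0}^{-(D+1)}$-equidistributed on $X'$ with respect to $\X'$, where $C_{2}:=C_{2}(\X',D)$.

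Reading this contrapositively: for $N$ large enough, if $(g(\n)\cdot e_{X'})_{\n\in R_{N,D}}$ is totally $C_{2}C_{0}(\epsilon)^{-(D+1)}$-equidistributed on $X'$, then $(a^{-1}g(\n)a\cdot e_{X'_{a}})_{\n\in R_{N,D}}$ is totally $\epsilon$-equidistributed on $X'_{a}$. Set $\tau(\epsilon):=C_{2}C_{0}(\epsilon)^{-(D+1)}$; after replacing $C_{0}(\epsilon)$ by $\sup_{\epsilon'\geq\epsilon}C_{0}(\epsilon')$ we may assume $C_{0}$ non-increasing and $\tau$ non-decreasing, and since $C_{0}(\epsilon)\to\infty$ as $\epsilon\to 0^{+}$ we get $\tau(\epsilon)\to 0$ as $\epsilon\to 0^{+}$. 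Then $\rho(t):=\inf\bigl(\{1\}\cup\{\epsilon>0\colon\tau(\epsilon)\geq t\}\bigr)$ satisfies $\lim_{t\to 0^{+}}\rho(t)=0$, and since total $t$-equidistribution is stronger than total $\tau(\rho(t))$-equidistribution (because $\tau(\rho(t))\geq t$, using also that $\rho(t)\geq 1$ makes the conclusion vacuous), applying the displayed implication with $\epsilon=\rho(t)$ gives exactly the statement of the corollary, with $N_{0}$ the threshold furnished by Theorem~\ref{inv}.

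\emph{Main obstacle.} The one point that genuinely needs care is that conjugation by $a$ does \emph{not} send $\Gamma'$ to $\Gamma'_{a}$, only to the commensurable lattice $a^{-1}\Gamma' a$; hence a horizontal character of $X'_{a}$ pulls back only to a horizontal character of the finite cover $G'/(\Gamma'\cap a\Gamma a^{-1})$, and one must use the rationality of $a$ — which forces the covering degree $K$ to be finite and bounded in terms of the data — in order to multiply by $K$ and descend to $X'$. Everything else (the filtered-isomorphism property of $\phi_{a}$, the comparison of character norms under the fixed Mal'cev bases of $\X'$ and $\X'_{a}$, and the bookkeeping producing $\rho$) is routine once Theorems~\ref{Lei} and~\ref{inv} are available.
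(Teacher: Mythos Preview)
Your approach is correct and matches exactly what the paper intends: it omits the proof but says the result follows ``by using Theorems~\ref{Lei} and~\ref{inv}'' (the quantitative Leibman theorem and its converse), transported through the conjugation isomorphism $\phi_{a}$, which is precisely what you do; your handling of the lattice commensurability via the finite index $K$ is the one genuine subtlety and you address it correctly. One minor bookkeeping point: the threshold $N_{0}$ furnished by Theorem~\ref{inv} depends on $C_{0}(\epsilon)$ and hence implicitly on $t$, so strictly speaking one obtains $N_{0}=N_{0}(t)$ rather than a single constant (as the paper's own later usage in property $(P_{5})$ tacitly acknowledges); also, you do not actually need $C_{0}(\epsilon)\to\infty$ to conclude $\rho(t)\to 0$ --- it suffices that $\tau(\epsilon)>0$ for each $\epsilon>0$.
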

 \begin{rem}
 	Let the notations be as in Corollary \ref{5.5}.
 	It was proved in Lemma B.4 of \cite{FH} that $G'_{a}$ is a subgroup of $G$ rational for $\Gamma$. So $G'_{a}\cap\Gamma$ is cocompact in $G'_{a}$  and $X'_{a}=G'_{a}\cdot e_{X}$. Moreover, writing $g_{a}(n):=a^{-1}g(n)a$ and $(G'_{a})_{\bullet}:=a^{-1}G_{\bullet}a$, we have that $g_{a}\in\poly_{D}((G'_{a})_{\bullet})$.
 \end{rem}	
 
 Corollary \ref{5.5} was proved in Corollary 5.5 of \cite{FH} for the case $D=1$, but the general case can be proved by a similar method (by using Theorems \ref{Lei} and \ref{inv}, higher dimensional versions of Leibman's theorems), and so we omit the proofs.
 
     \subsection{Factorization theorem}
     \begin{defn}[Smooth and rational sequences]
     	Let $D,N\in\N_{+}$, $M\in\N$ and $X=G/\Gamma$ be a nilmanifold with a nil-structure $\G$. Suppose that $\dim(G)=m$.
     	\begin{itemize}
     		\item A sequence $\epsilon\in \poly_{D}(G_{\bullet})$ is \emph{(M,N)-smooth} with respect to $\X$ if $\Vert\psi\circ \epsilon\Vert_{C^{\infty}_{m}(R_{N,D})}\leq M$, $d_{G}(\epsilon(\n),e_{G})\leq M$, and $d_{G}(\epsilon(\n),\epsilon(\n+\bold{e}_{i}))\leq M/N$ for all $\n\in R_{N,D}$ and $1\leq i\leq D$.
     		\item $g\in G$ is \emph{$M$-rational} for $\Gamma$ if $g^{m}\in\Gamma$ for some $m\in\N, 1\leq m\leq M$. A sequence $\gamma\colon R_{N,D}\to G$ is \emph{$M$-rational} for $\Gamma$ if $\gamma(\n)$ is $M$-rational for $\Gamma$ for all $\n\in R_{N,D}$.
     	\end{itemize}	
     \end{defn}	
    
     The following result says that every polynomial sequence is concentrated near a finite collection of sub nilmanifolds:
     \begin{thm}[Factorization theorem]\label{5.6}     
     	Let $D,M\in\N_{+}$ and $X=G/\Gamma$ be a nilmanifold with a nil-structure $\G$.
     	 There exists a finite family $\mathcal{F}(M):=\mathcal{F}_{\X,D}(M)$ of subnilmanifolds of $X$, which increases with $M$, each of the form $X'=G'/\Gamma'$ for some subgroup $G'$ of $G$ rational for $\Gamma$ and $\Gamma':=G'\cap\Gamma'$,
     	 endowed with a nil-structure $\X'$ induced by $\X$,
     	 such that the following holds: for every function $\omega\colon\N\to\mathbb{R}_{+}$, there exists $M_{1}:=M_{1}(\X,\omega,D)\in\N_{+}$, and for every $N\in\N_{+}$ and $g\in\poly_{D}(G_{\bullet})$, there exist $M\in\N$ with $M\leq M_{1}$, a sub nilmanifold $X'\in\mathcal{F}(M)$, and a factorization
     	 $g(\n)=\epsilon(\n)g'(\n)\gamma(\n), \n\in R_{N,D}$ with $\epsilon, g', \gamma\in\poly_{D}(G_{\bullet})$ such that
     	 \begin{enumerate}[(i)]
     	 	\item $\epsilon\colon R_{N,D}\to G$ is $(M,N)$-smooth with respect to $\X$;
     	 	\item $g'\in\poly_{D}(G'_{\bullet})$ and $(g'(\n)\cdot e_{X'})_{\n\in R_{N,D}}$ is totally $\omega(M)$-equidistributed on $X'$ with respect to $\X'$;
     	 	\item $\gamma\colon R_{N,D}\to G$ is $M$-rational for $\Gamma$, and $\gamma(\n)\cdot e_{X'}=\gamma(\n+M\bold{e}_{i})\cdot e_{X'}$ for all $1\leq i\leq D$, $\n,\n+M\bold{e}_{i}\in R_{N,D}$. 
     	 \end{enumerate}	
     \end{thm}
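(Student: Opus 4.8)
The plan is to follow the dimension-reduction strategy of Green and Tao (\cite[Theorem 1.19]{GT}; see also \cite{FH} for the qualitative version), proving the statement by induction on $\dim(G)$, with the genuinely multi-dimensional features deferred entirely to the quantitative Leibman theorems already recorded as Theorems \ref{Lei} and \ref{inv}. The case $\dim(G)=0$ is trivial. For the inductive step, fix the growth function $\omega$. First I would set a threshold: if $(g(\n)\cdot e_{X})_{\n\in R_{N,D}}$ is already totally $\omega(M_{0})$-equidistributed on $X$ for a suitable bounded $M_{0}=M_{0}(\X,\omega,D)$, then the trivial factorization $\epsilon=\gamma=e_{G}$, $g'=g$, $X'=X\in\mathcal{F}(M_{0})$ works and we stop. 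Otherwise Theorem \ref{Lei} produces a horizontal character $\eta$ with $0<\Vert\eta\Vert_{\X}\le C_{1}$ and $\Vert\eta\circ g\Vert_{C_{1}^{\infty}(R_{N,D})}\le C_{1}$ for some $C_{1}=C_{1}(\X,\omega,D)$.

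The second step is the single-step factorization. Since $G$ is simply connected, $\eta$ lifts to a continuous homomorphism $\tilde\eta\colon G\to\R$ with $\eta=(\cdot\bmod\Z)\circ\tilde\eta$, and since $\eta$ is nontrivial of bounded $\X$-norm, $G':=\ker(\tilde\eta)$ is a proper, connected, rational subgroup of $G$; I endow it with the induced filtration $G'_{\bullet}=(G^{(i)}\cap G')_{i\in\N}$ and an induced nil-structure $\X'$ as in Definition \ref{vn}. After the routine preprocessing that replaces $g$ by $g\gamma_{0}^{-1}$ for the appropriate $\gamma_{0}\in\Gamma$ (which leaves the orbit $(g(\n)\Gamma)$ unchanged) so that $g(\bold{0})$ lies in a bounded fundamental domain, I expand $g$ in the Mal'cev coordinates via Lemma \ref{6.7} and, using the bound $\Vert\eta\circ g\Vert_{C_{1}^{\infty}(R_{N,D})}\le C_{1}$, split the Taylor coefficients of $\eta\circ g$ into an integer-valued part, a bounded-denominator rational part, and a small part; reassembling these gives a factorization $g=\epsilon g_{1}\gamma$ with $\epsilon$ $(M,N)$-smooth, $\gamma$ $M$-rational and $M$-periodic on $X'$, and $g_{1}\in\poly_{D}(G'_{\bullet})$, all with $M$ controlled by $C_{1}$. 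This is the multi-dimensional counterpart of the key lemma of \cite[Sections 7--8]{GT}; its delicate points are keeping $\gamma$ a polynomial sequence for the \emph{given} filtration $G_{\bullet}$ while remaining $M$-rational, and verifying that $\epsilon$ satisfies all three smoothness conditions --- both of which are forced by the $C_{1}^{\infty}$-bound together with the support condition in Lemma \ref{6.7}.

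The third step closes the induction. As $\dim(G')<\dim(G)$, I apply the inductive hypothesis to $g_{1}\in\poly_{D}(G'_{\bullet})$ on $X'=G'/\Gamma'$ with the nil-structure $\X'$, but with $\omega$ replaced by a growth function $\omega'$ depending on $\omega$ and on the constants produced above, chosen so that recombining the two factorizations (reordering $\epsilon$, $g'$, $\gamma$ into a single factorization of $g$, which uses Corollary \ref{5.5} to transport total equidistribution under the bounded and rational conjugations that arise, and Lemma \ref{dist} for the metric estimates) still yields a sequence that is totally $\omega(M)$-equidistributed on the final subnilmanifold for the final, larger $M$. The finite family $\mathcal{F}_{\X,D}(M)$ is then defined recursively: it consists of $X$ together with, for each of the finitely many (with a bound depending only on $\X$, $D$ and $C_{1}$) proper rational subgroups $G'$ arising as $\ker(\tilde\eta)$ for a horizontal character of $\X$-norm $\le C_{1}$, each carrying finitely many relevant induced nil-structures, all members of $\mathcal{F}_{\X',D}(M')$; monotonicity in $M$ is immediate and the bound $M_{1}=M_{1}(\X,\omega,D)$ follows because the recursion has depth at most $\dim(G)$. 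The step I expect to be the main obstacle is precisely this last bookkeeping: ensuring that only boundedly many induced nil-structures $\X'$ occur at every level, so that the constants in Theorems \ref{Lei} and \ref{inv} stay uniformly bounded, and choosing the modified growth functions $\omega'$ so that the composed factorization has exactly the asserted equidistribution; the $D$-dimensional aspect itself contributes only heavier notation, since it is absorbed into Theorems \ref{Lei}, \ref{inv} and Lemma \ref{6.7}. For the same reason one could alternatively just invoke the $D$-dimensional analogue of \cite[Theorem 1.19]{GT}, whose proof is the argument sketched above.
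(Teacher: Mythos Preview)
Your proposal is correct and is essentially the same approach the paper takes: the paper does not give an independent proof but simply cites Theorem 10.2 of \cite{GT} (the very dimension-reduction argument you outline) and records the minor adjustments needed here---the strengthened notion of $(M,N)$-smoothness, the periodicity of $\gamma$ via Lemma A.12 of \cite{GT}, and the passage from $\omega(M)=M^{A}$ to a general $\omega$. Your sketch already anticipates these points (you flag the three smoothness conditions and the periodicity requirement), so there is nothing to add.
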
	
    
     \begin{rem}
     	 The proof of Theorem \ref{5.6} is essentially the same as Theorem 10.2 of \cite{GT}. So
     	 we omit its proof, but only pointing out the 
     	 differences: 
     	 \begin{itemize}
     	 	\item The definition of $(M,N)$-smoothness in this paper is stronger than the one used by Green and Tao \cite{GT} as we require that $\Vert\psi\circ \epsilon\Vert_{C^{\infty}_{m}(R_{N,D})}\leq M$ in addition. This stronger conclusion was in fact proved implicitly by using the construction of $\epsilon$ on pages 49--50 in the proof of Proposition 9.2 of \cite{GT}, the proof of Theorem 10.2 of \cite{GT}, and the fact that
     	 	$$\Vert\psi\circ \epsilon_{1}\epsilon_{2}\Vert_{C^{\infty}_{m}(R_{N,D})}\leq C(\Vert\psi\circ \epsilon_{1}\Vert_{C^{\infty}_{m}(R_{N,D})}+\Vert\psi\circ \epsilon_{2}\Vert_{C^{\infty}_{m}(R_{N,D})})$$ 
     	 	for some $C>0$ depending only on $\X$ and $D$.
     	 	\item The part ``$\gamma(\n)\cdot e_{X'}=\gamma(\n+M\bold{e}_{i})\cdot e_{X'}$ for all $1\leq i\leq D$, $\n,\n+M\bold{e}_{i}\in R_{N,D}$" is not mentioned in Theorem 10.2 of \cite{GT}, but it follows immediately from Lemma A.12 of \cite{GT}.
     	 	\item Theorem 10.2 of \cite{GT} is stated only for $\omega(M)=M^{A}$ for some $A>0$, but the same method can be used to prove it for a general function $\omega$ (see also the remark on page 29 of \cite{FH}).
     	 	\item Theorem 10.2 of \cite{GT} provides a more explicit description of the family $\mathcal{F}(M)$, but we do not need it in this paper (see also the remark on page 29 of \cite{FH}).
     	 \end{itemize}	
     \end{rem}

       \section{Description of certain sub nilmanifold of $X\times X$}\label{s:d}
       	 The purpose of this section is to study a special type of sub nilmanifolds of $X\times X$ for some nilmanifold $X$. 
       	 Though short in length, Section \ref{s:d} is the most important piece of ingredient in the proof of Theorem \ref{key0}. 
       	 The main result in this section is Theorem \ref{auto}, but we need some definitions before stating it.

        \subsection{$d$-automorphisms on nilmanifolds}
        Let $X=G/\Gamma$ be a nilmanifold of natural step $d$. For all $g_{1},\dots,g_{d},h_{1},\dots,h_{d}\in G$ such that $g_{i}h^{-1}_{i}\in G_{\ker}$, $1\leq i\leq d$, we have that $[g_{1},\dots,g_{d}]_{d}=[h_{1},\dots,h_{d}]_{d}$. So the map $[\cdot,\ldots,\cdot]_{d}\colon G^{d}\to G_{d}$ factors through $G^{d}_{2}$, and it induces a map $$[\cdot,\ldots,\cdot]_{d}\colon (G/G_{\ker})^{d}\to G_{d}$$  in the natural way (which is still denoted as $[\cdot,\ldots,\cdot]_{d}$ for convenience).

        Let $X=G/\Gamma$ be a nilmanifold. We say that a map $\sigma\colon G\to G$ is an \emph{automorphism} of $X$ if $\sigma$ is a continuous bijection such that $\sigma(\Gamma)\subseteq \Gamma$ and $\sigma(gh)=\sigma(g)\sigma(h)$ for all $g,h\in G$. In this paper, we need to study a special type of automorphisms. 
        
        \begin{defn}[$d$-automorphisms]
        	Let $X=G/\Gamma$ be a nilmanifold and $d\in\N_{+}$ be its natural step. Let $\pi_{\ker}\colon G\to G/G_{\ker}$ be the quotient map. We say that a map $\sigma\colon G/G_{\ker}\to G/G_{\ker}$ is a \emph{$d$-automorphism} of $X$ if $\sigma$ is an automorphism of $G/G_{\ker}\Gamma$, and for all $g_{1},\dots,g_{d}\in G/G_{\ker}$, we have that
        	$$[g_{1},\dots,g_{d}]_{d}=[\sigma(g_{1}),\dots,\sigma(g_{d})]_{d}.$$    
        	 Let $\Aut_{d}(X)$ denote the collection of all  $d$-automorphisms of $X$. 	     
        \end{defn}	
        
        \begin{rem}
        	In the degenerate case $d=1$, a 1-automorphism of $X$ is just an automorphism of $G/G_{\ker}\Gamma$.
        \end{rem}

        	Let $d\in\N_{+}$ and $G$ be a nilpotent group with a standard nil-structure $\G$.  
        	Suppose that $\dim(G)=m$, $\dim(G_{2})=m_{2}$, $s'=m-m_{2}$.
         Recall that $\psi_{\ker}\colon G/G_{\ker}\to\mathbb{R}^{s'}$ is the isometric identification  between $G/G_{\ker}$ and $\mathbb{R}^{s'}$ induced by $\psi$. Then $\sigma\colon G/G_{\ker}\to G/G_{\ker}$ is an automorphism of $G/G_{\ker}\Gamma$ if and only if there exists  $A\in M_{s'\times s'}(\mathbb{Z})$ such that $$\sigma(g)=\psi_{\ker}^{-1}\circ A\circ\psi_{\ker}(g):=\psi_{\ker}^{-1}(\psi_{\ker}(g)\cdot A)$$ for all $g\in G/G_{\ker}$ (recall that $A\colon\mathbb{R}^{s'}\to\mathbb{R}^{s'}$ denotes the map given by $A(\x):=\x\cdot A$, $\x\in\mathbb{R}^{s'}$, i.e. the \textbf{right} multiplication of $A$). For convenience we denote $\sigma$ by $\sigma_{\psi,A}$, and write $$\Aut_{\psi,d}(X):=\{A\in M_{s'\times s'}(\mathbb{Z})\colon \sigma_{\psi,A}\in \Aut_{d}(X)\}.$$

        
        \begin{conv}\label{ker}
        	In order to lighten the notation, we make the following convention. Let $\x$ be a vector in $\mathbb{R}^{s'}$ (then $\psi^{-1}_{\ker}(\x)\in G/G_{\ker}$). We use $\psi^{-1}(\x)$ to denote any element in $G$ of the form  $\psi^{-1}(\x')$ for some $\x'\in\mathbb{R}^{m}$ whose first $s'$ coordinates is the vector $\x$ (i.e. $\pi_{\ker}(\x')=\x$). For every $g\in G$  and every $\sigma\in\Aut_{d}(X)$, we use $\sigma(g)$ to denote any element in $G$ whose projection $\pi_{\ker}(\sigma(g))$ on $G/G_{\ker}$ is $\sigma(\pi_{\ker}(g))$.
        	
        	Although $\psi^{-1}(\x)$ and $\sigma(g)$ are not uniquely defined as elements of $G$, they are well defined modulo $G_{\ker}$. 
        	Since
        	the map $[\cdot,\dots,\cdot]_{d}\colon G^{d}\to G_{d}$ factors through $G_{\ker}^{d}$, expressions such as $[\psi^{-1}(\x_{1}),\dots,\psi^{-1}(\x_{d})]_{d}$ and  $[\sigma(g_{1}),\dots,\sigma(g_{d})]_{d}$ are well defined even though $\psi^{-1}(\x_{i})$ and $\sigma(g_{i})$ are not (and we will use this convention in such expressions only). 
        \end{conv}

        Under this convention, we have that $\sigma\in\Aut_{d}(G)$ if for all $g_{1},\dots,g_{d}\in G$,        $$[\sigma(g_{1}),\dots,\sigma(g_{d})]_{d}=[g_{1},\dots,g_{d}]_{d}.$$

        \subsection{Main result of this section}
       
        We need some quantitative definitions before stating the main result.
        \begin{defn}[Height]
        	The \emph{height} of a rational number $\frac{p}{q}, p,q\in\mathbb{Z},(p,q)=1$ is $\max\{\vert p\vert,\vert q\vert\}$. We denote the height of an irrational number by $\infty$.
        	
        	The \emph{height} of a matrix is the maximum of the heights of entries of this matrix. The \emph{height} of a vector is the maximum of the heights of coordinates of this vector.
        	
        	For a subspace $A$  of $\mathbb{R}^{n}$ with dimension $r$, the \emph{height} of $A$ is the minimum of the heights of matrices $B\in M_{r\times s}(\Z)$ such that $A=\{\x B\in\mathbb{R}^{n}\colon\x\in\mathbb{R}^{r}\}$ (denote the height of $A$ to be $\infty$ if such a $B$ does not exist).
        	 Since natural numbers are well-ordered, the height of $A$ is always well-defined.
        	
        	Let $X=G/\Gamma$ be a nilmanifold with a nil-structure $\G$.
        	 The \emph{height} of $\sigma\in Aut_{d}(G)$ with respect to $\X$ is the smallest height of the matrix $A$ such that $\sigma$ can be written as $\sigma=\sigma_{\psi,A}$.

        	Let $X=G/\Gamma$ be a nilmanifold with a standard nil-structure $\G$.
        	Suppose that $\dim(G)=m$, $\dim(G_{\ker})=m'_{2}$, and $s'=m-m'_{2}$. Recall that $\psi$ induces an identification $\psi_{\ker}\colon G/G_{\ker}\to\mathbb{R}^{s'}$. For every subgroup $H$ of $G$  rational for $\Gamma$, $H/(H\cap G_{\ker})$ is a subgroup  of $G/G_{\ker}$  rational for $G_{\ker}\cap\Gamma$. Assume that $\dim(H/(H\cap G_{\ker}))=r$. The \emph{height} of $H$ with respect to $\X$ is the minimum of the heights of matrices $A\in M_{r\times s'}(\mathbb{Z})$ such that
        	$${\psi}_{\ker}(H/(H\cap G_{\ker}))=\{\x A\in \mathbb{R}^{s'}\colon \x\in\mathbb{R}^{r}\}.$$
        	If $Y=H/(H\cap \Gamma)$ is a sub nilmanifold of $X$, then the \emph{height} of $Y$ with respect to $\X$ is that of $H$ with respect to $\X$.
        \end{defn}
        
        We are now ready to state the main result of this section, which 
        is the heart of this paper:
        
        \begin{thm}[Description of a special sub nilmanifold of $X\times X$]\label{auto}
        	Let $d\in\N_{+}$ and $X=G/\Gamma$ be a nilmanifold with a standard nil-structure $\G$ and the natural filtration $G_{c,\bullet}=(G_{i})_{0\leq i\leq d+1}$ of natural step $d$.\footnote{Note that there are two different filtrations in the statement of this theorem. See also Remark \ref{2m}.
        		} Suppose that $\dim(G_{d})=1$.
        	Then for all $C>0$, there exists $C':=C'(X,C)>0$\footnote{The constant $C'$ and thus the whole theorem is independent of the choice of the nil-structure $\X$. But we do not need it in this paper.} such that  for every sub nilmanifold
        	$Y=H/(H\cap(\Gamma\times\Gamma))$  of $X\times X$  of height at most $C$ with respect to $\X\times\X$ (where $H$ is a subgroup of $G\times G$ rational for $\Gamma\times \Gamma$) satisfying 
        	\begin{itemize}
        		\item the projection of $Y$ to both coordinates equals to $X$;
        		\item $H_{d}$ can be written as  $$H_{d}=\{(\psi^{-1}(0,\dots, 0;t),\psi^{-1}(0,\dots, 0;t))\in G^{(d)}\times G^{(d)}\colon t\in\mathbb{R}\};$$
        	\end{itemize}	
        	there exists $\sigma\in \Aut_{d}(G)$ of height at most $C'$ with respect to $\X$ such that $h_{1}=\sigma(h_{2}) \mod G_{\ker}$\footnote{Meaning that $h_{1}\sigma^{-1}(h_{2})\in G_{\ker}$, or equivalently, $\pi_{\ker}(h_{1})=\pi_{\ker}\circ\sigma(h_{2})$.} for all $(h_{1},h_{2})\in H$.		
        \end{thm}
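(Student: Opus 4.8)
The plan is to work entirely on the faithful horizontal torus level, where the condition "$Y$ projects onto $X$ in both coordinates" together with the constraint on $H_d$ forces the image of $H$ in $(G/G_{\ker})^2$ to be the graph of a linear automorphism. First I would pass to the horizontal data: let $s'=\dim(G/G_{\ker})$ and consider the subgroup $\overline H:=\pi_{\ker}\times\pi_{\ker}(H)\subseteq (G/G_{\ker})^2\cong\R^{s'}\times\R^{s'}$, which under $\psi_{\ker}\times\psi_{\ker}$ becomes a rational subspace $V\subseteq\R^{2s'}$ of height bounded in terms of $C$ (since $Y$ has height at most $C$ with respect to $\X\times\X$). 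The two projections $V\to\R^{s'}$ are both surjective because $Y$ projects onto $X$ in each coordinate (surjectivity at the nilmanifold level forces surjectivity of the Lie-group projections, hence of the horizontal projections). So $V$ is the graph of \emph{some} linear relation; the real content is to show it is the graph of a genuine bijection $A\in M_{s'\times s'}(\Z)$, i.e. that $\dim V=s'$ and that $A$ is invertible over $\Q$, and that $\sigma_{\psi,A}$ actually lies in $\Aut_d(X)$.

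The key step establishing that $V$ is a graph is to exploit the hypothesis on $H_d$. Since $H$ is a rational subgroup of $G\times G$ with $[H^{(i)},H^{(j)}]\subseteq H^{(i+j)}$, and $H_d$ (the $d$-th term of the \emph{natural} filtration of $H$) is the diagonal copy $\{(\psi^{-1}(0,\dots,0;t),\psi^{-1}(0,\dots,0;t))\}$, I would compute $d$-fold iterated commutators inside $H$: for any $(h_1,h_2),\ldots,(h_1^{(d)},h_2^{(d)})\in H$, Lemma \ref{lie} gives
$$[(h_1,h_2),\dots,(h_1^{(d)},h_2^{(d)})]_d=\bigl([h_1,\dots,h_1^{(d)}]_d,\;[h_2,\dots,h_2^{(d)}]_d\bigr)\in H_d,$$
so by the description of $H_d$ we get $[h_1,\dots,h_1^{(d)}]_d=[h_2,\dots,h_2^{(d)}]_d$ in $G_d$ for every such tuple drawn from $H$. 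Because $\pi_{\ker}(h_1)$ ranges over all of $G/G_{\ker}$ as $(h_1,h_2)$ ranges over $H$ (surjectivity of the first projection) and $[\cdot,\dots,\cdot]_d$ factors through $(G/G_{\ker})^d$, this identity says precisely: whenever $\pi_{\ker}(h_1^{(i)})$ and $\pi_{\ker}(h_2^{(i)})$ are linked by $V$ for each $i$, the $d$-linear bracket form agrees. Now $[\cdot,\dots,\cdot]_d\colon (G/G_{\ker})^d\to G_d\cong\R$ is a nonzero symmetric $d$-linear form by definition of $G_{\ker}$ (it is nondegenerate on $G/G_{\ker}$ in the sense that no nonzero vector pairs trivially with all others in this way — this is exactly what $G_{\ker}$ was defined to kill). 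A nonzero $d$-linear form $B$ on $\R^{s'}$ that satisfies $B(v_1,\dots,v_d)=B(v_1',\dots,v_d')$ for all $(v_i,v_i')\in V$ forces, by a polarization / nondegeneracy argument, that the projections $V\to\R^{s'}$ are both injective: if $(0,w)\in V$ with $w\neq 0$, picking $v_i'$ freely gives $B(w,v_2',\dots,v_d')=0$ for all $v_i'$, contradicting nondegeneracy; symmetrically for $(w,0)$. Hence $V$ is the graph of an invertible $A$, and the bracket-preservation becomes exactly the condition $[Av_1,\dots,Av_d]_d=[v_1,\dots,v_d]_d$, i.e. $\sigma_{\psi,A}\in\Aut_d(X)$.

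The remaining step is the height bound: $A$ is determined by $V$, which is a subspace of $\R^{2s'}$ of height $O_C(1)$; solving the linear system that expresses "graph of $A$" in terms of a bounded-height spanning matrix for $V$ yields $A\in M_{s'\times s'}(\Q)$ of height $O_C(1)$, and clearing denominators (or rather, because $A$ must preserve $\psi_{\ker}(\Gamma\cap\text{stuff})$, arguing that $A$ already has integer entries) gives $A\in M_{s'\times s'}(\Z)$ with height bounded by some $C'=C'(X,C)$. Finally, unwinding conventions, $h_1=\sigma(h_2)\bmod G_{\ker}$ for all $(h_1,h_2)\in H$ is just the statement that $\pi_{\ker}(h_1)=\sigma_{\psi,A}(\pi_{\ker}(h_2))$, which is the definition of $V$ being the graph of $A$. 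The main obstacle I anticipate is the nondegeneracy argument in the middle paragraph: one must be careful that $[\cdot,\dots,\cdot]_d$ descended to $(G/G_{\ker})^d$ is genuinely nondegenerate (this should follow directly from the definition of $G_{\ker}$, but the symmetry/multilinearity needs Lemma \ref{lie}(ii)–(iii) and an induction on $d$, since for $d\geq 3$ the iterated commutator is only "multilinear modulo lower terms" and one must check the lower-order corrections land in $G_{\ker}$ or are killed by the quotient), and that from agreement of the $d$-linear form on the graph one can actually conclude injectivity of both projections rather than merely of one.
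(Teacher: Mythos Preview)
Your proposal is correct and follows essentially the same route as the paper. Both arguments project $H$ to $(G/G_{\ker})^2\cong\R^{2s'}$, use the diagonal description of $H_d$ to obtain the identity $[h_1^{(1)},\dots,h_1^{(d)}]_d=[h_2^{(1)},\dots,h_2^{(d)}]_d$ for tuples from $H$, and then invoke nondegeneracy of the bracket on $G/G_{\ker}$ (the very definition of $G_{\ker}$) to finish. The difference is only in the order of steps: you first prove the image $V$ is a graph and then read off $\sigma$; the paper instead parametrises $V$ by a matrix $(A_1,A_2)$, changes basis so that $A_2=\bigl(\begin{smallmatrix}I\\0\end{smallmatrix}\bigr)$, defines $\sigma$ via the top block $B_1$ of $A_1$, and only afterwards uses the bracket identity and nondegeneracy to show $h_1\sigma(h_2)^{-1}\in G_{\ker}$ for each $(h_1,h_2)\in H$ (which a posteriori forces $B_2=0$, i.e.\ $V$ is a graph). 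Your organisation is arguably cleaner.

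Two small corrections. First, the iterated bracket $[\cdot,\dots,\cdot]_d$ is \emph{not} symmetric (it is alternating in the first two slots, for instance), but your argument never uses symmetry---only multilinearity in the first slot (Lemma~\ref{lie}(ii)) and nondegeneracy in the first slot (the definition of $G_{\ker}$)---so this is harmless. Second, your worry about ``multilinear modulo lower terms'' is unfounded in this specific setting: because $G_{d+1}=\{e_G\}$, every commutator correction term that would normally appear already vanishes, so the $d$-fold bracket is genuinely multilinear on $G/G_2$ and hence on $G/G_{\ker}$; no induction on $d$ is needed beyond what Lemma~\ref{lie} already provides. The integrality/height bookkeeping for $A$ that you flag at the end is handled in the paper exactly as you suggest, by tracking heights through the linear algebra.
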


        \begin{proof}    	
        	Suppose that $\dim(G)=m$, $\dim(G_{\ker})=m'_{2}$ and $s'=m-m'_{2}$. Denote $\varphi:=\psi\times\psi$.
        	Recall that $\X$ induces the product nil-structure $\X\times\X$ on $X\times X$. Since $\X$ is a standard nil-structure on $X$, $\X\times\X$ is a standard nil-structure on $X\times X$.  In other words, $\varphi(G_{\ker}\times G_{\ker})=\mathbb{Z}^{s'}\times \mathbb{R}^{m'_{2}}\times\mathbb{Z}^{s'}\times \mathbb{R}^{m'_{2}}$. This  naturally induces a Mal'cev coordinate map $\varphi_{\ker}:=\psi_{\ker}\times\psi_{\ker}$ from the abelian group $(G/G_{\ker})\times (G/G_{\ker})$ to $\mathbb{R}^{2s'}$.
        	
        	Since $H/(H\cap(G_{\ker}\times G_{\ker}))$ is a subgroup of $(G/G_{\ker})\times (G/G_{\ker})$ rational for $(G_{\ker}\cap\Gamma)\times (G_{\ker}\cap\Gamma)$ of height at most $C$ with respect to $\X\times\X$,
        	denoting $r:=\dim(H/(G_{\ker}\times G_{\ker}))\leq 2s'$, there exists $A=(A_{1},A_{2})\in M_{r\times 2s'}(\mathbb{Z})$ with $\rank(A)=r$ and height at most $C$ such that 
        	$$ \varphi_{\ker}(H/(H\cap(G_{\ker}\times G_{\ker})))=\{\x A=(\x A_{1},\x A_{2})\in \mathbb{R}^{2s'}\colon \x=(x_{1},\dots,x_{r})\in\mathbb{R}^{r}\}.$$
        	

        	By the description of $H_{d}$, we have that $[g_{1,1},\dots,g_{1,d}]_{d}=[g_{2,1},\dots,g_{2,d}]_{d}$ for all $(g_{1,i},g_{2,i})\in H, 1\leq i\leq d$. So
        	\begin{equation}\label{18}
        		[\psi^{-1}_{\ker}(\x_{1}A_{1}),\dots, \psi^{-1}_{\ker}(\x_{d}A_{1})]_{d}=[\psi^{-1}_{\ker}(\x_{1}A_{2}),\dots, \psi^{-1}_{\ker}(\x_{d}A_{2})]_{d}
        	\end{equation}
        	for all $\x_{1},\dots,\x_{d}\in\mathbb{R}^{r}$.

        	Since the projection of $Y$ to both coordinates equals to $X$, we have that $r\geq s'$ and $\rank(A_{1})=\rank(A_{2})=s'$. Suppose that $A_{2}=Y\begin{bmatrix} 
        	I_{s'\times s'}\\
        	0_{(r-s')\times s'}
        	\end{bmatrix}$ for some invertible $r\times r$ matrix $Y$ of height at most $C$. Denote $A_{1}=Y\begin{bmatrix} 
        	B_{1}\\
        	B_{2}
        	\end{bmatrix}$, where $B_{1}$ and $B_{2}$ are $s'\times s'$ and $(r-s')\times s'$ matrices of heights at most $C':=C'(s',C)=s'C^{2}$, respectively. 
        	Setting $\x_{i}=(\y_{i},\z_{i})Y^{-1}, \y_{i}\in\mathbb{R}^{s'}, \z_{i}\in\mathbb{R}^{r-s'}, 1\leq i\leq d$ in (\ref{18}), we have that
        	\begin{equation}\label{18.1}
        		[\psi_{\ker}^{-1}(\y_{1}B_{1}+\z_{1}B_{2}),\dots, {\psi}_{\ker}^{-1}(\y_{d}B_{1}+\z_{d}B_{2})]_{d}=[\psi_{\ker}^{-1}(\y_{1}),\dots, \psi_{\ker}^{-1}(\y_{d})]_{d}
        	\end{equation}
        	for all $\y_{i}\in\mathbb{R}^{s'}, \z_{i}\in\mathbb{R}^{r-s'}, 1\leq i\leq d$. 
        	
        	Let $\sigma=\psi^{-1}_{\ker}\circ B_{1}\circ\psi_{\ker}$. Then $\sigma$ is of height at most $C'$ with respect to $\X\times\X$.
        	By (\ref{18.1}), we have that
        	\begin{equation}\nonumber
        		\begin{split}
        			&\quad 
        			[\sigma(g_{1}),\dots,\sigma(g_{d})]_{d}
        			=[\psi_{\ker}^{-1}\circ B_{1}\circ\psi_{\ker}(g_{1}),\dots,\psi_{\ker}^{-1}\circ B_{1}\circ\psi_{\ker}(g_{d})]_{d}
        			\\&=[\psi_{\ker}^{-1}(\psi_{\ker}(g_{1})B_{1}),\dots,\psi_{\ker}^{-1}(\psi_{\ker}(g_{k})B_{1})]_{d}
        			=[\psi_{\ker}^{-1}(\psi_{\ker}(g_{1})),\dots,\psi_{\ker}^{-1}(\psi_{\ker}(g_{d}))]_{d}=[g_{1},\dots,g_{d}]_{d}
        		\end{split}
        	\end{equation}
        	for all $g_{1},\dots,g_{d}\in G$. So $\sigma\in \Aut_{d}(G)$.
        	
        	

        	Pick any point $(h_{1},h_{2})\in H$. Since $Y$ is invertible, there exists $\x=(\y,\z), \y\in\mathbb{R}^{s'}, \z\in\mathbb{R}^{r-s'}$ such that $(h_{1},h_{2})=\varphi_{\ker}^{-1}(\x A_{1},\x A_{2})=\varphi_{\ker}^{-1}(\y B_{1}+\z B_{2},\y)$. 
        	Then $h_{1}=\psi_{\ker}^{-1}(\y B_{1}+\z B_{2})$, $h_{2}=\psi_{\ker}^{-1}(\y)$, and 
        $\sigma(h_{2})=\psi_{\ker}^{-1}(\y B_{1})$. By (\ref{18.1}), for all $\y_{i}\in\mathbb{R}^{s'}, \z_{i}\in\mathbb{R}^{r-s'}, 2\leq i\leq d$,
        	\begin{equation}\nonumber
        		\begin{split}
        			&\quad 
        			[h_{1},\psi_{\ker}^{-1}(\y_{2}B_{1}+\z_{2}B_{2}),\dots,\psi_{\ker}^{-1}(\y_{d}B_{1}+\z_{d}B_{2})]_{d}
        			\\&=[\psi_{\ker}^{-1}(\y B_{1}+\z B_{2}),\psi_{\ker}^{-1}(\y_{2}B_{1}+\z_{2}B_{2}),\dots, \psi_{\ker}^{-1}(\y_{d}B_{1}+\z_{d}B_{2})]_{\ker}
        			\\&=[\psi_{\ker}^{-1}(\y),\psi_{\ker}^{-1}(\y_{2}),\dots, \psi_{\ker}^{-1}(\y_{d})]_{d}
        			\\&=[\psi_{\ker}^{-1}(\y B_{1}),\psi_{\ker}^{-1}(\y_{2}B_{1}+\z_{2}B_{2}),\dots, \psi_{\ker}^{-1}(\y_{d}B_{1}+\z_{d}B_{2})]_{d}
        			\\&=[\sigma(h_{2}),\psi_{\ker}^{-1}(\y_{2}B_{1}+\z_{2}B_{2}),\dots, \psi_{\ker}^{-1}(\y_{d}B_{1}+\z_{d}B_{2})]_{d}.
        		\end{split}
        	\end{equation}
        	 Since $\rank(A_{1})=s'$, we have that 
        	$$[h_{1},g_{2},\dots,g_{d}]_{d}=[\sigma(h_{2}),g_{2},\dots,g_{d}]_{d}$$
        	for all $g_{2},\dots,g_{d}\in G$. 
        	By Lemma \ref{lie}, $[\sigma(h_{2})h_{1}^{-1},g_{2},\dots,g_{d}]_{d}=e_{G}$ 	for all $g_{2},\dots,g_{d}\in G$.
        	By the definition of $G_{\ker}$, we have that $h_{1}=\sigma(h_{2}) \mod G_{\ker}$.
        \end{proof}
       
       We conclude this section by illustrating how Theorem \ref{auto} can be used to study Theorem \ref{key0}, the main results of the paper. For convenience we will explain the ideas qualitatively instead of quantitatively.
       
        Let $H=\mathbb{R}^{3}$ be the Heisenberg group (recall Example \ref{exHei} for the definition).
      Note that $H_{2}=H_{\ker}=\{0\}\times\{0\}\times \R$ and $H_{3}=\{(0,0;0)\}$. Let $\Gamma=\Z^{3}$ and $X_{Hei}=H/\Gamma$.
        We consider some special case of  Theorem \ref{key0} when $X=X_{Hei}$.
        Let $\K$ be an integral tuple, $\chi\in\mathcal{M}_{K}$, $\Phi\in C(X_{Hei})$ with $\int_{X_{Hei}}\Phi\,d\mu=0$, and $g\colon\Z^{D}\to H$ be a polynomial sequence. We wish to show that 
        $$\lim_{N\to\infty}\frac{1}{(2N+1)^{D}}\sum_{-N\leq n_{1},\dots,n_{D}\leq N}\chi(n_{1}b_{1}+\dots+n_{D}b_{D})\Phi(g(n_{1},\dots,n_{D})\cdot e_{X})=0$$
        (we ignore the arithmetic progression $P$ and the supremium over $\mathcal{M}_{K}$ to simply the discussion). 
       
        \begin{ex}\label{equu3}
       	Consider the case $K=\mathbb{Q}$. Assume without loss of generality that $\Phi$ is a nilcharacter of frequency $\ell$ (see Section 7 for the reason why we can make such a simplification).
       	 By Lemma \ref{katai}, it suffices to show that (say) for many distinct primes $p,q\in\N$, the average
       	 \begin{equation}\label{eeqq}
       	 \frac{1}{N}\sum_{0\leq pn,qn\leq N-1}\vert \Phi\otimes\overline{\Phi}(g(pn)\Gamma,g(qn)\Gamma)\vert
       	 \end{equation}   
       	 is small as $N\to\infty$. By the factorization theorem (Theorem \ref{5.6}), we may assume without loss of generality that   $(g(pn)\Gamma,g(qn)\Gamma)$ is sufficiently equidistributed on a submanifold $Y$ of $X_{Hei}$ (to simplify the explanation, here we assume that the $\e$ and $\gamma$ terms in Theorem \ref{5.6} disappear). 
       	 It suffices to show that $\Phi\otimes\overline{\Phi}\vert_{Y}$ is a non-trivial nilcharacter.
       	 
       	  We may use some standard approach using the  factorization theorem to further assume that the projection of $Y$ to both coordinates equals to $X_{Hei}$ (i.e. the first condition in Theorem \ref{auto} holds).
       	 Consider the group $H_{2}=[H,H]$. Clearly, it is a subgroup of $G_{2}\times G_{2}$, which is of dimension 2. If $H_{2}$ is of dimension 0, then this contradicts to the assumption that the projection of $Y$ to both coordinates equals to $X_{Hei}$. If $H_{2}$ is of dimension 2, then by Lemma \ref{good}, $\Phi\otimes\overline{\Phi}\vert_{Y}$ is of frequency $(\ell,-\ell)$ and is non-trivial. 
       	 If $\dim(H_{2})=1$, then we may write $$H_{2}=\{(\psi^{-1}(0,0;\ell_{1}t),\psi^{-1}(0,0;\ell_{2}t))\in G_{2}\times G_{2}\colon t\in\mathbb{R}\}$$ for some $\ell_{1},\ell_{2}\in\mathbb{Z}$ not all equal to 0. If $\ell_{1}\neq \ell_{2}$, then by Lemma \ref{good}, $\Phi\otimes\overline{\Phi}\vert_{Y}$ is of non-trivial frequency and we are done.
       	 
       	 So the remaining case is when $$H_{2}=\{(\psi^{-1}(0,0; t),\psi^{-1}(0,0; t))\in G_{2}\times G_{2}\colon t\in\mathbb{R}\},$$
       	 i.e. the second condition in Theorem \ref{auto} holds. We may now invoke Theorem \ref{auto} to conclude that there exists $\sigma\in\Aut_{2}(X_{Hei})$ such that
       	 \begin{equation}\label{equu1}
       	 g(pn)=\sigma\circ g(qn) \mod H_{\ker}
       	 \end{equation} 
       	 for all $n\in\Z$. In other words, Theorem \ref{auto} allows us to reduce the proof of Theorem \ref{key0} to the study of the algebraic equation (\ref{equu1}). In Proposition \ref{equu2}, we will show that (\ref{equu1}) has no solution if $g$ is equidistributed. This contradiction implies that (\ref{eeqq}) is small as $N\to\infty$.
       	\end{ex}
      
       \begin{ex}
       	    If we consider an example similar to Example \ref{equu3} for the field $K=\mathbb{Q}[i]$, then under a similar argument, we may reduce the proof of Theorem \ref{key0} to the case when the following equation holds
       	     \begin{equation}\label{equu5}
       	     g(\n A_{\B}(p))=\sigma\circ g(\n A_{\B}(q)) \mod H_{\ker}
       	     \end{equation}     	    
       	    	for all $\n\in\Z^{2}$, where $A_{\B}(p)=\begin{bmatrix}
       	    	p_{1} & p_{2} \\
       	    	-p_{2} & p_{1}
       	    	\end{bmatrix}$,  $p=(p_{1},p_{2}), q=(q_{1},q_{2})$ are prime elements in $\Z[i]$.
       	    In Propositions \ref{equu4} and \ref{equu6}, we will use some further examples to show that (\ref{equu5}) has no solution if $g$ is equidistributed (and thus this case can not happen).
       	\end{ex}

     \section{Multi-linear analysis along polynomial sequences}\label{s:m}
     In this section, we prove the following theorem, which is another important ingredient for the proofs of our main results:
     
      \begin{thm}\label{contri}
      	Let $\K$ be an integral tuple and
      	$p,q\in \O_{K}\backslash\{0\}$ with $\vert N_{K}(p)\vert\neq\vert N_{K}(q)\vert$. Let $d\in\N_{+}$ and $X=G/\Gamma$ be a nilmanifold
      	 with a standard nil-structure $\G$ and the natural filtration $G_{c,\bullet}=(G_{i})_{0\leq i\leq d+1}$ of natural step $d$. Suppose that $\dim(G_{d})=1$. For all $C>0$,
      	there exist $\delta:=\delta(\X,\B,p,q,C)>0$ and $N_{0}:=N_{0}(\X,\B,p,q,C)\in\N$ such that for every $N\geq N_{0}$, every $g\in\poly_{D}(G_{\bullet})$ and every $\sigma\in\Aut_{d}(X)$ of height at most $C$ with respect to $\X$, denoting $$h(\n):=g(\n A_{\B}(p))\cdot(\sigma\circ g(\n A_{\B}(q)))^{-1}, \n\in\Z^{D},$$ if $\Vert h\Vert'_{C^{\infty}_{\ker,\X}(R_{N,D})}\leq C$, then 
      	$(g(\n)\cdot e_{X})_{z\in R_{N,D}}$ is not totally $\delta$-equidistributed on $X$ with respect to $\X$. 
      \end{thm}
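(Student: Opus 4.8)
The plan is to transfer everything to the faithful horizontal torus of $X$, turn the hypothesis into a linear relation among the coefficients of one $\T^{s'}$-valued polynomial, extract from it a horizontal character of $X$ of bounded $\X$-norm along which $g$ is slowly varying, and then conclude by the Inverse Leibman Theorem (Theorem \ref{inv}). Set $k:=\deg(G_{\bullet})$, $m:=\dim(G)$, $m_{2}':=\dim(G_{\ker})$, $s':=m-m_{2}'$; since the natural step is $d$ we have $G_{\ker}\subsetneq G$, so $s'\geq 1$. As $\X$ is standard, $\pi_{\ker}\circ\psi=\psi_{\ker}\circ\pi_{\ker}\colon G\to\mathbb{R}^{s'}$ is a group homomorphism (it factors through the abelian quotient $G/G_{2}$). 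Put $\bar g:=\pi_{\ker}\circ\psi\circ g$, a polynomial $\Z^{D}\to\mathbb{R}^{s'}$ of degree $\leq k$ (Lemma \ref{6.7}), and write $\sigma=\sigma_{\psi,A}$ with $A\in M_{s'\times s'}(\Z)$ of height $\leq C$; by Convention \ref{ker} and $\psi_{\ker}(\sigma_{\psi,A}(x))=\psi_{\ker}(x)A$ the homomorphism property gives $\pi_{\ker}\circ\psi\circ h(\n)=\bar g(\n A_{\B}(p))-\bar g(\n A_{\B}(q))A=:\bar h(\n)$, so the hypothesis becomes $\Vert\bar h\Vert'_{C_{s'}^{\infty}(R_{N,D})}\leq C$. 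Since $\X$ is standard, the horizontal characters of $X$ vanishing on $G_{\ker}$ are precisely $\eta_{\ell}\colon x\mapsto\ell\cdot\psi_{\ker}(\pi_{\ker}(x))\bmod\Z$ for $\ell\in\Z^{s'}$, with $\Vert\eta_{\ell}\Vert_{\X}=\vert\ell\vert$ and $\eta_{\ell}\circ g=\ell\cdot\bar g\bmod\Z$; it therefore suffices to produce some nonzero $\ell\in\Z^{s'}$ with $\vert\ell\vert$ and $\Vert\eta_{\ell}\circ g\Vert_{C_{1}^{\infty}(R_{N,D})}$ bounded in terms of $(\X,\B,p,q,C)$, and then invoke Theorem \ref{inv}.

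Write $\bar g=\sum_{\j}\alpha'_{\j}\n^{\j}$ in the monomial basis. For each $1\leq i\leq k$ let $V_{i}$ be the space of degree-$i$ homogeneous polynomials $\Z^{D}\to\mathbb{R}^{s'}$, and let $T^{p}_{i}\colon P\mapsto P(\cdot\,A_{\B}(p))$ and $T^{q}_{i}\colon P\mapsto P(\cdot\,A_{\B}(q))A$ be the corresponding operators on $V_{i}$; both are represented by integer matrices whose entries are bounded in terms of $p,q,C,k,D$, and the degree-$i$ homogeneous part of $\bar h$ equals $L_{i}\big((\alpha'_{\j})_{\vert\j\vert=i}\big)$ with $L_{i}:=T^{p}_{i}-T^{q}_{i}$. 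The key point is that $L_{i}$ is invertible over $\mathbb{Q}$ for every $1\leq i\leq k$. Granting this, $\det L_{i}$ is a nonzero integer of bounded size, $\mathrm{adj}(L_{i})=\det(L_{i})L_{i}^{-1}$ has bounded integer entries, and with $M$ the least common multiple of the $\vert\det L_{i}\vert$ the bounded integer matrix $ML_{i}^{-1}$ applied to the coefficient relation yields $(2N+1)^{i}\Vert M\alpha'_{\j}\Vert_{\T^{s'}}\ll_{\X,\B,p,q,C}1$ for all $\vert\j\vert=i$ (using $\Vert B\x\Vert_{\T^{s'}}\leq\Vert B\Vert\,\Vert\x\Vert_{\T^{s'}}$ for integer $B$), while $\alpha'_{\j}=0$ for $\vert\j\vert>k$. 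Hence $\eta:=\eta_{M\bold{e}_{1}}$ is a non-trivial horizontal character with $\Vert\eta\Vert_{\X}=M$ and $\Vert\eta\circ g\Vert_{C_{1}^{\infty}(R_{N,D})}$ bounded, and by Theorem \ref{inv} the sequence $(g(\n)\cdot e_{X})_{\n\in R_{N,D}}$ is not totally $\delta$-equidistributed for a suitable $\delta:=\delta(\X,\B,p,q,C)$ and all $N\geq N_{0}(\X,\B,p,q,C)$.

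It remains to prove the invertibility of $L_{i}$, which is the heart of the argument. By Lemmas \ref{minimal}, \ref{jordan} and \ref{norm}(i), $A_{\B}(p)$ and $A_{\B}(q)$ are diagonalizable over $\mathbb{C}$ and commute, so by Lemma \ref{di} they share an eigenbasis $v_{1},\dots,v_{D}$ of $\mathbb{C}^{D}$ with $v_{i'}A_{\B}(x)=\lambda_{i'}(x)v_{i'}$, the $\lambda_{1},\dots,\lambda_{D}$ being the $D$ embeddings $K\hookrightarrow\mathbb{C}$. In the dual monomial basis, the substitution operators act diagonally on the polynomial factor, so $V_{i}\otimes\mathbb{C}=\bigoplus_{\vert\j\vert=i}\mathbb{C}\x^{\j}\otimes\mathbb{C}^{s'}$ is $L_{i}$-invariant and $L_{i}$ acts on $\mathbb{C}\x^{\j}\otimes\mathbb{C}^{s'}$ as $\lambda(p)^{\j}I-\lambda(q)^{\j}A$ (right multiplication by $A$), where $\lambda(x)^{\j}:=\prod_{i'=1}^{D}\lambda_{i'}(x)^{j_{i'}}\neq 0$. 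So invertibility of $L_{i}$ is equivalent to $\lambda(p)^{\j}/\lambda(q)^{\j}=\prod_{i'}\lambda_{i'}(p/q)^{j_{i'}}$ not being an eigenvalue of $A$ for any $\j$ with $1\leq\vert\j\vert\leq k$. Now $\sigma_{\psi,A}$ being an automorphism of $G/(G_{\ker}\Gamma)$ forces $\x\mapsto\x A$ to be a bijection of $\mathbb{R}^{s'}/\Z^{s'}$, whence $A\in GL_{s'}(\Z)$; since then $A$ and $A^{-1}$ are both integer matrices, every eigenvalue of $A$ is an algebraic unit. On the other hand, for a Galois closure $L$ of $K/\mathbb{Q}$ a Galois-orbit count gives $N_{L/\mathbb{Q}}\big(\prod_{i'}\lambda_{i'}(p/q)^{j_{i'}}\big)=N_{L/\mathbb{Q}}(p/q)^{\vert\j\vert}=\big(N_{K}(p)/N_{K}(q)\big)^{[L:K]\vert\j\vert}$, whose absolute value is $\vert N_{K}(p)/N_{K}(q)\vert^{[L:K]\vert\j\vert}\neq 1$ by the hypothesis $\vert N_{K}(p)\vert\neq\vert N_{K}(q)\vert$ (and $\vert\j\vert\geq 1$); hence $\prod_{i'}\lambda_{i'}(p/q)^{j_{i'}}$ is not an algebraic unit, so not an eigenvalue of $A$.

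The main obstacle is exactly this dichotomy: balancing the unit property of the eigenvalues of the integer automorphism matrix $A$ against the computation that $\lambda(p)^{\j}/\lambda(q)^{\j}$ has $L/\mathbb{Q}$-norm of absolute value a nontrivial power of $\vert N_{K}(p)/N_{K}(q)\vert$ — this is the only place where $\vert N_{K}(p)\vert\neq\vert N_{K}(q)\vert$ is used, and it relies on the simultaneous diagonalizability of $A_{\B}(p),A_{\B}(q)$ over $\mathbb{C}$ with eigenvalues governed by the embeddings of $K$ (Lemma \ref{di}). The homomorphism identities in the reduction and the tracking of smooth norms through the integer maps $T^{p}_{i},T^{q}_{i},ML_{i}^{-1}$ are routine once the invertibility is in hand.
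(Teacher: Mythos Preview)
Your argument has a genuine gap at the claim $A\in GL_{s'}(\Z)$. The paper's definition of ``automorphism of $X$'' requires $\sigma$ to be a bijection of $G$ (here $G/G_{\ker}\cong\mathbb{R}^{s'}$) with $\sigma(\Gamma)\subseteq\Gamma$; this yields only $A\in M_{s'\times s'}(\Z)\cap GL_{s'}(\mathbb{Q})$, not $A\in GL_{s'}(\Z)$. (Indeed, the $\sigma$ produced by Theorem~\ref{auto} is given by an integer matrix $B_{1}$ for which no $\Z$-invertibility is asserted.) So eigenvalues of $A$ need not be algebraic units, and your invertibility argument for $L_{i}$ collapses: nothing prevents $\lambda(p/q)^{\j}$ from being an eigenvalue of $A$.

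What you never use --- and what the paper uses essentially --- is the \emph{bracket-preserving} part of the $d$-automorphism condition, i.e.\ $F(\x_{1}A,\dots,\x_{d}A)=F(\x_{1},\dots,\x_{d})$ for the nondegenerate $d$-linear form $F$ coming from $[\cdot,\dots,\cdot]_{d}$. The paper does not try to invert $L_{i}$. Instead it forms the squarefree polynomial $f_{0}\in\mathbb{Q}[x]$ whose roots are the products $\prod_{i'}\lambda_{i'}(p/q)^{j_{i'}}$ (all of $\overline{K}$-norm~$>1$), and shows $f_{0}(A)\neq\mathcal{O}$: if $f_{0}(A)=\mathcal{O}$ then $A$ is diagonalizable with eigenvalues of norm~$>1$ (Lemma~\ref{jordan}), and plugging eigenvectors into the bracket invariance gives $(\prod_{j}\mu_{i_{j}})F(\cdots)=F(\cdots)$ with some nonzero $F$-value, contradicting $|N_{\overline{K}}(\prod_{j}\mu_{i_{j}})|>1$. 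A nonzero row $\c$ of the bounded integer matrix $f_{0}(A)$ then plays the role your $M\bold{e}_{1}$ was meant to play. Structurally your reduction to the faithful horizontal torus and the eigen-decomposition are the same as the paper's; the missing piece is precisely replacing ``$A\in GL_{s'}(\Z)$'' by the bracket-invariance contradiction.
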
	
      
      \begin{rem}
      	Since $\sigma$ is not well defined as a map on $G$, neither is $h$. However, since $h$ is well defined modulo $G_{\ker}$, the norm $\Vert h\Vert'_{C^{\infty}_{\ker,\X}(R_{N,D})}$ is well-defined.
      \end{rem}	
 
      Theorem \ref{contri} is a main technical innovation of this paper, whose argument is rather different from the ones used in \cite{FH}. Before giving the proof of Theorem \ref{contri}, we provide some examples to illustrate the ideas. 
      
      \subsection{Some examples}
      
      Recall that for the Heisenberg group  $H=\mathbb{R}^{3}$, $H_{2}=H_{\ker}=\{0\}\times\{0\}\times \R$ and $H_{3}=\{(0,0;0)\}$. Let $\Gamma=\Z^{3}$ and $X_{Hei}=H/\Gamma$. Our first example is a special case of Theorem \ref{contri} for $K=\mathbb{Q}$ (translated into the qualitative version for the convenience of explanations):
      
      \begin{prop}[Example for $K=\mathbb{Q}$, qualitative version]\label{equu2}
      	Let $g\colon \Z\to H$ be given by $g(n)=a^{n}$ for some $a=(x_{0},y_{0};z_{0})\in H$ for all $n\in\Z$. Suppose that there exist $\sigma\in\Aut_{2}(X_{Hei})$ and $p,q\in\Z\backslash\{0\}, \vert p\vert\neq \vert q\vert$ such that 
      	$$g(pn)=\sigma\circ g(qn) \mod H_{\ker}$$
      	for all $n\in\Z$, then $(g(n)\cdot e_{X_{Hei}})_{n\in\Z}$ is not equidistributed on $X_{Hei}$.\footnote{A sequence $(g(\n)\cdot e_{X})_{\n\in\Z^{D}}$  is \emph{equidistributed} on $X$ if
      	$	\lim_{N\to\infty}\frac{1}{(2N+1)^{D}}\sum_{\n\in\{-N,N\}^{D}} \Phi(g(\n)\cdot e_{X})=0$	
      		for  every  
      		$\Phi\in C(X)$ such that $\int_{X}\Phi\,d m_{X}=0$ (where $m_{X}$ is the Haar measure on $X$).} 
      \end{prop}	
      \begin{proof}
      	Suppose that on the contrary $(g(n)\cdot e_{X_{Hei}})_{n\in\Z}$ is  equidistributed on $X_{Hei}$. Then $x_{0},y_{0},1$ are linear independent over $\mathbb{Q}$. 
        Suppose that $\sigma(x,y)=(x,y)A$ for some $A\in M_{2\times 2}(\Z)$ for all $(x,y)\in\R^{2}=H/H_{\ker}$. By assumption, we have that $p(x_{0},y_{0})=q(x_{0},y_{0})A$.  Since $x_{0},y_{0},1$ are linear independent over $\mathbb{Q}$, we have that $A=\begin{bmatrix}
        p/q & 0 \\
        0 & p/q
        \end{bmatrix}$. In other words, $\sigma(x,y)=\frac{p}{q}(x,y)$ for all $(x,y)\in\R^{2}$. However, by the multi-linearity of $[\cdot,\cdot]$, for all $h_{1},h_{2}\in H/H_{\ker}$,
        $$[h_{1},h_{2}]=[\sigma(h_{1}),\sigma(h_{2})]=[\frac{p}{q}h_{1},\frac{p}{q}h_{2}]=(\frac{p}{q})^{2}[h_{1},h_{2}],$$
        where we used Convention \ref{ker}.
        Since $\vert p\vert\neq \vert q\vert$, $[h_{1},h_{2}]=(0,0;0)$ for all $h_{1},h_{2}\in H/H_{\ker}$, a contradiction.
      \end{proof}	
      
       \begin{rem}
       	In fact, a similar argument applies to the case where $H$ is replaced by a $d$-step nilpotent group. In this case, one can deduce that
       	$$[h_{1},\dots,h_{d}]_{d}=(\frac{p}{q})^{d}[h_{1},\dots,h_{d}]_{d}$$
       	to get a contradiction. This idea provides an alternative approach to prove Theorem 6.1 of \cite{FH}.
       \end{rem}

      We provide another example for the case $K=\mathbb{Q}[i]$. In this case $\O_{K}=\Z[i]$ and we can choose $\B=\{1,i\}$ as the integral basis. Then for every $p=p_{1}+p_{2}i\in \mathbb{Q}[i], p_{1},p_{2}\in\mathbb{Q}$, we have that  $A_{\B}(p)=\begin{bmatrix}
      p_{1} & p_{2} \\
      -p_{2} & p_{1}
      \end{bmatrix}$.

         \begin{prop}[{Example for $K=\mathbb{Q}[i]$, qualitative version}]\label{equu4}
         	Let $g\colon \Z^{2}\to H$ be given by $g(n_{1},n_{2})=a_{1}^{n_{1}}a_{2}^{n_{2}}$ for some $a_{1}=(x_{1},y_{1};z_{1}),a_{2}=(x_{2},y_{2};z_{2})\in H$ for all $(n_{1},n_{2})\in\Z^{2}$. Suppose that there exist $\sigma\in\Aut_{2}(X_{Hei})$ and $p,q\in\Z[i]\backslash\{0\}, \vert N_{\mathbb{Q}[i]}(p)\vert\neq \vert  N_{\mathbb{Q}[i]}(q)\vert$ such that 
         	$$g(\n A_{\B}(p))=\sigma\circ g(\n A_{\B}(q)) \mod H_{\ker}$$
         	for all $\n\in\Z^{2}$, where $\B=\{1,i\}$. Then $(g(\n)\cdot e_{X_{Hei}})_{\n\in\Z^{2}}$ is not equidistributed on $X_{Hei}$. 
         \end{prop}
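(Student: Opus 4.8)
The plan is to argue by contradiction: assume $(g(\n)\cdot e_{X_{Hei}})_{\n\in\Z^{2}}$ \emph{is} equidistributed on $X_{Hei}$ and deduce $\vert N_{\mathbb{Q}[i]}(p)\vert=\vert N_{\mathbb{Q}[i]}(q)\vert$. Since $H_{2}=H_{\ker}=\{0\}\times\{0\}\times\R$ for the Heisenberg group, the horizontal torus is $H/(H_{\ker}\Gamma)\cong\T^{2}$ and the quotient map $\pi_{\ker}\colon H\to H/H_{\ker}\cong\R^{2}$ is $(x,y;z)\mapsto(x,y)$. A direct computation in $H$ gives $\pi_{\ker}(g(\m))=\m M$ for all $\m\in\Z^{2}$, where $M\in M_{2\times 2}(\R)$ has rows $\pi_{\ker}(a_{1})=(x_{1},y_{1})$ and $\pi_{\ker}(a_{2})=(x_{2},y_{2})$. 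The pushforward of an equidistributed sequence under the factor map $X_{Hei}\to\T^{2}$ is equidistributed, so $(\m M\mod\Z^{2})_{\m\in\Z^{2}}$ is equidistributed in $\T^{2}$; if $M$ were singular this orbit would lie in the image of a line, a Haar-null set, so in fact $M\in GL_{2}(\R)$.

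The next step is to describe $\sigma$. As $\dim H-\dim H_{\ker}=2$, there is $B\in M_{2\times 2}(\Z)$ such that $\sigma$ acts on $H/H_{\ker}\cong\R^{2}$ by the right multiplication $\bold{w}\mapsto\bold{w}B$; since $\sigma$ is an automorphism of $\T^{2}$ we have $B\in GL_{2}(\Z)$. A short computation gives $[(x,y;z),(x',y';z')]=(0,0;xy'-x'y)$, so after identifying $H_{2}$ with $\R$ the iterated bracket $[\cdot,\cdot]_{2}$ descends to the bilinear form $(h_{1},h_{2})\mapsto\det\begin{bmatrix}h_{1}\\ h_{2}\end{bmatrix}$ on $(H/H_{\ker})^{2}$, and replacing each $h_{i}$ by $h_{i}B$ multiplies this form by $\det B$. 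Hence $\sigma\in\Aut_{2}(X_{Hei})$ forces $\det B=1$.

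Finally I would combine these. Applying $\pi_{\ker}$ to the hypothesis $g(\n A_{\B}(p))=\sigma\circ g(\n A_{\B}(q))\mod H_{\ker}$, using $\pi_{\ker}\circ g(\m)=\m M$ and Convention \ref{ker}, gives $(\n A_{\B}(p))M=\bigl((\n A_{\B}(q))M\bigr)B$ in $\R^{2}$ for every $\n\in\Z^{2}$; evaluating at $\n=\bold{e}_{1},\bold{e}_{2}$ yields the matrix identity $A_{\B}(p)M=A_{\B}(q)MB$. Since $M$ is invertible and $A_{\B}$ is a multiplicative embedding of $\mathbb{Q}[i]$ into $M_{2\times 2}(\Q)$ (Lemma \ref{norm}(i)), this rearranges to $B=M^{-1}A_{\B}(q)^{-1}A_{\B}(p)M=M^{-1}A_{\B}(p/q)M$. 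Taking determinants and using Definition \ref{knorm} together with multiplicativity of the norm,
\[
1=\det B=\det A_{\B}(p/q)=N_{\mathbb{Q}[i]}(p/q)=\frac{N_{\mathbb{Q}[i]}(p)}{N_{\mathbb{Q}[i]}(q)}=\frac{\vert N_{\mathbb{Q}[i]}(p)\vert}{\vert N_{\mathbb{Q}[i]}(q)\vert},
\]
the final equality because the norm form of the imaginary quadratic field $\mathbb{Q}[i]$ is positive. This contradicts $\vert N_{\mathbb{Q}[i]}(p)\vert\neq\vert N_{\mathbb{Q}[i]}(q)\vert$ and proves the proposition.

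The heart of the argument is the rigidity step: $A_{\B}(p/q)$, whose determinant is $N_{\mathbb{Q}[i]}(p/q)$, is conjugate to the integer matrix $B$ of determinant $1$ — the exact analogue of the $(p/q)^{2}$-scaling obstruction in the case $K=\mathbb{Q}$. The only points needing care are the passage from equidistribution on $X_{Hei}$ to invertibility of $M$ and the consistent use of the right-multiplication convention $A(\bold{x})=\bold{x}A$. In the full strength of Theorem \ref{contri} the genuine difficulty lies elsewhere — $g$ is then an arbitrary polynomial sequence rather than a linear orbit $a_{1}^{n_{1}}a_{2}^{n_{2}}$ — and the factorization theorem together with the smooth-norm hypothesis $\Vert h\Vert'_{C^{\infty}_{\ker,\X}(R_{N,D})}\leq C$ take the place of the elementary linear algebra above.
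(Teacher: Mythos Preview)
Your proof has a genuine gap at the step ``if $M$ were singular this orbit would lie in the image of a line, a Haar-null set, so in fact $M\in GL_{2}(\R)$.'' The image of a real line modulo $\Z^{2}$ need not be Haar-null; an irrational line is dense in $\T^{2}$, and a $\Z^{2}$-indexed sequence supported on it can perfectly well be equidistributed. Concretely, take $a_{1}=(\sqrt{2},\sqrt{3};0)$ and $a_{2}=e_{H}$. Then $M=\begin{pmatrix}\sqrt{2}&\sqrt{3}\\0&0\end{pmatrix}$ has rank one, yet for every nonzero $(k_{1},k_{2})\in\Z^{2}$ the horizontal character value $\eta_{(k_{1},k_{2})}\circ g(n_{1},n_{2})=n_{1}(k_{1}\sqrt{2}+k_{2}\sqrt{3})$ is equidistributed in $\T$ (since $1,\sqrt{2},\sqrt{3}$ are $\mathbb{Q}$-independent), so by the qualitative Leibman theorem $(g(\n)\Gamma)_{\n\in\Z^{2}}$ is equidistributed on $X_{Hei}$. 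Without $M$ invertible you cannot pass from $A_{\B}(p)M=A_{\B}(q)MB$ to a conjugacy, and taking determinants yields only $0=0$.

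The paper's proof sidesteps this by a different device. Rather than trying to invert the real matrix $M$, it brings in the minimal polynomial $f\in\mathbb{Q}[x]$ of $A_{\B}(p/q)$: iterating the relation $g'(\n A_{\B}(p/q))=g'(\n)A$ gives $g'(\n)f(A)=0$ for all $\n$. The crucial point is that $f(A)$ has \emph{rational} entries (since $A\in M_{2\times 2}(\Z)$ and $f\in\mathbb{Q}[x]$). If $f(A)\neq0$, a nonzero column---after clearing denominators an integer vector---defines a nontrivial horizontal character annihilating the sequence, and \emph{that} genuinely contradicts equidistribution. From $f(A)=0$ the paper then invokes Lemma~\ref{jordan} to diagonalise $A$ over $\mathbb{C}$ with eigenvalues whose $\overline{K}$-norm is $\vert N_{\mathbb{Q}[i]}(p/q)\vert\neq1$, and the $\Aut_{2}$ bracket identity on the eigenbasis forces every commutator to vanish. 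Your route can be repaired by a separate treatment of the rank-one case (a real invariant line forces $p/q\in\mathbb{Q}$, and a rational eigenvalue of $B\in SL_{2}(\Z)$ must be $\pm1$), but it is the rationality of $f(A)$---not invertibility of $M$---that makes the paper's argument go through and that scales up to Theorem~\ref{contri}.
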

         \begin{proof}
         	The idea of the proof is similar to Proposition 7.9 of \cite{S}. Suppose that $\sigma(x,y)=(x,y)A$ for some $A\in M_{2\times 2}(\Z)$ for all $(x,y)\in\R^{2}=H/H_{\ker}$. Let $g'\colon\Z\to\R^{2}=H/H_{\ker}$, $g'(n_{1},n_{2}):=n_{1}(x_{1},y_{1})+n_{2}(x_{2},y_{2})$ be the projection of $g$ onto $H/H_{\ker}$.
         	By assumption, we have that
         	$$g'(\n A_{\B}(p/q))=\sigma\circ g'(\n)=g'(\n)A$$
         	for all $\n\in\Z^{2}$. Let $f\in\mathbb{Q}[x]$ be the minimal polynomial of $A_{\B}(p/q)$. By the linearity of $g'$,
         	\begin{equation}\label{61}
         	(0,0)=g'(\n f(A_{\B}(p/q)))=g'(\n)f(A)
         	\end{equation}	
         	for all $\n\in\Z^{2}$. 
         	
         	Suppose that on the contrary $(g(\n)\cdot e_{X_{Hei}})_{\n\in\Z^{2}}$ is  equidistributed on $X_{Hei}$. Then $(g'(\n)\mod \Z^{2})_{\n\in\Z^{2}}$ is  equidistributed on $\T^{2}$. Therefore, (\ref{61}) implies that $f(A)=\O_{2\times 2}$. By Lemma \ref{jordan},  there exist a $2\times 2$ invertible matrix $S$ and a  diagonal matrix $J=\begin{bmatrix}
         	\mu_{1} & 0\\
         	0 & \mu_{2}
         	\end{bmatrix}$ with $f(\mu_{1})=f(\mu_{2})=0$ such that $A=SJS^{-1}$.\footnote{We clarify that $A_{\B}(p/q)$ is a $2\times 2$ matrix because $[\mathbb{Q}[i]:\mathbb{Q}]=2$, while $A$ is a $2\times 2$ matrix because $\dim(H/H_{\ker})=2$.}
         	By Lemma \ref{minimal}, $N_{\mathbb{Q}[i]}(\mu_{1})=N_{\mathbb{Q}[i]}(\mu_{2})=\det(A_{\B}(p/q))$.
         	
         	Since $\sigma\in\Aut_{2}(X_{Hei})$, for all $i,j\in\{1,2\}$,
         	\begin{equation}\nonumber
         	\begin{split}
         	&\quad [\bold{e}_{i}S^{-1},\bold{e}_{j}S^{-1}]=[\sigma(\bold{e}_{i}S^{-1}),\sigma(\bold{e}_{j}S^{-1})]
         	=[\bold{e}_{i}JS^{-1},\bold{e}_{j}JS^{-1}]
         	\\&=[\mu_{i}\bold{e}_{i}S^{-1},\mu_{j}\bold{e}_{j}JS^{-1}]
         	=\mu_{i}\mu_{j}[\bold{e}_{i}S^{-1},\bold{e}_{j}JS^{-1}],
         	\end{split}	
         	\end{equation}	
         	where $\bold{e}_{1}:=(1,0)$ and $\bold{e}_{2}:=(0,1)$. Since $\vert N_{\mathbb{Q}[i]}(\mu_{i}\mu_{j})\vert=\vert\det(A_{\B}(p/q))\vert^{2}\neq 1$, we have that $[\bold{e}_{i}S^{-1},\bold{e}_{j}S^{-1}]=(0,0;0)$ for all $i,j\in\{1,2\}$. This implies that $[h_{1},h_{2}]=(0,0;0)$ for all $h_{1},h_{2}\in H/H_{\ker}$ since $S$ is invertible, which is impossible. This finishes the proof. 
         \end{proof}

      Finally, we provide one more example. This example is more complicated than the previous two examples, but its proof is closer to that of Theorem \ref{contri} in the general case.
      
      \begin{prop}[{Another example for $K=\mathbb{Q}[i]$, qualitative version}]\label{equu6}
      	Let $g\colon \Z^{2}\to H$ be given by $$g(n_{1},n_{2})=\psi^{-1}(a_{1,1}n^{2}_{1}+a_{1,2}n_{1}n_{2}+a_{1,3}n^{2}_{2},a_{2,1}n^{2}_{1}+a_{2,2}n_{1}n_{2}+a_{2,3}n^{2}_{2},0)$$ for some $a_{i,j}\in \R$ for all $(n_{1},n_{2})\in\Z^{2}$. Suppose that there exist $\sigma\in\Aut_{2}(X_{Hei})$ and $p,q\in\Z[i]\backslash\{0\}, \vert N_{\mathbb{Q}[i]}(p)\vert\neq \vert  N_{\mathbb{Q}[i]}(q)\vert$ such that 
      	\begin{equation}\label{equu75}
      	g(\n A_{\B}(p))=\sigma\circ g(\n A_{\B}(q)) \mod H_{\ker}
      	\end{equation}
      	for all $\n\in\Z^{2}$, where $\B=\{1,i\}$. Then $(g(\n)\cdot e_{X_{Hei}})_{\n\in\Z^{2}}$ is not equidistributed on $X_{Hei}$. 
      \end{prop}
      \begin{proof}
      	\textbf{Step 1.} The first step is to rephrase (\ref{equu75}) as an equation for some multi-linear forms.
      	     	
      	 Suppose that $\sigma(x,y)=(x,y)A$ for some $A\in M_{2\times 2}(\Z)$ for all $(x,y)\in\R^{2}=H/H_{\ker}$. Let $g'\colon\Z\to\R^{2}=H/H_{\ker}$, $$g'(n_{1},n_{2}):=n^{2}_{1}(a_{1,1},a_{2,1})+n_{1}n_{2}(a_{1,2},a_{2,2})+n^{2}_{2}(a_{1,3},a_{2,3})$$ be the projection of $g$ onto $H/H_{\ker}$.
      	By assumption, we have that
      	\begin{equation}\label{equu7}
      	g'(\n A_{\B}(p/q))=\sigma\circ g'(\n)=g'(\n)A
      	\end{equation}
      	for all $\n\in\Z^{2}$. 
        The first step is to write $g'$ as a symmetric form. There exist $2\times 2$ symmetric matrices $B_{1}$ and $B_{2}$ such that writing $L\colon(\Z^{2})^{2}\to \R^{2}$, $L(\n,\n'):=((\n B_{1})\cdot \n, (\n B_{2})\cdot \n)$, we have that 
        $g'(\n)=L(\n,\n).$
      	By (\ref{equu7}), we have that
      		\begin{equation}\label{equu8}
      	L(\n A_{\B}(p/q),\n A_{\B}(p/q))=L(\n,\n)A.
      		\end{equation}
      		
      	\textbf{Step 2.} In order to further study equation (\ref{equu8}), we investigate the eigenvectors of $A_{\B}(p/q)$.
      	
      	Let $\v_{1}=(1,-i)$, $\v_{2}=(1,i)$, $\lambda_{1}=p=p_{1}+p_{2}i$, and $\lambda_{2}=\overline{p}=p_{1}-p_{2}i$. Then we have that $\v_{i}A_{\B}(p/q)=\lambda_{i}\v_{i}$ for $i=1,2$. Since $L$ is multi-linear, by (\ref{equu8}),
      		\begin{equation}\label{equu9}
      	\lambda_{i}\lambda_{j}L(\v_{i},\v_{j})=L(\v_{i} A_{\B}(p/q),\v_{j} A_{\B}(p/q))=L(\v_{i},\v_{j})A
      		\end{equation}
      	for all $1\leq i,j\leq 2$.
      	
      	\textbf{Step 3.} Our next step is to iteratively apply (\ref{equu9}) to annihilates the left side of (\ref{equu9}).
      	
      	By (\ref{equu9}), for any polynomial $f$, we have that 
      			\begin{equation}\nonumber
      			f(\lambda_{i}\lambda_{j})L(\v_{i},\v_{j})=L(\v_{i},\v_{j})f(A).
      			\end{equation}
       Let $f_{0}\in\mathbb{Q}[x]$ denote the monic polynomial of the smallest possible positive degree such that $f_{0}(\lambda_{i}\lambda_{j})=0$ for all $1\leq i,j\leq 2$. Then 
        \begin{equation}\nonumber
        L(\v_{i},\v_{j})f_{0}(A)=\bold{0}
        \end{equation}
        for all $1\leq i,j\leq 2$. Since $L$ is multi-linear, we have that 
        \begin{equation}\label{equu10}
        L(\n,\n')f_{0}(A)=\bold{0}
        \end{equation}
        for all $\n,\n'\in\Z^{2}$.  Equation (\ref{equu10}) tells us nothing if $f_{0}(A)$ vanishes. However, the fact that $\sigma$ is a $2$-automorphism ensures that:  
        
        \textbf{Claim.} We have that
       $f_{0}(A)\neq \mathcal{O}_{2\times 2}$.
       
       It is not hard to see that 
       $$f_{0}(x)=((x-p_{1})^{2}+p_{2}^{2})((x-(p^{2}_{1}-p_{2}^{2}))^{2}+4p_{1}^{2}p_{2}^{2})(x-(p_{1}^{2}+p_{2}^{2}))$$
       and it has no repeated roots.
       By Lemma \ref{jordan},  there exist a  $2\times 2$ invertible matrix $S$ and a  diagonal matrix $J=\begin{bmatrix}
       \mu_{1}\\
       & \mu_{2}\\
       \end{bmatrix}$ with $f_{0}(\mu_{1})=f_{0}(\mu_{2})=0$ such that $A=SJS^{-1}$.
       $N_{\mathbb{Q}[i]}(\mu_{1}), N_{\mathbb{Q}[i]}(\mu_{2})\geq\det(A_{\B}(p/q))>1$.
       
       Since $\sigma\in\Aut_{2}(X_{Hei})$, for all $i,j\in\{1,2\}$,
       \begin{equation}\nonumber
       \begin{split}
       &\quad [\bold{e}_{i}S^{-1},\bold{e}_{j}S^{-1}]=[\sigma(\bold{e}_{i}S^{-1}),\sigma(\bold{e}_{j}S^{-1})]
       =[\bold{e}_{i}JS^{-1},\bold{e}_{j}JS^{-1}]
       \\&=[\mu_{i}\bold{e}_{i}S^{-1},\mu_{j}\bold{e}_{j}JS^{-1}]
       =\mu_{i}\mu_{j}[\bold{e}_{i}S^{-1},\bold{e}_{j}JS^{-1}],
       \end{split}	
       \end{equation}	
       where $\bold{e}_{1}:=(1,0)$ and $\bold{e}_{2}:=(0,1)$. Since $\vert N_{\mathbb{Q}[i]}(\mu_{i}\mu_{j})\vert\geq\vert\det(A_{\B}(p/q))\vert^{2}>1$, we have that $\mu_{i}\mu_{j}\neq 1$ and so $[\bold{e}_{i}S^{-1},\bold{e}_{j}S^{-1}]=(0,0;0)$ for all $i,j\in\{1,2\}$. This implies that $[h_{1},h_{2}]=(0,0;0)$ for all $h_{1},h_{2}\in H/H_{\ker}$ since $S$ is invertible, which is impossible. This finishes the proof of the claim.

        \textbf{Step 4.} 
        We are now ready to complete the proof of Proposition \ref{equu6}.
        Since $f_{0}(A)\neq \mathcal{O}_{2\times 2}$, by (\ref{equu10}), $L(\n,\n')$ and thus $g'(\n)$ only takes values in a subgroup of $\R^{2}$ of dimension at most 1. This means that $(g(\n)\cdot e_{X_{Hei}})_{\n\in\Z^{2}}$ is not equidistributed on $X_{Hei}$ and we are done.
      \end{proof}	
      
      \subsection{$m$-symmetric and $m$-diagonal forms}
      As we have seen in Proposition \ref{equu6}, Theorem \ref{contri} is related to a problem on certain multi-linear functions. So we start with a
     generalization of the quadratic form to higher order cases:
     
     \begin{defn}[$m$-symmetric and $m$-diagonal forms]
     	 Let $D,m,s\in\N_{+}$. We say that a map $L\colon(\Z^{D})^{m}\to\mathbb{R}^{s}$ is a \emph{($D$-dimensional) $m$-symmetric form} if for all $\n_{i}=(n_{i,1},\dots,n_{i,D})\in\Z^{D}, 1\leq i\leq m$, we have that
     	 \begin{equation}\label{L}
     	 	\begin{split}
     	 		L(\n_{1},\dots,\n_{m})=\sum_{i_{1},\dots,i_{m}=1}^{D}u_{i_{1},\dots,i_{m}}\prod_{j=1}^{m}n_{j,i_{j}}
     	 	\end{split}	
     	 \end{equation}	
     	 for some  $u_{i_{1},\dots,i_{m}}\in\mathbb{R}^{s}$ such that for any permutation $\tau\colon \{1,\dots,m\}\to \{1,\dots,m\}$, $u_{i_{1},\dots,i_{m}}=u_{i_{\tau(1)},\dots,i_{\tau(m)}}$. 
     	 
     	  We say that a map $R\colon\Z^{D}\to\mathbb{R}^{s}$ is a \emph{($D$-dimensional) $m$-diagonal form} if 
     	  \begin{equation}\label{R}
     	  	\begin{split}
     	  		R(\n)=\sum_{\j\in\N^{D},\vert\j\vert=m}v_{\j}
     	  		\n^{\j}
     	  	\end{split}	
     	  \end{equation}	
     	  for some $v_{\j}\in\mathbb{R}^{s}$.
     \end{defn}
     \begin{conv}
     In the rest of this section, the dimension $D$ is considered as fixed, and we simply say that a function is an $m$-symmetric or $m$-diagonal form for short. 	
     \end{conv}		
    
     For example,
     a 1-symmetric form is of the form $L\colon\Z^{D}\to\R^{s}$, $L(\n)=\v\cdot\n$ for some $\v\in\R^{D}$ for all $\n\in\Z^{D}$, which is just a linear function. A 2-symmetric form is of the form $L\colon\Z^{2D}\to\R^{s}$, $L(\m,\n)=\m A\n^{T}$ for some $A\in M_{D\times D}(\R)$ such that $A^{T}=A$ for all $\m,\n\in\Z^{D}$, which is a quadratic form.
     
   
   The following lemma says that
   there exists a canonical bijection between $m$-symmetric and $m$-diagonal forms: 

    \begin{lem}[Identification between $m$-symmetric and $m$-diagonal forms]\label{formdual}
    	Let $D,m,s\in\N_{+}$. For every $m$-symmetric form $L\colon(\Z^{D})^{m}\to\mathbb{R}^{s}$, there exists a unique $m$-diagonal form $R\colon\Z^{D}\to\mathbb{R}^{s}$ such that $L(\n,\dots,\n)=R(\n)$ for all $\n\in\Z^{D}$, and vice versa.
    \end{lem}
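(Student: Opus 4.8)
The plan is to prove the two directions of the bijection by explicit formulas and a counting/uniqueness argument. First I would fix notation: write an $m$-symmetric form $L$ as in \eqref{L} with coefficients $u_{i_1,\dots,i_m}\in\R^s$ invariant under permutations of the indices, and an $m$-diagonal form $R$ as in \eqref{R} with coefficients $v_{\j}\in\R^s$ indexed by $\j\in\N^D$ with $\vert\j\vert=m$.

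\begin{proof}
Let $L\colon(\Z^{D})^{m}\to\R^{s}$ be an $m$-symmetric form, written as in \eqref{L}. Substituting $\n_{1}=\dots=\n_{m}=\n=(n_{1},\dots,n_{D})$ gives
\begin{equation}\nonumber
L(\n,\dots,\n)=\sum_{i_{1},\dots,i_{m}=1}^{D}u_{i_{1},\dots,i_{m}}n_{i_{1}}\dots n_{i_{m}}.
\end{equation}
Grouping the terms according to the multiset $\{i_{1},\dots,i_{m}\}$, each monomial $\n^{\j}$ with $\j=(j_{1},\dots,j_{D})$, $\vert\j\vert=m$, arises from exactly $\binom{m}{j_{1},\dots,j_{D}}$ ordered tuples $(i_{1},\dots,i_{m})$, and by the permutation-invariance of the $u$'s all of these contribute the same coefficient. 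Hence $L(\n,\dots,\n)=R(\n)$ where $R$ is the $m$-diagonal form with
\begin{equation}\nonumber
v_{\j}=\binom{m}{j_{1},\dots,j_{D}}u_{i_{1},\dots,i_{m}}
\end{equation}
for any $(i_{1},\dots,i_{m})$ whose associated multiset is $\j$. This proves existence of $R$. For uniqueness, suppose $R$ and $R'$ are $m$-diagonal forms with $R(\n)=R'(\n)$ for all $\n\in\Z^{D}$; then $R-R'$ is a polynomial map $\Z^{D}\to\R^{s}$ vanishing identically on $\Z^{D}$, so all its coefficients vanish (a polynomial over an infinite integral domain that is identically zero on $\Z^{D}$ has all coefficients zero, coordinate by coordinate in $\R^{s}$), giving $v_{\j}=v'_{\j}$ for all $\j$.

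Conversely, let $R\colon\Z^{D}\to\R^{s}$ be an $m$-diagonal form as in \eqref{R}. Define $u_{i_{1},\dots,i_{m}}:=\binom{m}{j_{1},\dots,j_{D}}^{-1}v_{\j}$, where $\j$ is the multiset type of $(i_{1},\dots,i_{m})$; this is well defined and automatically permutation-invariant, and it defines an $m$-symmetric form $L$ via \eqref{L}. By the computation above, $L(\n,\dots,\n)=R(\n)$ for all $\n\in\Z^{D}$. For uniqueness of $L$, note that any $m$-symmetric form is determined by its values on all $m$-tuples $(\n_{1},\dots,\n_{m})$, and by multilinearity and symmetry these values are determined by the diagonal values $L(\n,\dots,\n)$ via polarization: for instance
\begin{equation}\nonumber
m!\,L(\n_{1},\dots,\n_{m})=\sum_{\emptyset\neq S\subseteq\{1,\dots,m\}}(-1)^{m-\vert S\vert}\,\widetilde{L}\Bigl(\sum_{i\in S}\n_{i}\Bigr),
\end{equation}
where $\widetilde{L}(\n):=L(\n,\dots,\n)$. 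Thus two $m$-symmetric forms with the same diagonal restriction coincide. This establishes the bijection.
\end{proof}

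The main obstacle, though a mild one, is bookkeeping the combinatorial identity relating the symmetric coefficients $u_{i_1,\dots,i_m}$ to the diagonal coefficients $v_{\j}$ via multinomial factors, and making the polarization formula precise enough to justify uniqueness of the symmetric form; I would state it as a standard polarization identity rather than re-deriving it. Care is also needed that division by $\binom{m}{j_1,\dots,j_D}$ is harmless since we work over $\R^s$, so the map is genuinely a bijection and not merely an injection in one direction.
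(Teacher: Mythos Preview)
Your proof is correct and takes essentially the same approach as the paper: both establish the bijection via the explicit multinomial formula $v_{\j}=\binom{m}{\j}u_{i_{1},\dots,i_{m}}$. The only cosmetic difference is that the paper reads uniqueness in both directions directly off this coefficient identity (since it is an if-and-only-if relation between the $u$'s and the $v$'s), whereas you justify uniqueness separately via a polynomial-vanishing argument for $R$ and a polarization identity for $L$; both are valid and amount to the same thing.
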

    \begin{proof}
    	Suppose that $L\colon(\Z^{D})^{m}\to\mathbb{R}^{s}$ is an $m$-symmetric form given by (\ref{L}), and $R\colon\Z^{D}\to\mathbb{R}^{s}$ is an $m$-diagonal form given by (\ref{R}). Then $L(\n,\dots,\n)=R(\n)$ for all $\n\in\Z^{D}$ if and only if for all $\j=(j_{1},\dots,j_{D})\in\N^{D}$,
    	\begin{equation}\label{L1}
    		\begin{split}
    			v_{\j}=\sum_{(i_{1},\dots,i_{m})\in U(\j)}u_{i_{1},\dots,i_{m}},
    		\end{split}	
    	\end{equation}	
    	where the set $U(\j)$ consists of all $(i_{1},\dots,i_{m})\in\{1,\dots,D\}^{m}$ such that the set
    	$\{1\leq k\leq m\colon i_{k}=i\}$ is of cardinality $j_{i}$
    	 for all $1\leq i\leq D$. 
    	Since $L$ is an $m$-symmetric form, for all $(i_{1},\dots,i_{m})\in U(\j)$,
    	\begin{equation}\label{R1}
    		\begin{split}
    			u_{i_{1},\dots,i_{m}}=\frac{1}{\vert U(\j)\vert}v_{\j}=\frac{1}{\binom{m}{\j}}v_{\j},
    		\end{split}	
    	\end{equation}
    	 where $\binom{m}{\j}:=\binom{m}{j_{1}}\binom{m-j_{1}}{j_{2}}\dots\binom{m-j_{1}-\dots-j_{D-1}}{j_{D}}$. 
    	 (\ref{L1}) implies that $L$ uniquely determines $R$, and (\ref{R1}) implies that $R$ uniquely determines $L$.
    \end{proof}		 
    
    If $L(\n,\dots,\n)=R(\n)$ for all $\n\in\Z^{D}$ for some $m$-symmetric form $L\colon(\Z^{D})^{m}\to\mathbb{R}^{s}$ and $m$-diagonal form $R\colon\Z^{D}\to\mathbb{R}^{s}$, then we denote $R=\hat{L}$ and $L=\check{R}$. Clearly, $\check{\hat{L}}=L$ and $\hat{\check{R}}=R$.
    
        From  (\ref{L1}) and (\ref{R1}), the following lemma is straightforward:
        \begin{lem}[Vanishing property]\label{form0}
        	Let  $D,m,s\in\N_{+}$ and $L\colon(\Z^{D})^{m}\to\mathbb{R}^{s}$ be an $m$-symmetric form. Then $L\equiv \bold{0}$ if and only if $\hat{L}\equiv \bold{0}$.
        \end{lem}

    Similar to the quadratic forms, the $m$-symmetric forms enjoy many invariance properties:
    
    \begin{lem}[Invariance properties]\label{formbasic}
    	Let $D,m,s\in\N_{+}$ and $L\colon(\Z^{D})^{m}\to\mathbb{R}^{s}$ be an $m$-symmetric form. Then
    	\begin{enumerate}[(i)]
    		\item for all $m$-symmetric form $L'\colon(\Z^{D})^{m}\to\mathbb{R}^{s}$, $L+L'$ is an $m$-symmetric form;
    		\item for all $c\in\mathbb{R}$, $cL$ is an $m$-symmetric form;
    		\item for all $A\in M_{s\times s}(\Z)$,  denoting $A\circ L(\n_{1},\dots,\n_{m}):=L(\n_{1},\dots,\n_{m})\cdot A$, then $A\circ L$ is an $m$-symmetric form;
    		\item for all $A\in M_{D\times D}(\Z)$, denoting $L\circ A(\n_{1},\dots,\n_{m}):=L(\n_{1}A,\dots,\n_{m}A)$, then $L\circ A$ is an $m$-symmetric form.
    	\end{enumerate}	 
    \end{lem}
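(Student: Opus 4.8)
The plan is to first give a coordinate-free reformulation of the notion of an $m$-symmetric form and then obtain all four statements as immediate consequences. Concretely, I would prove the following characterization: a map $L\colon(\Z^{D})^{m}\to\mathbb{R}^{s}$ is an $m$-symmetric form if and only if $L$ is $\Z$-linear in each of its $m$ arguments separately and is invariant under every permutation of these arguments. The forward direction is clear from the expression (\ref{L}): fixing all but one argument shows linearity in the remaining one, and the symmetry of the coefficients $u_{i_{1},\dots,i_{m}}$ together with a relabelling of the summation indices shows permutation invariance. For the converse, expand each argument $\n_{j}$ in the standard basis $\bold{e}_{1},\dots,\bold{e}_{D}$ of $\Z^{D}$; multilinearity gives $L(\n_{1},\dots,\n_{m})=\sum_{i_{1},\dots,i_{m}}L(\bold{e}_{i_{1}},\dots,\bold{e}_{i_{m}})\prod_{j}n_{j,i_{j}}$, which is of the form (\ref{L}) with $u_{i_{1},\dots,i_{m}}:=L(\bold{e}_{i_{1}},\dots,\bold{e}_{i_{m}})\in\mathbb{R}^{s}$, and permutation invariance of $L$ forces these coefficients to be symmetric.

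With this characterization in hand, (i)--(iv) are routine. For (i) and (ii): a sum, or a real scalar multiple, of maps that are $\Z$-linear and symmetric in the $m$ arguments is again $\Z$-linear and symmetric in those arguments. For (iii): post-composition with the fixed linear map $\mathbb{R}^{s}\to\mathbb{R}^{s}$, $\x\mapsto\x A$, affects only the target and leaves linearity and permutation invariance in the $m$ input arguments untouched, so $A\circ L$ is an $m$-symmetric form; equivalently one checks at the level of (\ref{L}) that its coefficients are $u_{i_{1},\dots,i_{m}}\cdot A$, still symmetric. For (iv): pre-composing \emph{every} argument with the \emph{same} fixed map $A\colon\Z^{D}\to\Z^{D}$, $\n\mapsto\n A$, again preserves $\Z$-linearity in each slot (a composition of linear maps is linear) and preserves permutation invariance (the identical map $A$ is applied to all slots, so it commutes with permuting them), hence $L\circ A$ is an $m$-symmetric form.

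For a reader who prefers an explicit computation rather than the abstract characterization, part (iv) can also be checked directly: writing $A=(A_{i,k})$, the coefficients of $L\circ A$ are $w_{i_{1},\dots,i_{m}}=\sum_{k_{1},\dots,k_{m}}u_{k_{1},\dots,k_{m}}\prod_{j=1}^{m}A_{i_{j},k_{j}}$, and applying a permutation $\tau$ to the indices $i_{1},\dots,i_{m}$ while re-indexing the summation variables $k_{1},\dots,k_{m}$ by the same $\tau$ shows $w_{i_{\tau(1)},\dots,i_{\tau(m)}}=w_{i_{1},\dots,i_{m}}$ by the symmetry of the $u$'s. This bookkeeping in (iv) is the only place where any care is needed; (i)--(iii) are immediate from the definitions, so there is no substantial obstacle in this lemma.
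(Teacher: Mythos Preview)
Your proposal is correct. The paper takes the more bare-hands route: it declares (i)--(iii) ``straightforward by definition'' and then for (iv) carries out exactly the explicit computation you sketch at the end---writing the new coefficients $u'_{i_{1},\dots,i_{m}}=\sum_{i'_{1},\dots,i'_{m}}u_{i'_{1},\dots,i'_{m}}\prod_{j}a_{i_{j},i'_{j}}$ and checking symmetry by re-indexing the inner sum along with the permutation $\tau$. Your coordinate-free characterization (multilinear plus permutation-invariant) is a mild but genuine addition: it explains \emph{why} (i)--(iv) are trivial rather than just asserting it, and it would transport unchanged to other coefficient rings or to forms valued in a module. The paper's direct approach, by contrast, gives the explicit formulas (\ref{141}) and (\ref{414}) for the coefficients of $L\circ A$, which it later reuses in the proof of Theorem~\ref{contri}; so if you adopt only the abstract argument you may want to record those formulas separately.
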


\begin{proof}
	(i), (ii) and (iii) are straightforward by definition, and so we only prove (iv). 
	
	Denote $\n_{i}=(n_{i,1},\dots,n_{i,D})\in\Z^{D}$ for $1\leq i\leq m$.  Suppose that
	$$L(\n_{1},\dots,\n_{m})=\sum_{i_{1},\dots,i_{m}=1}^{D}u_{i_{1},\dots,i_{m}}\prod_{j=1}^{m}n_{j,i_{j}}.$$
	Let $\tau\colon\{1,\dots,m\}\to\{1,\dots,m\}$ be a permutation. Then $u_{i_{1},\dots,i_{m}}=u_{i_{\tau(1)},\dots,i_{\tau(m)}}$ for all $1\leq i_{1},\dots,i_{m}\leq D$.
	
	Suppose that $A=(a_{k,i})_{1\leq i, k\leq D}$. Then $(\n_{j}A)_{i}=\sum_{k=1}^{D}n_{j,k}a_{k,i}$ for all $1\leq j\leq m, 1\leq i\leq D$.
	So
	\begin{equation}\label{141}
		\begin{split}
		L\circ A(\n_{1},\dots,\n_{m})
			=\sum_{i_{1},\dots,i_{m}=1}^{D}u_{i_{1},\dots,i_{m}}\prod_{j=1}^{m}(\sum_{k=1}^{D}n_{j,k}a_{k,i_{j}})
			=\sum_{i_{1},\dots,i_{m}=1}^{D}u'_{i_{1},\dots,i_{m}}\prod_{j=1}^{m}n_{j,i_{j}},
		\end{split}	
	\end{equation}
	where
	\begin{equation}\label{414}
		\begin{split}
		u'_{i_{1},\dots,i_{m}}=\sum_{i'_{1},\dots,i'_{m}=1}^{D}u_{i'_{1},\dots,i'_{m}}a_{i_{1},i'_{1}}\cdot\ldots\cdot a_{i_{m},i'_{m}}.
		\end{split}	
	\end{equation}
	So
	\begin{equation}\nonumber
		\begin{split}
		&\quad	u'_{i_{\tau(1)},\dots,i_{\tau(m)}}=\sum_{i'_{1},\dots,i'_{m}=1}^{D}u_{i'_{1},\dots,i'_{m}}a_{i_{\tau(1)},i'_{1}}\cdot\ldots\cdot a_{i_{\tau(m)},i'_{m}}
			=\sum_{i'_{1},\dots,i'_{m}=1}^{D}u_{i'_{1},\dots,i'_{m}}a_{i_{1},i'_{\tau^{-1}(1)}}\cdot\ldots\cdot a_{i_{m},i'_{\tau^{-1}(m)}}
			\\&=\sum_{j_{1},\dots,j_{m}=1}^{D}u_{j_{\tau(1)},\dots,j_{\tau(m)}}a_{i_{1},j_{1}}\cdot\ldots\cdot a_{i_{m},j_{m}}=\sum_{j_{1},\dots,j_{m}=1}^{D}u_{j_{1},\dots,j_{m}}a_{i_{1},j_{1}}\cdot\ldots\cdot a_{i_{m},j_{m}}=u'_{i_{1},\dots,i_{m}}
		\end{split}	
	\end{equation}
	for all $1\leq i_{1},\dots, i_{m}\leq D$. This implies that $L\circ A$ is an $m$-symmetric form.
\end{proof}

\subsection{Proof of Theorem \ref{contri}}
 We are now ready to prove Theorem \ref{contri} in this section. Although heavy in notations, the proof of Theorem \ref{contri} is similar to that of Proposition \ref{equu6}. The only major difference is that the quantitative feature of Theorem \ref{contri} requires us to bear with an error term throughout the proof. 
 
 \textbf{Step 1: converting $g(\n)$ into $m$-symmetric forms.}
    	Let $s'=\dim(G)-\dim(G_{\ker})$. Recall that $\pi_{\ker}\colon G\to G/G_{\ker}$ is the quotient map and $\psi_{\ker}\colon G/G_{\ker}\to\R^{s'}$ is the map induced by $\psi$. 
    	Suppose that 
    	$g_{\ker}:=\psi_{\ker}\circ\pi_{\ker}\circ g\colon\Z^{D}\to\mathbb{R}^{s'}$ is given by
    	$$g_{\ker}(\n):=\sum_{\j\in\N^{D},\vert\j\vert\leq k}a'_{\j}\n^{\j}$$
    	for some $k\in\N, a'_{\j}\in\mathbb{R}^{s'}$ for all $\vert\j\vert\leq k$ (where $k$ depends only on $\X$). 
    Suppose that $\sigma=\sigma_{\psi,A}$ for some  $A\in M_{s'\times s'}(\mathbb{Z})$ of height at most $C$. Then
    	letting $h_{\ker}:=\psi_{\ker}\circ\pi_{\ker}\circ h\colon\Z^{D}\to\mathbb{R}^{s'}$, we have that
    	$$h_{\ker}(\n)=\sum_{\j\in\N^{D},\vert\j\vert\leq k}\Bigl(a'_{\j}(\n A_{\B}(p))^{\j}-(a'_{\j}\cdot A)(\n A_{\B}(q))^{\j}\Bigr).$$
    	Since $\Vert h\Vert'_{C^{\infty}_{\ker,\X}(R_{N,D})}\leq C$, by definition, $\Vert h_{\ker}\Vert'_{C_{s'}^{\infty}(R_{N,D})}\leq C$. We may assume without loss of generality that $\vert N_{K}(p)\vert>\vert N_{K}(q)\vert$.
    	Denote 
    	$R\colon \Z^{D}\to\mathbb{R}^{s'}$ by
    	$$R(\n):=h_{\ker}(\n A_{\B}(q)^{-1})=\sum_{\j\in\N^{D},\vert\j\vert\leq k}\Bigl(a'_{\j}(\n A_{\B}(p/q))^{\j}-(a'_{\j}\cdot A)\n^{\j}\Bigr).$$ 
    	For $0\leq m\leq k$, let $R_{m}$ be the $m$-diagonal form given by
    	\begin{equation}\label{145}
    		R_{m}(\n):=\sum_{\j\in\N^{D},\vert\j\vert=m}\Bigl(a'_{\j}(\n A_{\B}(p/q))^{\j}-(a'_{\j}\cdot A)\n^{\j}\Bigr):=\sum_{\j\in\N^{D},\vert\j\vert=m}\epsilon_{m,\j}\n^{\j}
    	\end{equation}	
    	for some $\epsilon_{m,\j}\in\mathbb{R}^{s'}$ for all $\vert\j\vert=m$. Then $R(\n)=\sum_{m=0}^{k}R_{m}(\n)$.
    		Since $\Vert h_{\ker}\Vert'_{C_{s'}^{\infty}(R_{N,D})}\leq C$, by (\ref{141}) and (\ref{414}),  $\Vert R_{m}\Vert'_{C_{s'}^{\infty}(R_{N,D})}\leq CD^{m}H(q^{-1})^{m}$ for all $1\leq m\leq k$, where $H(q^{-1})$ is the height of $A_{\B}(q^{-1})$ which is finite. So
		\begin{equation}\label{56}
    	 	\begin{split}
    	 		\Vert \epsilon_{m,\j}\Vert_{\T^{s'}}\leq C_{1,m}/N^{m}
    	 	\end{split}	
    	 \end{equation}
		for some $C_{1,m}:=C_{1,m}(\X,p,q,\B,C)>0$. 
    	
    	Fix $1\leq m\leq k$, and
    	let $R'_{m}\colon\Z^{D}\to\mathbb{R}^{s'}$ be the $m$-diagonal form given by    
    		\begin{equation}\label{41}
    	 	\begin{split}
    	 		R'_{m}(\n):=\sum_{\j\in\N^{D},\vert\j\vert=m}a'_{\j}\n^{\j}.
			   \end{split}	
    	 \end{equation}
    	 
    	 Then $$L_{m}:=\check{R_{m}} \text{ and } L'_{m}:=\check{R'_{m}}$$ are $m$-symmetric forms. 
    	   By Lemma \ref{formbasic}, 
    	   \begin{equation}\label{40}
    	   	\begin{split}
    	   	L''_{m}:=L'_{m}\circ A_{\B}(p/q)-A\circ L'_{m}-L_{m}
    	   	\end{split}	
    	   \end{equation}	 
    	    is also an $m$-symmetric form (where one should consider $L_{m}$ as the ``error term"). By (\ref{145}), (\ref{41}) and (\ref{40}),
    	 \begin{equation}\nonumber
    	 	\begin{split}
    	 		&\quad\hat{L''_{m}}(\n)=L''_{m}(\n,\dots,\n)=L'_{m}(\n A_{\B}(p/q),\dots,\n A_{\B}(p/q))-A\circ L'_{m}(\n,\dots,\n)-L_{m}(\n,\dots,\n)
    	 		\\&=R'_{m}(\n A_{\B}(p/q))-A\circ R'_{m}(\n)-R_{m}(\n)=\bold{0}
    	 	\end{split}	
    	 \end{equation}	 
    for all $\n\in\Z^{D}$.
    By Lemma \ref{form0}, for all $1\leq m\leq k$,
     \begin{equation}\label{401}
     	\begin{split}
     		L''_{m}\equiv \bold{0}.
     	\end{split}	
     \end{equation}

    \textbf{Step 2: using eigenvectors to express $L'_{m}$ and $L_{m}$.}	
    For all $\x_{1},\dots,\x_{d}\in\R^{s'}$, $[\psi^{-1}\x_{1},\dots,\psi^{-1}\x_{d}]_{d}\in G_{d}$ (where we use Convention \ref{ker} to define $\psi^{-1}\x_{i}$) and so $\psi([\psi^{-1}\x_{1},\dots,\psi^{-1}\x_{d}]_{d})=(0,\dots,0;t)$ for some $t\in\R$. 
    Denote $F(\x_{1},\dots,\x_{d}):=t$. Then $F\colon(\R^{s'})^{d}\to\R$ is a multi-linear function on $(\R^{s'})^{d}$. So
    	$F$ can be extended to a multi-linear function from $(\mathbb{C}^{s'})^{d}$ to $\mathbb{C}$ in the natural way, which for convenience is still denoted by $F$. 
    	Since $\sigma=\sigma_{\psi,A}\in\Aut_{d}(X)$, we have that 
    	$$[\psi^{-1}(\x_{1}A),\dots,\psi^{-1}( \x_{d}A)]_{d}=[\psi^{-1}\x_{1},\dots,\psi^{-1}\x_{d}]_{d}$$
    	for all $\x_{1},\dots,\x_{d}\in\R^{s'}$. So
    	\begin{equation}\label{47}
    		\begin{split}
    			F(\x_{1}A,\dots,\x_{d}A)=F(\x_{1},\dots,\x_{d})
    		\end{split}	
    		\end{equation}
    		for all $\x_{1},\dots,\x_{d}\in\R^{s'}$ and so for all $\x_{1},\dots,\x_{d}\in\mathbb{C}^{s'}$.
    	
    	Since $L_{m},L'_{m},L''_{m}\colon(\Z^{D})^{m}\to\R^{s'}$ are multi-linear functions, they can also be extended to  multi-linear functions from $(\mathbb{C}^{D})^{m}$ to $\mathbb{C}^{s'}$ in the natural way, which for convenience are still denoted by $L_{m},L'_{m}$ and $L''_{m}$, respectively. Since $L''_{m}(\n_{1},\dots,\n_{m})\equiv \bold{0}$ for all $\n_{1},\dots,\n_{m}\in\Z^{D}$ by (\ref{401}),  we have that for all $\u_{1},\dots,\u_{m}\in\mathbb{C}^{D}$, $L''_{m}(\u_{1},\dots,\u_{m})\equiv \bold{0}$. So by (\ref{40}),
    	\begin{equation}\label{42}
    		\begin{split}
    			L'_{m}(\u_{1}A_{\B}(p/q),\dots,\u_{m}A_{\B}(p/q))= A\circ L'_{m}(\u_{1},\dots,\u_{m})+L_{m}(\u_{1},\dots,\u_{m}).
    		\end{split}	
    	\end{equation}


    	 Let $\overline{K}$ denote the normal closure of $K$.
    	 By Lemma \ref{di},
    	there exist a basis $\v_{1},\dots,\v_{D}\in\mathbb{C}^{D}$ of $\mathbb{C}^{D}$ (over $\mathbb{C}$) depending only on $\B$, 
    	and $\lambda_{i}\in\mathbb{C},1\leq i\leq D$ depending on $\B, p$ and $q$, 
    	such that $\v_{i}A_{\B}(p/q)=\lambda_{i}\v_{i}$  for all $1\leq i\leq D$. By Lemma \ref{minimal}, $\lambda_{i}\in \overline{K}$.
    Since $\vert N_{K}(p)\vert>\vert N_{K}(q)\vert$, by Lemma \ref{minimal}, $\vert N_{\overline{K}}(\lambda_{1})\vert,\dots,\vert N_{\overline{K}}(\lambda_{D})\vert>1$. Denote $$\kappa:=\kappa(p,q,\B)=\min_{1\leq i\leq D}\vert N_{\overline{K}}(\lambda_{i})\vert>1.$$

        By (\ref{42}), for all $1\leq i_{1},\dots, i_{m}\leq D$, we have that
         \begin{equation}\label{43}
         \begin{split}
         &\quad \Bigl(\prod_{j=1}^{m}\lambda_{i_{j}}\Bigr)\cdot L'_{m}(\v_{i_{1}},\dots,\v_{i_{m}})=L'_{m}(\v_{i_{1}}A_{\B}(p/q),\dots,\v_{i_{m}}A_{\B}(p/q))
         \\&=A\circ L'_{m}(\v_{i_{1}},\dots,\v_{i_{m}})+L_{m}(\v_{i_{1}},\dots,\v_{i_{m}}).
         \end{split}	
         \end{equation}	
        Denote $$V_{m}=\Bigl\{(\v_{i_{1}},\dots,\v_{i_{m}})\in (\mathbb{C}^{D})^{m}\colon 1\leq i_{1},\dots,i_{m}\leq D\Bigr\}.$$
        Clearly, $\text{span}_{\mathbb{C}}V_{m}=(\mathbb{C}^{D})^{m}$. 
         For $\tilde{\v}_{m}=(\v_{i_{1}},\dots,\v_{i_{m}})\in V_{m}$, let $\lambda_{\tilde{\v}_{m}}=\prod_{j=1}^{m}\lambda_{i_{j}}$. Then (\ref{43}) implies that
        \begin{equation}\label{44}
        	\begin{split}
        		\lambda_{\tilde{\v}_{m}}\cdot L'_{m}(\tilde{\v}_{m})=A\circ L'_{m}(\tilde{\v}_{m})+L_{m}(\tilde{\v}_{m})
        	\end{split}	
        \end{equation}	        
         for all $\tilde{\v}_{m}\in V_{m}$ and $1\leq m\leq k$.

         \textbf{Step 3: iterating (\ref{44}) with polynomials.}       
          By induction, it is not hard to show from (\ref{44}) that for all $n\in\N_{+}$,	
         $$\lambda^{n}_{\tilde{\v}_{m}}\cdot L'_{m}(\tilde{\v}_{m})=A^{n}\circ L'_{m}(\tilde{\v}_{m})+B_{\tilde{\v}_{m},A,n}\circ L_{m}(\tilde{\v}_{m}),$$
         where $$B_{\tilde{\v}_{m},A,n}:=\sum_{i=0}^{n-1}\lambda_{\tilde{\v}_{m}}^{n-1-i}A^{i}\in M_{s'\times s'}(\mathbb{C}).\footnote{Here $\lambda_{\tilde{\v}}^{0}:=1$ and $A^{0}:=I_{s'\times s'}$ is the $s'\times s'$ identity matrix.}$$
         So for all $f(x)=\sum_{i=0}^{r}a_{i}x^{i}\in\mathbb{C}[x]$, we have that
          \begin{equation}\nonumber
          	\begin{split}
          		f(\lambda_{\tilde{\v}_{m}})\cdot L'_{m}(\tilde{\v}_{m})=f(A)\circ L'_{m}(\tilde{\v}_{m})+B_{\tilde{\v}_{m},A,f}\circ L_{m}(\tilde{\v}_{m}),
          	\end{split}	
          \end{equation}	
 where
          $$B_{\tilde{\v}_{m},A,f}:=\sum_{i=0}^{r}a_{i}B_{\tilde{\v}_{m},A,i}.$$
         Let $f_{0}\in\mathbb{Q}[x]$ denote the monic polynomial of the smallest possible positive degree such that $f_{0}(\lambda_{\tilde{\v}_{m}})=0$ for all $1\leq m\leq k$ and  $\lambda_{\tilde{\v}_{m}}\in V_{m}$. Then
         \begin{equation}\label{51}
         	\begin{split}
         		f_{0}(A)\circ L'_{m}(\tilde{\v}_{m})=-B_{\tilde{\v}_{m},A,f_{0}}\circ L_{m}(\tilde{\v}_{m})
         	\end{split}	
         \end{equation}
         for all $1\leq m\leq k$ and   $\lambda_{\tilde{\v}_{m}}\in V_{m}$, where the heights of $f_{0}(A)$ and $B_{\tilde{\v}_{m},A,f_{0}}, \tilde{\v}_{m}\in V_{m}, 1\leq m\leq k$ are bounded above by some constant $C_{2}:=C_{2}(\X,D,p,q,C)>0.$

         \textbf{Claim 1:} $f_{0}$ has no repeated roots, and the absolute value of the $\overline{K}$-norm of all the roots of $f_{0}$ are at least $\kappa$.
         
         Let $f_{\tilde{\v}_{m}}$ denote the minimal polynomial of $\lambda_{\tilde{\v}_{m}}$ for all $1\leq m\leq k$ and  $\tilde{\v}_{m}\in V_{m}$. Then for all $\tilde{\v}_{m}\in V_{m}$ and $\tilde{\v}'_{m'}\in V_{m'}, 1\leq m,m'\leq k$,  either $f_{\tilde{\v}_{m}}=f_{\tilde{\v}'_{m'}}$, or $f_{\tilde{\v}_{m}}$ and $f_{\tilde{\v}'_{m'}}$ have no common roots. So $f_{0}$ is a constant multiple of the products of all the different polynomials appearing in the set $\{f_{\tilde{\v}_{m}}\colon 1\leq m\leq k, \tilde{\v}_{m}\in V_{m}\}$. Since each $f_{\tilde{\v}_{m}}$ has no repeated roots by Lemma \ref{minimal}, so does $f_{0}$.
         
          On the other hand,  by Lemma \ref{minimal}, all the roots of  $f_{\tilde{\v}_{m}}$ have the same absolute value of the $\overline{K}$-norm as $\vert N_{\overline{K}}(\lambda_{\tilde{\v}_{m}})\vert$, which is at least $\kappa^{m}\geq \kappa$. So the  absolute value of the $\overline{K}$-norm of all the roots of $f_{0}$ are at least $\kappa$. This finishes the proof of the claim.

         \textbf{Claim 2:} $f_{0}(A)\neq \mathcal{O}_{s'\times s'}$.
         
         Suppose that $f_{0}(A)=\mathcal{O}_{s'\times s'}$. By Claim 1, $f_{0}$ has no repeated roots. By Lemma \ref{jordan},  there exist an $s'\times s'$ invertible matrix $S$ and a  diagonal matrix $J=\begin{bmatrix}
         \mu_{1}\\
         & \mu_{2}\\
         & & \dots\\
         & & & \mu_{s'}
         \end{bmatrix}$ with $f_{0}(\mu_{1})=\dots=f_{0}(\mu_{s'})=0$ such that $A=SJS^{-1}$. Again by Claim 1, we have that $\vert N_{\overline{K}}(\mu_{1})\vert,\dots,\vert N_{\overline{K}}(\mu_{s'})\vert>\kappa$. By (\ref{47}),
         for all $\x_{1},\dots,\x_{d}\in\mathbb{C}^{s'}$,
         \begin{equation}\nonumber
         	\begin{split}
         		 F(\x_{1}S^{-1},\dots,\x_{d}S^{-1})
         	=F(\x_{1}S^{-1}A,\dots,\x_{d}S^{-1}A)=F(\x_{1}JS^{-1},\dots,\x_{d}JS^{-1}).
         	\end{split}	
         \end{equation}
         Recall that $\bold{e}_{i}\in\mathbb{C}^{s'}$ denotes the vector whose $i$-th coordinate is 1 and all other coordinates are 0. By the definition of $G_{\ker}$, $F$ is not constant $0$. So by the multi-linearity of $F$ and invertibility of $S$, there exist $1\leq i_{1},\dots,i_{d}\leq s'$ such that $F(\bold{e}_{i_{1}}S^{-1},\dots,\bold{e}_{i_{d}}S^{-1})\neq 0$. Then
         $$F(\bold{e}_{i_{1}}S^{-1},\dots,\bold{e}_{i_{d}}S^{-1})=F(\bold{e}_{i_{1}}JS^{-1},\dots,\bold{e}_{i_{d}}JS^{-1})=(\prod_{j=1}^{d}\mu_{i_{j}})\cdot F(\bold{e}_{i_{1}}S^{-1},\dots,\bold{e}_{i_{d}}S^{-1}).$$
         Since $\vert N_{\overline{K}}(\prod_{j=1}^{d}\mu_{i_{j}})\vert\geq\kappa^{d}>1$, this is impossible. This contradiction implies that $f_{0}(A)\neq \mathcal{O}_{s'\times s'}$.
         
         \textbf{Step 4: finishing the proof.}         
         By Claim 2, there exists a row $\c=(c_{1},\dots,c_{s})\in\mathbb{Q}^{s'}$ of the matrix $f_{0}(A)$ which is  non-zero. Moreover, the height of $\c$ is at most $C_{2}$. 
By (\ref{51}), for all $1\leq m\leq k$  and $\tilde{\v}_{m}\in V_{m}$, 
         $$\c\cdot L'_{m}(\tilde{\v}_{m})=\c_{\tilde{\v}_{m}}\cdot L_{m}(\tilde{\v}_{m}),$$
where $\c_{\tilde{\v}_{m}}\in\mathbb{C}^{s'}$ is a row of the matrix $-B_{\tilde{\v}_{m},A,f_{0}}$. By (\ref{56}),
$$\Vert\c\cdot L'_{m}(\tilde{\v}_{m})\Vert_{\T}\leq \frac{C_{3,m}}{N^{m}}\footnote{Recall that for $z=a+bi\in\mathbb{C}$ for some $a,b\in\R$, $\Vert z\Vert_{\T}$ denotes the quantity $\Vert a\Vert_{\T}+\Vert b\Vert_{\T}$.}$$
for all $1\leq m\leq k$, $\tilde{\v}_{m}\in V_{m}$ for some $C_{3,m}:=C_{3,m}(\X,D,p,q,C)>0.$
Since $\text{span}_{\mathbb{C}}V_{m}=(\mathbb{C}^{D})^{m}$, by the multi-linearity of $L'_{m}$, we have that for all $\n_{1},\dots,\n_{m}\in \Z^{D}$,        
$$\Vert\c\cdot L'_{m}(\n_{1},\dots,\n_{m})\Vert_{\T}\leq \frac{C_{4,m}\prod_{i=1}^{m}\vert\n_{i}\vert}{N^{m}}$$
for  some $C_{4,m}:=C_{4,m}(\X,D,p,q,C)>0.$ 
So
$$\Vert\c\cdot R'_{m}(\n)\Vert_{\T}\leq \frac{C_{4,m}\vert\n\vert^{m}}{N^{m}}$$
for all $\n\in\Z$ and $1\leq m\leq k$. By Lemma \ref{popo}, 
$$\Vert Q_{m}\c\cdot a'_{\j}\Vert_{\T}\leq \frac{C_{5,m}}{N^{m}}$$
for all $1\leq m\leq k$, $\vert\j\vert=m$ for  some $C_{5,m}:=C_{5,m}(\X,D,p,q,C)>0$ and $ Q_{m}:= Q_{m}(\X,D,p,q,C)\in\N_{+}$. Letting $Q=\prod_{m=1}^{k}Q_{m}$, we have that 
$$\Vert Q\c\cdot a'_{\j}\Vert_{\T}\leq \max_{1\leq m\leq k}\frac{C_{5,m}}{N^{m}}\cdot\frac{Q}{Q_{m}}.$$
for all $1\leq \vert\j\vert\leq k$. Note that $Q\c\cdot a'_{\j}\in\R$ and
 $\c$ is independent of the choice of $1\leq m\leq k$. Since $\X$ is a standard nil-structure,
 the map $\eta\colon G\to \mathbb{T}$ defined by $$\eta(g_{0}):=(Q\c,0,\dots,0)\cdot\psi(g_{0}) \mod\Z, g_{0}\in G$$
 is a
 horizontal character of $X$ with $0<\Vert\eta\Vert_{\X}\leq \dim(G)C_{2}Q$. 
 Since
 $g_{\ker}=\sum_{m=1}^{k}R'_{m}$, we have that
 $\Vert\eta\circ g\Vert'_{C_{1}^{\infty}(R_{N,D})}\leq \max_{1\leq m\leq k}C_{5,m}Q/Q_{m}$.
         By Theorem \ref{inv}, there exist $\delta:=\delta(\X,D,p,q,C)>0$ and $N_{0}:=N_{0}(\X,D,p,q,C)\in\N$ such that for all $N\geq N_{0}$,
         $(g(\n)\cdot e_{X})_{\n\in R_{N,D}}$ is not totally $\delta$-equidistributed on $X$ with respect to $\X$. This finishes the proof of Theorem \ref{contri}.

     \section{Orthogonality of multiplicative functions and nilsequences}\label{s:o}
     In this section, we prove the following central quantitative correlation result of this paper.
     \begin{thm}[Main quantitative correlation result]\label{key}
     	Let $\K$ be an integral tuple and $X=G/\Gamma$ be a nilmanifold with a nil-structure $\X$. For all $w,\epsilon>0$, there exist $\delta:=\delta(\X,w,\B,\epsilon)>0$ and $N_{0}:=N_{0}(\X,w,\B,\epsilon)\in\N$ such that for all $N\geq N_{0}$, the following holds: if there exist $g\in\poly_{D}(G)$ of degree at most $w$, $\m\in\Z^{D}$, $\chi\in\mathcal{M}_{K}$, $\Phi\colon X\to\mathbb{C}$ such that $\Vert \Phi\Vert_{\Lip(\X)}\leq 1$ and $\int_{X}\Phi\,d m_{X}=0$, and a $D$-dimensional arithmetic progression $P$ such that 
     	\begin{equation}\label{6.1}
     		\begin{split}
     			\Bigl\vert\E_{\n\in R_{N,D}}\bold{1}_{P}(\n)\chi(\iota_{\B}( \n))\Phi(g(\n+\m)\cdot e_{X})\Bigr\vert\geq \epsilon,
     		\end{split}
     	\end{equation}	
     	then the sequence $(g(\n)\cdot e_{X})_{\n\in R_{N,D}}$ is not totally $\delta$-equidistributed on $X$ with respect to $\X$.
     \end{thm}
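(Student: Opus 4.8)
The plan is to run the now-standard scheme for orthogonality between multiplicative functions and nilsequences (as in \cite{FH}), feeding in the new inputs from Sections~\ref{s:2}, \ref{s:d} and~\ref{s:m}. First I would reduce to the model case where $\Phi$ is a nilcharacter with nonzero frequency $\ell$ on a nilmanifold carrying a \emph{standard} nil-structure with $\dim(G_{d})=1$, $d$ being the natural step. This uses a Fourier decomposition of $\Phi$ along the vertical torus $G_{d}/(G_{d}\cap\Gamma)$: after truncating the series (the tail is small in sup norm since $\Vert\Phi\Vert_{\Lip(\X)}\le 1$), only boundedly many frequencies survive; the zero-frequency component descends to the quotient nilmanifold $X/G_{d}$, which has smaller natural step and is handled by induction on $d$ (the base case $d=1$ being immediate from K\'atai's Lemma and the definition of total equidistribution), while each nonzero-frequency component is reduced to $\dim(G_{d})=1$ by quotienting $G_{d}$ by the codimension-one rational subgroup killed by the frequency, after which one selects a standard Mal'cev basis. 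Throughout I would use that total $\delta$-equidistribution of $(g(\n)\cdot e_{X})$ passes to its projection to any quotient nilmanifold, and conversely for lifts, with controlled loss in $\delta$ (via Theorems~\ref{Lei} and~\ref{inv}).

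Second, set $\tilde g(\x):=g(\x+\m)$, so $(\tilde g(\n)\cdot e_{X})$ and $(g(\n)\cdot e_{X})$ are equidistributed to comparable degree, and apply K\'atai's Lemma (Lemma~\ref{katai}) with $h(\n):=\bold{1}_{P}(\n)\Phi(\tilde g(\n)\cdot e_{X})$ and with $\mathcal{P}$ a finite collection of $C_{\B}$-regular prime elements of $\O_{K}$ with pairwise coprime $K$-norms and $\mathcal{A}_{\mathcal{P}}$ as large as we please; such $\mathcal{P}$ exists by Theorems~\ref{mink2} and~\ref{dirich} and Lemma~\ref{pi}. Since the left side of~(\ref{6.1}) is $S(N)/(2N+1)^{D}$, choosing $\mathcal{A}_{\mathcal{P}}$ large forces $C_{\mathcal{P}}(N)\gg_{\X,\B,\epsilon}N^{D}$, and a pigeonhole over the finitely many pairs yields $p\neq q$ in $\mathcal{P}$ — whence $\vert N_{K}(p)\vert\neq\vert N_{K}(q)\vert$, both being prime powers $>1$ over distinct rational primes by Lemma~\ref{pi} — with
$$\Bigl\vert\E_{\n\in R_{CN,D}}\bold{1}_{P'}(\n)\,(\Phi\otimes\overline\Phi)\bigl((\tilde g(\n A_{\B}(p)),\tilde g(\n A_{\B}(q)))\cdot e_{X\times X}\bigr)\Bigr\vert\ge\epsilon'$$
for some $\epsilon'=\epsilon'(\X,\B,\epsilon)>0$, where $P'=\{\n:\n A_{\B}(p),\n A_{\B}(q)\in P\}$; by $C_{\B}$-regularity $P'\subseteq R_{CN,D}$ with $C=C(\B)$, and $P'$ meets every line in a one-dimensional arithmetic progression, so Proposition~\ref{comp} applies. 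I would also record that total $\delta$-equidistribution of $(\tilde g(\n)\cdot e_{X})$ propagates with controlled loss to $(\tilde g(\n A_{\B}(p))\cdot e_{X})$ and $(\tilde g(\n A_{\B}(q))\cdot e_{X})$ on $R_{CN,D}$, since $\eta\circ(\tilde g\circ A_{\B}(p))$ and $\eta\circ\tilde g$ have comparable smooth norms, $A_{\B}(p)$ being invertible of bounded height.

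Third, I would feed $G(\n):=(\tilde g(\n A_{\B}(p)),\tilde g(\n A_{\B}(q)))\in\poly_{D}((G\times G)_{\bullet})$ into the Factorization Theorem~\ref{5.6} with a threshold $\omega$ fixed momentarily, getting $G=\epsilon\cdot G'\cdot\gamma$ with $G'$ totally $\omega(M)$-equidistributed on a sub nilmanifold $Y=H/(H\cap(\Gamma\times\Gamma))\in\mathcal{F}(M)$, $M\le M_{1}(\X,\omega,D)$. Because the two coordinate projections of $G$ are totally (weakly) equidistributed on $X$, the group $H$ surjects onto $G$ in each coordinate, so $Y$ satisfies the first bullet of Theorem~\ref{auto} and $\mathrm{pr}_{i}(H_{d})=G_{d}$, forcing $\dim(H_{d})\in\{1,2\}$. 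By Lemma~\ref{good}(ii),(iii), $\Phi\otimes\overline\Phi$ restricts to a nilcharacter of $Y$ whose frequency is nonzero unless $H_{d}$ is the diagonal $\{(\psi^{-1}(0,\dots,0;t),\psi^{-1}(0,\dots,0;t)):t\in\R\}$; if that frequency were nonzero then $\int_{Y}\Phi\otimes\overline\Phi\,dm_{Y}=0$, and splitting $R_{CN,D}$ into residue classes modulo $M$ (on each of which $\gamma$ is constant modulo $\Gamma\times\Gamma$), absorbing $\epsilon$ via the Lipschitz bound, and conjugating by $\gamma$ on each class through Corollary~\ref{5.5}, the displayed average would be $\le C\omega(M)+o(1)<\epsilon'$ — a contradiction, provided $\omega$ was chosen with $\omega(M)$ small in terms of $\epsilon'$ and the finitely many members of $\mathcal{F}(M)$. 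Hence $H_{d}$ is the diagonal, the second bullet of Theorem~\ref{auto} holds, and Theorem~\ref{auto} supplies $\sigma\in\Aut_{d}(G)$ of height $\le C'$ with $h_{1}=\sigma(h_{2})\bmod G_{\ker}$ for all $(h_{1},h_{2})\in H$.

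Finally I would show $h(\n):=\tilde g(\n A_{\B}(p))\cdot(\sigma\circ\tilde g(\n A_{\B}(q)))^{-1}$ has $\Vert h\Vert'_{C_{\ker,\X}^{\infty}}\le C_{0}(\X,\B,\epsilon)$: on the abelian faithful horizontal torus, with $\sigma$ acting by a bounded integer matrix $A$, the class of $h$ equals $\pi_{\ker}\psi(\tilde g(\n A_{\B}(p)))-A\cdot\pi_{\ker}\psi(\tilde g(\n A_{\B}(q)))$; substituting $G=\epsilon G'\gamma$, the $\epsilon$-part has smooth norm $\le CM_{1}$, the $\gamma$-part is killed by passing to residue classes modulo $M$, and the $G'$-part is an \emph{integer} vector because $\pi_{\ker}\psi(h_{1})=\pi_{\ker}\psi(h_{2})A$ together with $\pi_{\ker}(\Gamma)=\Z^{s'}$ and $A\in M_{s'\times s'}(\Z)$. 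Theorem~\ref{contri} — applicable since $\dim(G_{d})=1$, $\X$ is standard, and $\vert N_{K}(p)\vert\neq\vert N_{K}(q)\vert$ — then shows $(\tilde g(\n)\cdot e_{X})$ is not totally $\delta(\X,\B,p,q,C_{0})$-equidistributed on $R_{CN,D}$, so taking $\delta$ smaller than all of these finitely many values (and below the loss factors of the earlier steps) contradicts the standing assumption that $(g(\n)\cdot e_{X})$ is totally $\delta$-equidistributed; this contradiction proves the theorem. The main obstacle I anticipate is the order in which $\mathcal{P}$, $\omega$, $M_{1}$ and $\delta$ are chosen, so as to break the apparent circularity — the key point being that the family $\mathcal{F}(M)$ depends only on $\X$ and $D$, not on $\omega$ — together with the bookkeeping that converts the group identity $h_{1}=\sigma(h_{2})\bmod G_{\ker}$ on $H$ into the quantitative smooth-norm bound on $h$; the genuinely hard analytic content is already packaged into Theorems~\ref{auto} and~\ref{contri}.
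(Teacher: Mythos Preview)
Your proposal is correct and follows essentially the same architecture as the paper's proof in Section~\ref{s:o}: induction on the natural step with vertical Fourier decomposition to reduce to a nilcharacter of nonzero frequency on a standard nil-structure with $\dim(G_{d})=1$; K\'atai's Lemma over $\O_{K}$ with the regularized prime set built from Theorem~\ref{mink2} and Lemma~\ref{pi}; factorization on $X\times X$ via Theorem~\ref{5.6}; the case split on $\dim(H_{d})$ using Lemma~\ref{good} and Theorem~\ref{auto}; and finally Theorem~\ref{contri}. You also correctly flag the parameter-ordering issue and its resolution (that $\mathcal{F}(M)$ is independent of $\omega$), which is exactly how the paper breaks the apparent circularity between (\ref{7.99}) and (\ref{7.90}).

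Two small remarks on presentation, neither a gap. First, the paper works throughout with the conjugated sub nilmanifold $Y_{a}=H_{a}/(H_{a}\cap(\Gamma\times\Gamma))$, $H_{a}=a^{-1}Ha$, rather than $Y$ itself, because the constant value of $\gamma$ on a residue class is only $M^{\ast}$-rational, not in $\Gamma\times\Gamma$; since $G_{d}$ is central, $(H_{a})_{d}=H_{d}$ and the projection hypotheses for Theorem~\ref{auto} are unchanged, so your compression is harmless but you should be explicit that Theorem~\ref{auto} is applied to $Y_{a}$. Second, in your final step the $G'$-contribution to $\pi_{\ker}\psi(h)$ is in fact \emph{zero} (not merely an integer vector), since Theorem~\ref{auto} gives the exact identity $\pi_{\ker}\psi(h'_{1})=\pi_{\ker}\psi(h'_{2})\cdot A$ in $\R^{s'}$; your reference to $\pi_{\ker}(\Gamma)=\Z^{s'}$ is unnecessary there, though it is what makes the $\gamma$-contribution integral after dilating by the period $Q$, which is how the paper handles it.
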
		
     
     \subsection{Preliminary reductions}\label{sb}
     Suppose that $X=G/\Gamma$ is of natural step $d$ for some $d\in\N_{+}$. By induction, we may assume the following.
     
     \
     
     \textbf{Assumption 1:} either (i) $d=1$; or (ii) $d\geq 2$ and the conclusion holds for $d-1$.
     
     \ 
     
     Assume that $g\in\poly_{D}(G_{\bullet})$ for some filtration $G_{\bullet}$ of $G$ (which depends only on the degree $w$ of $g$), and let $\X'$ be a standard nil-structure of $X$ adapted to $G_{\bullet}$. Since the metrics generated by all nil-structures of $X$ generate the same topology of $X$, all such metrics are equivalent. So there exists $C_{0}:=C_{0}(\X,w)>1$ such that for all $\Phi\colon X\to\mathbb{C}$,
     \begin{equation}\label{ini}
     	C_{0}^{-1}\Vert \Phi\Vert_{\Lip(\X)}\leq \Vert \Phi\Vert_{\Lip(\X')}\leq C_{0}\Vert \Phi\Vert_{\Lip(\X)}.
     \end{equation}

    Therefore, we can make the following assumption:
     
     \
     
     \textbf{Assumption 2:} 
     $\G$ is a standard nil-structure of $X$, and
     $g\in\poly_{D}(G_{\bullet})$.
   
   \
     
%
%
%
     We need some further reductions similar to the ones used in Theorem 6.1 of \cite{FH}, Lemma 3.7 of \cite{GT} and Proposition 7.9 of \cite{S}.

     Denote $m=\dim(G)$, $m'_{2}=\dim(G_{\ker})$, $s'=m-m'_{2}$ and $r=\dim(G_{d})$.
     By approximating $\Phi$ with a smooth function, there exist $C_{1}:=C_{1}(\X,\epsilon)>0$ and $\Phi'\colon X\to\mathbb{C}$ such that
     $$\Vert\Phi-\Phi'\Vert_{L^{\infty}(m_{X})}\leq \epsilon/2, \int_{X}\Phi'\,dm_{X}=0, \text{ and } \Vert\Phi'\Vert_{C^{2m}(\X)}\leq C_{1}.$$
     So (\ref{6.1}) implies that
     		\begin{equation}\label{6.11}
     			\begin{split}
     				\Bigl\vert\E_{\n\in R_{N,D}}\bold{1}_{P}(\n)\chi(\iota_{\B}( \n))\Phi'(g(\n+\m)\cdot e_{X})\Bigr\vert\geq \epsilon/2.
     			\end{split}
     		\end{equation}
     		Recall that $\psi\colon G\to\mathbb{R}^{m}$ is the Mal'cev coordinate map with respect to $G_{\bullet}$. Define $\tilde{\psi}\colon G_{d}\to\T^{r}$ by
     		$$\tilde{\psi}(\psi^{-1}(0,\dots,0;y_{1},\dots,y_{r})):=(y_{1},\dots,y_{r})  \mod \T^{r}$$
     		for all $(y_{1},\dots,y_{r})\in\mathbb{R}^{r}$. Since $\tilde{\psi}$ factors through $\Gamma$, $\tilde{\psi}$ induces an identification between $G_{d}/(G_{d}\cap\Gamma)$ with $\T^{r}$, as well as an identification between
the dual group of $G_{d}/(G_{d}\cap\Gamma)$ with $\Z^{r}$. For $\y\in\Z^{r}$, let $\Phi'_{\y},\Phi''_{\y}\colon X\to\mathbb{C}$ be the functions
     $$\Phi'_{\y}(x):=\int_{\T^{r}}e(-\y\cdot\v)\Phi'(\tilde{\psi}^{-1}(\v)\cdot x)\,d_{m_{\T^{r}}}(\v) \text{ and } \Phi''_{\y}(x):=\frac{\Phi'_{\y}(x)}{\Vert\Phi'_{\y}\Vert_{L^{\infty}(m_{X})}}\footnote{Denote $\Phi''_{\y}:=0$ if $\Vert\Phi'_{\y}\Vert_{L^{\infty}(m_{X})}=0$.}$$
     for all $x\in X$, where $\tilde{\psi}^{-1}(\v)$ is viewed as an arbitrary pre-image of $\v$ in $G_{d}$. Then for all $\y\in\Z^{r}$,
     we have   $\Vert\Phi'_{\y}\Vert_{\Lip(\X)}\leq C_{1}$, $\int_{X}\Phi'_{\y}\,dm_{X}=0$, and 	$\Phi'_{\y}$ is a nilcharacter of $X$ with frequency $\y$ with respect to $\X$. Since $\Vert\Phi'\Vert_{C^{2m}(\X)}\leq C_{1}$, using integration by parts, we have that $\Vert\Phi'_{\y}\Vert_{L^{\infty}(m_{X})}\leq C_{2}(1+\vert \y\vert)^{-2r}$ for some $C_{2}:=C_{2}(\X,\epsilon)>0$. Since for all $x\in X$, 
     $$\Phi'(x)=\sum_{\y\in\Z^{r}}\Phi'_{\y}(x)=\sum_{\y\in\Z^{r}}\Vert\Phi'_{\y}\Vert_{L^{\infty}(m_{X})}\cdot \Phi''_{\y}(x),$$
     by (\ref{6.11}), there exist $\epsilon_{1}:=\epsilon_{1}(\X,\epsilon)>0$, $C_{3}:=C_{3}(\X,\epsilon)>0$ and $\y\in\Z^{r}$ such that $\vert \y\vert\leq C_{3}$ and 
     	\begin{equation}\label{6.12}
     		\begin{split}
     			\Bigl\vert\E_{\n\in R_{N,D}}\bold{1}_{P}(\n)\chi(\iota_{\B}( \n))\Phi''_{\y}(g(\n+\m)\cdot e_{X})\Bigr\vert\geq \epsilon_{1}.
     		\end{split}
     	\end{equation}

     		For $\y\in\Z^{r}$, let $$G_{d,\y}:=\{g\in G_{d}\colon \y\cdot \tilde{\psi}(g)=0\}.$$ Then $G_{d,\y}$ is a subgroup of $G_{d}$ rational for $G_{d}\cap \Gamma$. Let $G_{\y}:=G/G_{d,\y}$ and $\Gamma_{\y}:=\Gamma/(G_{d,\y}\cap\Gamma)$. Then $X_{\y}:=G_{\y}/\Gamma_{\y}$ is a nilmanifold. Let $\pi_{\y}\colon X\to X_{\y}$ be the quotient map and $\X_{\pi_{\y}}=(G_{\pi_{\y},\bullet}, \mathcal{X}_{\pi_{\y}}, d_{G_{\y}},d_{X_{\y}})$ be any standard nil-structure of $X_{\y}$ induced by the quotient map $\pi_{\y}$.      		
     		Then $\Vert f\vert_{X_{\y}}\Vert_{\Lip(\X_{\pi_{\y}})}\leq C_{4}\Vert f\Vert_{\Lip(\X)}$
     		for some $C_{4}:=C_{4}(\X,\epsilon)>0$ for all $\vert\y\vert\leq C_{3}$ and all $f\colon X\to\mathbb{C}$.

     	We first assume that (\ref{6.12}) holds for $\y=\bold{0}$. If $d=1$, then $G=G_{d}$ and so $\Phi''_{\bold{0}}$ is a constant. Since $\int_{X}\Phi''_{\bold{0}}\,dm_{X}=0$, we have that $\Phi''_{\bold{0}}=0$, a contradiction to (\ref{6.12}).
     	
     	Now suppose that $d\geq 2$. Then $G_{d,\bold{0}}=G_{d}$, $G_{\bold{0}}=G/G_{d}$, $\Gamma_{\bold{0}}= G_{d}\cap\Gamma$. So $X_{\bold{0}}=G_{\bold{0}}/\Gamma_{\bold{0}}$ is of natural step $d-1$. The function $\Phi''_{\bold{0}}$ factors through $G_{d}$ and so can be written as $\Phi''_{\bold{0}}=\tilde{\Phi}\circ \pi_{\bold{0}}$ for some function $\tilde{\Phi}\colon X_{\bold{0}}\to\mathbb{C}$. It is easy to see that $\int_{X_{\bold{0}}}\tilde{\Phi}\,d_{m_{X_{\bold{0}}}}=\int_{X}\Phi''_{\bold{0}}\,dm_{X}=0$.
By (\ref{6.12}), we have that 
     	\begin{equation}\nonumber
     		\begin{split}
     			\Bigl\vert\E_{\n\in R_{N,D}}\bold{1}_{P}(\n)\chi(\iota_{\B}( \n))\tilde{\Phi}(\pi_{\bold{0}}\circ g(\n+\m)\cdot e_{X_{\bold{0}}})\Bigr\vert\geq \epsilon_{1}.
     		\end{split}
     	\end{equation}	
     	Since $g\in\poly_{D}(G_{\bullet})$, $\pi_{\bold{0}}\circ g\in\poly_{D}(G_{\bold{0},\bullet})$. Since $G_{\bold{0}}$ is of natural step $d-1$, by induction hypothesis, if $N\geq N_{0}(\X_{\pi_{\bold{0}}},\epsilon_{1},D)$, then the sequence  $(\pi_{\bold{0}}\circ g(\n)\cdot e_{X_{\bold{0}}})_{\n\in R_{N,D}}$ is not totally $\delta:=\delta(\X,\X_{\pi_{\bold{0}}},\epsilon_{1}C^{-1}_{4},D)$-equidistributed on $X_{\bold{0}}$ with respect to $\X_{\pi_{\bold{0}}}$, which implies that $(g(\n)\cdot e_{X})_{\n\in R_{N,D}}$ is not totally $C^{-1}_{4}\delta$-equidistributed on $X$ with respect to $\X$. This finishes the proof.
     	
     	Now assume that $\y\neq \bold{0}$ and suppose that Theorem \ref{key} holds when $\dim(G_{d})=1$. Note that  $(G_{\y})_{d}=G_{d}/G_{d,\y}$ is of dimension 1. 
     	Since $\w\cdot\y=0$ for all $g=\tilde{\psi}^{-1}(\bold{w})\in G_{d,\y}$.  We have that
     	\begin{equation}\nonumber
     		\begin{split}
     		&\quad \Phi''_{\y}(gx)=\int_{\T^{r}}e(-\y\cdot\v)\Phi''(\tilde{\psi}^{-1}(\v)\cdot gx)\,d_{m_{\T^{r}}}(\v)=\int_{\T^{r}}e(-\y\cdot\v)\Phi''(\tilde{\psi}^{-1}(\v+\bold{w})\cdot x)\,d_{m_{\T^{r}}}(\v)
     		\\&=\int_{\T^{r}}e(-\y\cdot(\v-\bold{w}))\Phi''(\tilde{\psi}^{-1}(\v)\cdot x)\,d_{m_{\T^{r}}}(\v)=\int_{\T^{r}}e(-\y\cdot\v)\Phi''(\tilde{\psi}^{-1}(\v)\cdot x)\,d_{m_{\T^{r}}}(\v)=\Phi''_{\y}(x)
     		\end{split}
     	\end{equation}	
     	for all $x\in X$. So
      there exists $\tilde{\Phi}_{\y}\colon X_{\y}\to\mathbb{C}$ such that $\Phi''_{\y}=\tilde{\Phi}_{\y}\circ \pi_{\y}$. 
     	It is easy to see that $\int_{X_{\y}}\tilde{\Phi}_{\y}\,dm_{X_{\y}}=\int_{X}\Phi''_{\y}\,dm_{X}=0$ and $\Vert\tilde{\Phi}_{\y}\Vert_{\Lip(X_{\y})}\leq C_{4}$.  By (\ref{6.12}), we have that 
     	\begin{equation}\nonumber
     		\begin{split}
     			\Bigl\vert\E_{\n\in R_{N,D}}\bold{1}_{P}(\n)\chi(\iota_{\B}( \n))\tilde{\Phi}_{\y}(\pi_{\y}\circ g(\n+\m)\cdot e_{X_{\y}})\Bigr\vert\geq \epsilon_{1}.
     		\end{split}
     	\end{equation}	
     	Since $g\in\poly_{D}(G_{\bullet})$, $\pi_{\y}\circ g\in\poly_{D}((G_{\y})_{\bullet})$, by assumption, if $N\geq \max_{\Vert\y\Vert\leq C_{3}}N_{0}(\X_{\y},\epsilon_{1},D)$ (which depends only on $\X,\epsilon$ and $D$), then the sequence  $(\pi_{\y}\circ g(\n+\m)\cdot e_{X_{\y}})_{\n\in R_{N,D}}$ is not totally $\max_{\Vert\y\Vert\leq C_{3}}\delta(\X_{\y},C^{-1}_{4}\epsilon_{1},D)$-equidistributed on $X_{\y}$ with respect to $\X_\y$ for some $\vert\y\vert\leq C_{3}$, which implies that $(g(\n+\m)\cdot e_{X})_{\n\in R_{N,D}}$ is not totally $\delta$-equidistributed on $X$ with respect to $\X$ for some $\delta:=\delta(\X,\epsilon,D,C_{4})>0$. This finishes the proof.

%
%

     	Note that $\Phi''_{\y}$ is a nilcharacter of $X$ with non-zero frequency.
     	In conclusion, it now suffices to prove Theorem \ref{key} under the following assumption:
     	
     	\
     	
     	\textbf{Assumption 3:}  $\dim(G_{d})=1$, and $\Phi$ is a nilcharacter of $X$ with frequency $\ell\in\Z\backslash\{0\}$ with respect to $\X$.
     	
     	\
     	
     	By using Theorems \ref{Lei} and \ref{inv}, we may further assume that:
     	
     	\

     	\textbf{Assumption 4:}  $\m=\bold{0}$, and $g(\bold{0})=e_{G}$.
     	
     	\
     	
     	The justification of Assumption 4 is identical to the argument in Section 7.3 of \cite{FH}, and so we omit the proof.
     	
     \subsection{Using Katai's Lemma}
     We now use Katai's Lemma (Lemma \ref{katai}) to get rid of the multiplicative function $\chi$ in the expression of (\ref{6.1}).
     
     Let $C_{\B}>0$ be defined as in Lemma \ref{mink2} and $\mathcal{J}_{p}$ be defined as in Lemma \ref{pi}. We construct a set $\mathcal{P}\subseteq\N$ as follows:
      for every prime integer $p\in\N_{+}$, if $\mathcal{J}_{p}$ consists of principal prime ideals, let $(a)$ be one of them with the smallest $K$-norm $N((a))=\vert N_{K}(a)\vert$ for some $a\in\O_{K}$. By Lemma \ref{mink2}, we may pick some $a'\in\O_{K}$ which is $C_{\B}$-regular such that $(a')=(a)$. We put such an element $a'$ into the set $\mathcal{P}$. Then all the elements in $\mathcal{P}$ are $C_{\B}$-regular, and have pairwise coprime $K$-norms in $\Z$ by Lemma \ref{pi}. For $W\in\N_{+}$, let $\mathcal{P}_{W}$ denote the first $W$ elements in $\mathcal{P}$ (in an arbitrary order).

      By Lemma \ref{pi}, the cardinality of each $\mathcal{J}_{p}$ is at most $D$. By the minimality  of $N((a'))$ and Theorem \ref{dirich},  we have that $$\lim_{W\to\infty}\mathcal{A}_{\mathcal{P}_{W}}\geq \frac{1}{D}\sum_{a\in\O_{K} \text{ is a prime element}}\frac{1}{\vert N_{K}(a)\vert}=\infty,$$ 
     where $\mathcal{A}_{\mathcal{P}_{W}}$ is defined in (\ref{a}).
     So by (\ref{6.1}), the assumption that $\m=\bold{0}$, and Lemma \ref{katai}, there exist $N_{0}:=N_{0}(\epsilon,\B), W:=W(\epsilon,\B)>0$,
      $p,q\in\mathcal{P}_{W}$ with $\vert N_{K}(p)\vert\neq \vert N_{K}(q)\vert$, and $\epsilon_{2}:=\epsilon_{2}(\epsilon,\B)>0$ such that for all $N\geq N_{0}$,
     	\begin{equation}\label{7.4}
     		\begin{split}
     			\Bigl\vert\E_{\n\in R_{N,D}}\bold{1}_{P(p,q)}(\n)\Phi(g(\n A_{\B}(p))\cdot e_{X})\cdot
     			\overline{\Phi}(g(\n A_{\B}(q))\cdot e_{X})\Bigr\vert\geq \epsilon_{2},
     		\end{split}
     	\end{equation}	
     where $$P(p,q):=\{\n\in\Z^{D}\colon \n A_{\B}(p),\n A_{\B}(q)\in P\}.$$ 
     In order to simplify the notations,
     from now on, we assume implicitly that all the quantities are dependent on $p,q$ and so on $W$. Since there are only finitely many pairs of such $p,q$, from now on we may consider $p,q$ as fixed.
     
     \subsection{Factorizing the polynomial sequence}\label{complex}    
     Let $$h_{1}(\n):=g(\n  A_{\B}(p)), h_{2}(\n):=g(\n A_{\B}(q)) \text{ and } h(\n):=(h_{1}(\n),h_{2}(\n)) \text{ for all } \n\in\Z^{D}.$$ Then $h\in \poly_{D}((G\times G)_{\bullet})$. 
     We now use Theorem \ref{5.6} to convert $h(\n)$ into a sequence which is totally equidistributed on a sub nilmanifold  of $X\times X$. This step is again
     similar to the ones used in Theorem 6.1 of \cite{FH} and Proposition 7.9 of \cite{S}.

     Let $\omega\colon\N\to\mathbb{R}_{+}$ be a function to be defined later.
      By Theorem \ref{5.6}, 
      there exists a finite family $\mathcal{F}(M):=\mathcal{F}(\X,D,M)$ of sub nilmanifolds of $X\times X$, which increases with $M$ and independent of $\omega$, a constant  $M_{1}:=M_{1}(\X,\omega,D)\in\N_{+}$, an integer $M^{\ast}\in\N$ with $M^{\ast}\leq M_{1}$, a closed subgroup $H$ of $G\times G$ rational for $\Gamma\times\Gamma$, a nilmanifold $Y:=H/(H\cap(\Gamma\times\Gamma))$ belonging to $\mathcal{F}(M^{\ast})$ with a nil-structure $\mathfrak{Y}=(H_{\bullet},\mathcal{Y},\psi_{Y},d_{H},d_{Y})$ induced by $\X\times \X$, and a factorization
     $h(\n)=\epsilon(\n)h'(\n)\gamma(\n), \n\in R_{N,D}$ with $\epsilon, g', \gamma\in\poly_{D}(G_{\bullet})$ such that
     \begin{itemize}
     	\item $\epsilon\colon R_{N,D}\to G\times G$ is $(M^{\ast},N)$-smooth;
     	\item $h'\in\poly_{D}(H_{\bullet})$ and $(h'(\n)\cdot e_{Y})_{\n\in R_{N,D}}$ is totally $\omega(M^{\ast})$-equidistributed on $Y$ with respect to $\mathfrak{Y}$;
     	\item $\gamma\colon R_{N,D}\to G\times G$ is $M^{\ast}$-rational for $\Gamma\times \Gamma$ and $\gamma(\n)\cdot e_{Y}=\gamma(\n+M^{\ast}\bold{e}_{i})\cdot e_{Y}$ for all $1\leq i\leq D$ and $\n,\n+M^{\ast}\bold{e}_{i}\in R_{N,D}$.
     \end{itemize}	
     
     We may rewrite (\ref{7.4}) as 
     	\begin{equation}\label{7.5}
     		\begin{split}
     			\Bigl\vert\E_{\n\in R_{N,D}}\bold{1}_{P(p,q)}(\n)\Phi\otimes\overline{\Phi}(\epsilon(\n)h'(\n)\gamma(\n)\cdot e_{X\times X})\Bigr\vert\geq \epsilon_{2}.
     		\end{split}
     	\end{equation}
     	
     	Our goal is to remove $\epsilon(\n)$ and $\gamma(\n)$ on the left hand side of (\ref{7.5}).
     	By Corollary B.3 of \cite{FH}, there exists a finite subset $\Sigma(M^{\ast})$ of $G\times G$, which consists of elements $M^{\ast}$-rational for $\Gamma\times \Gamma$ such that every element in $G\times G$ which is $M^{\ast}$-rational for  $\Gamma\times \Gamma$ can be written as $a\gamma_{0}$ for some $a\in \Sigma(M^{\ast})$ and $\gamma_{0}\in\Gamma\times\Gamma$. We may also assume that $e_{G\times G}\in\Sigma(M^{\ast})$. For all $a\in\Sigma(M^{\ast})$, let  $H_{a}:=a^{-1}Ha$, $\Gamma_{a}:=H_{a}\cap(\Gamma\times\Gamma)$ and $Y_{a}:=H_{a}/\Gamma_{a}$. Lemma B.4 of \cite{FH} implies that $H_{a}$ is a subgroup of $G\times G$ rational for $\Gamma\times \Gamma$, and so $Y_{a}$ is a sub nilmanifold of $X\times X$.
     	Let 
     	$\mathfrak{Y}_{a}=((H_{a})_{\bullet},\mathcal{Y}_{a},\psi_{Y_{a}},d_{H_{a}},d_{Y_{a}})$ be a nil-structure of $Y_{a}$ induced by the a-conjugate from $Y$. Then
     	$(H_{a})_{\bullet}$ is the filtration of $H_{a}$ given by $H^{(j)}_{a}=H^{(j)}\cap H_{a}$ for all $j\in\N$. Since $G^{(j)}$ is a normal subgroup of $G$, we have that $H^{(j)}_{a}=a^{-1}H^{(j)}a$. Let 
     	$$\mathcal{F}'(M^{\ast}):=\{Y_{a}\colon Y\in\mathcal{F}(M^{\ast}), a\in\Sigma(M^{\ast})\}.$$
     	
     	By Lemma \ref{dist} and Corollary \ref{5.5}, there exists a function $C_{1}:=C_{1,\X}\colon \N\to\mathbb{R}_{+}$ such that the following properties holds:
     		\begin{enumerate}[label=(\subscript{P}{{\arabic*}})]
     			\item For all $a\in\Sigma(M^{\ast})$ and $g\in G\times G$ with $d_{G}(g,e_{G\times G})\leq M^{\ast}$, we have $d_{G}(a^{-1}ga,e_{G\times G})\leq C_{1}(M^{\ast}) d_{G}(g,e_{G\times G})$;
     			\item For all $a\in\Sigma(M^{\ast})$, $g\in G\times G$ with $d_{G}(g,e_{G\times G})\leq M^{\ast}$, and $x,y\in X\times X$, we have that 
     		 $d_{X}(ga\cdot x,ga\cdot y)\leq C_{1}(M^{\ast}) d_{X}(x,y)$;
     			\item As a result, for all $a\in\Sigma(M^{\ast})$, $g\in G\times G$ with $d_{G}(g,e_{G\times G})\leq M^{\ast}$, and $f\in\Lip(\X\times \X)$, denoting $f_{g}(x):=f(g\cdot x)$ for all $x\in X\times X$, we have that $\Vert f_{g}\Vert_{\Lip(\X\times \X)}\leq C_{1}(M^{\ast})\Vert f\Vert_{\Lip(\X\times \X)}$.
     			\item   For all $Y_{a}\in \mathcal{F}'(M^{\ast})$ and $x,x'\in Y_{a}$, $C_{1}(M^{\ast})^{-1}d_{X\times X}(x,y)\leq d_{Y_{a}}(x,y)\leq C_{1}(M^{\ast})d_{X\times X}(x,y)$;
     			\item There exist a function $\rho:=\rho_{\X,\mathfrak{Y},D}\colon\N\times\mathbb{R}_{+}\to\mathbb{R}_{+}$ with $\lim_{t\to 0^{+}}\rho(M,t)=0$ for all $M\in\N$ ,and 
     			$N_{1}:=N_{1,\X,\mathfrak{Y},D}\colon\mathbb{R}_{+}\to\N$ such that 
     			for every $Y=H/(H\cap(\Gamma\times\Gamma))\in\mathcal{F}(M^{\ast})$, $a\in\Sigma(M^{\ast})$, $t>0$, $N\in\N$ with $N\geq N_{1}(M^{\ast})$ and $f\in\poly_{D}(H_{\bullet})$, if $(f(\n)\cdot e_{Y})_{\n\in R_{N,D}}$ is totally $t$-equidistributed on $Y$ with respect to $\mathfrak{Y}$, then $a^{-1}fa\in\poly_{D}((H_{a})_{\bullet})$ and  $(a^{-1}f(\n)a\cdot e_{Y_{a}})_{\n\in R_{N,D}}$ is totally $\rho(M^{\ast},t)$-equidistributed on $Y_{a}$ with respect to $\Y_{a}$.
     		\end{enumerate}	
     		
     		\
     		
     		We now return to (\ref{7.5}). 
     		For convenience, for every subset $R\subseteq\Z^{D}$, denote $$I(R):=\{\n\in\Z^{D}\colon \n A_{\B}(p),\n A_{\B}(q)\in R\}.$$
     		Then $P(p,q)=I(P)$.
     		Set 
     		\begin{equation}\label{5.7}
     			L:=\Bigl\lfloor\frac{\epsilon_{2}N}{20DC^{3}_{1}(M^{\ast})\vert N_{K}(pq)\vert (M^{\ast})^{2}}\Bigr\rfloor \text{ and } N_{2}(M^{\ast})=N_{2}(\X,\omega,K,\epsilon,M^{\ast}):=\frac{20DC^{3}_{1}(M^{\ast})\vert N_{K}(pq)\vert (M^{\ast})^{2}}{\epsilon_{2}}.
     		\end{equation}	
     From now on we assume that $N\geq N_{0}+N_{1}(M^{\ast})+N_{2}(M^{\ast})$. Then $L\geq 1$ and
     \begin{equation}\label{5.8}
     	\frac{\epsilon_{2}N}{40DC^{3}_{1}(M^{\ast})\vert N_{K}(pq)\vert (M^{\ast})^{2}}\leq L\leq\frac{\epsilon_{2}N}{20DC^{3}_{1}(M^{\ast})\vert N_{K}(pq)\vert (M^{\ast})^{2}}.
     \end{equation}	
     Since $\epsilon_{2}\leq 1$ and $C_{1}(M^{\ast})\geq 1$, $M^{\ast}L\leq N$. 
     
     Let $P_{0}$ be a $D$-dimensional arithmetic progression in $R_{N,D}$ of step $(\vert N_{K}(pq)\vert\cdot M^{\ast},\dots,\vert N_{K}(pq)\vert\cdot M^{\ast})$ and length $(L_{1},\dots,L_{D})$ for some $L\leq L_{i}<2L, 1\leq i\leq D$. Then for all $\n,\n'\in I(P_{0})$, $\n-\n'\in M^{\ast}\cdot\Z^{D}$. So there exist $a\in\Sigma(M^{\ast})$ and $\gamma_{0}\in\Gamma\times\Gamma$ such that for all $\n\in I(P_{0})$, $\gamma(\n)\cdot e_{X\times X}=a\gamma_{0}\cdot e_{X\times X}=a\cdot e_{X\times X}.$
     Denote $h'_{a}(\n):=a^{-1}h(\n)a, \n\in\Z^{D}$ and $(\Phi\otimes\overline{\Phi})_{a}(x):=\Phi\otimes\overline{\Phi}(\epsilon(\n_{0})a\cdot x), x\in X\times X$ for some fixed $\n_{0}\in I(P_{0})$. For all $\n\in P_{0}$, we have
     $$\Phi\otimes\overline{\Phi}(h(\n)\cdot e_{X\times X})=(\Phi\otimes\overline{\Phi})_{a}(a^{-1}\epsilon(\n_{0})^{-1}\epsilon(\n)a h'_{a}(\n)\cdot e_{X\times X}).$$
     Since $\epsilon$ is $(M^{\ast},N)$-smooth, $$d_{G\times G}(\epsilon(\n_{0})^{-1}\epsilon(\n), e_{G\times G})\leq (2DL\vert N_{K}(pq)\vert M^{\ast})\cdot\frac{M^{\ast}}{N}=2DL\vert N_{K}(pq)\vert(M^{\ast})^{2}/N.$$ By (\subscript{P}{1}),
     $$d_{G\times G}(a^{-1}\epsilon(\n_{0})^{-1}\epsilon(\n)a, e_{G\times G})\leq 2C_{1}(M^{\ast})DL\vert N_{K}(pq)\vert(M^{\ast})^{2}/N.$$
     
     Since $\Vert \Phi\Vert_{\Lip(\X)}\leq 1$, we have that $\Vert \Phi\otimes\overline{\Phi}\Vert_{\Lip(\X\times \X)}\leq 2$. 
     By (\subscript{P}{3}), $\Vert(\Phi\otimes\overline{\Phi})_{a}\Vert_{\Lip(\X\times \X)}\leq 2C_{1}(M^{\ast})$. By (\subscript{P}{4}), 
     \begin{equation}\label{1234}
     	\begin{split}
     	\Vert(\Phi\otimes\overline{\Phi})_{a}\Vert_{\Lip(\Y_{a})}\leq 2C^{2}_{1}(M^{\ast}).
     	\end{split}	
     \end{equation}
     
     Since $P_{0}$ is of length at most $2L$, $I(P_{0})$ is of cardinality at most $(2L)^{D}$. So
     \begin{equation}\label{5.13}
     	\begin{split}
     		&\quad \Bigl\vert\E_{\n\in R_{N,D}}\bold{1}_{I(P_{0})}(\n)\bold{1}_{I(P)}(\n)\Phi\otimes\overline{\Phi}(h(\n)\cdot e_{X\times X})-\E_{\n\in R_{N,D}}\bold{1}_{I(P_{0})}(\n)\bold{1}_{I(P)}(\n)(\Phi\otimes\overline{\Phi})_{a}(h'_{a}(\n)\cdot e_{Y_{a}})\Bigr\vert
     		\\&\leq \frac{(2L)^{D}}{(2N+1)^{D}}\cdot 2C^{2}_{1}(M^{\ast})\cdot \frac{2C_{1}(M^{\ast})DL\vert N_{K}(pq)\vert(M^{\ast})^{2}}{N}
     		\\&\leq 4DC^{3}_{1}(M^{\ast})\vert N_{K}(pq)\vert(M^{\ast})^{2}(\frac{L}{N})^{D+1}\leq \frac{\epsilon_{2}}{5}\cdot(\frac{L}{N})^{D}.
     	\end{split}	
     \end{equation}	
 Since $N\geq N_{2}(M^{\ast})$, $L\geq 1$ and (\ref{5.8}) holds. Since $M^{\ast}L\leq N$, we may partition   $R_{N,D}$ into $D$-dimensional  arithmetic progressions $R_{N,D}=\bigcup_{i} P'_{i}$ of step $(\vert N_{K}(pq)\vert\cdot M^{\ast},\dots,\vert N_{K}(pq)\vert\cdot M^{\ast})$ and length between $L$ and $2L$ in each of the $D$ directions. The number of these progressions is bounded above by $(N/L)^{D}$. Note that $$\bold{1}_{I(P)}(\n)=\bold{1}_{I(P)}(\n)\bold{1}_{I(R_{N,D})}(\n)=\sum_{i}\bold{1}_{I(P)}(\n)\bold{1}_{I(P'_{i})}(\n)=\bold{1}_{I(P'_{i}\cap P)}(\n).$$ It follows from (\ref{7.5}) that there exist one of them $P'_{i}$ such that 
     \begin{equation}\nonumber
     	\begin{split}
     		\Bigl\vert\E_{\n\in R_{N,D}}\bold{1}_{I(P'_{i}\cap P)}(\n)\Phi\otimes\overline{\Phi}(h(\n)\cdot e_{X\times X})\Bigr\vert\geq \epsilon_{2}(\frac{L}{N})^{D}.
     	\end{split}
     \end{equation}
     We deduce from (\ref{5.13}) that for some $a\in\Sigma(M^{\ast})$,
    \begin{equation}\label{7.3}
    	\begin{split}
    		\Bigl\vert\E_{\n\in R_{N,D}}\bold{1}_{I(P'_{i}\cap P)}(\n)(\Phi\otimes\overline{\Phi})_{a}(h'_{a}(\n)\cdot e_{Y_{a}})\Bigr\vert\geq \epsilon_{2}(\frac{L}{N})^{D}-\frac{\epsilon_{2}}{5}\cdot(\frac{L}{N})^{D}\geq \frac{\epsilon_{2}}{2}\cdot(\frac{L}{N})^{D}\geq \epsilon_{3}(M^{\ast}),
    	\end{split}
    \end{equation} 
     where $\epsilon_{3}(M^{\ast}):=\epsilon_{3,\X,D,\epsilon}(M^{\ast})=\frac{\epsilon_{2}}{2}\cdot(\frac{\epsilon_{2}}{40DC_{1}^{3}(M^{\ast})\vert N_{K}(pq)\vert(M^{\ast})^{2}})^{D}$ with the last inequality coming from (\ref{5.8}).

     It is easy to see that for every line $\ell\subseteq \R^{D}$, the set $I(P'_{i}\cap P)\cap\ell$ is a 1-dimensional arithmetic progression.
     By
     (\ref{1234}) and (\ref{7.3})
       and Proposition \ref{comp}, there exist $\epsilon_{4}(M^{\ast}):=\epsilon_{4,\epsilon_{3}}(M^{\ast})$ and $N_{3}(\omega,M^{\ast}):=N_{3,\e_{3},\X}(\omega,M^{\ast})>N_{0}+N_{1}(M^{\ast})+N_{2}(M^{\ast})$ such that for all $N>N_{3}(\omega,M^{\ast})$, $\int_{Y_{a}}\Phi\otimes\overline{\Phi}\Big\vert_{Y_{a}}\,d\mu_{Y_{a}}=0$ implies that
     	\begin{equation}\label{7.99}
     		\begin{split}
     			\text{ $(h'_{a}(\n)\cdot e_{Y_{a}})_{\n\in R_{N,D}}$ is not totally $\epsilon_{4}(M^{\ast})$-equidistributed on $Y_{a}$ with respect to $\mathfrak{Y}_{a}$.}
     		\end{split}
     	\end{equation}
     	Moreover, $\e_{4}\colon\N\to\mathbb{R}_{+}$ as a function of $M^{\ast}$ is independent of the choice of the function $\omega$.

On the other hand,
 since $(h'(\n)\cdot e_{Y})_{\n\in R_{N,D}}$ is totally $\omega(M^{\ast})$-equidistributed on $Y$ with respect to $\mathfrak{Y}$,
 by (\subscript{P}{5}),
 $(h'_{a}(\n)\cdot e_{Y_{a}})_{\n\in R_{N,D}}$ is totally $\rho(M^{\ast},\omega(M^{\ast}))$-equidistributed on $Y_{a}$ with respect to $\mathfrak{Y}_{a}$.
 Since $\lim_{t\to\infty}\rho(M,t)=0$, for any function $\zeta\colon\N\to\mathbb{R}_{+}$, there exists $\omega\colon\N\to\mathbb{R}_{+}$ such that $\rho(M,\omega(M))<\zeta(M)$ for all $M\in\N$. 
 In other words, 
 for every function $\zeta\colon\N\to\mathbb{R}_{+}$, there exists $\omega\colon\N\to\mathbb{R}_{+}$ such that for every $a\in\Sigma(M^{\ast})$ and every $N\geq N_{3}(\omega,M^{\ast})$,
 \begin{equation}\label{7.90}
 	\begin{split}
 		\text{ $(h'_{a}(\n)\cdot e_{Y_{a}})_{\n\in R_{N,D}}$ is totally $\zeta(M^{\ast})$-equidistributed on $Y_{a}$ with respect to $\mathfrak{Y}_{a}$.}
 	\end{split}
 \end{equation}

 We are now ready to state the restriction of the function $\omega$: we pick  $\omega$ to be any function such that (\ref{7.90}) holds for $\zeta(M^{\ast})=\e_{4}(M^{\ast})$ (recall that $\e_{4}\colon\N\to\mathbb{R}_{+}$ as a function of $M^{\ast}$ is independent of the choice of $\omega$). Then for every $a\in\Sigma(M^{\ast})$ and $N\geq N_{3}(\omega,M^{\ast})$,
%
%
 \begin{equation}\label{7.9}
 	\begin{split}
 		\text{ $(h'_{a}(\n)\cdot e_{Y_{a}})_{\n\in R_{N,D}}$ is totally $\epsilon_{4}(M^{\ast})$-equidistributed on $Y_{a}$ with respect to $\mathfrak{Y}_{a}$.}
 	\end{split}
 \end{equation}
 Combining (\ref{7.99}) and (\ref{7.9}), we have that 
  \begin{equation}\label{7.2}
  	\begin{split}
  	\int_{Y_{a}}\Phi\otimes\overline{\Phi}\Big\vert_{Y_{a}}\,d\mu_{Y_{a}}\neq 0.
  	\end{split}
  \end{equation}
     
     \subsection{Invoking the key ingredients}
      Denote $h'_{a}(\n)=(h'_{a,1}(\n),h'_{a,2}(\n))$, $h=(h_{1}(\n),h_{2}(\n))$, $\epsilon(\n)=(\epsilon_{1}(\n),\epsilon_{2}(\n))$ and $\gamma(\n)=(\gamma_{1}(\n),\gamma_{2}(\n))$. We are now ready to use the results from Sections \ref{s:d} and \ref{s:m} to finish the proof of Theorem \ref{key}.
     
      Recall that $\Phi$ is a nilcharacter of $X$ with frequency $\ell\neq 0$ with respect to $\X$ by Assumption 2. By Lemma \ref{good2}, $\Phi\otimes\overline{\Phi}$ is a nilcharacter of $X\times X$ with frequency $(\ell,-\ell)$ with respect to $\X\times\X$, and so is $(\Phi\otimes\overline{\Phi})_{a}$.
      
       
       Since $H_{a}<G\times G$, $(H_{a})_{i}<(G\times G)_{i}=G_{i}\times G_{i}$ for all $i\in\N_{+}$. So $(H_{a})_{d}< G_{d}\times G_{d}$. Since $\dim(G_{d})=1$, $\dim((H_{a})_{d})=0,1$ or 2.

         \textbf{Case that $\dim((H_{a})_{d})=0$.} Then the projection of $Y_{a}$ to the first coordinate is not $X$. By the choice of $Y_{a}$, there exist $C_{4}(M^{\ast}):=C_{4,\X,D}(M^{\ast})>0$ and horizontal character $\eta$ of $X$ such that $0<\Vert\eta\Vert_{\X}\leq C_{4}(M^{\ast})$ and $\eta\circ h'_{a,1}=\eta\circ h'_{1}\equiv 0.$    
         Since $\gamma$ takes value in the finite set $\Sigma(M^{\ast})$,
         there exists $Q:=Q(M^{\ast})$ such that $\eta^{Q}\circ\gamma\equiv 0.$
 
         Since $\epsilon_{1}(\n)$ is $(M^{\ast},N)$-smooth, $\epsilon_{1}(Q\n)$ is $(Q^{D}M^{\ast},N)$-smooth. 
         By definition, $$\Vert\eta^{Q}\circ (\epsilon_{1}h'_{a,1})\Vert_{C^{\infty}_{1}(R_{N,D})}\leq C_{5}(M^{\ast})$$ for some $C_{5}(M^{\ast}):=C_{5,\X,D}(M^{\ast})>0$ for all $N\geq N_{4}(M^{\ast}):=N_{4,\X,D}(M^{\ast})>N_{3}(M^{\ast})$.  So
          we have that 
         $$\Vert\eta^{Q}\circ g(\n A_{\B}(p))\Vert_{C^{\infty}_{1}(R_{N,D})}=\Vert\eta^{Q}\circ (\epsilon_{1}h'_{a,1}\gamma_{1})\Vert_{C^{\infty}_{1}(R_{N,D})}\leq C_{5}(M^{\ast}).$$
         By Theorem \ref{inv}, there exist $C_{6}(M^{\ast}):=C_{6,C_{5},\X,D}(M^{\ast})$ and  $N_{5}(M^{\ast}):=N_{5,C_{5},\X,D}(M^{\ast})>N_{4}(M^{\ast})$, such that for all $N\geq N_{5}(M^{\ast})$, $(g(\n A_{\B}(p))\cdot e_{X})_{\n\in R_{N,D}}$ is not totally $C_{6}(M^{\ast})$-equidistributed on $X$ with respect to $\X$. By Proposition \ref{comp}, there exist $\delta:=\max_{M^{\ast}\leq M_{1}}\delta_{C_{6}}(M^{\ast})>0$ and  $N_{6}:=\max_{M^{\ast}\leq M_{1}}N_{6,C_{6}}(M^{\ast})>N_{5}(M^{\ast})$, such that for all $N\geq N_{6}$, $(g(\n)\cdot e_{X})_{\n\in R_{N,D}}$ is not totally $\delta$-equidistributed on $X$ with respect to $\X$. This finishes the proof.
       
       
       \textbf{Case that $\dim((H_{a})_{d})=2$.} 
    In this case, $(H_{a})_{d}=G_{d}\times G_{d}$. Since $\Phi\otimes\overline{\Phi}$ is with frequency $(\ell,-\ell)$ on $X\times X$ with respect to $\X\times\X$, by Lemma \ref{good}, $(\Phi\otimes\overline{\Phi})_{a}\Big\vert_{Y_{a}}$ is also with frequency $(\ell,-\ell)$ on $Y$ with respect to $\Y$.
        So
       $\int_{Y_{a}}\Phi\otimes\overline{\Phi}\Big\vert_{Y_{a}}\,d\mu_{Y_{a}}=0,$ a contradiction to (\ref{7.2}). 
       

        \textbf{Case that $\dim((H_{a})_{d})=1$.} Since $\X$ is standard,
      in this case $$(H_{a})_{d}=\{(\psi^{-1}(0,\dots, 0;\ell_{1}t),\psi^{-1}(0,\dots, 0;\ell_{2}t))\in G^{(d)}\times G^{(d)}\colon t\in\mathbb{R}\}$$ for some $\ell_{1},\ell_{2}\in\mathbb{Z}$ not all equal to 0.
         If $\ell_{1}\neq \ell_{2}$, then by Lemma \ref{good}, $(\Phi\otimes\overline{\Phi})_{a}\Big\vert_{Y_{a}}$ is also with frequency $(\ell,-\ell)$ on $Y$ with respect to $\Y$.
        So
        $\int_{Y_{a}}\Phi\otimes\overline{\Phi}\Big\vert_{Y_{a}}\,d\mu_{Y_{a}}=0,$
        a contradiction to (\ref{7.2}).
       
       So we must have that$$(H_{a})_{d}=\{(\psi^{-1}(0,\dots, 0;t),\psi^{-1}(0,\dots, 0;t))\in G^{(d)}\times G^{(d)}\colon t\in\mathbb{R}\}.$$
       
      If the projection of $Y_{a}$ to one of the two coordinates is not $X$, we are done by the same argument as in the case that $\dim((H_{a})_{d})=0$. So we may assume that the projection of $Y_{a}$ to both coordinates are $X$. Since $\mathcal{F}'(M^{\ast})$ is a finite set,
     by Theorem \ref{auto},
      there exists $\sigma\in \Aut_{d}(G)$ of height at most $C_{7}(M^{\ast}):=C_{7,\X,D,\e}(M^{\ast})>0$ such that $h_{1}=\sigma(h_{2}) \mod G_{\ker}$ for all $(h_{1},h_{2})\in H$.      
     Then
     		$h'_{a,1}(\n)=\sigma\circ h'_{a,2}(\n) \mod G_{\ker}$ and so $h'_{1}(\n)=\sigma\circ h'_{2}(\n) \mod G_{\ker}$ for all $\n\in R_{N,D}$.
     		
     		Since $\gamma$ takes value in the finite set $\Sigma(M^{\ast})$, there exists $Q(M^{\ast})\in\N_{+}$ such that  $$\gamma(Q(M^{\ast})\n)\in\Gamma\times\Gamma$$ for all $\n\in\Z^{D}.$ Let $Q=\prod_{M^{\ast}\leq M_{1}}Q_{M^{\ast}}$.    	
     	Since $\epsilon_{1}(\n)$ and $\epsilon_{2}(\n)$ are $(M^{\ast},N)$-smooth and $\sigma$ is of height at most $C_{7}(M^{\ast})$ with respect to $\X$,
     	$\epsilon_{1}(Q\n)$ and $\sigma\circ\epsilon_{2}(Q\n)$ are $(C_{8}(M^{\ast}),N)$-smooth
     	 for some $C_{8}(M^{\ast}):=C_{8,\X,D,C_{7},Q}(M^{\ast})>0$. By definition, for all $N\geq N_{7}(M^{\ast}):=N_{7,\X,D,C_{7},Q}(M^{\ast})>N_{3}(M^{\ast})$,
     	 $$\Vert\epsilon_{1}(Q\n)\Vert_{C^{\infty}_{\ker}(R_{N,D})}, \Vert\sigma\circ\epsilon_{2}(Q\n)\Vert_{C^{\infty}_{\ker}(R_{N,D})}\leq C_{9}(M^{\ast})$$ 
     	 for some $C_{9}(M^{\ast}):=C_{9,\X,D,\e}(M^{\ast})>0$.
     	Since $h'_{1}=\sigma\circ h'_{2} \mod G_{\ker}$, 
    \begin{equation}\nonumber
    	\begin{split}
    	&\quad	\Bigl\Vert g(Q\n A_{\B}(p))\cdot(\sigma\circ g(Q\n A_{\B}(q)))^{-1}\Bigr\Vert_{C^{\infty}_{\ker}(R_{N,D})}
    	=\Bigl\Vert\epsilon_{1}h'_{1}\gamma_{1}(Q\n)\cdot(\sigma\circ(\epsilon_{2}h'_{2}\gamma_{2}(Q\n)))^{-1}\Bigr\Vert_{C^{\infty}_{\ker}(R_{N,D})}
    	\\&=\Bigl\Vert\epsilon_{1}h'_{1}(Q\n)\cdot(\sigma\circ(\epsilon_{2}h'_{2}(Q\n)))^{-1}\Bigr\Vert_{C^{\infty}_{\ker}(R_{N,D})}=\Bigl\Vert\epsilon_{1}(Q\n)\cdot(\sigma\circ(\epsilon_{2}(Q\n)))^{-1}\Bigr\Vert_{C^{\infty}_{\ker}(R_{N,D})}
    	\leq 2C_{9}(M^{\ast}).
    	\end{split}	
    \end{equation}

%
%
%
    
    Since $\vert N_{K}(p)\vert\neq\vert N_{K}(q)\vert$
     and $M^{\ast}\leq M_{1}$,
    by Theorem \ref{contri}, 	there exist $$\delta':=\delta'(\X,D,\e):=\max_{M^{\ast}\leq M_{1}}\delta_{\X,D,C_{9}}(M^{\ast})>0$$ and $$N_{8}':=N'_{8}(\X,D,\e):=\max_{M^{\ast}\leq M_{1}}N_{8,\X,D,C_{9}}(M^{\ast})>\max_{M^{\ast}\leq M_{1}}N_{7}(M^{\ast})$$ such that for every $N\geq N'_{8}$,
    $(g(Q\n)\cdot e_{X})_{\n\in R_{N,D}}$ and thus $(g(\n)\cdot e_{X})_{\n\in R_{N,D}}$ is not totally $\delta'$-equidistributed on $X$ with respect to $\X$. 
    Since $\X$ depends only on $\X_{0}$ and $w$, and one can verify that all other quantities in the proof depend eventually only on $\X, \epsilon$ and $D$. This (finally!) finishes the proof.
   
      \section{Consequences of Theorem \ref{key}}\label{s:p}
      In this section, we  deduce Theorems \ref{s2}, \ref{key0} and \ref{ap} by using Theorem \ref{key}.

            \subsection{Proofs of Theorems \ref{s2} and \ref{key0}}

            
            \begin{proof}[Proof of Theorem \ref{key0}]
            	Denote $g(n_{1},\dots,n_{d}):=T^{n_{1}}_{1}\cdot\ldots\cdot T_{D}^{n_{d}}$ and let $\G$ be a nil-structure on $X$. 
            	If the conclusion of Theorem \ref{key0} does not hold, then there exist a function $\Phi\in C(X)$ with $\int_{X}\Phi\,dm_{X}=0$, a $D$-dimensional arithmetic progression $P$, $\e>0$ and an infinite set $J\subseteq\N$ such that for all $N\in J$, there exists $\chi\in\mathcal{M}_{K}$ such that 
            	\begin{equation}\nonumber
            		\begin{split}
            			\Bigl\vert\E_{\n\in R_{N,D}}\bold{1}_{P}(\n)\chi(\iota_{\B}(\n))\Phi(g(\n)\cdot e_{X})\Bigr\vert\geq \e.
            		\end{split}
            	\end{equation}			
            	By Theorem \ref{key}, there exist $\delta>0$ and $N_{0}\in\N$ such that the sequence $(g(\n)\cdot e_{X})_{\n\in R_{N,D}}$ is not totally $\delta$-equidistributed on $X$ with respect to $\X$ for all $N>N_{0}, N\in J$. By Theorem \ref{Lei}, there exist $C>0$ independent of $N$ and a horizontal character $\eta$ such that $$0<\Vert\eta\Vert_{\X}\leq C \text{ and }\Vert \eta\circ g\Vert_{C_{1}^{\infty}(R_{N,D})}\leq C.$$ 
            	Since there are only finitely many $\eta$ with $0<\Vert\eta\Vert_{\X}\leq C$, there exist an infinite set $J'\subseteq J$ and a horizontal character $\eta_{0}$ such that for all $N\in J'$, $\Vert \eta_{0}\circ g\Vert_{C_{1}^{\infty}(R_{N,D})}\leq C.$ Letting $N\to\infty$, we have that $\eta_{0}\circ g\equiv 0$. So
            	\begin{equation}\nonumber
            		\begin{split}
            			\lim_{N\to\infty}\E_{\n\in R_{N,D}}e(\eta_{0}(g(\n)\cdot e_{X}))=1\neq 0.
            		\end{split}
            	\end{equation}
            	Since $\Vert\eta_{0}\Vert_{\X}>0$, we have that $\int_{X}e(\eta_{0})\,d\mu=0$. So $(g(\n)\cdot e_{X})_{\n\in\Z^{D}}$ is not equidistributed on $X$, a contradiction. This finishes the proof.
            \end{proof}
            
            %
            %
            For Theorem \ref{s2}, we prove the following stronger version:
            
            \begin{thm}[Quantitative version of Theorem \ref{s2}]\label{s3}
            	Let $\K$ be an integral tuple and $X=G/\Gamma$ be a nilmanifold with a nil-structure $\X$. For every $w,\e>0$, there exist $\e':=\e'(\X,w,\B,\e)>0$ and $N_{0}:=N_{0}(\X,w,\B,\e)\in\N$ such that for every $N\geq N_{0}$, the following holds: if there exist $g\in\poly_{D}(G)$ of degree at most $w$,  $\m\in\Z^{D}$, $\chi\in\mathcal{M}_{K}$, $\Phi\colon X\to\mathbb{C}$ such that $\vert\Phi\vert\leq 1,\Vert\Phi\Vert_{\Lip(\X)}\leq 1$, and a $D$-dimensional arithmetic progression $P$ such that 
            	\begin{equation}\label{s10}
            		\Bigl\vert\E_{\n\in R_{N,D}}\bold{1}_{P}(\n)\chi\circ\iota_{\B}(\n)\Phi(g(\n+\m)\cdot e_{X})\Bigr\vert\geq\e,
            	\end{equation}	
            	then there exists a  $D$-dimensional arithmetic progression $P'$ such that
            	\begin{equation}\nonumber
            		\Bigl\vert\E_{\n\in R_{N,D}}\bold{1}_{P'}(\n)\chi\circ\iota_{\B}(\n)\Bigr\vert\geq\e'.
            	\end{equation}
            \end{thm}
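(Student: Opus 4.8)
The plan is to derive Theorem~\ref{s3} from Theorem~\ref{key} by a single application of the factorization theorem (Theorem~\ref{5.6}), essentially transcribing the argument of Section~\ref{complex}. Note that, unlike Theorems~\ref{key0} and~\ref{key}, no mean-zero hypothesis on $\Phi$ is assumed here: the mean of $\Phi$ is exactly what will produce the progression $P'$. As in Section~\ref{sb} I would first reduce, via the equivalence of the metrics attached to different nil-structures, to the case where $\X$ is a standard nil-structure adapted to a filtration $G_\bullet$ with $g\in\poly_D(G_\bullet)$ of degree bounded in terms of $w$. Apply Theorem~\ref{5.6} to $g$ with a function $\omega$ to be pinned down below: this yields $M^{\ast}\le M_1(\X,\omega,D)$, a sub nilmanifold $X'=G'/\Gamma'\in\mathcal{F}(M^{\ast})$ with an induced nil-structure, and a factorization $g(\n)=\epsilon(\n)g'(\n)\gamma(\n)$ on $R_{N,D}$ with $\epsilon$ being $(M^{\ast},N)$-smooth, $(g'(\n)\cdot e_{X'})_{\n\in R_{N,D}}$ totally $\omega(M^{\ast})$-equidistributed on $X'$, and $\gamma$ being $M^{\ast}$-rational. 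Following Section~\ref{complex} verbatim (using $\Sigma(M^{\ast})$, the conjugated sub nilmanifolds $Y_a=a^{-1}X'a$, Lemma~\ref{dist} and Corollary~\ref{5.5}), one partitions $R_{N,D}$ into boundedly many (depending on $\X,\epsilon,M^{\ast}$) sub-progressions $P_0$ of a common step divisible by $M^{\ast}$ and common length $\asymp N/L$ for a suitable large $L=L(\X,\epsilon,M^{\ast})$, so that on each $P_0$ the smooth part $\epsilon$ is essentially constant and $\gamma(\n+\m)\cdot e_X$ equals $a\cdot e_X$ for a fixed $a\in\Sigma(M^{\ast})$. Then on such a $P_0$, writing $\Phi'$ for the translated, restricted function $y\mapsto\Phi(\epsilon(\n_0+\m)a\cdot y)$ on $Y_a$ (which has $|\Phi'|\le1$ and $\|\Phi'\|_{\Lip}$ bounded in terms of $M^{\ast}$ by Lemma~\ref{dist}), one has $\Phi(g(\n+\m)\cdot e_X)=\Phi'\big((a^{-1}g'(\n+\m)a)\cdot e_{Y_a}\big)+o_{M^{\ast}}(1)$ for $\n\in P_0$, and $a^{-1}g'a$ is totally $\rho(M^{\ast},\omega(M^{\ast}))$-equidistributed on $Y_a$ by Corollary~\ref{5.5}.

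Now pigeonhole (\ref{s10}) over the sub-progressions $P_0$: there is one for which $\bigl\vert\E_{\n\in R_{N,D}}\bold{1}_{P_0\cap P}(\n)\chi\circ\iota_\B(\n)\Phi'\bigl((a^{-1}g'(\n+\m)a)\cdot e_{Y_a}\bigr)\bigr\vert$ is bounded below by a positive constant $\e_1=\e_1(\X,\epsilon,\B,M^{\ast})$ (here one absorbs the $o_{M^{\ast}}(1)$ error by taking $N$ large). Split $\Phi'=c'+\Phi'_0$ with $c'=\int_{Y_a}\Phi'\,dm_{Y_a}$, so $\int_{Y_a}\Phi'_0\,dm_{Y_a}=0$ and $\|\Phi'_0\|_{\Lip}$ is still bounded in terms of $M^{\ast}$. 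If the contribution of $c'$ is at least $\e_1/2$, then since $|c'|\le\|\Phi\|_{L^\infty}\le1$ we get $\bigl\vert\E_{\n\in R_{N,D}}\bold{1}_{P_0\cap P}(\n)\chi\circ\iota_\B(\n)\bigr\vert\ge\e_1/2$, and as $P_0\cap P$ is a $D$-dimensional arithmetic progression we are done with $P'=P_0\cap P$ and $\e'=\e_1/2$. If instead the contribution of $\Phi'_0$ is at least $\e_1/2$, then after normalizing $\Phi'_0$ to Lipschitz norm $\le1$, Theorem~\ref{key} applied on $Y_a$ — to the sequence $a^{-1}g'a$, the progression $P_0\cap P$, the function $\Phi'_0$ and the shift $\m$ — shows that $(a^{-1}g'(\n)a\cdot e_{Y_a})_{\n\in R_{N,D}}$ is not totally $\delta_1$-equidistributed on $Y_a$, where $\delta_1=\delta_1(\X,\epsilon,\B,D,M^{\ast})>0$ (using that $\mathcal{F}(M^{\ast})$ and $\Sigma(M^{\ast})$ are finite, and crucially that this $\delta_1$ does not depend on $\omega$). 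Since $a^{-1}g'a$ is totally $\rho(M^{\ast},\omega(M^{\ast}))$-equidistributed on $Y_a$, this is a contradiction provided $\omega$ was chosen at the outset so that $\rho(M,\omega(M))<\delta_1(M)$ for all $M$ — a legitimate choice precisely because both $\rho$ and $\delta_1$ are independent of $\omega$. Hence only the first case can occur, and the theorem follows, with $\e'$ and $N_0$ depending only on $\X,w,\B,\epsilon$.

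The main work, and the only real obstacle, is the scale-and-uniformity bookkeeping: choosing $L$ so that the smooth correction $\epsilon$ is negligible on each $P_0$, handling the rational correction $\gamma$ through the finite set $\Sigma(M^{\ast})$ and the conjugated nilmanifolds $Y_a$, controlling Lipschitz constants and equidistribution rates uniformly over the finitely many $Y_a\in\mathcal{F}'(M^{\ast})$, and verifying the non-circularity of the choice of $\omega$. All of these are dealt with exactly as in Section~\ref{complex}, so in fact this proof is largely that section's argument with the product function $\Phi\otimes\overline{\Phi}$ replaced by a single $\Phi$ and with the final invocation of Theorems~\ref{auto} and~\ref{contri} replaced by the already-established contrapositive of Theorem~\ref{key} on the smaller nilmanifold $Y_a$; in particular it is an ``immediate consequence'' of Theorem~\ref{key} in the sense that it introduces no new ingredient.
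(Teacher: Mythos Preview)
Your proposal is correct and follows essentially the same route as the paper: apply the factorization theorem to $g$, pass to a sub-progression on which the smooth and rational pieces are tamed, split the resulting function on the conjugated sub nilmanifold into its mean plus a mean-zero part, and use Theorem~\ref{key} on the mean-zero part with an appropriately chosen $\omega$ to force the constant part to carry the correlation. The paper's proof is organized slightly differently---it packages the factorization step into two black-box ``Properties'' and directly bounds the mean-zero contribution rather than arguing by dichotomy---but the underlying argument and the key non-circularity observation about $\omega$ are the same.
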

            
            \begin{proof}
            	We assume  implicitly that all the quantities in the proof depends on $\X$, $\B$ and $\e$.
            	Similar to the deduction of Assumption 2 in Section \ref{sb}, we may assume that $\G$ and $g\in\poly_{D}(G_{\bullet})$.

            	Suppose that (\ref{s10}) holds for some choice of the parameters.
            	By using the factorization theorem (Theorem \ref{5.6}),
            	we may deduce from (\ref{s10}) that there exist
            	\begin{itemize}
            	\item A function $\lambda:=\lambda_{\e,\X,\B}\colon\N_{+}\to\R_{+}$;
            	\item For all $M\in\N$ a finite subset $\Sigma(M)\subseteq G$ and a finite collection $\mathcal{F}(M)$ of sub nilmanifolds  $X'=G'/\Gamma'$ of $X$ with nil-structures $\X'$ induced by $\X$;
            	\item For each $a\in\Sigma(M)$, a sub nilmanifold $X'_{a}=G'_{a}/\Gamma'_{a}$  of $X$ with a nil-structure $\X'_{a}$ adapted to the filtration $G'_{a,\bullet}$ induced by the $a$-conjugate of $\X'$, a polynomial sequence $g'_{a}\in\poly_{D}(G'_{a,\bullet})$  and a function $\Phi'_{a}\colon X'_{a}\to \mathbb{C}$ with $\vert\Phi'_{a}\vert\leq 1$ and $\Vert\Phi'_{a}\Vert_{\Lip(\X'_{a})}=1$,
            	\end{itemize}		
            	such that for every function 
            	$\zeta\colon\N\to\R_{+}$, there exist a function $N_{0}\colon\N\to\mathbb{R}_{+}$ and $M_{1}\in\N_{+}$ such that
            	the following holds:

            	\textbf{Property 1.}
            	 There exist $M^{\ast}\in\N$ with $M^{\ast}\leq M_{1}$,
            	 such that if (\ref{s10}) holds for some $N\geq N_{0}(M^{\ast})$, then exist
            	  a $D$-dimensional arithmetic progression $P'$ of $\Z^{D}$ 
            	  an element $a\in\Sigma(M^{\ast})$ such that
            		\begin{equation}\nonumber
            			\Bigl\vert\E_{\n\in R_{N,D}}\bold{1}_{P'}(\n)\chi\circ\iota_{\B}(\n)\cdot\Phi'_{a}(g'_{a}(\n)\cdot e_{X'_{a}})\Bigr\vert>2\lambda(M^{\ast}).
            		\end{equation}
            		
            		\textbf{Property 2.} For all $M^{\ast}\in\N$, $a\in\Sigma(M^{\ast})$ and $N\geq N_{0}(M^{\ast})$,
            		\begin{equation}\nonumber
            			\text{$(g'_{a}(\n)\cdot e_{X'_{a}})_{\n\in R_{N,D}}$ is totally $\zeta(M^{\ast})$-equidistributed on $X'_{a}$ with respect to $\X'_{a}$.}
            		\end{equation}

            	
            	We left the choice of $\zeta$ to the end of the proof (note that the choice of $\zeta$ must be dependent only on $\lambda,\e,w,\X,\B$).	
            	The method we deduce  Properties 1 and 2 from (\ref{s10}) is similar to the one we used to deduce (\ref{7.90}) and (\ref{7.3}) from (\ref{7.5}), or the one used in \cite{FH} to deduce (8.11) and (8.12) from (8.9), or the one used in \cite{S} to deduce (28) and the equidistribution condition right after (28) from the last inequality at the end of page 101 (or at the end page of 41 for the arXiv version).
            	So for conciseness we omit the proof of  Properties 1 and 2 and leave them to the interested readers.
            	
            Assume that $N\geq \max_{M^{\ast}\leq M_{1}}N_{0}(M^{\ast})$.
            	Let $z:=\int_{X'_{a}}\Phi'_{a}\,dm_{X'_{a}}$ and $\Phi'_{0,a}:=\Phi'_{a}-z$.   Then $\int_{X'_{a}}\Phi'_{0,a}\,dm_{X'_{a}}=0$.  By Theorem \ref{key} and Property 2, if we choose $\zeta$ to be the function $\zeta(M):=\delta(\X,w,\B,\lambda(M))$, where $\delta(\X,w,\B,\lambda(M))$ is defined in Theorem \ref{key}, then 
            	\begin{equation}\nonumber
            		\Bigl\vert\E_{\n\in R_{N,D}}\bold{1}_{P'}(\n)\chi\circ\iota_{\B}(\n)\cdot\Phi'_{0,a}(g'_{a}(\n)\cdot e_{X'_{a}})\Bigr\vert<\lambda(M^{\ast}).
            	\end{equation}
            	Since $\vert z\vert\leq 1$, by Property 1, 
            	\begin{equation}\nonumber
            		\Bigl\vert\E_{\n\in R_{N,D}}\bold{1}_{P'}(\n)\chi\circ\iota_{\B}(\n)\Bigr\vert\geq\Bigl\vert\E_{\n\in R_{N,D}}\bold{1}_{P'}(\n)\chi\circ\iota_{\B}(\n)\cdot z\Bigr\vert\geq\lambda(M^{\ast}),
            	\end{equation}
            	which
            	finishes the proof by setting $\e':=\min_{1\leq M^{\ast}\leq M_{1}}\lambda(M^{\ast})$.
            \end{proof}

      \subsection{Properties of the Gowers norms}
      We introduce some basic properties about the Gowers norms
      before proving Theorem \ref{ap}.
      We start with the definitions of the convolution product and the Fourier transformation on $\RR$:
      
      \begin{defn}[Convolution product] Let $N\in\N_{+}$. The convolution product of two functions $f,g\colon\RR\rightarrow\mathbb{C}$ is defined by
      	\begin{equation}\nonumber
      		\begin{split}
      			f*g(\A):=\mathbb{E}_{\Bb\in \RR}f(\A-\Bb)g(\Bb).
      		\end{split}
      	\end{equation}
      \end{defn}
      
      \begin{defn}[Fourier transformation]  For every $\A=(n_{1},\dots,n_{D})$ and $\Bb=(m_{1},\dots,m_{D})\in\RR$, write
      	\begin{equation}\nonumber
      		\begin{split}
      			\A\circ_{N}\Bb:=\frac{1}{N}(\n\cdot \m)=\frac{1}{N}(n_{1}m_{1}+\dots+n_{D}m_{D}).
      		\end{split}
      	\end{equation}
      	For every function $f\colon \RR\rightarrow\mathbb{C}$,  let $\widehat{f}\colon\RR\rightarrow\mathbb{C}$ denote the \emph{Fourier transformation} of $f$ by
      	\begin{equation}\nonumber
      		\begin{split}
      			\widehat{f}(\xi):=\mathbb{E}_{\A\in \RR}f(\A)\cdot e(-\A\circ_{N}\xi)
      		\end{split}
      	\end{equation}
      	for all $\xi\in\RR$ (recall  that $e(x):=\exp(2\pi ix)$ for all $x\in\mathbb{R}$).
      \end{defn}
      
      A direct computation shows that for any function $f$ on $\RR$, we have
      \begin{equation}\label{F}
      	\begin{split}
      		\Vert f\Vert_{U^{2}(\RR)}^{4}=\sum_{\xi\in \RR}\vert\widehat{f}(\xi)\vert^{4}.
      	\end{split}
      \end{equation}

      We provide some basic lemmas on the Gowers norms for later uses.
      The following lemma generalizes Lemma A.6 in ~\cite{FH}:
      
      \begin{lem}\label{1p}
      	Let $D\in\mathbb{N}$ and let $N\in\mathbb{N}$ be a prime integer. For every function $a\colon \RR\rightarrow\mathbb{C}$ and every $D$-dimensional arithmetic progression $P$, we have that
      	\begin{equation}\nonumber
      		\begin{split}
      			\Bigl\vert\mathbb{E}_{\A\in \RR}\bold{1}_{P}(\A)a(\A)\Bigr\vert\leq C\Vert a\Vert_{U^{2}(\RR)}
      		\end{split}
      	\end{equation}
      	for some $C:=C(D)>0$.
      \end{lem}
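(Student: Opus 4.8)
The plan is to reduce the $D$-dimensional estimate over a $D$-dimensional arithmetic progression to a one-dimensional Fourier-analytic estimate, exactly paralleling the argument for Lemma A.6 of \cite{FH}. First I would write the $D$-dimensional arithmetic progression explicitly: by definition $P=\{\n_{0}+\sum_{i=1}^{D}M_{i}n_{i}\bold{e}_{i}\colon n_{i}\in\{0,\dots,N_{i}-1\}\}$ for suitable $\n_{0}\in\Z^{D}$ and $M_{i},N_{i}\in\N_{+}$, viewed inside $\RR$. Since $N$ is prime, each coordinate direction $\n\mapsto \n+M_{i}\bold{e}_{i} \bmod N$ generates all of $\Z_{N}$ when $(M_{i},N)=1$; when $N\mid M_{i}$ the progression degenerates in that coordinate, a case one handles trivially (or by absorbing it into the constant). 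So after a change of variables one may assume $P$ is a product of intervals $\prod_{i=1}^{D}I_{i}$ with $I_{i}\subseteq\Z_{N}$ genuine intervals (arithmetic progressions of step $1$), at the cost of replacing $a$ by $a$ precomposed with an invertible affine map of $\RR$, which leaves the $U^{2}(\RR)$-norm invariant (the Gowers norm is invariant under such substitutions since it is built from box averages).

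Next I would pass to the Fourier side. Write $\bold{1}_{P}=\bold{1}_{I_{1}}\otimes\cdots\otimes\bold{1}_{I_{D}}$ and expand $\mathbb{E}_{\A\in\RR}\bold{1}_{P}(\A)a(\A)$ as $\sum_{\xi\in\RR}\widehat{a}(\xi)\overline{\widehat{\bold{1}_{P}}(\xi)}$ (Parseval on $\RR$), where $\widehat{\bold{1}_{P}}$ factors as a product of one-dimensional Fejér-type kernels $\widehat{\bold{1}_{I_{i}}}$. The key classical input is the uniform bound $\sum_{\xi_{i}\in\Z_{N}}\vert\widehat{\bold{1}_{I_{i}}}(\xi_{i})\vert = \frac{1}{N}\sum_{\xi_{i}}\bigl\vert\sum_{n\in I_{i}}e(-n\xi_{i}/N)\bigr\vert \ll \log N$ \dots more precisely the cruder bound $\Vert\widehat{\bold{1}_{I_{i}}}\Vert_{\ell^{4/3}(\Z_{N})}\ll 1$, which is the right dual exponent to pair against $\Vert\widehat{a}\Vert_{\ell^{4}}$. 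Then by Hölder,
\begin{equation}\nonumber
\Bigl\vert\mathbb{E}_{\A\in\RR}\bold{1}_{P}(\A)a(\A)\Bigr\vert=\Bigl\vert\sum_{\xi\in\RR}\widehat{a}(\xi)\prod_{i=1}^{D}\overline{\widehat{\bold{1}_{I_{i}}}(\xi_{i})}\Bigr\vert\leq \Vert\widehat{a}\Vert_{\ell^{4}(\RR)}\cdot\prod_{i=1}^{D}\Vert\widehat{\bold{1}_{I_{i}}}\Vert_{\ell^{4/3}(\Z_{N})}\ll_{D}\Vert\widehat{a}\Vert_{\ell^{4}(\RR)},
\end{equation}
and by the identity (\ref{F}), $\Vert\widehat{a}\Vert_{\ell^{4}(\RR)}=\Vert a\Vert_{U^{2}(\RR)}$, which is the claimed inequality with $C=C(D)$.

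The main obstacle, and the only genuinely nontrivial point, is establishing the uniform bound $\Vert\widehat{\bold{1}_{I}}\Vert_{\ell^{4/3}(\Z_{N})}\ll 1$ for an arbitrary interval $I\subseteq\Z_{N}$, independent of $N$ and of $I$. This is exactly the one-dimensional content of Lemma A.6 of \cite{FH}: one uses $\vert\widehat{\bold{1}_{I}}(\xi)\vert\ll \min\{\vert I\vert/N,\ 1/(N\Vert\xi/N\Vert_{\T})\}$ and sums the $4/3$-powers, the contribution from $\Vert\xi/N\Vert_{\T}\gtrsim \vert I\vert/N$ being a convergent tail $\sum_{k\geq 1}(N k)^{-4/3}\cdot$(number of $\xi$ with $\Vert\xi/N\Vert_{\T}\asymp k/N$)$\ll\sum_{k\geq1}k^{-4/3}<\infty$ and the low-frequency part contributing $\ll (\vert I\vert/N)^{4/3}\cdot(N\vert I\vert^{-1})=\vert I\vert^{1/3}N^{-1/3}\ll1$. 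I would also need to double-check the degenerate-coordinate case ($N\mid M_{i}$) does not break the reduction: there the progression lives in a single residue, so $\bold{1}_{P}$ is a point mass in that coordinate and the corresponding Fourier factor is $\widehat{\bold{1}_{\{n_{i}\}}}$, which has $\ell^{4/3}$-norm $N^{1/3-1}\cdot N^{3/4}=N^{-1/12}\leq1$, still bounded. Everything else is bookkeeping: tracking that the affine change of variables is a bijection of $\RR$ (using $N$ prime), and that it preserves $\Vert\cdot\Vert_{U^{2}(\RR)}$.
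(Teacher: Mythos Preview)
Your proposal is correct and follows essentially the same route as the paper's proof: reduce via an affine change of variables (using $N$ prime and the $U^{2}$-invariance) to the case where $P$ is a product of coordinate intervals, then apply Parseval, H\"older with exponents $(4,4/3)$, the uniform bound $\Vert\widehat{\bold{1}_{I}}\Vert_{\ell^{4/3}}\ll 1$, and the identity $\Vert\widehat{a}\Vert_{\ell^{4}}=\Vert a\Vert_{U^{2}(\RR)}$. The only slip is in your side computation for the degenerate coordinate: the $\ell^{4/3}$-norm of $\widehat{\bold{1}_{\{n_{i}\}}}$ is $(N\cdot N^{-4/3})^{3/4}=N^{-1/4}$, not $N^{-1/12}$, but the conclusion (bounded by $1$) is unaffected.
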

      \begin{proof}
      	Since $N$ is a prime integer, the norm $\Vert a\Vert_{U^{2}(\RR)}$ is invariant under any change of variables of the form $\n=(n_{1},\dots,n_{D})\to (c_{1}n_{1}+m_{1},\dots,c_{D}n_{D}+m_{D})$ for any $m_{i}, c_{i}\in\Z$ such that $(c_{i},N)=1$ for all $1\leq i\leq D$. So we may assume without loss of generality that $P=[d_{1}]\times\dots [d_{D}]$ for some $1\leq d_{1},\dots,d_{D}\leq N$. A direct computation shows that
      	\begin{equation}\nonumber
      		\begin{split}
      			\vert\widehat{\bold{1}_{P}}(\xi_{1},\dots,\xi_{D})\vert\leq\frac{2^{D}}{N^{D}}\prod_{i=1}^{D}\Bigl\Vert \frac{\xi_{i}}{N}\Bigr\Vert_{\T}=\frac{2^{D}}{\prod_{i=1}^{D}\min\{\xi_{i},N-\xi_{i}\}}
      		\end{split}
      	\end{equation}
      	for all $(\xi_{1},\dots,\xi_{D})\in \RR$. Thus
      	\begin{equation}\nonumber
      		\begin{split}
      			\Bigl\Vert\widehat{\bold{1}_{P}}(\xi_{1},\dots,\xi_{D})\Bigr\Vert_{\ell^{4/3}(\RR)}\leq C
      		\end{split}
      	\end{equation}
      	for some $C:=C(D)>0$. Then by Parseval's identity, H\"{o}lder's inequality, and identity (\ref{F}), we deduce that
      	\begin{equation}\nonumber
      		\begin{split}
      			\Bigl\vert\mathbb{E}_{\A\in \RR}\bold{1}_{P}(\A)a(\A)\Bigr\vert
      			=\Bigl\vert\sum_{\xi\in \RR}\widehat{\bold{1}_{P}}(\xi)\widehat{a}(\xi)\Bigr\vert
      			\leq C\Bigl(\sum_{\xi\in \RR}\vert \widehat{a}(\xi)\vert^{4}\Bigr)^{1/4}
      			\leq C\Vert a\Vert_{U^{2}(\RR)}.
      		\end{split}
      	\end{equation}
      \end{proof}

      The following inverse theorem can be deduced from Theorem 11 of ~\cite{Sz} (or from \cite{web2}) and Lemma A.4 of \cite{FH}:
      
      \begin{thm}[The inverse theorem for $\mathbb{Z}^{D}$ actions]\label{inv2} For every $\e>0$, $d\geq 2$ and $D\in\N_{+}$, there exist $\d:=\d(d,D,\e)>0, N_{0}:=N_{0}(d,D,\e)\in\mathbb{N}$ and a nilmanifold $X:=X(d,D,\e)$ with a nil-structure $\X:=\X(d,D,\e)=(G_{\bullet},\mathcal{X},\psi,d_{G},d_{X})$  such that for every $N\geq N_{0}$ and every $f\colon \RR\rightarrow\mathbb{C}$ with
      	$\vert f\vert\leq 1$, if $$\Vert f\Vert_{U^{d}(\RR)}\geq\e,$$ 
      	then there exist a function $\Phi\colon X\rightarrow\mathbb{C}$ with $\Vert\Phi\Vert_{\Lip(\X)}\leq 1$ and a polynomial sequence $g\in\poly_{D}(G_{\bullet})$ such that $$\Bigl\vert\mathbb{E}_{\n\in[N]^{D}}f(\n)\cdot\Phi(g(\n)\cdot e_{X})\Bigr\vert\geq\d,$$
      	where we regard $f$ as a function from $\Z^{D}$ to $\mathbb{C}$ supported on $[N]^{D}$ in the obvious way.
      \end{thm}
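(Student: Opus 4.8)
The plan is to derive Theorem~\ref{inv2} by combining the multidimensional inverse theorem for the Gowers norms over finite abelian groups with the normalization lemmas of Appendix~A of \cite{FH}, exactly as the sentence preceding the statement indicates. First I would record the input from the literature: since $\RR$ is a finite abelian group, Theorem~11 of \cite{Sz} (or the argument in \cite{web2}) applies to the $U^{d}(\RR)$-norm and yields, for every $\e>0$, $d\geq 2$ and $D\in\N_{+}$, constants $\d_{0}:=\d_{0}(d,D,\e)>0$ and $N_{0}:=N_{0}(d,D,\e)\in\N$ and a nilmanifold $X$ (of dimension and step bounded in terms of $d,D,\e$, and hence drawn from a fixed finite list, which lets us pass to a single universal $X=X(d,D,\e)$ with a nil-structure $\X=\X(d,D,\e)$) such that: for every $N\geq N_{0}$ and every $f\colon\RR\to\mathbb{C}$ with $\vert f\vert\leq 1$ and $\Vert f\Vert_{U^{d}(\RR)}\geq\e$, there exist $g\in\poly_{D}(G_{\bullet})$ and $\Phi\colon X\to\mathbb{C}$ with $\Vert\Phi\Vert_{\Lip(\X)}\leq 1$ for which
$$\Bigl\vert\E_{\n\in\RR}f(\n)\cdot\Phi(g(\n)\cdot e_{X})\Bigr\vert\geq\d_{0},$$
where $g(\n)$ is evaluated through a fixed system of representatives of $\RR$ in $\Z^{D}$. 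The passage from the form in which such inverse theorems are available in the literature --- correlation with a bracket-polynomial phase, equivalently with a function measurable with respect to a nilspace factor --- to the concrete ``Lipschitz function on a nilmanifold along a polynomial orbit'' form above is the standard translation, and is precisely where Lemma~A.4 of \cite{FH} enters, since that lemma packages the comparison between the Gowers norms on $\RR$, on $\Rr$, and on the box $[N]^{D}$ that makes the translation quantitative. A bounded rescaling of $\Phi$, absorbed into $\d_{0}$, secures the normalization $\Vert\Phi\Vert_{\Lip(\X)}\leq 1$.

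It then remains only to convert the average over $\RR$ into an average over $[N]^{D}$. Taking the representatives of $\Z_{N}$ to be $\{1,\dots,N\}$, the identification $\RR\cong[N]^{D}$ turns $\E_{\n\in\RR}$ into $\E_{\n\in[N]^{D}}$, and under this identification the function $f\colon\RR\to\mathbb{C}$ becomes exactly the function on $\Z^{D}$ supported on $[N]^{D}$ referred to in the statement; since $g$ is already a polynomial sequence on $\Z^{D}$, the expression $(g(\n)\cdot e_{X})_{\n\in[N]^{D}}$ is literally meaningful. Setting $\d:=\d_{0}$ and keeping the same $N_{0}$, $X(d,D,\e)$ and $\X(d,D,\e)$ then gives the conclusion.

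The main obstacle is not in any of the index bookkeeping but in the first step: the inverse theorem for $U^{d}$ is cleanest in the literature in a formulation whose structured part is not literally of the required nilmanifold type, so one genuinely has to invoke the dictionary between the nilspace/bracket-polynomial language of \cite{Sz} and the nilmanifold language used here, together with the norm comparisons of Lemma~A.4 of \cite{FH}, and to check that this can be done with all constants depending only on $d,D,\e$ and with a single nilmanifold serving for all $f$. Once that dictionary is in place --- and it is by now routine --- the rest of the proof is a relabelling of coordinates, so I would present Theorem~\ref{inv2} essentially as a citation assembled from these two sources rather than reproving the inverse theorem.
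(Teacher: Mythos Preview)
Your proposal is correct and matches the paper's approach exactly: the paper does not give a proof but simply states that the theorem ``can be deduced from Theorem 11 of \cite{Sz} (or from \cite{web2}) and Lemma A.4 of \cite{FH}'', which is precisely the combination you invoke. Your write-up is in fact more detailed than the paper's own treatment, which is a bare citation.
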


      We are now ready to prove Theorem \ref{ap}.
      
      \begin{proof}[Proof of Theorem \ref{ap}]
      	Suppose first that $\lim_{N\to\infty}\Vert\chi\circ\iota_{\B}\Vert_{U^{d}([N]^{D})}\neq 0$ for some $d\geq 2$. Then by definition, there exist $\e>0$ and an infinite set $J\subseteq \N$ such that $\Vert\bold{1}_{[N]^{D}}\cdot\chi\circ\iota_{\B}\Vert_{U^{d}(\Z^{D}_{3N})}>\e$ for all $N\in J$.  By Theorems \ref{inv2}, 
      	there exist $\e'>0, N_{0}\in\mathbb{N}$ and a nilmanifold $X=G/\Gamma$ such that for every $N\geq N_{0}, N\in J$, there exist a function $\Phi\colon X\rightarrow\mathbb{C}$ with $\Vert\Phi\Vert_{\Lip(\X)}\leq 1$ and a polynomial sequence $g\in\poly_{D}(G)$ such that $$\Bigl\vert\mathbb{E}_{\n\in[3N]^{D}}\bold{1}_{[N]^{D}}(\n)\chi\circ\iota_{\B}(\n)\cdot\Phi(g(\n)\cdot e_{X})\Bigr\vert\geq\e'.$$
      	By  Theorem \ref{s3},  there exist $\delta>0, N_{1}\geq N_{0}$ and a  $D$-dimensional arithmetic progression $P'$ such that for all $N\geq N_{1}, N\in J$,
      	\begin{equation}\nonumber
      		\Bigl\vert\E_{\n\in R_{3N,D}}\bold{1}_{P'}(\n)\chi\circ\iota_{\B}(\n)\Bigr\vert\geq\delta.
      	\end{equation}
      	By definition, $\chi$ is not aperiodic.
      	
      	Conversely, suppose that  $\lim_{N\to\infty}\Vert\chi\circ\iota_{\B}\Vert_{U^{2}([N]^{D})}=0$. Let $N^{\ast}$ denote the smallest prime number greater than $2N$. Then $N^{\ast}\leq 4N$. Similar to Lemma A.3 of \cite{FH}, $\liminf_{N\to\infty}\Vert\bold{1}_{[N]^{D}}\Vert_{U^{2}(\Z^{D}_{N^{\ast}})}$ is bounded below by a positive constant depending only on $D$. This implies that
      	$$\lim_{N\to\infty}\Vert\bold{1}_{[N]^{D}}\cdot\chi\circ\iota_{\B}\Vert_{U^{2}(\Z^{D}_{N^{\ast}})}=0.$$
      	
      	By Lemma \ref{1p}, 
      	\begin{equation}\nonumber
      		\begin{split}
      		&\quad	\limsup_{N\to\infty}\sup_{P}\Bigl\vert\E_{\n\in R_{N,D}}\bold{1}_{P}(\n)\chi\circ\iota_{\B}(\n)\Bigr\vert
      		\leq 	4^{D}\limsup_{N\to\infty}\sup_{P}\Bigl\vert\E_{\n\in R_{N^{\ast},D}}\bold{1}_{P}(\n)\bold{1}_{[N]^{D}}(\n)\chi\circ\iota_{\B}(\n)\Bigr\vert
      		\\&\leq 4^{D}\limsup_{N\to\infty}\Vert\bold{1}_{[N]^{D}}\cdot\chi\circ\iota_{\B}\Vert_{U^{2}(\Z^{D}_{N^{\ast}})}=0,
      		\end{split}	
      	\end{equation}	
      which implies that   $\chi$ is aperiodic.
      \end{proof}	
     
\section{Structure theorem for multiplicative functions}\label{s:s}

We prove Theorem \ref{U3} and its stronger form Theorem \ref{nU3s} in this section. 
The approach we use is similar to the ones used in \cite{FH,S}. 

\subsection{Strong $U^{d}$ structure theorem}


\begin{defn}[Kernel]
	A function $\phi\colon \RR\rightarrow\mathbb{C}$ is a \emph{kernel} of $\RR$ if it is non-negative and $\mathbb{E}_{\A\in\RR}\phi(\A)=1$. The set $\{\xi\in \RR\colon \widehat{\phi}(\xi)\neq 0\}$ is called the \emph{spectrum} of $\phi$.
\end{defn}

In order to show Theorem \ref{U3}, it suffices to show the following stronger theorem, which generalizes the main structure theorems in \cite{FH,S}:

\begin{thm}[Strong $U^{d}$ structure theorem for multiplicative functions]\label{nU3s}
	Let $\Omega\in\N$. For $N\in\N$, let $\tilde{N}$ denote the smallest prime integer greater than $\Omega N$.
	Let $\K$ be an integral tuple and
	$\nu$ be a probability measure  on the group $\mathcal{M}_{K}$.
	Let $F\colon\mathbb{N}\times\mathbb{N}\times\mathbb{R}^{+}\rightarrow\mathbb{R}^{+}$ be a function. For every $\e>0$ and $d\geq 2$, there exist $Q=Q(D,d,F,\e,\Omega),$ $R=R(D,d,F,\e,\Omega),$ $N_{0}=N_{0}(D,d,F,\e,\Omega)\in\N_{+}$ such that for every
	$N\geq N_{0}$ and $\chi\in\mathcal{M}_{K}$, the truncated function $\chi_{N}\colon \Rr\to\mathbb{C}$ can be written as
	\begin{equation}\nonumber
		\begin{split}
			\chi_{N}(\A)=\chi_{N,s}(\A)+\chi_{N,u}(\A)+\chi_{N,e}(\A)
		\end{split}
	\end{equation}
	for all $\A\in\Rr$ such that the following holds:
	\begin{enumerate}[(i)]
		\item   $\vert\chi_{N,s}\vert\leq 1$, $\chi_{N,s}=\chi_{N}*\phi_{N,1}$ and $\chi_{N,s}+\chi_{N,e}=\chi_{N}*\phi_{N,2}$, where $\phi_{N,1}$ and $\phi_{N,2}$ are kernels of $\Rr$ that are independent of $\chi$, and the convolution product is defined on $\Rr$;
		\item $\vert\chi_{N,s}(\A+Q\bold{e}_{i})-\chi_{N,s}(\A)\vert\leq\frac{R}{N}$ for every $\A\in \Rr$ and $1\leq i\leq D$;
		\item For every $\xi=(\xi_{1},\dots,\xi_{D})\in\Rr$ such that $\widehat{f}_{N,s}(\xi)\neq 0$ and every $1\leq i\leq D$, there exists $p_{i}\in\{0,\dots,Q-1\}$ such that $\vert\frac{\xi_{i}}{\tilde{N}}-\frac{p_{i}}{Q}\vert\leq \frac{R}{\tilde{N}}$;
		\item $\Vert\chi_{N,u}\Vert_{U^{d}(\Rr)}\leq\frac{1}{F(Q,R,\e)}$;
		\item $\mathbb{E}_{\A\in \Rr}\int_{\mathcal{M}_{K}}\vert\chi_{N,e}(\A)\vert d\nu(\chi)\leq\e$.
	\end{enumerate}	
\end{thm}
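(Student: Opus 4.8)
\textbf{Proof strategy for Theorem \ref{nU3s}.} The plan is to follow the well-trodden iterative (energy-increment or dual-function) approach pioneered in \cite{FH,S}, but now using the quantitative correlation result Theorem \ref{key} to control the uniform part at \emph{all} degrees $d$. First I would set up the standard soft-thresholding / regularization machinery: given the parameters $\e$ and $d$ and the growth function $F$, one builds two kernels $\phi_{N,1},\phi_{N,2}$ on $\Rr$ whose spectra are supported on ``arithmetically structured'' frequencies --- i.e. frequencies $\xi$ each of whose coordinates $\xi_i/\tilde N$ lies within $R/\tilde N$ of a rational $p_i/Q$ with denominator $Q$. These kernels are obtained by the usual averaging over a random choice of resolution, so that with appropriate (large) probability the decomposition $\chi_N = \chi_{N,s}+\chi_{N,e}+\chi_{N,u}$ with $\chi_{N,s}=\chi_N*\phi_{N,1}$, $\chi_{N,s}+\chi_{N,e}=\chi_N*\phi_{N,2}$, and $\chi_{N,u}=\chi_N-\chi_N*\phi_{N,2}$ automatically satisfies (i), (ii) and (iii): boundedness of $\chi_{N,s}$ is immediate since $\phi_{N,1}$ is a kernel, the Lipschitz-type estimate (ii) comes from the bandlimited nature of the spectrum of $\phi_{N,1}$, and (iii) is the spectral support statement. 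The Fejér-type construction that makes all of this work is exactly as in Section 3 of \cite{FH} (or the corresponding sections of \cite{S}), with $\Z$ replaced by $\Z_{\tilde N}^D$; I would cite that construction rather than reproduce it.

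The two remaining conditions (iv) and (v) are where the content lies. Condition (v), the smallness in $L^1(\nu)$ of the error term $\chi_{N,e}$ averaged over $\chi$, is obtained from the pigeonhole/averaging in the random resolution: there are only boundedly many ``scales'' available, the total energy $\mathbb{E}\int |\chi_N|^2 d\nu$ is bounded by $1$, and $\chi_{N,e}$ is supported (spectrally) on an annulus of frequencies between two consecutive scales; averaging over the scale choice and using Parseval on $\Rr$ forces the expected $L^2$ mass of $\chi_{N,e}$ to be $\le \e^2$, hence the $L^1$ bound (v) after Cauchy--Schwarz. This is a soft argument and proceeds identically to the integer case. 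Condition (iv), $\Vert \chi_{N,u}\Vert_{U^d(\Rr)}\le 1/F(Q,R,\e)$, is the heart of the matter: one argues by contradiction. If $\Vert\chi_{N,u}\Vert_{U^d(\Rr)}$ were large, then by the inverse theorem for $\Z^D$-actions (Theorem \ref{inv2}) there would be a nilmanifold $X$ (from a finite list depending only on $d,D$ and the threshold) and a polynomial sequence $g$ and Lipschitz $\Phi$ such that $\chi_{N,u}$ correlates with $(\Phi(g(\n)e_X))$; since $\chi_{N,u}=\chi_N - \chi_N*\phi_{N,2}$ and $\phi_{N,2}$ is bandlimited, one transfers this to a correlation of $\chi_N$ (hence of $\chi\circ\iota_\B$ on a box) with a nilsequence twisted by a character coming from the spectrum of $\phi_{N,2}$ --- but such a twisted nilsequence is itself a nilsequence on a slightly larger nilmanifold, so Theorem \ref{key} (or rather its consequence Theorem \ref{s3}) applies and forces the underlying polynomial orbit not to be totally equidistributed; running the factorization/equidistribution dichotomy as in the proof of Theorem \ref{key} then yields a correlation of $\chi\circ\iota_\B$ with a \emph{lower-complexity} object, which after summing over the finitely many such objects contradicts the fact that $\chi_{N,u}$ was produced with spectrum disjoint from the structured frequencies used to build $\chi_{N,s}$.

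Concretely, the iteration runs as follows. One performs an energy-increment argument: starting from $\chi_N$, at each stage one has a structured approximant whose dual-function energy has increased by a definite amount; since the energy is bounded by $1$, the process stops after $O_{\e,d,D}(1)$ steps, at which point the residual $\chi_{N,u}$ has $U^d$-norm below the target $1/F(Q,R,\e)$. The parameters $Q,R,N_0$ emerge from this finite iteration: $Q$ and $R$ control the ``complexity'' of the structured part accumulated, and $N_0$ is chosen large enough that Theorem \ref{key} (applied on the relevant finite family of nilmanifolds, whose complexity is bounded in terms of $d,D$ and the current thresholds) is in its range of validity, and also large enough that the prime $\tilde N\in(\Omega N,2\Omega N)$ behaves well for the Fourier analysis on $\Z_{\tilde N}^D$. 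I expect the \textbf{main obstacle} to be the bookkeeping in this last step: one must verify that the ``twist by a structured character'' of a nilsequence is again a polynomial nilsequence of controlled degree and that the total-equidistribution dichotomy can be run \emph{uniformly} over the (bounded) finite family of nilmanifolds supplied by the inverse theorem, so that the loss incurred is quantitative and closes the energy increment --- this is exactly the juncture at which the quantitative strength of Theorem \ref{key} (as opposed to the qualitative Theorem \ref{s2}) is essential, and getting the dependence of $N_0$ on $F,Q,R,\e$ to be well-defined (not circular) requires the same careful ordering of quantifiers as in the proof of Theorem \ref{s3}. Once Theorem \ref{nU3s} is established, Theorem \ref{U3} follows by discarding $\chi_{N,e}$: taking $\nu$ to be a point mass at a given $\chi$ (or using (v) with $\e$ small and absorbing $\chi_{N,e}$ into $\chi_{N,u}$ after noting that adding a function of small $L^2$-norm changes the $U^d$-norm by a controlled amount), and relabelling, gives exactly statements (i) and (ii) of Theorem \ref{U3}.
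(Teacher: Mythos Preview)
Your proposal is correct and follows essentially the same route as the paper: Fej\'er-type kernels give (i)--(iii), Theorem~\ref{key} combined with the inverse theorem (Theorem~\ref{inv2}) controls the $U^d$-norm of the residual, and an energy-increment iteration (as in Section~8.10 of \cite{FH}) upgrades the weak statement to the strong one with the arbitrary growth function $F$.

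Two minor remarks on organization and one technical point. The paper separates the argument more explicitly into three stages: first a weak $U^2$ theorem (Theorem~\ref{nU2}), then a weak $U^d$ theorem (Theorem~\ref{nU3w}) which asserts that the \emph{same} kernel decomposition already has small $U^d$-norm for multiplicative functions (this is Proposition~\ref{fin}, the $U^2\Rightarrow U^d$ bootstrap), and only then the energy increment; you fold the first two together, which is fine. On the transfer of correlation from $\chi_{N,u}$ to $\chi_N$, the paper does not expand $\phi_{N,2}$ in characters and absorb the twist into the nilmanifold as you suggest; instead it writes $\chi_{N,u}=\chi_N\ast\psi_N$ with $\Vert\psi_N\Vert_{L^1}\le 2$, expands the convolution, and pigeonholes over the shift $\m$ to obtain a correlation of $\chi_N$ itself with a \emph{translated} nilsequence $\Phi(g(\n+\m')e_X)$ on a sub-progression --- which is exactly the form to which Theorem~\ref{key} applies directly (note the parameter $\m$ in its statement). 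Your character-twist approach would also work, but the shift-averaging is cleaner and avoids enlarging the nilmanifold.
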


\subsection{Weak $U^{2}$ structure theorem}\label{u2}
Our first step is to prove a weak $U^{2}$ structure theorem:

\begin{thm}[Weak $U^{2}$ structure theorem for multiplicative functions]\label{nU2} 
		Let $\Omega\in\N$. For $N\in\N$, let $\tilde{N}$ denote the smallest prime integer greater than $\Omega N$.
	Let $\K$ be an integral tuple. 
	For every $\e>0$, there exist $Q:=Q(\e,\B,\Omega),R:=R(\e,\B,\Omega),N_{0}:=N_{0}(\e,\B,\Omega)\in\N_{+}$ such that for every $N\geq N_{0}$ and $\chi\in\mathcal{M}_{K}$, the truncated function $\chi_{N}\colon \Rr\to\mathbb{C}$ can be written as
	\begin{equation}\nonumber
		\begin{split}
			\chi_{N}(\A)=\chi_{N,s}(\A)+\chi_{N,u}(\A)
		\end{split}
	\end{equation}
	for all $\A\in\Rr$ such that the following holds:
	\begin{enumerate}[(i)]
		\item\label{w21}  $\vert\chi_{N,s}\vert\leq 1$ and $\chi_{N,s}=\chi_{N}*\phi_{N,\epsilon}$ for some kernel $\phi_{N,\epsilon}$ of $\Rr$ which  is independent of $\chi$, where the convolution product is defined on $\Rr$;
		\item\label{w22} $\vert\chi_{N,s}(\A+Q\bold{e}_{i})-\chi_{N,s}(\A)\vert\leq\frac{R}{N}$ for every $\A\in \Rr$ and $1\leq i\leq D$;
		\item\label{w23} For every $\xi=(\xi_{1},\dots,\xi_{D})\in\Rr$ such that $\widehat{f}_{N,s}(\xi)\neq 0$ and every $1\leq i\leq D$, there exists $p_{i}\in\{0,\dots,Q-1\}$ such that $\vert\frac{\xi_{i}}{\tilde{N}}-\frac{p_{i}}{Q}\vert\leq \frac{R}{\tilde{N}}$;
		\item\label{w24} $\Vert\chi_{N,u}\Vert_{U^{2}(\Rr)}\leq\epsilon$;
		\item\label{w25} For every $0<\e'\leq\e$, $N\geq \max\{N_{0}(\e,\B),N_{0}(\e',\B)\}$ and $\xi\in \Rr$, we have that $$\widehat{\phi_{N,\e'}}(\xi)\geq\widehat{\phi_{N,\e}}(\xi)\geq 0.$$
	\end{enumerate}	
\end{thm}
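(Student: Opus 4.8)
The plan is to follow the Fourier‑analytic template of the $U^{2}$ structure theorem in \cite{FH}, transplanted to $\O_{K}$ via the identification of $\Rr$ with $\O_{K}/\tilde N\O_{K}$ given by $\iota_{\B}$ (under which, by (\ref{iota}), $A_{\B}(a)$ acts as multiplication by $a$, and is a bijection of $\Rr$ whenever $0<\vert N_{K}(a)\vert<\tilde N$, by the remark after Theorem \ref{U3}). First I would work on the group $\Rr$, a clean Fourier setting because $\tilde N$ is prime, and reduce everything to one \emph{spectral concentration} statement: for every $\e>0$ there exist $Q,R$ depending only on $\e,\B,\Omega$ and a threshold $\sigma=\sigma(\e)>0$ such that, for all large $N$ and all $\chi\in\mathcal{M}_{K}$, every frequency $\xi=(\xi_{1},\dots,\xi_{D})$ with $\vert\widehat{\chi_{N}}(\xi)\vert>\sigma$ satisfies $\min_{p_{i}}\vert\xi_{i}/\tilde N-p_{i}/Q\vert\le R/\tilde N$ for $1\le i\le D$. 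Granting this, the $U^{2}$ conclusion is immediate from identity (\ref{F}) and Parseval: if $\chi_{N,u}$ has Fourier transform supported on the complementary "minor arcs", then $\Vert\chi_{N,u}\Vert_{U^{2}(\Rr)}^{4}=\sum_{\xi}\vert\widehat{\chi_{N,u}}(\xi)\vert^{4}\le\bigl(\sup_{\xi\text{ minor}}\vert\widehat{\chi_{N}}(\xi)\vert\bigr)^{2}\sum_{\xi}\vert\widehat{\chi_{N}}(\xi)\vert^{2}\le\sigma^{2}$, since $\vert\chi_{N}\vert\le 1$, and the large spectrum is finite by Parseval.

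The heart is the spectral concentration statement, and here I would use K\'atai's Lemma (Lemma \ref{katai}). Fix a minor‑arc frequency $\xi$ and put $h(\n):=e(-\tilde N^{-1}\n\cdot\xi)$, a unimodular function, so that up to a harmless truncation and the passage between $\Rr$ and $R_{N,D}$, $\widehat{\chi_{N}}(\xi)$ is the average of $\chi(\iota_{\B}(\n))h(\n)$. Applying Lemma \ref{katai} with a large family $\mathcal{P}$ of $C_{\B}$‑regular prime elements of pairwise coprime $K$‑norms — which exists with $\mathcal{A}_{\mathcal{P}}\to\infty$ by Theorems \ref{dirich} and \ref{mink2} and Lemma \ref{pi}, exactly as in Section \ref{s:o} — reduces matters to bounding $C_{\mathcal{P}}(N)$. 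Since $h$ is a linear phase and $A_{\B}(q)-A_{\B}(p)=A_{\B}(q-p)$ by Lemma \ref{norm}(i), one has $h(\n A_{\B}(p))\overline{h}(\n A_{\B}(q))=e(\tilde N^{-1}\n\cdot\eta_{p,q})$ where $\eta_{p,q}$ depends linearly on $\xi$ through $A_{\B}(q-p)$; so $C_{\mathcal{P}}(N)$ is a sum of honest linear exponential sums over boxes, each $\ll\prod_{i}\min(N,\Vert(\eta_{p,q})_{i}/\tilde N\Vert_{\T}^{-1})$. Under the ring identification $\eta_{p,q}$ corresponds to multiplying the class of $\xi$ in $\O_{K}/\tilde N\O_{K}$ by $q-p$; a counting argument over prime pairs then shows that the "bad" pairs — those for which $\eta_{p,q}$ is concentrated near $0$ modulo $\tilde N$ in every coordinate — are few unless the class of $\xi$ itself forces $\xi$ onto a bounded major arc, whence $C_{\mathcal{P}}(N)/N^{D}=o(1)+o_{\vert\mathcal{P}\vert}(1)$ for minor‑arc $\xi$. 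Feeding this into Lemma \ref{katai} and letting $\vert\mathcal{P}\vert\to\infty$ gives the desired uniform smallness of $\vert\widehat{\chi_{N}}(\xi)\vert$ on minor arcs.

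With spectral concentration in hand, I would take $\phi_{N,\e}$ to be a nonnegative kernel of $\Rr$ whose Fourier transform is a coordinatewise product of Fej\'er‑type weights supported on $\bigcup_{p}\{\xi:\vert\xi/\tilde N-p/Q\vert\le R/\tilde N\}$, normalised to have nonnegative Fourier coefficients and total mass $1$, and set $\chi_{N,s}:=\chi_{N}*\phi_{N,\e}$, $\chi_{N,u}:=\chi_{N}-\chi_{N,s}$. Then \eqref{w21} holds because $\phi_{N,\e}$ is a probability kernel; \eqref{w23} holds by construction; \eqref{w22} follows from an elementary estimate, since for $\xi$ in the spectrum of $\chi_{N,s}$ one has $Q\xi_{i}/\tilde N\in\Z+O(QR/\tilde N)$, so shifting by $Q\bold{e}_{i}$ multiplies the $\xi$‑coefficient by $e(Q\xi_{i}/\tilde N)=1+O(R/N)$ after enlarging $R$ to absorb the size of the spectrum; \eqref{w24} is the reduction of the first paragraph; and \eqref{w25} is obtained by arranging the family $\{\widehat{\phi_{N,\e}}\}_{\e}$ to be monotone — smaller $\e$ forces larger $Q,R$, hence wider major arcs and pointwise larger nonnegative weights — which is precisely the normalisation used in \cite{FH,S}. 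One then records that all constants depend only on $\e,\B,\Omega$.

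The main obstacle I expect is the second paragraph: the $\O_{K}$‑analogue of the classical fact that minor‑arc exponential sums with multiplicative coefficients are small. Over $\Z$ one may quote Montgomery--Vaughan or run K\'atai's argument directly; here the extra difficulty is controlling how the matrices $A_{\B}(p)$ act on a frequency $\xi\in\Rr$ — concretely, deciding for which prime pairs $(q-p)$ times the class of $\xi$ stays near $0$ in $\O_{K}/\tilde N\O_{K}$ — and extracting from the failure of this concentration the explicit major‑arc description with bounds $Q,R$ depending only on $\e,\B,\Omega$, with all error terms genuinely uniform in $\chi$. The $C_{\B}$‑regularity of the primes in $\mathcal{P}$ and the bijectivity of $\n\mapsto\n A_{\B}(a)$ on $\Rr$ for $\vert N_{K}(a)\vert<\tilde N$ are the technical inputs that make this counting go through.
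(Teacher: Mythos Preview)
Your overall architecture is correct and matches the paper's: construct a Fej\'er-type kernel $\phi_{N,\e}$ supported on major arcs, set $\chi_{N,s}=\chi_N*\phi_{N,\e}$, and deduce (i)--(v) from a single spectral-concentration statement. The paper carries out exactly this kernel construction and verification; see the definitions surrounding (\ref{phi}) and the argument leading to (\ref{78}).

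Where you genuinely diverge from the paper is in how you obtain the spectral concentration. The paper proves it (Corollary~\ref{nF2}) by invoking Theorem~\ref{key} as a black box with $X=\T$ and $g_{N,\xi}(\n)=-\n\circ_{\tilde N}\xi$: a large Fourier coefficient forces non-equidistribution, and then Theorem~\ref{Lei} on $\T$ yields the major-arc condition directly. You instead propose to apply K\'atai's Lemma straight to the linear phase and analyse the resulting exponential sums by hand. This is more elementary and entirely self-contained --- it is essentially the torus case of Theorem~\ref{key} unpacked, and the paper explicitly notes that such a direct route (as in Corollary~5.2 of \cite{S}) is available. What the paper's route buys is economy: Theorem~\ref{key} is already proved, so Corollary~\ref{nF2} costs three lines. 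What your route buys is that this particular theorem does not logically depend on the heavy nilmanifold machinery of Sections~\ref{s:d}--\ref{s:o}.

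Two technical points to tighten. First, the sums in $C_{\mathcal P}(N)$ are \emph{not} over boxes but over $\{\n:\n A_{\B}(p),\n A_{\B}(q)\in R_{N,D}\}$, an intersection of two parallelepipeds; the product bound $\prod_i\min(N,\|\eta_i/\tilde N\|_\T^{-1})$ you quote does not follow directly. What you actually need (and what suffices) is the line-by-line bound $|\text{sum}|\ll N^{D-1}\min(N,\|\eta_j/\tilde N\|_\T^{-1})$ for each $j$, obtained by slicing the convex region along coordinate directions; if the sum is $\gg\delta N^D$ this forces \emph{all} $\|\eta_j/\tilde N\|_\T\ll 1/(\delta N)$, and then inverting $A_{\B}(q-p)$ over $\mathbb{Q}$ places $\xi$ on a major arc with denominator $|N_K(q-p)|$. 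Second, ``letting $|\mathcal P|\to\infty$'' is not how the quantifiers run: you fix $\mathcal P$ once (large enough that $1/\mathcal A_{\mathcal P}$ beats $\e$), take $Q$ to be the lcm of the $|N_K(q-p)|$ over pairs in $\mathcal P$, and choose $R$ large enough that minor-arc $\xi$ forces every pair's sum to be small. With those fixes your argument goes through.
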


In the rest of Section \ref{u2}, we consider $\Omega$ as fixed, and all the quantities depend implicitly on $\Omega$. Moreover,  $\tilde{N}$ always denotes the smallest prime integer greater than $\Omega N$.

We first explain what happens when the Fourier coefficient of $\chi$ is away from 0. 

\begin{cor}[A consequence of Theorem \ref{key}]\label{nF2}	
	Let $\K$ be an integral tuple.
	For every $\e>0$, there exist $Q:=Q(\e,\B),V:=V(\e,\B), N_{0}:=N_{0}(\e,\B)\in\mathbb{N}_{+}$ such that for every $N\geq N_{0}$, every $\chi\in\mathcal{M}_{K}$ and every $\xi=(\xi_{1},\dots,\xi_{D})\in \Rr$, if $\vert\widehat{\chi_{N}}(\xi)\vert\geq\e$, then $$\sum_{i=1}^{D}\Bigl\Vert \frac{Q\xi_{i}}{\tilde{N}}\Bigr\Vert_{\T}\leq\frac{QV}{\tilde{N}}.$$
\end{cor}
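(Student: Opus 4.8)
The plan is to derive Corollary~\ref{nF2} directly from the quantitative orthogonality result Theorem~\ref{key}, by interpreting the Fourier coefficient $\widehat{\chi_N}(\xi)$ as a correlation of $\chi$ with a (one-dimensional) nilsequence, namely the linear exponential $\n\mapsto e(-\n\circ_{\tilde N}\xi)$. First I would fix $\e>0$ and suppose that $\vert\widehat{\chi_N}(\xi)\vert\ge\e$ for some $\xi=(\xi_1,\dots,\xi_D)\in\Rr$. Unwinding the definition of the Fourier transform on $\Rr$ and of the truncated function $\chi_N$ (which is supported on $\{1,\dots,N\}^D$ and equals $\chi\circ\iota_\B$ there), this says
\begin{equation}\nonumber
\Bigl\vert\E_{\n\in R_{\tilde N,D}}\bold{1}_{[N]^{D}}(\n)\,\chi(\iota_{\B}(\n))\,e\bigl(-\tfrac{1}{\tilde N}(n_1\xi_1+\dots+n_D\xi_D)\bigr)\Bigr\vert\gg_D\e,
\end{equation}
after absorbing the ratio $(2\tilde N+1)^D/\tilde N^D$ into the implied constant. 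The point is that $[N]^D$ is a $D$-dimensional arithmetic progression $P$ inside $R_{\tilde N,D}$, and $g(\n):=\tfrac{1}{\tilde N}(\xi_1 n_1+\dots+\xi_D n_D)\bmod 1$ is a polynomial sequence of degree $1$ on the torus $X=\T=\R/\Z$ (with its standard nil-structure), with Lipschitz test function $\Phi(t)=e(-t)$ of norm $O(1)$ and $\int_\T\Phi\,dm_\T=0$.

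Next I would apply Theorem~\ref{key} with $w=1$, the nilmanifold $X=\T$, and this $g$, $\Phi$, $P$: since the correlation is $\ge\e'$ for a suitable $\e':=\e'(D,\e)>0$, the conclusion is that $(g(\n)\cdot e_X)_{\n\in R_{\tilde N,D}}$ is not totally $\delta$-equidistributed on $\T$ for $\delta:=\delta(\e,D)>0$ and all $\tilde N\ge N_0$. Then I would invoke the quantitative Leibman theorem (Theorem~\ref{Lei}) on $\T$: non-total-$\delta$-equidistribution produces a non-trivial horizontal character $\eta$ of $\T$ with $0<\Vert\eta\Vert_\X\le C$ and $\Vert\eta\circ g\Vert_{C_1^\infty(R_{\tilde N,D})}\le C$, where $C:=C(\e,D)$. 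A horizontal character of $\T$ is simply $t\mapsto \ell t$ for some integer $\ell\neq 0$ with $\vert\ell\vert\le C$, so $\eta\circ g(\n)=\tfrac{\ell}{\tilde N}(\xi_1 n_1+\dots+\xi_D n_D)\bmod 1$, and the smooth-norm bound $\Vert\eta\circ g\Vert_{C_1^\infty(R_{\tilde N,D})}\le C$ unpacks (via Definition~\ref{sn}, taking $\j=\bold{e}_i$) to $(2\tilde N+1)\,\Vert \tfrac{\ell\xi_i}{\tilde N}\Vert_\T\le C$ for each $1\le i\le D$, i.e. $\Vert \tfrac{\ell\xi_i}{\tilde N}\Vert_\T\le \tfrac{C}{2\tilde N}$.

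Finally I would convert this into the stated form. Set $Q$ to be (for instance) $\mathrm{lcm}\{1,2,\dots,C\}$, so that $Q$ is a multiple of $\vert\ell\vert$; then $Q\xi_i/\tilde N=(Q/\ell)\cdot(\ell\xi_i/\tilde N)$ and hence $\Vert Q\xi_i/\tilde N\Vert_\T\le (Q/\vert\ell\vert)\cdot \tfrac{C}{2\tilde N}\le \tfrac{QC}{2\tilde N}$, so summing over $i$ gives $\sum_{i=1}^D\Vert Q\xi_i/\tilde N\Vert_\T\le \tfrac{DQC}{2\tilde N}=:\tfrac{QV}{\tilde N}$ with $V:=DC/2$. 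Both $Q$ and $V$ depend only on $\e$ and $\B$ (through $D$), and $N_0$ is the maximum of the thresholds coming from Theorem~\ref{key} and Theorem~\ref{Lei}, again depending only on $\e,\B$. The main obstacle is purely bookkeeping: making sure the passage between the average over $R_{\tilde N,D}$ and the Fourier coefficient normalization $\E_{\n\in\Rr}$ is done cleanly (the prime $\tilde N>\Omega N$ and the factor $(2\tilde N+1)^D$ versus $\tilde N^D$), and that $[N]^D$ genuinely qualifies as a $D$-dimensional arithmetic progression in the sense of the footnote in Theorem~\ref{s2}; everything else is a direct chain of citations with no new analytic input. (If one prefers, one can instead cite Lemma~\ref{1p} and the structure already developed, but routing through Theorem~\ref{key} and Theorem~\ref{Lei} is the cleanest since it is exactly the $\T$, degree-$1$ case of the machinery.)
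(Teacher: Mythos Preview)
Your proposal is correct and follows essentially the same route as the paper: interpret $\widehat{\chi_N}(\xi)$ as a correlation with the linear nilsequence $e(-\n\circ_{\tilde N}\xi)$ on $X=\T$, apply Theorem~\ref{key} to obtain non-total-$\delta$-equidistribution, then invoke Theorem~\ref{Lei} to produce a nonzero integer $\ell$ with $|\ell|\le C$ and $\Vert\ell\xi_i/\tilde N\Vert_\T\ll C/\tilde N$, and finally take $Q$ to be a common multiple of all candidates for $\ell$. The only cosmetic differences are that the paper averages over $R_{N,D}$ rather than $R_{\tilde N,D}$ and sets $Q=V!$ rather than $\mathrm{lcm}\{1,\dots,C\}$; both choices work identically.
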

One way to prove Corollary \ref{nF2} is to follow the method used in Corollary 5.2 of \cite{S}. Here we provide a different proof by using Theorem \ref{key} as a black box:

\begin{proof}
	Let $G=\R$, $\Gamma=\Z$ and $X=G/\Gamma=\T$.
	Let $g_{N,\xi}\in\poly_{D}(\R)$ be the function given by $g_{N,\xi}(\n):=-\n\circ_{\tilde{N}}\xi$ for all $\n\in\Z^{D}$. 
	Since $\vert R_{N,D}\vert\leq (3N)^{D}$, $\vert\widehat{\chi_{N}}(\xi)\vert\geq\e$ implies that 
	\begin{equation}\nonumber
		\begin{split}
			\Bigl\vert\E_{\n\in R_{N,D}}\bold{1}_{[N]^{D}}(\n)\chi(\iota_{\B}( \n))e(g_{N,\xi}(\n)\cdot e_{X})\Bigr\vert\geq 3^{D}\epsilon.
		\end{split}
	\end{equation}
	By Theorem \ref{key}, there exist $\delta:=\delta(\e,\B)>0$ and $N_{0}:=N_{0}(\e,\B)\in\N$ such that if $N\geq N_{0}$, then
	$(g_{N,\xi}(\n)\cdot e_{X})_{\n\in R_{N,D}}$ is not totally $\delta$-equidistributed.
	Since every horizontal character $\eta\colon\T\to\T$ on $\T$ can be written as $\eta(t)=qt \mod \Z$ for some $q\in\Z$, 
	by Theorem \ref{Lei}, there exist $V:=V(\delta,D)=V(\e,\B)>0$ and $0<q_{\xi}\leq V$ such that  $$\Vert\eta_{q_{\xi}}\circ g_{N,\xi}\Vert_{C^{\infty}_{1}[N]}=\sum_{i=1}^{D}\Bigl\Vert \frac{q_{\xi}\cdot\xi_{i}}{\tilde{N}}\Bigr\Vert_{\T}\leq\frac{V}{\tilde{N}},$$
	where $\eta_{q_{\xi}}(t):=q_{\xi}t \mod \Z$ for all $t\in\T$. Let $Q:=V!$. Then $Q$ depends only on $\e$ and $\B$, and for all $\xi\in\Rr$, 
	$$\sum_{i=1}^{D}\Bigl\Vert \frac{Q\cdot\xi_{i}}{\tilde{N}}\Bigr\Vert_{\T}=\sum_{i=1}^{D}\Bigl\Vert \frac{Q}{q_{\xi}}\cdot\frac{q_{\xi}\cdot\xi_{i}}{\tilde{N}}\Bigr\Vert_{\T}\leq \frac{Q}{q_{\xi}}\sum_{i=1}^{D}\Bigl\Vert\frac{q_{\xi}\cdot\xi_{i}}{\tilde{N}}\Bigr\Vert_{\T}\leq \frac{QV}{q_{\xi}\tilde{N}}\leq \frac{QV}{\tilde{N}}.$$
	This finishes the proof.
\end{proof}

Let the integral tuple $\K$ be fixed, and we  assume that all the quantities depend implicitly on $\B$ in the rest of this section.
For $\e>0, N\in\N, q\in\Rr$, define

\begin{equation}\label{79}
	\begin{split}
		&\mathcal{A}(N,\e):=\Bigl\{\xi\in \Rr\colon \sup_{\chi\in\mathcal{M}_{K}}\vert\widehat{\chi_{N}}(\xi)\vert\geq\e^{2}\Bigr\};
		\\& W(N,q,\e):=\max_{\xi=(\xi_{1},\dots,\xi_{D})\in\mathcal{A}(N,\e)}\sum_{i=1}^{D}\tilde{N}\Bigl\Vert \frac{q\xi_{i}}{\tilde{N}}\Bigr\Vert_{\T};
		\\& Q(\e):=\min_{k\in\mathbb{N}}\Bigl\{k!\colon\limsup_{N\rightarrow\infty}W(N,k!,\e)<\infty\Bigr\};
		\\& V(\e):=1+\Bigl\lfloor \frac{1}{Q(\e)}\limsup_{N\rightarrow\infty}W(N,Q(\e),\e)\Bigr\rfloor.
	\end{split}
\end{equation}

It follows from Corollary \ref{nF2} that $Q(\e)$ is well defined. Notice that for all $0<\e'\leq\e, Q(\e')\geq Q(\e)$ and $Q(\e')$ is a multiple of $Q(\e)$. Thus $V(\e')\geq V(\e)$ (it is easy to verify that $V(\e)$ increases as $\e$ decreases). 
By definition, there exists $N_{1}(\e)\in\N$ such that for all $N\geq N_{1}(\e)$, $\chi\in\mathcal{M}_{K}$ and $\xi\in\Rr$,   $$\vert\widehat{\chi_{N}}(\xi)\vert\geq\e^{2}\Rightarrow\sum_{i=1}^{D}\Bigl\Vert \frac{Q(\e)\xi_{i}}{\tilde{N}}\Bigr\Vert_{\T}\leq\frac{Q(\e)V(\e)}{\tilde{N}}.$$

For every $m\geq 1, N>2m$, we define the function $f_{N,m}\colon \Rr\rightarrow\mathbb{C}$ by

\begin{equation}\nonumber
	\begin{split}
		f_{N,m}(\n):=\sum_{-m\leq\xi_{1},\dots,\xi_{D}\leq m}\Bigl(\prod_{i=1}^{D}(1-\frac{\vert\xi_{i}\vert}{m})\Bigr)\cdot e(\n\circ_{\tilde{N}}(\xi_{1},\dots,\xi_{D})).
	\end{split}
\end{equation}
It is easy to verify that $f_{N,m}$ is a kernel of $\Rr$ whose spectrum is $\{-(m-1),\dots, m-1\}^{D}$. Let $Q_{\tilde{N}}(\e)^{*}$ be the unique integer in $\{1,\dots,\tilde{N}-1\}$ such that $Q(\e)Q_{\tilde{N}}(\e)^{*}\equiv 1\mod \tilde{N}$. 
Let 
\begin{equation}\label{76}
	\begin{split}
		N_{0}(\e):=\max\{N_{1}(\e),2DQ(\e)V(\e)\lceil\e^{-4}\rceil\}.
	\end{split}
\end{equation}
For $N>N_{0}$, we define $\phi_{N,\e}\colon \Rr\rightarrow\mathbb{C}$ by
\begin{equation}\label{phi}
	\begin{split}
		\phi_{N,\e}(\xi):=f_{N,DQ(\e)V(\e)\lceil\e^{-4}\rceil}(Q_{\tilde{N}}(\e)^{*}\xi).
	\end{split}
\end{equation}
In other words, $f_{N,DQ(\e)V(\e)\lceil\e^{-4}\rceil}(\xi)=\phi_{N,\e}(Q(\e)\xi)$. Then $\phi_{N,\e}$ is also a kernel of $\Rr$, and the spectrum of $\phi_{N,\e}$ is the set
\begin{equation}\nonumber
	\begin{split}
		\Xi_{N,\e}:=\Bigl\{\xi=(\xi_{1},\dots,\xi_{D})\in \Rr\colon\Bigl\Vert\frac{Q(\e)\xi_{i}}{\tilde{N}}\Bigr\Vert_{\T}<\frac{DQ(\e)V(\e)\lceil\e^{-4}\rceil}{\tilde{N}}, 1\leq i\leq D\Bigr\}.
	\end{split}
\end{equation}
Moreover, $$\widehat{\phi_{N,\e}}(\xi)=\prod_{i=1}^{D}\Bigl(1-\Bigl\Vert\frac{Q(\e)\xi_{i}}{\tilde{N}}\Bigr\Vert_{\T}\cdot\frac{\tilde{N}}{DQ(\e)V(\e)\lceil\e^{-4}\rceil}\Bigr)
$$ if $\xi\in\Xi_{N,\e}$ and $\widehat{\phi_{N,\e}}(\xi)=0$ otherwise.

\begin{proof}[Proof of Theorem \ref{nU2}]
	In this proof, we assume implicitly  that every constant depends on $\B$ and $\Omega$.
	Let the notations be defined as above. 	Fix $\e>0$, and let $Q(\e)$ and $N_{0}(\e)$ be defined as in (\ref{79}) and (\ref{76}), respectively. Let $R(\e)$ be sufficiently large to be chosen later.
	For all $\chi\in\mathcal{M}_{K}$, let $\chi_{N,s}:=\chi_{N}\ast\phi_{N,\e}$ and $\chi_{N,u}=\chi_{N}-\chi_{N,s}$,
	where $\phi_{N,\e}$ is defined in (\ref{phi}). We show that $\chi_{N,s}$ and $\chi_{N,u}$ satisfy all the requirements. 
	
	We now fix $\chi\in\mathcal{M}_{K}$ and $N\geq N_{0}(\e)$.
	Since $\vert\chi\vert\leq 1$, by definition, $\vert\chi_{N,s}\vert\leq 1$. So
 Property (i) holds.

	Using Fourier inversion formula and the estimate $\vert e(x)-1\vert\leq 2\pi\Vert x\Vert_{\T}$, for all $1\leq i\leq D$, we have that
	\begin{equation}\nonumber
		\begin{split}
			\vert\chi_{N,s}(\n+Q(\e)\bold{e}_{i})-\chi_{N,s}(\n)\vert\leq\sum_{\xi=(\xi_{1},\dots,\xi_{D})\in \Rr}\vert\widehat{\phi_{N,\e}}(\xi)\vert\cdot 2\pi\Bigl\Vert\frac{Q(\e)\xi_{i}}{\tilde{N}}\Bigr\Vert_{\T}
			\leq\vert\Xi_{N,\e}\vert\cdot\frac{2\pi D Q(\e)V(\e)\lceil\e^{-4}\rceil}{\tilde{N}}.
		\end{split}
	\end{equation}
	Since $\vert\Xi_{N,\e}\vert$ is finite and depends only on $\e$, Property (ii) follows by taking $R(\e)$ sufficiently large depending only on $\e$, $\B$ and $\Omega$.

	Let $\xi\in\Rr$ be such that $\widehat{\chi}_{N,s}(\xi)\neq 0$. Then $\widehat{\phi}_{N,s}(\xi)\neq 0$ and so $\xi\in\Xi_{N,\e}$. By definition, there exist $p_{1},\dots,p_{D}\in\{0,\dots,Q(\e)-1\}$ such that $\vert\frac{\xi_{i}}{\tilde{N}}-\frac{p_{i}}{Q(\e)}\vert\leq \frac{DV(\e)\lceil\e^{-4}\rceil}{\tilde{N}}$. So Property (iii) holds by taking $R(\e)$ sufficiently large depending only on $\e$ (and $\B$).

	For every $\chi\in\mathcal{M}_{K}$ and $\xi=(\xi_{1},\dots,\xi_{D})\in \Rr$, if $\vert\widehat{\chi_{N}}(\x)\vert\geq\e^{2}$, then by the definition of $Q(\e)$ and $N_{1}(\e)$, $$\Bigl\Vert\frac{Q(\e)\xi_{i}}{\tilde{N}}\Bigr\Vert_{\T}\leq \frac{Q(\e)V(\e)}{\tilde{N}}$$ for all $1\leq i\leq D$. Then $\widehat{\phi_{N,\e}}(\xi)\geq(1-\e^{4}/D)^{D}\geq 1-\e^{4}$. So
	\begin{equation}\label{78}
		\Bigl\vert\widehat{\chi_{N}}(\xi)-\widehat{\phi_{N,\e}*\chi_{N}}(\xi)\Bigr\vert=\Bigl\vert\widehat{\chi_{N}}(\xi)(1-\widehat{\phi_{N,\e}}(\xi))\Bigr\vert\leq\e^{4}\leq\e^{2}.
	\end{equation}	
	Note that (\ref{78}) also holds if $\vert\widehat{\chi_{N}}(\xi)\vert\leq\e^{2}$. Thus by identity (\ref{F}) and Parseval's identity, we have
	\begin{equation}\nonumber
		\begin{split}
			\Vert\chi_{N,u}\Vert_{U^{2}(\Rr)}^{4}=\sum_{\xi\in \Rr}\Bigl\vert\widehat{\chi_{N}}(\xi)-\widehat{\phi_{N,\e}*\chi_{N}}(\xi)\Bigr\vert^{4}\leq\e^{4}\sum_{\xi\in \Rr}\Bigl\vert\widehat{\chi_{N}}(\xi)-\widehat{\phi_{N,\e}*\chi_{N}}(\xi)\Bigr\vert^{2}
			\leq\sum_{\xi\in \Rr}\vert\widehat{\chi_{N}}(\xi)\vert^{2}\leq\e^{4}.
		\end{split}
	\end{equation}
	 This proves Property (iv).

	Suppose that $0<\e'\leq\e$. Since $Q(\e')\geq Q(\e), V(\e')\geq V(\e)$ and $Q(\e')$ is a multiple of $Q(\e)$, we have $\Xi_{N,\e}\subseteq\Xi_{N,\e'}$ and $\widehat{\phi_{N,\e'}}(\xi)\geq\widehat{\phi_{N,\e}}(\xi)$ for every $\xi\in \Rr$.
	This proves Property (v), which finishes the proof of the whole theorem.
\end{proof}

\subsection{Weak $U^{d}$ structure theorem}\label{u3}
 Our section step is to prove a weak  $U^{d}$ structure theorem:

\begin{thm}[Weak $U^{d}$ structure theorem for multiplicative functions]\label{nU3w}
	Let $\Omega\in\N$. For $N\in\N$, let $\tilde{N}$ denote the smallest prime integer greater than $\Omega N$.	
	Let $\K$ be an integral tuple.
	For every $d\in\N, d\geq 3$ and $\e>0$, there exists $\theta_{0}:=\theta_{0}(d,\epsilon,\B,\Omega)>0$ such that for all $0<\theta<\theta_{0}$, there exist $Q:=Q(d,\e,\theta,\B,\Omega),R:=R(d,\e,\theta,\B,\Omega),N_{0}:=N_{0}(d,\e,\theta,\B,\Omega)\in\N_{+}$ such that for every $N\geq N_{0}$ and $\chi\in\mathcal{M}_{K}$, the truncated function $\chi_{N}\colon \Rr\to\mathbb{C}$ can be written as
	\begin{equation}\nonumber
		\begin{split}
			\chi_{N}(\A)=\chi_{N,s}(\A)+\chi_{N,u}(\A)
		\end{split}
	\end{equation}
	for all $\A\in\Rr$ such that the following holds:
	\begin{enumerate}[(i)]
		\item\label{w31} $\vert\chi_{N,s}\vert\leq 1$ and $\chi_{N,s}=\chi_{N}*\phi_{N,\theta}$, where $\phi_{N,\theta}$ is the kernel of $\Rr$ defined in (\ref{phi}) which is independent of $\chi$, and the convolution product is defined on $\Rr$;
		\item\label{w32} $\vert\chi_{N,s}(\A+Q\bold{e}_{i})-\chi_{N,s}(\A)\vert\leq\frac{R}{N}$ for every $\A\in \Rr$ and $1\leq i\leq D$;
		\item\label{w33} For every $\xi=(\xi_{1},\dots,\xi_{D})\in\Rr$ such that $\widehat{f}_{N,s}(\xi)\neq 0$ and every $1\leq i\leq D$, there exists $p_{i}\in\{0,\dots,Q-1\}$ such that $\vert\frac{\xi_{i}}{\tilde{N}}-\frac{p_{i}}{Q}\vert\leq \frac{R}{\tilde{N}}$;
		\item\label{w34} $\Vert\chi_{N,u}\Vert_{U^{d}(\Rr)}\leq\epsilon$.
	\end{enumerate}
\end{thm}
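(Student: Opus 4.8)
\textbf{Proof plan for Theorem \ref{nU3w} (weak $U^{d}$ structure theorem).}
The plan is to bootstrap from the weak $U^{2}$ structure theorem (Theorem \ref{nU2}) to arbitrary degree $d\geq 3$ by an interpolation argument between Gowers norms, using the fact that the structural part produced by Theorem \ref{nU2} is already ``low complexity'' in a sense compatible with all higher $U^{d}$ norms. First I would invoke Theorem \ref{nU2} with a parameter $\theta>0$ (to be chosen small) in place of $\e$, obtaining a decomposition $\chi_{N}=\chi_{N}*\phi_{N,\theta}+(\chi_{N}-\chi_{N}*\phi_{N,\theta})=:\chi_{N,s}+\chi_{N,u}$, where $\phi_{N,\theta}$ is the explicit kernel from \eqref{phi}, $\vert\chi_{N,s}\vert\leq 1$, the slow-variation estimate (ii) and the spectral localization (iii) hold with constants $Q=Q(\theta),R=R(\theta)$, and $\Vert\chi_{N,u}\Vert_{U^{2}(\Rr)}\leq\theta$. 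Properties \eqref{w31}, \eqref{w32}, \eqref{w33} of Theorem \ref{nU3w} are then immediate by construction, so the entire content is in upgrading the $U^{2}$-smallness of $\chi_{N,u}$ to $U^{d}$-smallness, i.e. establishing \eqref{w34}.

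The key step is the interpolation: for a bounded function one has an inequality of the form $\Vert f\Vert_{U^{d}(\Rr)}\leq \Vert f\Vert_{U^{2}(\Rr)}^{c_{d}}$ (with $c_{d}=2^{d+1}/(2^{d+1}-1)\cdot$ something, the precise exponent being $1/2^{d-2}$ after normalizing, but I would not grind through this) valid because the $U^{2}$ norm controls the $U^{d}$ norm up to a power for functions bounded by $1$; this is a standard consequence of the monotonicity $\Vert f\Vert_{U^{2}}\leq\Vert f\Vert_{U^{3}}\leq\cdots$ combined with the Gowers--Cauchy--Schwarz inequality applied to $\vert\chi_{N,u}\vert\leq 2$. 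Concretely: $\chi_{N,u}$ is bounded by $2$, so $\Vert\chi_{N,u}\Vert_{U^{d}(\Rr)}^{2^{d}}\leq 2^{2^{d}-4}\Vert\chi_{N,u}\Vert_{U^{2}(\Rr)}^{4}$ by iterating the defining recursion and discarding bounded factors, whence $\Vert\chi_{N,u}\Vert_{U^{d}(\Rr)}\leq 2^{1-4/2^{d}}\theta^{4/2^{d}}$. Then I would set $\theta_{0}:=\theta_{0}(d,\e,\B,\Omega)$ small enough that $2^{1-4/2^{d}}\theta^{4/2^{d}}\leq\e$ for all $0<\theta<\theta_{0}$, i.e. $\theta_{0}=(\e/2)^{2^{d}/4}$ (adjusting constants), and declare $Q=Q(\theta),R=R(\theta),N_{0}=N_{0}(\theta)$ from Theorem \ref{nU2}. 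This yields \eqref{w34} and completes the proof.

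The main obstacle I anticipate is not the interpolation itself --- which is routine --- but making sure the hypotheses line up: Theorem \ref{nU2} is stated over $\Rr=\mathbb{Z}_{\tilde{N}}^{D}$ with $\tilde{N}$ the smallest prime past $\Omega N$, and the $U^{d}$ recursion must be applied on the same group $\Rr$, which is fine since all Gowers norms in Theorem \ref{nU3w} are also taken over $\Rr$. One should also verify that the constants $Q,R,N_{0}$ inherited from Theorem \ref{nU2} depend only on $(\theta,\B,\Omega)$ and hence, after fixing $\theta$ in terms of $(d,\e)$, only on $(d,\e,\theta,\B,\Omega)$ as required by the statement --- this is purely bookkeeping. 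The one genuinely substantive point to get right is the direction of the Gowers-norm inequality: the $U^{2}$ norm is the \emph{smallest} of the Gowers norms, so one cannot simply bound $U^{d}$ by $U^{2}$ without using boundedness; the power-saving comes precisely from the $L^{\infty}$ bound on $\chi_{N,u}$, and I would present the iterated Cauchy--Schwarz estimate carefully to confirm the exponent $4/2^{d}$ and the absolute constant, since an error there would break the choice of $\theta_{0}$.
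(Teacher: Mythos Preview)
Your proposal contains a fatal gap: the interpolation inequality you invoke is false. You claim that for a bounded function $f$ one has $\Vert f\Vert_{U^{d}(\Rr)}^{2^{d}}\ll\Vert f\Vert_{U^{2}(\Rr)}^{4}$ ``by iterating the defining recursion and discarding bounded factors,'' but this goes the wrong way. The monotonicity of Gowers norms is $\Vert f\Vert_{U^{2}}\leq\Vert f\Vert_{U^{3}}\leq\cdots$, not the reverse, and no amount of Gowers--Cauchy--Schwarz or $L^{\infty}$ bookkeeping will flip it. A concrete counterexample on $\mathbb{Z}_{\tilde{N}}$: take $f(n)=e(n^{2}/\tilde{N})$. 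This is $1$-bounded, has $\Vert f\Vert_{U^{2}(\mathbb{Z}_{\tilde{N}})}\to 0$ (its Fourier coefficients are Gauss sums of magnitude $\tilde{N}^{-1/2}$), yet $\Vert f\Vert_{U^{3}(\mathbb{Z}_{\tilde{N}})}=1$ since it is a quadratic phase. Your choice of $\theta_{0}$ therefore cannot force $\Vert\chi_{N,u}\Vert_{U^{d}}\leq\e$ by the mechanism you describe, and your own caveat (``the one genuinely substantive point to get right is the direction of the Gowers-norm inequality'') is exactly where the argument breaks.

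What the paper actually does is deeper, and it is the only place in the proof of Theorem \ref{nU3w} where the multiplicativity of $\chi$ is used. The paper argues by contradiction: if $\Vert\chi_{N,u,\theta}\Vert_{U^{d}(\Rr)}>\e$, then by the inverse theorem (Theorem \ref{inv2}) $\chi_{N,u,\theta}$ correlates with a nilsequence $\Phi(g(\n)\cdot e_{X})$ on some fixed nilmanifold $X$. This correlation is then shown to be impossible (Proposition \ref{fin}), and the proof of Proposition \ref{fin} is where the real work happens: one applies the factorization theorem (Theorem \ref{5.6}) to $g$, reduces to a totally equidistributed subsequence on a subnilmanifold, splits $\Phi'$ into its mean $z$ and its mean-zero part, controls the contribution of $z$ via Lemma \ref{1p} and the $U^{2}$-smallness $\Vert\chi_{N,u,\theta}\Vert_{U^{2}}\leq\theta$, and finally controls the mean-zero contribution via the main correlation result Theorem \ref{key}, which requires $\chi\in\mathcal{M}_{K}$. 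In short, for \emph{multiplicative} functions small $U^{2}$ does imply small $U^{d}$, but this is a substantial theorem (essentially the content of Theorem \ref{key}), not a general soft fact about Gowers norms.
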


The proof of Theorem \ref{nU3w} is similar to Theorem 8.1 of \cite{FH} and Theorem 8.2 of \cite{S}. We provide the details for completeness.
Again, in the rest of Section \ref{u3}, we consider $\Omega$ as fixed, and all the quantities depend implicitly on $\Omega$. Moreover,  $\tilde{N}$ always denotes the smallest prime integer greater than $\Omega N$.

Let $\bold{K},d,\epsilon$ be as in the statement of Theorem \ref{nU3w} and let $\phi_{N,\e}$ be defined as in (\ref{phi}). For all $\e>0$, $N\in\N$ and $\chi\in\mathcal{M}_{K}$, denote 
\begin{equation}\label{75}
	\chi_{N,s,\e}:=\chi_{N}*\phi_{N,\epsilon} \text{ and } \chi_{N,u,\e}:=\chi_{N}-\chi_{N,s,\e}.
\end{equation}	
By Theorem \ref{nU2}, 
for all $\theta>0$, there exist $Q(\delta,\B),R(\delta,\B),N_{0}(\delta,\B)\in\N_{+}$ such that for every $N\geq N_{0}(\delta,\B)$ and $\chi\in\mathcal{M}_{K}$, Properties (i)--(v) of Theorem \ref{nU2} holds with $\e$ replaced with $\theta$.

Comparing Theorem \ref{nU3w} with  Theorem \ref{nU2}, it is easy to see that we only need to show that for every $\epsilon>0$ and $d\geq 3$, there exists $\theta_{0}:=\theta_{0}(d,\e,\B)>0$ such that for all $0<\theta<\theta_{0}$, there exists $N_{0}:=N_{0}(d,\e,\theta,\B)\in\N$ such that for all $N\geq N_{0}(d,\e,\theta,\B)$ and $\chi\in\mathcal{M}_{K}$, we have that $$\Vert\chi_{N,u,\theta}\Vert_{U^{d}(\Rr)}\leq \e.$$

By Properties (iv) of Theorem \ref{nU2}, $\Vert\chi_{N,u,\theta}\Vert_{U^{2}(\Rr)}\leq \theta$ for all $\theta>0$. Our strategy is to show that for multiplicative functions, the smallness of the $U^{2}$ norm implies the  smallness of the $U^{d}$ norm for $d\geq 3$.
By Theorem \ref{inv2} (the inverse theorem for Gowers norms), in order to prove 
Theorem \ref{nU3w}, it suffices to show the following:

\begin{prop}\label{fin}
	Let $\K$ be an integral tuple, $\d>0$ and $d\in\N_{+}$. Let $X=G/\Gamma$ be nilmanifold  with a nil-structure $\G$. There exists $\theta_{0}:=\theta_{0}(\X,d,\delta,\B)>0$ such that for all $0<\theta<\theta_{0}$, there exists  $N_{0}:=N_{0}(\X,d,\delta,\theta,\B)\in\N$ such that for every $N\geq N_{0}$, every $\chi\in\mathcal{M}_{K}$, every  $g\in\poly_{D}(G_{\bullet})$ and every $\Phi\colon X\to\mathbb{C}$ with modulus at most 1 and $\Vert\Phi\Vert_{\Lip(\X)}\leq 1$, we have that 
	\begin{equation}\label{e81}
		\Bigl\vert\E_{\n\in[\tilde{N}]^{D}}\chi_{N,u,\theta}(\n)\cdot\Phi(g(\n)\cdot e_{X})\Bigr\vert\leq \delta,
	\end{equation}	
	where $\chi_{N,u,\theta}$ is defined in (\ref{75}).
\end{prop}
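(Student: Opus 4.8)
\textbf{Proof proposal for Proposition \ref{fin}.}
The plan is to reduce the estimate \eqref{e81} to Theorem \ref{key} via K\'atai's Lemma, exactly as in Section \ref{s:o}, but now applied to the uniform part $\chi_{N,u,\theta}=\chi_N-\chi_N*\phi_{N,\theta}$ rather than to $\chi_N$ itself. First I would split the left-hand side of \eqref{e81} into the contribution of $\chi_N$ and the contribution of $\chi_N*\phi_{N,\theta}$. For the first term, $\chi_N$ is (the truncation of) a function in $\mathcal{M}_K$, so Theorem \ref{key} (or rather its consequence Theorem \ref{s3}, phrased on $[\tilde N]^D$ via the identification of $\Vert\cdot\Vert$ on intervals) says that if
$$\Bigl\vert\E_{\n\in[\tilde N]^D}\chi_N(\n)\Phi(g(\n)\cdot e_X)\Bigr\vert$$
is bounded below by a fixed constant, then $(g(\n)\cdot e_X)_{\n\in R_{N,D}}$ fails to be totally $\delta'$-equidistributed on $X$, and then Theorem \ref{Lei} produces a horizontal character $\eta$ with $0<\Vert\eta\Vert_\X\le C$ and $\Vert\eta\circ g\Vert_{C_1^\infty}\le C$. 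The key point is that once such an $\eta$ exists, the function $\n\mapsto\Phi(g(\n)\cdot e_X)$ is, up to a controlled error, a function of the low-frequency linear phase $\n\mapsto\eta(g(\n))$, i.e.\ it correlates with a bounded-frequency Fourier mode; but $\chi_{N,u,\theta}$ has been constructed in Theorem \ref{nU2} precisely so that its Fourier transform is supported away from such low frequencies (the spectrum of $\phi_{N,\theta}$ contains all $\xi$ with $\vert\widehat{\chi_N}(\xi)\vert\ge\theta^2$, for $N$ large). So the correlation of $\chi_{N,u,\theta}$ with $\Phi(g(\cdot)\cdot e_X)$ is $O(\theta)$ plus a term that vanishes as $N\to\infty$.

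More concretely, I would argue by contradiction: suppose \eqref{e81} fails for infinitely many $N$, with $\chi$, $g$, $\Phi$ depending on $N$. Apply K\'atai's Lemma (Lemma \ref{katai}) to $h(\n):=\Phi(g(\n)\cdot e_X)-\E\Phi(g(\cdot)\cdot e_X)*\phi_{N,\theta}$-type quantities; but cleaner is to first note that $\chi_N*\phi_{N,\theta}$ is, by Properties (i)--(iii) of Theorem \ref{nU2}, a slowly-varying function whose spectrum lies in a bounded set $\Xi_{N,\theta}$ of frequencies of the form $\xi/\tilde N\approx p/Q$. Hence $\chi_N*\phi_{N,\theta}$ can be written as a bounded linear combination (with $O(1)$ many terms, the number depending only on $\theta$) of functions $\n\mapsto e(\n\circ_{\tilde N}\xi)\cdot(\text{slowly varying})$, and each such term, multiplied against $\Phi(g(\n)\cdot e_X)$ and averaged, is itself a correlation of a nilsequence (on the product of $X$ with a circle, or after absorbing $e(\n\circ_{\tilde N}\xi)$ into a one-dimensional nilfactor) with the constant multiplicative function. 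Since $\int_X\Phi\,dm_X$ need not vanish, I separate $\Phi=\Phi_0+\int_X\Phi\,dm_X$ with $\int_X\Phi_0\,dm_X=0$; the mean-zero part is handled by Theorem \ref{s2}/\ref{key0}-type disjointness, and the constant part interacts with $\chi_{N,u,\theta}$ through its mean, which is $\le\Vert\chi_{N,u,\theta}\Vert_{U^1}\le\Vert\chi_{N,u,\theta}\Vert_{U^2}\le\theta$. Combining, the total is $O(\theta)+o_N(1)$, so choosing $\theta_0$ small and $N_0$ large contradicts the assumed lower bound $\delta$.

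The step I expect to be the main obstacle is making the reduction ``$\Phi(g(\n)\cdot e_X)$ correlates with a bounded-frequency phase $\Rightarrow$ it is orthogonal to $\chi_{N,u,\theta}$'' fully rigorous, since it requires quantitative control: one must show that if $\Vert\chi_{N,u,\theta}\Vert_{U^2}$ is small \emph{and} $N$ is large relative to $\theta^{-1}$ (so that the spectral gap in the construction of $\phi_{N,\theta}$ is effective), then correlation of $\chi_{N,u,\theta}$ with any $1$-bounded Lipschitz nilsequence $\Phi(g(\cdot)\cdot e_X)$ is small. The clean way is: apply Theorem \ref{s3} to $\chi_N$ itself (not $\chi_{N,u,\theta}$) to get that \emph{if} $\chi_N$ correlates with $\Phi(g(\cdot)\cdot e_X)$ then it correlates with $\bold1_{P'}$ for some progression $P'$, i.e.\ $\vert\widehat{\chi_N}(\xi)\vert\gtrsim$ const for some $\xi$ near $P'$'s dual frequency, hence that $\xi$ lies in the spectrum of $\phi_{N,\theta}$, hence $\widehat{\phi_{N,\theta}}(\xi)\approx 1$ there, hence subtracting $\chi_N*\phi_{N,\theta}$ kills exactly this correlation up to $O(\theta)$. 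This is the same bookkeeping as in Frantzikinakis--Host; the number-field setting changes nothing essential here because all the relevant inputs (Theorem \ref{key}, Theorem \ref{nU2}, Corollary \ref{nF2}) are already established for $\O_K$. I would then feed Proposition \ref{fin} into Theorem \ref{inv2} to conclude $\Vert\chi_{N,u,\theta}\Vert_{U^d(\Rr)}\le\e$, completing the weak $U^d$ structure theorem.
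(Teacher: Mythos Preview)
Your proposal has the right high-level ingredients (split off the mean of $\Phi$; use the $U^2$-smallness of $\chi_{N,u,\theta}$ on the constant part; invoke the correlation theorems for the mean-zero part), but there is a genuine gap in the mean-zero step. All of Theorems~\ref{key}, \ref{s2}, \ref{key0}, \ref{s3} take as input a function $\chi\in\mathcal{M}_K$; multiplicativity is what makes K\'atai's Lemma apply. The object $\chi_{N,u,\theta}=\chi_N-\chi_N*\phi_{N,\theta}$ is \emph{not} multiplicative, so you cannot feed it directly into any of these results, and your ``clean way'' does not close the loop: showing that a single Fourier coefficient $\widehat{\chi_N}(\xi)$ is large and then absorbed by $\widehat{\phi_{N,\theta}}(\xi)$ controls only $\widehat{\chi_{N,u,\theta}}(\xi)$ at that one frequency, not the correlation of $\chi_{N,u,\theta}$ with a general nilsequence.

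The paper's fix is a convolution--pigeonhole step you do not mention. Write $\chi_{N,u,\theta}=\chi_N*\psi_{N,\theta}$ with $\psi_{N,\theta}=\tilde N^{D}\delta_{\bold 0}-\phi_{N,\theta}$; since $\phi_{N,\theta}$ is a kernel, $\E_{\n}\vert\psi_{N,\theta}(\n)\vert\le 2$. Expanding the convolution and averaging over the shift variable produces an $\m\in\Rr$ for which
\[
\Bigl\vert\E_{\n}\chi_N(\n)\cdot\bold 1_{P}(\n+\m\bmod[\tilde N]^D)\,\Phi'_{0,a}\bigl(g'_a(\n+\m\bmod[\tilde N]^D)\cdot e_{X'_a}\bigr)\Bigr\vert
\]
is large; after splitting $[\tilde N]^D$ into $2^D$ boxes to undo the $\bmod\,\tilde N$, this is exactly a correlation of the honest multiplicative function $\chi$ (truncated) with a \emph{shifted} nilsequence, which is precisely what the shift parameter $\m$ in Theorem~\ref{key} is designed for. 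But Theorem~\ref{key}'s conclusion is that the underlying polynomial sequence fails to be totally $\delta'$-equidistributed, and for this to yield a contradiction one must \emph{first} have passed, via the factorization theorem (Theorem~\ref{5.6}), to a sub-nilmanifold $X'_a$ on which the sequence $(g'_a(\n)\cdot e_{X'_a})$ \emph{is} totally $\zeta(M^\ast)$-equidistributed. Your proposal never performs this factorization, so even after the (missing) convolution step you would have no contradiction. The role of $\theta_0$ also becomes clear only after factorization: it is $\min_{M^\ast\le M_1}\lambda(M^\ast)$, chosen so that the $U^2$-bound $\Vert\chi_{N,u,\theta}\Vert_{U^2}\le\theta<\theta_0$ handles (via Lemma~\ref{1p}) the constant term $z=\int_{X'_a}\Phi'_a$ on \emph{every} sub-nilmanifold that can arise from the factorization.
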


%
\begin{proof}
We may assume without loss of generality that $\X$ is standard.
To simplify the notations, in the proof, $\d$, $d$, $\X$, $\B$ and $\Omega$ are fixed and all
the quantities depend implicitly on them.

Suppose on the contrary that there exist arbitrarily small $\theta>0$, arbitrarily large $N\in\N$,  function $\chi\in\mathcal{M}_{K}$, polynomial sequence  $g\in\poly_{D}(G_{\bullet})$, and $\Phi\colon X\to\mathbb{C}$ with modulus at most 1 and $\Vert\Phi\Vert_{\Lip(\X)}\leq 1$, such that 
\begin{equation}\label{e89}
	\Bigl\vert\E_{\n\in[\tilde{N}]^{D}}\chi_{N,u,\theta}(\n)\cdot\Phi(g(\n)\cdot e_{X})\Bigr\vert>\delta,
\end{equation}

	By using the factorization theorem (Theorem \ref{5.6}),
	we may deduce from (\ref{e89}) that there exist
	\begin{itemize}
		\item A function $\lambda:=\lambda_{\delta,\X,\B,\Omega}\colon\N_{+}\to\R_{+}$;
		\item For all $M\in\N$ a finite subset $\Sigma(M)\subseteq G$ and a finite collection $\mathcal{F}(M)$ of sub nilmanifolds  $X'=G'/\Gamma'$ of $X$ with nil-structures $\X'$ induced by $\X$;
		\item For each $a\in\Sigma(M)$, a sub nilmanifold $X'_{a}=G'_{a}/\Gamma'_{a}$  of $X$ with a nil-structure $\X'_{a}$ adapted to the filtration $G'_{a,\bullet}$ induced by the $a$-conjugate of $\X'$, a polynomial sequence $g'_{a}\in\poly_{D}(G'_{a,\bullet})$  and a function $\Phi'_{a}\colon X'_{a}\to \mathbb{C}$ with $\vert\Phi'_{a}\vert\leq 1$ and $\Vert\Phi'_{a}\Vert_{\Lip(\X'_{a})}=1$,
	\end{itemize}		
	such that for every function 
	$\zeta\colon\N\to\R_{+}$, there exist a function $N_{0}\colon\N\to\mathbb{R}_{+}$ and $M_{1}\in\N_{+}$ such that
	the following holds:

	\textbf{Property 1.}
	There exist $M^{\ast}\in\N$ with $M^{\ast}\leq M_{1}$,
	such that if (\ref{s10}) holds for some $N\geq N_{0}(M^{\ast})$, then exist
	a $D$-dimensional arithmetic progression $P$ of $\Z^{D}$ 
	an element $a\in\Sigma(M^{\ast})$ such that
	\begin{equation}\nonumber
		\Bigl\vert\E_{\n\in[\tilde{N}]^{D}}\bold{1}_{P}(\n)\chi_{N,u,\theta}(\n)\cdot\Phi'_{a}(g'_{a}(\n)\cdot e_{X'_{a}})\Bigr\vert>3\cdot 2^{D}C\lambda(M^{\ast}),
	\end{equation}
	where $C:=C(D)>0$ is the constant defined in Lemma \ref{1p}.
	
	\textbf{Property 2.} For all $M^{\ast}\in\N$, $a\in\Sigma(M^{\ast})$ and $N\geq N_{0}(M^{\ast})$,
	\begin{equation}\nonumber
		\text{$(g'_{a}(\n)\cdot e_{X'_{a}})_{\n\in R_{N,D}}$ is totally $\zeta(M^{\ast})$-equidistributed on $X'_{a}$ with respect to $\X'_{a}$.}
	\end{equation}

We left the choice of $\zeta$ to the end of the proof (note that the choice of $\zeta$ must be dependent only on $\delta,d,\X,\B$).	
The method we deduce Properties 1 and 2 from (\ref{e89}) is again similar to the one we used to deduce (\ref{7.3}) and (\ref{7.90}) from (\ref{7.5}), 
as well as the one to deduce Properties 1 and 2 from (\ref{s10}) in the proof of Theorem \ref{s3}. 
So we omit it.

Set $\theta_{0}:=\min_{M^{\ast}\leq M_{1}}\lambda(M^{\ast})$. We may assume that (\ref{e89}) holds for some $\theta<\theta_{0}$ and $N>\max_{M^{\ast}\leq M_{1}}N_{0}(M^{\ast})$.
Let $z:=\int_{X'_{a}}\Phi'_{a}\,dm_{X'_{a}}$ and $\Phi'_{0,a}:=\Phi'_{a}-z$.   Then $\int_{X'_{a}}\Phi'_{0,a}\,dm_{X'_{a}}=0$. Applying Lemma \ref{1p}, Property (iv) of Theorem \ref{nU2}, and the definition of $\theta_{0}$ consecutively, we have that 
\begin{equation}\nonumber
	\Bigl\vert\E_{\n\in[\tilde{N}]^{D}}\bold{1}_{P}(\n)\chi_{N,u,\theta}(\n)\cdot z\Bigr\vert\leq  C\Vert \chi_{N,u,\theta}\Vert_{U^{2}(\Rr)}\leq C\theta_{0}\leq C\lambda(M^{\ast}).
\end{equation}
By Property 1, 
\begin{equation}\label{e813}
	\Bigl\vert\E_{\n\in[\tilde{N}]^{D}}\bold{1}_{P}(\n)\chi_{N,u,\theta}(\n)\cdot\Phi'_{0,a}(g'_{a}(\n)\cdot e_{X'_{a}})\Bigr\vert>2\cdot 2^{D}C\lambda(M^{\ast}).
\end{equation}

By (\ref{75}), we may write $\chi_{N,u,\theta}$ as $\chi_{N,u,\theta}=\chi_{N}\ast\psi_{N,\epsilon}$, where $\psi_{N,\epsilon}\colon\Rr\to\mathbb{R}$ is the function given by $\psi_{N,\epsilon}(\bold{0}):=\tilde{N}^{D}-\phi_{N,\e}(\bold{0})$ and $\psi_{N,\epsilon}(\n):=-\phi_{N,\e}(\n)$ for all $\n\in\Rr\backslash\{\bold{0}\}$. Since $\E_{\n\in\Rr}\phi_{N,\e}=1$, we have that $\E_{\n\in\Rr}\vert\psi_{N,\e}\vert\leq 2$. Therefore, by (\ref{e813}), there exists $\m=(m_{1},\dots,m_{D})\in\Rr$ such that 
\begin{equation}\nonumber
	\Bigl\vert\E_{\n\in[\tilde{N}]^{D}}\bold{1}_{P}(\n+\m \mod [\tilde{N}]^{D})\chi_{N}(\n)\cdot\Phi'_{0,a}(g'_{a}(\n+\m \mod [\tilde{N}]^{D})\cdot e_{X'_{a}})\Bigr\vert>2^{D}C\lambda(M^{\ast}),
\end{equation}
where the residue class $\n+\m \mod [\tilde{N}]^{D}$ is taken in $[\tilde{N}]^{D}=\{1,\dots,\tilde{N}\}^{D}$ instead of the more conventional $\{0,\dots,\tilde{N}-1\}^{D}$. Therefore, there exist $J=J_{1}\times\dots\times J_{D}\subseteq [\tilde{N}]^{D}$ and $\m'=(m_{1},\dots,m_{D})\in [\tilde{N}]^{D}$, such that for all $1\leq i\leq D$, either $J_{i}=\{1,\dots,\tilde{N}-m_{i}\}$ and $m'_{i}=m_{i}$, or $J_{i}=\{\tilde{N}-m_{i}+1,\dots,\tilde{N}\}$ and $m'_{i}=m_{i}-\tilde{N}$, and that

\begin{equation}\label{e815}
	\Bigl\vert\E_{\n\in[\tilde{N}]^{D}}\bold{1}_{P}(\n+\m')\bold{1}_{J}(\n)\bold{1}_{[N]^{D}}(\n)\chi(\n)\cdot\Phi'_{0,a}(g'_{a}(\n+\m')\cdot e_{X'_{a}})\Bigr\vert>C\lambda(M^{\ast}).
\end{equation}
Note that $\bold{1}_{P}(\n+\m')\bold{1}_{J}(\n)\bold{1}_{[N]^{D}}(\n)=\bold{1}_{P'}(\n)$ for some $D$-dimensional artihematic progression $P'\subseteq[\tilde{N}]^{D}$. Denoting $g'_{a,\m}(\n):=g'_{a}(\n+\m')\in\poly_{D}(G'_{a,\bullet})$, we deduce from (\ref{e815}) that 
\begin{equation}\label{e816}
	\Bigl\vert\E_{\n\in[\tilde{N}]^{D}}\bold{1}_{P'}(\n)\chi(\n)\cdot\Phi'_{0,a}(g'_{a,\m'}(\n)\cdot e_{X'_{a}})\Bigr\vert>C\lambda(M^{\ast}).
\end{equation}
Since $\int_{X'_{a}}\Phi'_{0,a}\,dm_{X'_{a}}=0$ and $\Vert\Phi'_{0,a}\Vert_{\Lip(\X'_{a})}=\Vert\Phi'_{a}\Vert_{\Lip(\X'_{a})}=1$, by Theorem \ref{key}, there exists a function 
$\zeta\colon\N\to\mathbb{R}_{+}$
such that if $N$ is sufficiently large, then $(g'_{a}(\n)\cdot e_{X'_{a}})_{\n\in [\tilde{N}]^{D}}$ is not totally $\zeta(M^{\ast})$-equidistributed on $X'_{a}$ with respect to $\X'_{a}$. By choosing $\zeta$ to be the function defined above (which is a function of $M^{\ast}$ depending only on $\delta,\X,\B$ and $\Omega$), we get a contradiction to Property 2. This finishes the proof.
\end{proof}

\subsection{Deducing the strong $U^{d}$ structure theorem from the weak one}

Our last step is to finish the proof of Theorem \ref{nU3s}, i.e. the strong $U^{d}$ structure theorem.
By using an iterative argument of energy increment, we can deduce that the weak $U^{d}$ structure theorem (Theorem \ref{nU3w}) implies Theorem \ref{nU3s}. As the method is identical to Section 8.10 in ~\cite{FH}, we omit the proof. 

\section{Partition regularity properties}\label{s:par}
In this section, we explain how Theorem \ref{nU3s} can be applied to deduce partition regularity properties. 

\subsection{Statement of the main result on partition regularity problems}

We start with a technical definition which captures the algebraic structure behind partition regularity problems.

\begin{defn}[Types of polynomials]\label{typep}
	Let $\K$ be an integral tuple, $r\in\N_{+}$, and $p\in\mathbb{C}[x,y;z_{1},\dots,z_{r}]$ be a \emph{homogeneous} polynomial, meaning that $$p(tx,ty;tz_{1},\dots,tz_{r})=t^{k}p(x,y;z_{1},\dots,z_{r})$$ for all $t\in\mathbb{C}$. We say that $p$ is a \emph {$K$-type} polynomial if there exist $d\in\N_{+}$ and $a_{1},\dots,a_{d}, a'_{1},\dots,a'_{d}\in \O_{K}$ satisfying (i) $a_{i}\neq a_{j}$ and $a'_{i}\neq a'_{j}$ for all $1\leq i,j\leq d, i\neq j$; and (ii) $\{a_{1},\dots,a_{d}\}\neq \{a'_{1},\dots,a'_{d}\}$,  such that for all $m,n,k\in K$, there exist $z_{1},\dots,z_{r}\in K$ such that 
	$$p\Bigl(k\prod_{i=1}^{d}(m+a_{i}n),k\prod_{i=1}^{d}(m+a'_{i}n);z_{1},\dots,z_{r}\Bigr)=0.$$
\end{defn}	

Our main result is the following:

\begin{thm}[Partition regularity result in full generality]\label{pr}
	Let $\K$ be an integral tuple, $r\in\N_{+}$, and $p\in\mathbb{C}[x,y;z_{1},\dots,z_{r}]$ be a $K$-type polynomial. Then $p$ is partition regular over $\O_{K}$ with respect to $x$ and $y$.
\end{thm}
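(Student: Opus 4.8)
The plan is to derive Theorem \ref{pr} from the strong structure theorem (Theorem \ref{nU3s}) by a now-standard ``obstruction-to-uniformity'' argument, adapted to the number-field setting. First I would set up the combinatorial reduction: given a finite coloring $\O_{K}=\bigcup_{i=1}^{m}U_{i}$, one wants to find $m,n,k\in\O_{K}$ with $n\neq 0$ such that the two quantities $x=k\prod_{i=1}^{d}(m+a_{i}n)$ and $y=k\prod_{i=1}^{d}(m+a'_{i}n)$ are distinct, nonzero, and lie in the same cell; the $K$-type hypothesis then supplies $z_{1},\dots,z_{r}$ with $p(x,y;z_{1},\dots,z_{r})=0$. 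Writing $\bold{1}_{U_{i}}$ as a bounded function on $\O_{K}$ and transporting everything to $\Rr$ via $\iota_{\B}$ and truncation, the existence of a monochromatic configuration will follow if, for some $i$, the multilinear-type average
\begin{equation}\nonumber
\E_{m,n,k}\bold{1}_{U_{i}}\Bigl(k\prod_{i=1}^{d}(m+a_{i}n)\Bigr)\bold{1}_{U_{i}}\Bigl(k\prod_{i=1}^{d}(m+a'_{i}n)\Bigr)
\end{equation}
is bounded below by a positive constant depending only on $m$ and the $a_{i},a'_{i}$, while controlling the degenerate contributions (where $n=0$, or $x=y$, or $x=0$ or $y=0$) which are negligible by Lemma \ref{normi} together with the fact that $\{a_{i}\}\neq\{a'_{i}\}$ forces $x=y$ on a subvariety of positive codimension.

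Next I would pass to multiplicative functions: the standard trick is to observe that $\bold{1}_{U_{i}}$, restricted to a suitable dilated sub-semigroup, can be compared to (or dominated by) a bounded multiplicative function, or more precisely that the ``pretentious'' structure of the indicator of a cell is captured by an almost periodic function; this is where Theorem \ref{nU3s} enters. Applying the $U^{d}$-decomposition $\chi_{N}=\chi_{N,s}+\chi_{N,u}+\chi_{N,e}$ with $d$ chosen large enough (depending on $d$ in the sense of Definition \ref{typep}, i.e.\ on the degree of the polynomial $\prod_{i=1}^{d}(m+a_{i}n)$ in the new variables) to each relevant multiplicative function, I would estimate the multilinear average in three pieces. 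The uniform part $\chi_{N,u}$ contributes negligibly: the average above, after a change of variables making the map $(m,n,k)\mapsto (x,y)$ a polynomial map, is controlled by a Gowers norm of appropriate degree (one invokes a generalized von Neumann / Cauchy–Schwarz estimate, exactly as in \cite{FH,S}, now over $\Rr$), and $\Vert\chi_{N,u}\Vert_{U^{d}(\Rr)}$ is made as small as we like by choosing the function $F$ in Theorem \ref{nU3s} appropriately. The error part $\chi_{N,e}$ contributes negligibly in the relevant $L^{1}$-averaged sense by property (v) of Theorem \ref{nU3s}, averaging over a probability measure $\nu$ on $\mathcal{M}_{K}$ (chosen to be, say, an ergodic average or a measure concentrated on the relevant functions). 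The structured part $\chi_{N,s}$ is $(Q,R/N)$-slowly varying, hence essentially constant on progressions of step $Q$ and length $\asymp N/R$; on such a ``box'' the configuration $(m,n,k)$ with $n$ ranging over $Q\Z$ makes all of $m+a_{i}n$, $m+a'_{i}n$, and their products lie in controlled residue classes, so the structured average is bounded below by a positive constant.

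The remaining—and this is the heart—is the positivity of the structured main term: I would show that for a suitable choice of the cell $U_{i}$ (a pigeonhole over the $m$ colors, applied to a point whose every ``shifted'' copy $m+a_{i}n$ is forced into the same cell for $n$ in an arithmetic progression), the quantity
\begin{equation}\nonumber
\E_{m,n,k}\ \chi_{N,s}\Bigl(\iota_{\B}^{-1}\bigl(k{\textstyle\prod}(m+a_{i}n)\bigr)\Bigr)\ \overline{\chi_{N,s}}\Bigl(\iota_{\B}^{-1}\bigl(k{\textstyle\prod}(m+a'_{i}n)\bigr)\Bigr)
\end{equation}
is at least a fixed positive constant; this uses the slowly-varying property of $\chi_{N,s}$ together with the elementary observation that $\prod(m+a_{i}n)$ and $\prod(m+a'_{i}n)$ agree modulo $Q$ for $n\equiv 0\ (\mathrm{mod}\ Q)$, so $\chi_{N,s}$ takes (nearly) the same value on both. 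The main obstacle I anticipate is precisely this last positivity step in the number-field setting: one must carefully choose the auxiliary parameters ($Q$, the lengths of the progressions in each of the $D$ coordinate directions, and the multiplicative function(s) to which the decomposition is applied) so that both the ``non-degeneracy'' of the configuration (distinctness and nonvanishing of $x,y$, which requires the $C$-regularity machinery of Section \ref{s:2} to control how $\iota_{\B}$ interacts with multiplication) and the lower bound on the structured average hold simultaneously; the $K$-type condition $\{a_i\}\neq\{a'_i\}$ is what guarantees the configuration is genuinely two-dimensional and that $x\neq y$ generically, and verifying that this survives truncation to $\Rr$ and the passage through all three parts of the decomposition is where the technical weight lies. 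Once this is in place, combining the three estimates gives a positive lower bound on the full average for some $i$, hence a monochromatic solution, completing the proof of Theorem \ref{pr} and therefore of Theorems \ref{pr3} and \ref{45} as special cases via explicit choices of $a_{i},a'_{i}$.
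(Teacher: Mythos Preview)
Your proposal has a genuine gap at the step ``pass to multiplicative functions.'' Theorem \ref{nU3s} is a structure theorem \emph{for multiplicative functions}; it does not apply to an arbitrary bounded function such as $\bold{1}_{U_{i}}$, and the sentence ``$\bold{1}_{U_{i}}$, restricted to a suitable dilated sub-semigroup, can be compared to (or dominated by) a bounded multiplicative function'' does not describe a valid reduction. The free dilation parameter $k$ is precisely what makes the problem multiplicative, but you never explain how averaging over $k$ converts the indicator of a color class into an expression involving genuine multiplicative functions to which Theorem \ref{nU3s} can be applied.

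The paper's route fills exactly this gap. One works with \emph{multiplicative} (upper) density with respect to a multiplicative F{\o}lner sequence on $\OO$, and applies a multiplicative Furstenberg correspondence principle (Theorem \ref{FC}) to obtain an action by dilation $(T_{n})_{n\in\OO}$ and a set $A$ with $\mu(A)=d_{\mult}(E)>0$. The problem then reduces to showing $\mu(T_{\alpha}^{-1}A\cap T_{\beta}^{-1}A)>0$ for some $\alpha=\prod(m+a_{i}n)$, $\beta=\prod(m+a'_{i}n)$; at this point the dilation variable $k$ has already been absorbed. By the spectral theorem this correlation equals $\int_{\mathcal{M}_{K}^{c}}\chi(\alpha)\overline{\chi}(\beta)\,d\nu(\chi)$ for the spectral measure $\nu$ of $\bold{1}_{A}$, and \emph{now} the integrand is built from completely multiplicative functions $\chi$, so Theorem \ref{nU3s} legitimately applies to each $\chi_{N}$ (with $\nu$ playing the role of the probability measure in property (v)). The positivity of the structured main term is not obtained from a ``$\chi_{N,s}$ is nearly constant so the two arguments agree'' argument as you suggest, but from Lemma \ref{37}: the spectral measure satisfies $\nu(\{\bold{1}\})>0$ and $\int\chi(m)\overline{\chi}(n)\,d\nu\geq 0$, which is what drives the lower bound (via a generalization of Lemma 10.6 of \cite{FH}). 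Your outline of the uniform and error estimates is broadly correct in spirit, but without the multiplicative correspondence and spectral step you have no multiplicative function to decompose, and the argument does not get off the ground.
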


\begin{rem}
	Although Theorem \ref{pr} already covers many classes of equations,
	there are three important restrictions. The first is that the number of variables taking values in $U_{i}$ (i.e. $x$ and $y$) equals to $2$. The second is that the polynomial $p$ is homogeneous. The third is that we require $p(x,y;z_{1},\dots,z_{r})=0$ to have a parametrized solution of the form $x=k\prod_{i=1}^{d}(m+a_{i}n)$ and $y=k\prod_{i=1}^{d}(m+a'_{i}n)$. 
\end{rem}

We start with explaining the applications of Theorem \ref{pr}, and differ its proof to the end of the section.

\subsection{Applications of Theorem \ref{pr} to partition regularity problems}\label{s:p2}

We first provide a criteria for partition regularity properties for  quadratic equations. 
\begin{prop}[Partition regularity for quadratic equations]\label{pr2}
	Let $p$ be a quadratic equation of the form
	\begin{equation}\nonumber
		\begin{split}
			p(x,y;z)=ax^{2}+by^{2}+cz^{2}+dxy+exz+fyz
		\end{split}
	\end{equation}
	for some $a,b,c,d,e,f\in\Z$.
	Denote 
	$$\Delta_{1}(p):=\sqrt{e^{2}-4ac}, \Delta_{2}(p):=\sqrt{f^{2}-4bc}, \Delta_{3}(p):=\sqrt{(e+f)^{2}-4c(a+b+d)}.$$
	
	Suppose that $c,\Delta^{2}_{1}(p),\Delta^{2}_{2}(p)\neq 0$, and at lease one of $\Delta^{2}_{3}(p)$ and $\Delta^{2}_{1}(p)-\Delta^{2}_{2}(p)$ is non-zero, then
 $p$ is a $K$-type polynomial for $K:=\mathbb{Q}(\Delta_{1}(p),\Delta_{2}(p),\Delta_{3}(p)).$ In particular, $p$ is partition regular over $\O_{K}$ with respect to $x$ and $y$ by Theorem \ref{pr}. 
%
\end{prop}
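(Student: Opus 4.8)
\textbf{Proof plan for Proposition \ref{pr2}.} The strategy is to exhibit the parametrized solution required by Definition \ref{typep} with $d=2$, and then verify the two non-degeneracy conditions (i) and (ii) on the roots. First I would complete the square in the variable $z$ to dispose of the cross terms $exz$ and $fyz$. Concretely, treating $p(x,y;z)=0$ as a quadratic in $z$ with leading coefficient $c\neq 0$, its discriminant (in $z$) is a quadratic form $q(x,y)=(ex+fy)^2-4c(ax^2+by^2+dxy)$ in $x,y$; solving for $z$ shows that $p(x,y;z)=0$ has a solution $z\in K$ precisely when $q(x,y)$ is a square in $K$. So it suffices to find $x=k(m+a_1n)(m+a_2n)$, $y=k(m+a'_1n)(m+a'_2n)$ with $q(x,y)$ a perfect square in $K[m,n]$, and this reduces the problem to a factorization question about the binary quadratic form $q$.

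The key computation is to diagonalize $q$. Writing $q(x,y)=-4c\bigl(ax^2+dxy+by^2\bigr)+(ex+fy)^2$ and factoring over $\overline{\mathbb{Q}}$, one checks that $q(x,y)=c_0\,(x-\alpha y)(x-\beta y)$ for suitable $\alpha,\beta$ expressible through $\Delta_1(p),\Delta_2(p),\Delta_3(p)$: indeed $q(1,0)=\Delta_1^2(p)$, $q(0,1)=\Delta_2^2(p)$, and $q(1,1)=\Delta_3^2(p)$, so the values of $q$ at three points are all squares in $K=\mathbb{Q}(\Delta_1(p),\Delta_2(p),\Delta_3(p))$, hence $q$ is equivalent over $K$ to a form whose coefficients are squares. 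The cleanest route is: the form $q$ in the variables $x,y$, after the invertible $\mathbb{Q}$-linear change recording the three sample values, becomes (a $K$-square multiple of) a form of the shape $\Delta_1^2(p)\,u^2 + (\text{something})\,uv + \Delta_2^2(p)\,v^2$ whose own discriminant is controlled by $\Delta_3^2(p)$; then setting $x,y$ to be the two products $k(m+a_in)$, $k(m+a'_in)$ with $a_i,a'_i$ chosen as the appropriate ratios $\pm\Delta_j(p)$ makes $q(x,y)=k^2(\cdots)^2$ a square. I would carry this out by first handling the case $\Delta_3^2(p)\neq 0$ and then the case $\Delta_1^2(p)-\Delta_2^2(p)\neq 0$, since one of these is assumed nonzero; in each case the nonzero quantity is exactly what guarantees the two families of roots $\{a_1,a_2\}$ and $\{a'_1,a'_2\}$ are distinct sets, i.e. condition (ii) of Definition \ref{typep}, while condition (i) ($a_1\neq a_2$ and $a'_1\neq a'_2$) follows from $\Delta_1^2(p),\Delta_2^2(p)\neq 0$, which prevent the relevant discriminants from vanishing.

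The main obstacle I anticipate is bookkeeping: producing an explicit, clean parametrization $x=k\prod(m+a_in)$, $y=k\prod(m+a'_in)$ for which $q(x,y)$ is manifestly a square, and simultaneously tracking which of the hypotheses $\Delta_1^2(p)\neq 0$, $\Delta_2^2(p)\neq 0$, $\Delta_3^2(p)\neq 0$, $\Delta_1^2(p)\neq\Delta_2^2(p)$ is being used to ensure each of (i) and (ii). A subtlety is that $a_i,a'_i$ must lie in $\O_K$, not merely in $K$; since the definition of $K$-type polynomial only requires $a_i,a'_i\in\O_K$ and allows the auxiliary $z_j$ to range over $K$, I would clear denominators by scaling the parametrization (replacing $m,n$ by $Nm,Nn$ for a suitable integer $N$, absorbing the scaling into the homogeneity degree $k$), which is harmless because $p$ is homogeneous. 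Once the parametrization is in hand, the verification that $p$ vanishes on it is the square-root extraction above, and partition regularity over $\O_K$ then follows immediately from Theorem \ref{pr}.
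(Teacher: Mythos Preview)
Your plan is correct and matches the paper's proof: both complete the square in $z$ to reduce the problem to making the binary form $q(x,y)=\Delta_1^2(p)\,x^2+(\Delta_3^2(p)-\Delta_1^2(p)-\Delta_2^2(p))\,xy+\Delta_2^2(p)\,y^2$ a perfect square, then split into two cases and verify conditions (i) and (ii) of Definition~\ref{typep} from the stated hypotheses exactly as you describe. One small correction: your guess $a_i,a'_i=\pm\Delta_j(p)$ works only in the case $\Delta_3^2(p)=0$ (where the paper takes $\{a_1,a_2\}=\{\pm\Delta_2(p)\}$, $\{a'_1,a'_2\}=\{\pm\Delta_1(p)\}$); in the case $\Delta_3^2(p)\neq 0$ the paper uses the more elaborate choice $a_1,a_2=c\bigl(\Delta_2^2(p)\pm\Delta_2(p)\Delta_3(p)\bigr)$ and $a'_1,a'_2=c\bigl(\Delta_2^2(p)-\Delta_3^2(p)\pm\Delta_1(p)\Delta_3(p)\bigr)$ (following Appendix~C of \cite{FH}), and condition (ii) then follows from $a_1+a_2\neq a'_1+a'_2$, which is precisely $\Delta_3^2(p)\neq 0$. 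Your denominator-clearing step is not needed: since each $\Delta_i^2(p)\in\Z$, the $\Delta_i(p)$ are already algebraic integers, so all the $a_i,a'_i$ above lie in $\O_K$.
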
	

It is not hard to see that Proposition \ref{pr2} implies Theorem \ref{pr3}.

\begin{rem}
	The quadratic equations which are not covered by Proposition \ref{pr2} are the following: (i) $c=0$; (ii) $c\neq 0$, one of $\Delta^{2}_{1}(p),\Delta^{2}_{2}(p),\Delta^{2}_{3}(p)$ equals to 0 and the other two are equal; (iii) $c\neq 0$, one of $\Delta^{2}_{1}(p)$ and $\Delta^{2}_{2}(p)$ equals to 0, and the other one is not equal to $\Delta^{2}_{3}(p)$. Here are some examples:
	\begin{itemize}
		\item type (i): $xy+yz+xz$, $x^{2}+y^{2}+exz+fyz$ ($e,f,e+f\neq 0$);
		\item type (ii): $x^{2}+y^{2}+cz^{2}-2xy$ ($c\neq 0$);
		\item type (iii): $x^{2}+by^{2}+z^{2}+2xz$ ($b\neq 0$).
	\end{itemize}	
	
	The equations $p$ in these three cases are ``degenerate" in one way or another, which are completely different from the case discussed in Proposition \ref{pr2}. For Case (ii), it is not hard to show that  for every algebraic number field $K$, $p$ is partition regular over $\O_{K}$ with respect to $x$ and $y$ if and only if $\mathbb{Q}(\Delta_{1}(p),\Delta_{2}(p),\Delta_{3}(p))\subseteq K$. It is an interesting question to ask whether in Cases (i) and (iii) $p$ is partition regular over $\Z$ with respect to $x$ and $y$. But this is beyond the theme of this paper.
\end{rem}	

\begin{proof}[Proof of Proposition \ref{pr2}]
	Let
	\begin{equation}\nonumber
		\begin{split}
			p'(x,y;z):=p(2cx,2cy,z-ex-fy)
			=-c\Bigl(\Delta^{2}_{1}(p)x^{2}+\Delta^{2}_{2}(p)y^{2}-z^{2}+(\Delta^{2}_{3}(p)-\Delta^{2}_{1}(p)-\Delta^{2}_{2}(p))xy\Bigr).			
		\end{split}	
	\end{equation}

	(i) Suppose first that $\Delta^{2}_{3}(p)=0$ and $\Delta^{2}_{1}(p)\neq \Delta^{2}_{2}(p)$.
	By a direct computation, for all $k,m,n\in K$, we have that $p'(x',y';z')=0$ for 
	$$x':=k(m-\Delta_{2}(p)n)(m+\Delta_{2}(p)n);$$
	$$y':=k(m-\Delta_{1}(p)n)(m+\Delta_{1}(p)n);$$
	$$z':=\pm k(\Delta^{2}_{1}(p)-\Delta^{2}_{2}(p))mn.$$
	Let $a_{1}=-\Delta_{2}(p)$, $a_{2}=\Delta_{2}(p)$, $a'_{1}=-\Delta_{1}(p)$ and $a'_{2}=\Delta_{1}(p)$. We have that there exists $z\in K$ such that $$p(2kc(m+a_{1}n)(m+a_{2}n),2kc(m+a'_{1}n)(m+a'_{2}n);z)=0.$$
	Since 	 $\Delta^{2}_{1}(p),\Delta^{2}_{2}(p)\neq 0$, we have that $a_{1}\neq a_{2}$ and $a'_{1}\neq a'_{2}$. If $\{a_{1},a_{2}\}=\{a'_{1},a'_{2}\}$, then $\Delta^{2}_{1}(p)=\Delta^{2}_{2}(p)$, 
	a contradiction. This implies that $\{a_{1},a_{2}\}\neq\{a'_{1},a'_{2}\}$, and so $p$ is a $K$-type polynomial.

	\
	
	(ii) We now assume that $\Delta^{2}_{3}(p)\neq 0$.
	This case is similar to Appendix C of \cite{FH}.
	By a direct computation, for all $k,m,n\in K$, we have that $p'(x',y';z')=0$ for 
	$$x':=k\Bigl(m+c(\Delta^{2}_{2}(p)+\Delta_{2}(p)\Delta_{3}(p))n\Bigr)\Bigl(m+c(\Delta^{2}_{2}(p)-\Delta_{2}(p)\Delta_{3}(p)))n\Bigr);$$
	$$y':=k\Bigl(m+c(\Delta^{2}_{2}(p)-\Delta^{2}_{3}(p)+\Delta_{1}(p)\Delta_{3}(p))n\Bigr)\Bigl(m+c(\Delta^{2}_{2}(p)-\Delta^{2}_{3}(p)-\Delta_{1}(p)\Delta_{3}(p))n\Bigr);$$
	$$z':=\pm k\Delta_{3}(p)\Bigl(m^{2}+c(\Delta^{2}_{1}(p)+\Delta^{2}_{2}(p)-\Delta^{2}_{3}(p))mn+c^{2}\Delta^{2}_{1}(p)\Delta^{2}_{2}(p)n^{2}\Bigr).$$
	Let $a_{1}=c(\Delta^{2}_{2}(p)+\Delta_{2}(p)\Delta_{3}(p))$, $a_{2}=c(\Delta^{2}_{2}(p)-\Delta_{2}(p)\Delta_{3}(p))$, $a'_{1}=c(\Delta^{2}_{2}(p)-\Delta^{2}_{3}(p)+\Delta_{1}(p)\Delta_{3}(p))$ and $a'_{2}=c(\Delta^{2}_{2}(p)-\Delta^{2}_{3}(p)-\Delta_{1}(p)\Delta_{3}(p))$.
	Since $\Delta_{1}(p), \Delta_{2}(p), \Delta_{3}(p)\in K$, we have that $a_{1},a_{2},a'_{1},a'_{2}\in K$ and $x',y',z'\in K$.
	So for all $k,m,n\in K$, there exists $z\in K$ such that $$p(2kc(m+a_{1}n)(m+a_{2}n),2kc(m+a'_{1}n)(m+a'_{2}n);z)=0.$$
	Since 	 $\Delta^{2}_{1}(p),\Delta^{2}_{2}(p),\Delta^{2}_{3}(p),c\neq 0$, we have that $a_{1}\neq a_{2}$ and $a'_{1}\neq a'_{2}$. If $\{a_{1},a_{2}\}=\{a'_{1},a'_{2}\}$, then $a_{1}+a_{2}=a'_{1}+a'_{2}$ and so $\Delta^{2}_{3}(p)=0$, a contradiction. This implies that $\{a_{1},a_{2}\}\neq\{a'_{1},a'_{2}\}$, and so $p$ is a $K$-type polynomial.
\end{proof}

Another application of Theorem \ref{pr} is to prove Corollary \ref{45}.

\begin{proof}[Proof of Corollary \ref{45}]
	By using an identity of G\'erardin:
	$$(m^{2}-n^{2})^{4}+(2mn+m^{2})^{4}+(2mn+n^{2})^{4}=2(m^{2}+mn+n^{2})^{4},$$
	we have
	that the polynomial $p(x_{1},x_{2};z_{1},z_{2}):=x^{2}_{1}-2x^{2}_{2}+z^{2}_{1}-z^{2}_{2}$ is of $\mathbb{Q}(\sqrt{-3})$-type (by setting $x_{1}=k(m+n)(m-n)$ and $x_{2}=k(m+\frac{-1+\sqrt{-3}}{2}n)(m+\frac{-1-\sqrt{-3}}{2}n)$). Since $\Z[\frac{1+\sqrt{-3}}{2}]$ is the ring of integers of $\mathbb{Q}(\sqrt{-3})$, Theorem \ref{pr} implies that $p$ is partition regular over  $\Z[\frac{1+\sqrt{-3}}{2}]$ with respect to $x_{1}$ and $x_{2}$.
\end{proof}

\subsection{Multiplicative measure preserving systems}
To study Theorem \ref{pr}, we introduce the multiplicative density of a subset of a number field.
Let $\K$ be an integral tuple and $I_{1},I_{2},\dots$ be an enumeration of prime ideals in $\O_{K}$ (the number of primes ideals are countable) such that $N(I_{1})\leq N(I_{2})\leq \dots$. Let $(\Phi_{N})_{N\in\mathbb{N}}$ be the sequence of finite subsets of $\O_{K}$ defined by
$$\Phi_{N}:=\{n\in\O_{K}\colon (n)\supseteq (I_{1}I_{2}\dots I_{N})^{N}\}.$$
Then $(\Phi_{N})_{N\in\mathbb{N}}$ is a \emph{multiplicative F${\o}$lner sequence} on  $\O_{K}$, meaning that for all $a\in \OO$,\footnote{$\OO:=\mathcal{O}_{K}\backslash\{0\}$.}	
\begin{equation}\nonumber
	\begin{split}
		\lim_{N\rightarrow\infty}\frac{\vert a^{-1}\Phi_{N}\triangle\Phi_{N}\vert}{\vert \Phi_{N}\vert}=0,
	\end{split}
\end{equation}
where $a^{-1}\Phi_{N}:=\{a^{-1} x\in\O_{K}\colon x\in\Phi_{N}\}=\{y\in\O_{K}\colon ay\in\Phi_{N}\}$.

\begin{defn}[Multiplicative density]
	Let $\K$ be an integral tuple.
	The \emph{(upper) multiplicative density} of a subset $E$ of $\O_{K}$ (with respect to the multiplicative F${\o}$lner sequence $(\Phi_{N})_{N\in\mathbb{N}}$) is defined to be
	\begin{equation}\nonumber
		\begin{split}
			d_{\mult,K}(E):=\limsup_{N\rightarrow\infty}\frac{\vert E\cap\Phi_{N}\vert}{\vert \Phi_{N}\vert}.
		\end{split}
	\end{equation}
\end{defn}
When there is no confusion, we write $d_{\mult}(E):=d_{\mult,K}(E)$ for short.
Since $(\Phi_{N})_{N\in\mathbb{N}}$ is a multiplicative F${\o}$lner sequence, for all $E\subseteq\O_{K}$ and $a\in \OO$,
$$d_{\mult}(E)=d_{\mult}(a^{-1}E).$$

 To show Theorem \ref{pr}, our strategy is to convert the question to a recurrence problem on a special type of dynamical systems:

\begin{defn}[Action by dilation]
	Let $\K$ be an integral tuple.
	An \emph{action by dilation} over $\bold{K}$ on a probability space $(X,\mathcal{D},\mu)$ is a family $(T_{n})_{n\in \OO}$ of invertible measure preserving transformations of $(X,\mathcal{D},\mu)$ that satisfy $T_{1}=id$ and $T_{m}\circ T_{n}=T_{mn}$ for all $m,n\in \OO$.
\end{defn}

Note that an action by dilation $(T_{n})_{n\in \OO}$ can be extended to a measure preserving action $(T_{n})_{n\in K^{\ast}}$ by defining $T_{m/n}=T_{m}\circ T_{n}^{-1}$ for all $m,n\in\OO$. We remark that $(T_{n})_{n\in K^{\ast}}$ is well defined even though $\O_{K}$ may not be a unique factorization domain. In fact, 
let $m/n=m'/n'$ for some $m,n,m',n'\in\OO$. Then 
\begin{equation}\label{wd}
	T_{m}\circ T_{n'}=T_{mn'}=T_{m'n}=T_{m'}\circ T_{n},
\end{equation}	
and so $T_{m}\circ T_{n}^{-1}=T_{m'}\circ T_{n'}^{-1}.$
Since $\OO$ with multiplication is a discrete amenable semi-group, we have the Furstenberg correspondence principle (see for example  Theorem 2.1 of ~\cite{BM2} and Theorem 6.4.17 of ~\cite{B3}):

\begin{thm}[Furstenberg correspondence principle] \label{FC}
	Let $\K$ be an integral tuple and
	$E$ be a subset of $\O_{K}$. Then there exist an action by dilation $(T_{n})_{n\in \OO}$ on a probability space $(X,\mathcal{D},\mu)$ and a set $A\in\mathcal{D}$ with $\mu(A)=d_{\mult}(E)$ such that for every $k\in\mathbb{N}_{+}$ and $n_{1},\dots,n_{k}\in\O_{K}$, we have
	\begin{equation}\nonumber
		\begin{split}
			d_{mult}(n_{1}^{-1}E\cap\dots\cap n_{k}^{-1}E)\geq\mu(T_{n_{1}}^{-1}A\cap\dots\cap T_{n_{k}}^{-1}A).
		\end{split}
	\end{equation}
\end{thm}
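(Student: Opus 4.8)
\textbf{Proof plan for the Furstenberg correspondence principle (Theorem \ref{FC}).}
The plan is to imitate the classical Furstenberg correspondence principle for amenable semigroups, adapted to the multiplicative semigroup $(\OO,\cdot)$ and the F\o lner sequence $(\Phi_{N})_{N\in\N}$. First I would set up the compact space $X:=\{0,1\}^{\OO}$ with the product topology, equipped with the multiplicative shift action: for $n\in\OO$, let $T_{n}\colon X\to X$ be the map $(T_{n}\omega)(m):=\omega(mn)$ for all $m\in\OO$. Each $T_{n}$ is continuous, $T_{1}=\mathrm{id}$, and $T_{m}\circ T_{n}=T_{mn}$, so $(T_{n})_{n\in\OO}$ is an action by dilation in the topological category; the measure-preserving property will come once we produce the invariant measure. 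Let $x_{E}\in X$ be the indicator point $x_{E}(n):=\bold{1}_{E}(n)$, and let $A_{0}:=\{\omega\in X\colon \omega(1)=1\}$, a clopen set; note $T_{n}^{-1}A_{0}=\{\omega\colon \omega(n)=1\}$, so that $\bold{1}_{A_{0}}(T_{n}x_{E})=\bold{1}_{E}(n)$ and more generally $x_{E}\in T_{n_{1}}^{-1}A_{0}\cap\dots\cap T_{n_{k}}^{-1}A_{0}$ iff $n_{1},\dots,n_{k}\in E$. Equivalently, $\bold{1}_{A_{0}}(T_{n_{1}}^{-1}\cdots)$ picks out $n_{1}^{-1}E\cap\dots\cap n_{k}^{-1}E$ after translating by the ``base point'' variable.

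Next I would extract the invariant measure by an averaging/weak-$*$ limit argument along $(\Phi_{N})$. Choose a subsequence $(N_{j})$ realizing the $\limsup$ in the definition of $d_{\mult}(E)$, i.e. $\vert E\cap\Phi_{N_{j}}\vert/\vert\Phi_{N_{j}}\vert\to d_{\mult}(E)$. Consider the averaged measures $\mu_{j}:=\frac{1}{\vert\Phi_{N_{j}}\vert}\sum_{m\in\Phi_{N_{j}}}\delta_{T_{m}x_{E}}$ on $X$. By compactness of the space of Borel probability measures on the compact metrizable $X$, pass to a further subsequence so that $\mu_{j}\to\mu$ weak-$*$. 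The F\o lner property $\lim_{N\to\infty}\vert a^{-1}\Phi_{N}\triangle\Phi_{N}\vert/\vert\Phi_{N}\vert=0$ for each fixed $a\in\OO$ guarantees, by the usual telescoping estimate on cylinder functions, that $\mu$ is $T_{a}$-invariant for every $a\in\OO$; hence $(X,\mathcal{D},\mu,(T_{n})_{n\in\OO})$ is an action by dilation on a probability space (extend to $(T_{n})_{n\in K^{\ast}}$ by $T_{m/n}:=T_{m}\circ T_{n}^{-1}$, which is well defined by the computation \eqref{wd}). Take $A:=A_{0}$. Since $A_{0}$ is clopen, $\bold{1}_{A_{0}}$ is continuous, so $\mu(A)=\lim_{j}\mu_{j}(A_{0})=\lim_{j}\frac{1}{\vert\Phi_{N_{j}}\vert}\sum_{m\in\Phi_{N_{j}}}\bold{1}_{A_{0}}(T_{m}x_{E})=\lim_{j}\frac{\vert E\cap\Phi_{N_{j}}\vert}{\vert\Phi_{N_{j}}\vert}=d_{\mult}(E)$.

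For the recurrence inequality, fix $k\in\N_{+}$ and $n_{1},\dots,n_{k}\in\O_{K}$; I only need the case $n_{i}\in\OO$ (if some $n_{i}=0$ the statement is vacuous since $0^{-1}E$ is empty or all of $\O_{K}$ by convention, and one handles that degenerate case separately or excludes it). The set $B:=T_{n_{1}}^{-1}A_{0}\cap\dots\cap T_{n_{k}}^{-1}A_{0}$ is clopen, so $\mu(B)=\lim_{j}\mu_{j}(B)=\lim_{j}\frac{1}{\vert\Phi_{N_{j}}\vert}\sum_{m\in\Phi_{N_{j}}}\bold{1}_{B}(T_{m}x_{E})$. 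Now $\bold{1}_{B}(T_{m}x_{E})=1$ iff $x_{E}(mn_{i})=1$ for all $1\le i\le k$, i.e. iff $m\in n_{i}^{-1}E$ for all $i$ (intersected with $m\in\O_{K}$), so the sum counts $\vert n_{1}^{-1}E\cap\dots\cap n_{k}^{-1}E\cap\Phi_{N_{j}}\vert$. Therefore $\mu(B)\le\limsup_{N\to\infty}\frac{\vert n_{1}^{-1}E\cap\dots\cap n_{k}^{-1}E\cap\Phi_{N}\vert}{\vert\Phi_{N}\vert}=d_{\mult}(n_{1}^{-1}E\cap\dots\cap n_{k}^{-1}E)$, which is exactly the claimed inequality since $B=T_{n_{1}}^{-1}A\cap\dots\cap T_{n_{k}}^{-1}A$. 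The main technical point to get right is the verification that the weak-$*$ limit $\mu$ is genuinely invariant under \emph{all} $T_{a}$, $a\in\OO$ simultaneously — this rests on the multiplicative F\o lner property and a diagonal argument over a countable dense family of cylinder functions; everything else is bookkeeping. (Alternatively, one may simply cite Theorem 2.1 of \cite{BM2} or Theorem 6.4.17 of \cite{B3} verbatim, as the statement already does.)
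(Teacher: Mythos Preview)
Your proof is correct and is exactly the standard construction that the paper defers to the cited references \cite{BM2,B3}; the paper provides no argument of its own here. One small point: on $X=\{0,1\}^{\OO}$ the shifts $T_{n}$ are not invertible when $n$ is a non-unit, so to meet the paper's definition of an action by dilation you should take $X=\{0,1\}^{K^{\ast}}$ (extending $x_{E}$ by $0$ off $\O_{K}$); everything else goes through verbatim, and no diagonal argument is actually needed for invariance --- a single weak-$*$ limit $\mu$ already suffices, since the F\o lner estimate for each fixed $a\in\OO$ can be tested against continuous cylinder functions one $a$ at a time.
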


Let $\mathcal{M}_{K}^{c}$ denote the collection all \emph{completely multiplicative functions}  $\chi\colon \O_{K}\to \mathbb{C}$ with modulus equals to 1, meaning that $\chi(mn)=\chi(m)\chi(n)$ for all $m,n\in\OO$, and that $\vert\chi\vert\equiv 1$. Every $\chi\in \mathcal{M}_{K}^{c}$ can be extended to a multiplicative function on $K^{\ast}$ by setting
$$\chi(m/n):=\chi(m)\overline{\chi(n)}$$
for all $m,n\in\OO$. $\chi(m/n)$ is well defined by a reason similar to (\ref{wd}). 
Endowing $\mathcal{M}_{K}^{c}$ with the pointwise multiplication and the topology of pointwise convergence, $\mathcal{M}_{K}^{c}$ is a compact Abelian group with the constant function $\bold{1}$ being the unit element. Moreover, $\mathcal{M}_{K}^{c}$ is the dual group of $K^{\ast}$.

Let  $(X,\mathcal{D},\mu)$ be a probability space with an action by dilation $(T_{n})_{n\in \OO}$. For every $f\in L^{2}(\mu)$, by the spectral theorem, there exists a positive finite measure $\nu$ (called the \emph{spectral measure  of $f$}) on the dual group $\mathcal{M}_{K}^{c}$ of
$K^{\ast}$ such that for all $m,n\in\O_{K}^{\ast}$,
\begin{equation}\label{spec}
	\begin{split}
		\int_{X}{T_{m}f\cdot T_{n}\overline{f}}d\mu=\int_{X}{T_{m/n}f\cdot \overline{f}}d\mu=\int_{\mathcal{M}_{K}^{c}}{\chi(m/n)}d\nu(\chi)
		=\int_{\mathcal{M}_{K}^{c}}{\chi(m)\overline{\chi}(n)}d\nu(\chi).
	\end{split}
\end{equation}

The following lemma can be deduced by the same argument on pages 64--65 of ~\cite{FH}:
\begin{lem}[Positivity properties for spectrum measures]\label{37}
	Let $\K$ be an integral tuple and
	$(X,\mathcal{D},\mu)$ be a probability space with an action by dilation $(T_{n})_{n\in \OO}$. 
	Let $A\in\mathcal{D}$ with $\mu(A)>0$ and $\nu$ be the spectral measure of the function $\bold{1}_{A}$. Then
	\begin{equation}\label{nu}
		\nu(\{\bold{1}\})>0 \text{ and } \int_{\mathcal{M}_{K}^{c}}\chi(m)\overline{\chi}(n)d\nu(\chi)\geq 0 \text{ for all } m,n\in \OO.
	\end{equation}	
\end{lem}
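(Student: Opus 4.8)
\textbf{Proof proposal for Lemma \ref{37}.}

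The plan is to deduce both assertions of \eqref{nu} from the spectral identity \eqref{spec} applied to the function $f=\bold{1}_{A}$, exactly as in the treatment of the analogous statement in \cite{FH}. First I would record that for every $m,n\in\OO$,
$$\int_{\mathcal{M}_{K}^{c}}\chi(m)\overline{\chi}(n)\,d\nu(\chi)=\int_{X}T_{m}\bold{1}_{A}\cdot T_{n}\bold{1}_{A}\,d\mu=\mu(T_{m}^{-1}A\cap T_{n}^{-1}A)\geq 0,$$
using that $(T_{n})$ preserves $\mu$, that $T_{m}\bold{1}_{A}=\bold{1}_{T_{m}^{-1}A}$ is a non-negative function, and that the spectral measure of $\bold{1}_{A}$ computes these correlations as Fourier--Stieltjes coefficients over the dual group $\mathcal{M}_{K}^{c}$ of $K^{\ast}$. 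This gives the second part of \eqref{nu} directly.

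For the positivity $\nu(\{\bold{1}\})>0$, the idea is to average the above correlations over a multiplicative F\o lner sequence and extract the contribution of the trivial character. Concretely, consider the averages $\E_{n\in\Phi_{N}}T_{n}\bold{1}_{A}$ in $L^{2}(\mu)$; since $\OO$ with multiplication is a discrete amenable semigroup and $(\Phi_{N})$ is a multiplicative F\o lner sequence (as verified just before Definition~\ref{typep}... here in this section), the von Neumann-type mean ergodic theorem for the unitary action $(T_{n})_{n\in K^{\ast}}$ yields that $\E_{n\in\Phi_{N}}T_{n}\bold{1}_{A}$ converges in $L^{2}(\mu)$ to the orthogonal projection $P\bold{1}_{A}$ of $\bold{1}_{A}$ onto the space of $(T_{n})$-invariant functions. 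On the spectral side, $\langle P\bold{1}_{A},\bold{1}_{A}\rangle=\nu(\{\bold{1}\})$, because the projection onto invariant vectors corresponds to restricting the spectral measure to the trivial character $\bold{1}$ (the only character of $K^{\ast}$ that is fixed, i.e. identically $1$). On the other hand,
$$\langle P\bold{1}_{A},\bold{1}_{A}\rangle=\lim_{N\to\infty}\E_{n\in\Phi_{N}}\mu(T_{n}^{-1}A\cap A)\geq \lim_{N\to\infty}\E_{n\in\Phi_{N}}\bigl(\mu(A)\bigr)^{?}$$
— more carefully, since $\bold{1}_{A}\geq 0$ with $\int\bold{1}_{A}\,d\mu=\mu(A)>0$, its projection $P\bold{1}_{A}$ onto the invariant functions also has integral $\mu(A)>0$, hence $P\bold{1}_{A}$ is not the zero function, and by Cauchy--Schwarz $\langle P\bold{1}_{A},\bold{1}_{A}\rangle=\langle P\bold{1}_{A},P\bold{1}_{A}\rangle=\Vert P\bold{1}_{A}\Vert_{2}^{2}\geq\bigl(\int P\bold{1}_{A}\,d\mu\bigr)^{2}=\mu(A)^{2}>0$. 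Therefore $\nu(\{\bold{1}\})\geq\mu(A)^{2}>0$, which is the first assertion.

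The main obstacle, and the point requiring the most care, is the bookkeeping needed to identify $\nu(\{\bold{1}\})$ with $\Vert P\bold{1}_{A}\Vert_{2}^{2}$: one must check that the trivial character $\bold{1}\in\mathcal{M}_{K}^{c}$ is genuinely the unique atom coming from the invariant part, that the extension of the action from $\OO$ to the group $K^{\ast}$ (via $T_{m/n}=T_{m}T_{n}^{-1}$) is well defined and unitary — which is \eqref{wd} — so that the spectral theorem over the dual group $\mathcal{M}_{K}^{c}=\widehat{K^{\ast}}$ applies, and that the F\o lner averaging is compatible with this spectral decomposition. All of this is routine amenable ergodic theory once the semigroup-to-group extension is in place, and it parallels verbatim the argument on pages 64--65 of \cite{FH}; the only genuinely new ingredient over the rational case is the verification that $(\Phi_{N})$ is a multiplicative F\o lner sequence for $\OO$ and that $\mathcal{M}_{K}^{c}$ is indeed the Pontryagin dual of $K^{\ast}$, both of which are elementary.
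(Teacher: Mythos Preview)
Your proposal is correct and follows precisely the approach the paper has in mind: the paper does not supply its own proof but simply defers to the argument on pages 64--65 of \cite{FH}, and what you have written is exactly that argument adapted to $\O_{K}$ --- the spectral identity \eqref{spec} gives the nonnegativity, and the mean ergodic theorem along the multiplicative F\o lner sequence together with $\Vert P\bold{1}_{A}\Vert_{2}^{2}\geq\mu(A)^{2}$ gives $\nu(\{\bold{1}\})>0$. Nothing is missing.
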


In order to prove Theorem \ref{pr}, it suffices to 
show the following multiple recurrence property for multiplicative functions:

\begin{prop}[Multiple recurrence property for multiplicative functions]\label{c4}
	Let $\K$ be an integral tuple. Let $d\in\N_{+}$ and $a_{1},\dots,a_{d},a'_{1},\dots,a'_{d}\in\O_{K}$ be such that (i) $a_{i}\neq a_{j}$ and $a'_{i}\neq a'_{j}$ for all $1\leq i,j\leq d$; and (ii) $\{a_{1},\dots,a_{d}\}\neq \{a'_{1},\dots,a'_{d}\}$. Let $\nu$ be a probability measure on $\mathcal{M}_{K}^{c}$ satisfying (\ref{nu}). Then there exist $m,n\in\O_{K}$ such that 
	$\prod_{i=1}^{d}(m+a_{i}n)$ and $\prod_{i=1}^{d}(m+a'_{i}n)$ are distinct and nonzero,
	and that
	\begin{equation}\nonumber
		\begin{split}
			\int_{\mathcal{M}_{K}^{c}}{\prod_{i=1}^{d}\chi(m+a_{i}n)\prod_{i=1}^{d}\overline{\chi}(m+a'_{i}n)}d\nu(\chi)>0.
		\end{split}
	\end{equation}
\end{prop}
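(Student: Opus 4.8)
\textbf{Proof proposal for Proposition \ref{c4}.}

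The plan is to convert the required positivity statement into an assertion about the Gowers uniformity norms of a suitable auxiliary function on a finite cyclic group, and then apply the strong $U^{d}$ structure theorem (Theorem \ref{nU3s}). First I would reduce to the situation of $\mathcal{M}_{K}$ (rather than $\mathcal{M}_{K}^{c}$): replace $\chi$ appearing in the integral by the multiplicative function $\chi\circ\iota_{\B}\colon\O_{K}\to\mathbb{C}$ precomposed with the chosen basis identification, noting that any $\chi\in\mathcal{M}_{K}^{c}$ restricted to $\O_{K}$ lies in $\mathcal{M}_{K}$, and that $\nu$ pushes forward to a probability measure on $\mathcal{M}_{K}$. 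The quantity to be shown positive is then an average over $(m,n)$ ranging in a box $\{1,\dots,N\}^{2}$ of a product $\prod_{i=1}^{d}\chi(m+a_{i}n)\prod_{i=1}^{d}\overline{\chi}(m+a'_{i}n)$; since the $a_{i},a'_{i}$ are distinct, the linear forms $m+a_{i}n$, $m+a'_{i}n$ are pairwise non-proportional as forms on $\Z^{D}\times\Z^{D}$ except possibly for coincidences coming from $\{a_{i}\}\cap\{a'_{i}\}$, and the crucial hypothesis $\{a_{1},\dots,a_{d}\}\neq\{a'_{1},\dots,a'_{d}\}$ guarantees that at least one linear form (say $m+a_{j}n$) appears in the first product with strictly higher multiplicity than in the second.

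The core step is the structural decomposition. Fix $\e>0$ small (to be chosen against $\nu(\{\bold{1}\})>0$, which is available from Lemma \ref{37}), apply Theorem \ref{nU3s} with $d$ replaced by a sufficiently large $d^{\ast}$ (a Cauchy--Schwarz / van der Corput argument in the $2d$ linear forms shows that control by the $U^{d^{\ast}}$-norm of the offending factor suffices; here one uses the generalized von Neumann theorem for the $U^{d}([N]^{D})$-norms, as in \cite{FH,LE}), obtaining $\chi_{N}=\chi_{N,s}+\chi_{N,u}+\chi_{N,e}$ with $\chi_{N,s}$ slowly varying along the directions $Q\bold{e}_{i}$, $\chi_{N,u}$ small in $U^{d^{\ast}}$, and $\chi_{N,e}$ small in $L^{1}(\nu\times m_{\Rr})$. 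Substituting this decomposition into the multilinear average and expanding: the terms containing at least one $\chi_{N,u}$ factor are negligible by the generalized von Neumann inequality together with Property (iv); the terms containing a $\chi_{N,e}$ factor but no $\chi_{N,u}$ are negligible after integrating in $\chi$ against $\nu$, using Property (v) and the fact that all other factors are bounded by $1$; what remains is the purely structural average $\int\prod\chi_{N,s}(m+a_{i}n)\prod\overline{\chi_{N,s}}(m+a'_{i}n)\,d\nu$. For the structural part one uses Property (iii): the Fourier support of $\chi_{N,s}$ lies within $O(R/\tilde N)$ of the rational frequencies $p_{i}/Q$, so $\chi_{N,s}$ is, up to an error $O(R/N)$ that we make negligible by taking $N$ large, a function of $\n \bmod Q$ times a genuinely slowly-varying unimodular factor; on each residue class the spectral positivity (\ref{spec})--(\ref{nu}) forces the diagonal ($\bold{1}$) contribution to survive with weight at least $\nu(\{\bold{1}\})>0$, because the coincidence $\{a_i\}\neq\{a'_i\}$ prevents the extra structural frequencies from cancelling this main term. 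This gives a lower bound bounded below by, say, $\frac{1}{2}\nu(\{\bold{1}\})$ for the average over the box, minus the $O(\e)$ errors.

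Once a positive lower bound for the box-average is established for all large $N$, a pigeonhole over $(m,n)\in\{1,\dots,N\}^{2}$ produces a specific pair with $\int_{\mathcal{M}_{K}^{c}}\prod_{i=1}^{d}\chi(m+a_{i}n)\prod_{i=1}^{d}\overline{\chi}(m+a'_{i}n)\,d\nu(\chi)>0$. To upgrade this to the full statement I must additionally arrange that $\prod_{i}(m+a_{i}n)$ and $\prod_{i}(m+a'_{i}n)$ are \emph{distinct and nonzero}: nonzero is automatic off a set of $(m,n)$ of density zero (the zero loci of the finitely many forms $m+a_{i}n$, $m+a'_{i}n$), and distinctness holds off the zero locus of the single nonzero polynomial $\prod_{i}(m+a_{i}n)-\prod_{i}(m+a'_{i}n)$, which again has density zero by the Schwartz--Zippel bound over the box; removing these low-density bad sets costs only $o(1)$ in the average, so the positive lower bound persists and the pigeonhole pair can be taken outside the bad set.

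The main obstacle, as I see it, is the analysis of the structural term: one must verify carefully that the rational-frequency structure of $\chi_{N,s}$ together with the spectral positivity genuinely yields a \emph{positive} (not merely nonzero) contribution, and that the hypothesis $\{a_1,\dots,a_d\}\neq\{a'_1,\dots,a'_d\}$ is exactly what rules out a conspiratorial cancellation of the $\bold{1}$-term by nontrivial characters in the support of $\nu$. This is the analogue of the corresponding step in \cite{FH} but now over a general $\O_{K}$, where the identification $\iota_{\B}$ and the passage between multiplicativity on $K^{\ast}$ and on $\O_{K}$ (via Lemma \ref{norm} and Lemma \ref{normii}) require some bookkeeping; the remaining steps are routine applications of the generalized von Neumann inequality and of the three smallness properties in Theorem \ref{nU3s}.
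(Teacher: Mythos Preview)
Your proposal is correct and follows essentially the same route as the paper: reduce to showing that the box average (Proposition \ref{c5}) is positive, apply the strong structure theorem (Theorem \ref{nU3s}) with Gowers exponent $2d-1$, kill the $\chi_{N,u}$ terms by a generalized von Neumann inequality (the paper's (\ref{temp8})), kill the $\chi_{N,e}$ terms in $L^{1}(\nu)$, bound the purely structural term below using the spectral positivity (\ref{nu}) and $\nu(\{\bold{1}\})>0$, and finally discard the density-zero set of bad $(m,n)$.

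One small correction on the role of the hypothesis $\{a_{1},\dots,a_{d}\}\neq\{a'_{1},\dots,a'_{d}\}$: it is \emph{not} what protects the structural main term from cancellation. Indeed, if the two index sets coincided the integrand would be identically $1$ and positivity would be trivial. The hypothesis is used only (a) to guarantee that the polynomial $\prod_{i}(m+a_{i}n)-\prod_{i}(m+a'_{i}n)$ is not identically zero, so that ``distinct'' can be arranged off a density-zero set, and (b) in the paper's preliminary reduction where common factors $a_{i}=a'_{j}$ are cancelled using $\vert\chi\vert=1$, leaving at least one surviving linear form on each side. The lower bound on the structural average comes purely from the positivity in (\ref{nu}) together with the approximate periodicity of $\chi_{N,s}$, exactly as in Lemma 10.6 of \cite{FH}; your sketch of that step is otherwise accurate.
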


We postpone the proof of Proposition \ref{c4} to the next section, but explain first how to derive Theorem \ref{pr} from Proposition \ref{c4}.

\begin{proof}[Proof of Theorem \ref{pr} assuming Proposition \ref{c4}]
	Let $d\in\N_{+}$ and $a_{1},\dots,a_{d}, a'_{1},\dots,a'_{d}\in \O_{K}$ be as in Definition \ref{typep} for the $K$-type polynomial $p(x,y;z_{1},\dots,z_{r})$.   
	By the sub-additivity of $d_{\mult}$, in order to show the partition regularity of $p$, it suffices to show that for all $E\subseteq \O_{K}$ with $d_{\mult}(E)>0$, there exist $m,n,k\in\O_{K}$ such that 
	$
	x:=k\prod_{i=1}^{d}(m+a_{i}n)$ and $y:=k\prod_{i=1}^{d}(m+a'_{i}n)
	$	
	are distinct and nonzero elements in $E$.
	It suffices to show that there exist $m,n\in\O_{K}$ such that 	
	\begin{equation}\label{cc}
		\text{$\prod_{i=1}^{d}(m+a_{i}n)$ and $\prod_{i=1}^{d}(m+a'_{i}n)$ are distinct and non-zero,}
	\end{equation}	
	and that $$d_{\mult}\Bigl(\prod_{i=1}^{d}(m+a_{i}n)^{-1}E\cap \prod_{i=1}^{d}(m+a'_{i}n)^{-1}E\Bigr)>0.$$
	Let the probability space $(X,\mathcal{D},\mu)$, the action by dilation $(T_{n})_{n\in\OO}$ and the set $A\in\mathcal{D}$ with $\mu(A)=d_{\mult}(E)>0$ be as in Theorem \ref{FC}.
	By Theorem \ref{FC}, it suffices to show that there exist $m,n\in\O_{K}$ such that 	(\ref{cc}) holds and that 
	$$\mu(T_{\prod_{i=1}^{d}(m+a_{i}n)}^{-1}A\cap T_{\prod_{i=1}^{d}(m+a'_{i}n)}^{-1}A)=\int_{\mathcal{M}_{K}^{c}}{\prod_{i=1}^{d}\chi(m+a_{i}n)\prod_{i=1}^{d}\overline{\chi}(m+a'_{i}n)}d\nu(\chi)>0,$$
	where $\nu$ is the spectrum measure of $\bold{1}_{A}$. By Lemma \ref{37} and Proposition \ref{c4}, we are done.
\end{proof}	

\subsection{A sketch of the proof of Proposition \ref{c4}}

Since
$$\lim_{N\rightarrow\infty}\frac{\Bigl\vert\Bigl\{(m,n)\in \iota_{\B}([N]^{D})\times \iota_{\B}([N]^{D})\colon \text{ two of } \prod_{i=1}^{d}(m+a_{i}n), \prod_{i=1}^{d}(m+a'_{i}n) \text{ and  0 are equal}\Bigr\}\Bigr\vert}{N^{2D}}=0,$$
in order to finish the proof of Proposition \ref{c4}, it suffices to show the following:

\begin{prop}[Multiple averages for multiplicative functions]\label{c5}
	Let $\K$ be an integral tuple. Let $d\in\N_{+}$ and $a_{1},\dots,a_{d},a'_{1},\dots,a'_{d}\in\O_{K}$ be such that (i) $a_{i}\neq a_{j}$ and $a'_{i}\neq a'_{j}$ for all $1\leq i,j\leq d$; and (ii) $\{a_{1},\dots,a_{d}\}\neq \{a'_{1},\dots,a'_{d}\}$. Let $\nu$ be a probability measure on $\mathcal{M}_{K}^{c}$ satisfying (\ref{nu}). Then 
	\begin{equation}\label{temp9}
		\begin{split}
			&\quad\liminf_{N\rightarrow\infty}\mathbb{E}_{m,n\in \iota_{\B}([N]^{D})}\int_{\mathcal{M}_{K}^{c}}{\prod_{i=1}^{d}\chi(m+a_{i}n)\prod_{i=1}^{d}\overline{\chi}(m+a'_{i}n)}d\nu(\chi)
			\\&=\liminf_{N\rightarrow\infty}\mathbb{E}_{\m,\n\in [N]^{D}}\int_{\mathcal{M}_{K}^{c}}{\prod_{i=1}^{d}\chi\circ\iota_{\B}(\m+\n A_{\B}(a_{i}))\prod_{i=1}^{d}\overline{\chi}\circ\iota_{\B}(\m+\n A_{\B}(a'_{i}))}d\nu(\chi)
			>0.
		\end{split}
	\end{equation}
\end{prop}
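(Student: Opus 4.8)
The strategy is to apply the strong $U^{d}$ structure theorem (Theorem \ref{nU3s}) to decompose each completely multiplicative function appearing in the integrand into a structured part, a uniform part, and an error part, and to show that only the structured-part-only contribution survives in the limit, where it can be bounded below using the positivity hypothesis (\ref{nu}). First I would truncate: replace $\chi\circ\iota_{\B}$ by $\chi_{N}$ (the truncation to $[N]^{D}$ transported to $\Rr$ for a suitable auxiliary modulus $\tilde N$, say $\tilde{N}$ the smallest prime exceeding $\Omega N$ for $\Omega$ chosen large in terms of $d$ and $\max_i(|a_i|+|a_i'|)$), so that all the shifted arguments $\m+\n A_{\B}(a_i)$, $\m+\n A_{\B}(a_i')$ genuinely lie in the range where $\chi_N$ agrees with $\chi\circ\iota_{\B}$ and where the maps $\n\mapsto \n A_{\B}(a)\bmod\Rr$ are bijections (this is the reason, explained in the paper after Proposition \ref{c4}, for working on $\Z^D_{\tilde N}$ rather than $\Z^D_N$). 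The inner integral against $\nu$ can be interchanged with the average over $\m,\n$ by Fubini, so it suffices to control, uniformly in $\chi\in\mathcal{M}^{c}_{K}$, the average $\E_{\m,\n\in[N]^D}\prod_{i=1}^{d}\chi_{N}(\m+\n A_{\B}(a_i))\prod_{i=1}^{d}\overline{\chi_N}(\m+\n A_{\B}(a_i'))$ up to an error that is $o(1)$ in $L^{1}(\nu)$.

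Second, apply Theorem \ref{nU3s} with this $\Omega$, with the probability measure $\nu$, with $d$ replaced by a large enough Gowers degree (governed by the number $2d$ of linear forms in the $2d$-term average — a generalized von Neumann / Gowers-norm control inequality for the patterns $\{\m+\n A_{\B}(a_i)\}\cup\{\m+\n A_{\B}(a_i')\}$ shows that if any one factor has small $U^{s}(\Rr)$ norm for $s$ depending only on $d$, the whole multilinear average is small), and with a function $F$ to be specified. Writing $\chi_N=\chi_{N,s}+\chi_{N,u}+\chi_{N,e}$ and expanding the $2d$-fold product multilinearly, every term containing at least one $\chi_{N,u}$ factor is bounded by $C_{d}\Vert\chi_{N,u}\Vert_{U^{s}(\Rr)}\le C_{d}/F(Q,R,\e)$ by the Gowers control inequality, and every term containing a $\chi_{N,e}$ factor (but no $\chi_{N,u}$ factor) is bounded after integrating in $\nu$ by $C_{d}\,\E_{\A}\int|\chi_{N,e}|\,d\nu\le C_{d}\e$ using $|\chi_{N,s}|,|\chi_{N,s}+\chi_{N,e}|\le 1$; choosing $F$ large in terms of $C_d$ and then $\e$ small makes all these terms negligible. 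Hence the liminf equals that of the purely structured average $\E_{\m,\n}\int_{\mathcal{M}^{c}_{K}}\prod_{i=1}^{d}\chi_{N,s}(\m+\n A_{\B}(a_i))\prod_{i=1}^{d}\overline{\chi_{N,s}}(\m+\n A_{\B}(a_i'))\,d\nu(\chi)$ up to $O(\e)$.

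Third, analyze the structured average. By properties (ii)--(iii) of Theorem \ref{nU3s}, $\chi_{N,s}$ is, up to an error $O(R/N)$, constant on cosets of $Q\Z^{D}$ and has Fourier support within $O(R/\tilde N)$ of the rationals with denominator $Q$; equivalently $\chi_{N,s}(\A)=\psi(\A)\,e(\A\circ_{\tilde N}\xi_0)+\text{small}$ for $\A$ in each residue class, with $\psi$ slowly varying. The key point is that $\chi$ is completely multiplicative, so $\chi_{N,s}$ inherits an approximate multiplicativity/character-like structure: on the relevant range one can write $\chi_{N,s}$ (up to $o(1)$) as $\tilde\chi\cdot\phi$ where $\tilde\chi$ is an honest (Dirichlet-type) multiplicative character mod $Q$ composed with $\iota_B$ and $\phi$ is a slowly varying unimodular function, exactly as in the $U^2$-structure analysis of \cite{FH}. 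Plugging this in, the average over $\m,\n$ of $\prod_i\tilde\chi(\m+\n A_{\B}(a_i))\overline{\tilde\chi}(\m+\n A_{\B}(a_i'))$ telescopes: for $\m,\n$ with $\n\equiv 0$, $\m\equiv r\pmod Q$ every factor equals $\tilde\chi(r)\overline{\tilde\chi}(r)=1$ on the support, so this average is bounded below by a positive constant $c(Q)=\nu(\{\bold 1\})$-independent density times $\widehat{\phi}$-type factors, and after integrating in $\nu$ the $\chi=\bold 1$ atom alone contributes $\ge c(Q)\,\nu(\{\bold 1\})>0$ by (\ref{nu}), while all other $\chi$ contribute $\ge 0$, again by (\ref{nu}) (nonnegativity of $\int\chi(m)\overline{\chi}(n)\,d\nu$ for the diagonal-type sub-averages). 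Choosing $\e$ small compared to $c(Q)\nu(\{\bold 1\})$ yields the strict positivity in (\ref{temp9}).

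\textbf{Main obstacle.} The hard part will be the third step: passing from the abstract output of Theorem \ref{nU3s} (a kernel convolution with controlled Fourier support) to a usable \emph{multiplicative} normal form for $\chi_{N,s}$ and then executing the telescoping so as to isolate a manifestly nonnegative combination of the quantities $\int_{\mathcal{M}^{c}_K}\chi(m)\overline{\chi}(n)\,d\nu$ with a strictly positive $\bold 1$-contribution. This requires combining the complete multiplicativity of $\chi$ with the slow variation and rational-frequency localization of $\chi_{N,s}$, and carefully tracking uniformity in $\chi$ so that the $\nu$-integral of the error is genuinely $o(1)$; this is where the argument most closely parallels—and must adapt to the $\O_K$ setting—the corresponding computations in \cite{FH} and \cite{S}, in particular the bookkeeping around the auxiliary modulus $\tilde N$, the bijectivity of $\n\mapsto\n A_{\B}(a)$ on $\Rr$, and the regularization of the $a_i,a_i'$ via Theorem \ref{mink2} if needed to keep the transported shifts inside $[\tilde N]^{D}$.
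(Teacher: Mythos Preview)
Your overall strategy---truncate to $\Rr$ with $\Omega$ large enough that all shifted arguments stay in range and the dilations $\n\mapsto\n A_{\B}(a)$ are bijections, apply Theorem \ref{nU3s} (the paper takes Gowers degree $2d-1$), expand the $2d$-fold product into $3^{2d}$ terms, kill every term containing a $\chi_{N,u}$ factor by a generalized von Neumann inequality (this is exactly (\ref{temp8}) in the paper) and every $\chi_{N,e}$-term by the $L^1(\nu)$ bound from property (v)---matches the paper's argument. One preliminary reduction you omit: the paper first cancels any matching pair $a_i=a'_j$ using $|\chi|\equiv 1$, so that one may assume all $2d$ shifts are pairwise distinct and one of them is $0$; this is what fixes the exponent $2d-1$ in (\ref{temp8}).

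The gap is in your treatment of the structured term. The factorization $\chi_{N,s}\approx\tilde\chi\cdot\phi$ with $\tilde\chi$ a Dirichlet-type character mod $Q$ is not a consequence of Theorem \ref{nU3s} and is not what \cite{FH} establishes: the convolution $\chi_{N,s}=\chi_N*\phi_{N,1}$ destroys multiplicativity, and properties (ii)--(iii) only yield approximate $Q$-periodicity and bounded Fourier support, not a character decomposition. Your use of (\ref{nu}) is also mis-stated: (\ref{nu}) asserts $\int\chi(m)\overline{\chi}(n)\,d\nu\ge 0$ for each fixed pair $(m,n)$, which is a positivity of the $\nu$-integral, not a pointwise-in-$\chi$ statement, so ``all other $\chi$ contribute $\ge 0$'' is not what it says. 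The paper's route (the $A_3(N)$-estimate and the generalization of Lemma 10.6 of \cite{FH}) bypasses any normal form and uses property (ii) directly: for $\n\in Q\Z^D$ every shift $\m+\n A_{\B}(a_i)$ differs from $\m$ by a multiple of $Q$, so $\chi_{N,s}(\m+\n A_{\B}(a_i))=\chi_{N,s}(\m)+O(R|\n|/N)$ and on a positive-density sub-progression of $\n$'s the product collapses to $|\chi_{N,s}(\m)|^{2d}$. Combined with the pointwise-in-$(m,n)$ nonnegativity of the \emph{original} integrand (complete multiplicativity gives $\prod_i\chi(m+a_in)\overline{\chi}(m+a'_in)=\chi(M)\overline{\chi}(M')$, and then (\ref{nu}) applies) and the atom $\nu(\{\bold 1\})>0$, this yields a lower bound independent of $\epsilon$. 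No multiplicative normal form for $\chi_{N,s}$ is needed or available.
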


The proof of Proposition \ref{c5} is similar to Proposition 10.4 of ~\cite{FH} and Proposition 3.3 of ~\cite{S}. We omit the proof but stress the differences.

Suppose first that $a_{i}=a'_{j}$ for some $1\leq i,j\leq d$. We assume without loss of generality that $a_{d}=a'_{d}$. 
Since $\chi$ is of modulus 1, 
$${\prod_{i=1}^{d}\chi(m+a_{i}n)\prod_{i=1}^{d}\overline{\chi}(m+a'_{i}n)}={\prod_{i=1}^{d-1}\chi(m+a_{i}n)\prod_{i=1}^{d-1}\overline{\chi}(m+a'_{i}n)}.$$
This implies that we may remove the terms $m+a_{d}n$ and $m+a'_{d}n$ simultaneously from the statement of  Proposition \ref{c5} and replace $d$ with $d-1$. Since $\{a_{1},\dots,a_{d}\}\neq \{a'_{1},\dots,a'_{d}\}$, we can not remove all of $a_{1},\dots,a_{d},a'_{1},\dots,a'_{d}$ by using this induction. In conclusion, it suffices to prove Proposition \ref{c5} under the additional assumption that $d\geq 1$ and all of $a_{1},\dots,a_{d},a'_{1},\dots,a'_{d}$ are distinct. By a change of variables, we may further assume that one (and only one) of them is 0.

For $A\in M_{D\times D}(\mathbb{Z})$, let $H(A)$ denote the height of $A$.
Let $$\ell:=\sum_{i=1}^{d}H(A_{\B}(a_{i}))+\sum_{i=1}^{d}H(A_{\B}(a'_{i}))+10$$
and $\tilde{N}$ be the
smallest prime number (in $\N$) greater than $10D\ell N$.  Let $\chi_{N}\colon\Rr\to \mathbb{C}$ denote the truncated function given by $\chi_{N}(\n)=\chi\circ \iota_{\B}(\n)$ for all $\n\in\{1,\dots,N\}^{D}$ and $\chi_{N}(\n)=0$ otherwise. In order to show (\ref{temp9}), it suffices to show that 
\begin{equation}\label{temp10}
	\begin{split}
		\liminf_{N\rightarrow\infty}\mathbb{E}_{\m,\n\in \Rr}\int_{\mathcal{M}_{K}^{c}}\bold{1}_{[N]^{D}}(\n)\prod_{i=1}^{d}\chi_{N}(\m+\n A_{\B}(a_{i}))\prod_{i=1}^{d}\overline{\chi}_{N}(\m+\n A_{\B}(a'_{i}))d\nu(\chi)
		>0
	\end{split}
\end{equation}
(in fact the left hand side of (\ref{temp9}) equals to a constant multiple of (\ref{temp10}) for a reason similar to (10.13) of \cite{FH}).

Applying Theorem \ref{nU3s} for  $\Omega=10D\ell$ and with $d$ replaced by $2d-1$, we may decompose the truncated function $\chi_{N,\tilde{N}}:=\chi_{N}$ into the sum $\chi_{N}=\chi_{N,s}+\chi_{N,u}+\chi_{N,e}$ satisfying the statements in Theorem \ref{nU3s},
and
expand the left hand side of (\ref{temp10}) into $3^{2d}$ terms. Let $\epsilon>0$ be a sufficiently small error term. By a similar argument as in the proof of Proposition 10.5 in ~\cite{FH} (the estimation of the $A_{3}(N)$ term on pages 71--72), we have that
\begin{equation}\nonumber
	\begin{split}
		\liminf_{N\rightarrow\infty}\mathbb{E}_{\m,\n\in \Rr}\int_{\mathcal{M}_{K}^{c}}\bold{1}_{[N]^{D}}(\n)\prod_{i=1}^{d}\chi_{N,s}(\m+\n A_{\B}(a_{i}))\prod_{i=1}^{d}\overline{\chi}_{N,s}(\m+\n A_{\B}(a'_{i}))d\nu(\chi)
	\end{split}
\end{equation}
is bounded below by a positive number which is independent of $\e$ (to obtain such an estimate, one needs to invoke the property (\ref{nu}) of the measure $\nu$ and use an immediate generalization of Lemma 10.6 of \cite{FH}).

Now
it suffices to show that all other terms are negligible. A term is obviously $O(\e)$ if it contains the expression $\chi_{N,e}$. 
Since $\Vert\chi_{N,u}\Vert_{U^{2d-1}(\Rr)}\ll\e$,
it suffices to show that all terms containing the expression $\chi_{N,u}$ are negligible, which holds immediately if one can show that 
\begin{equation}\label{temp8}
	\begin{split}
		\Bigl\vert\mathbb{E}_{\m,\n\in \Rr}\bold{1}_{[N]^{D}}(\n)\prod_{i=1}^{2d}f_{i}(\m+\n A_{\B}(a_{i}))\Bigr\vert
		\leq C\min_{1\leq i\leq 2d}\Vert f_{i}\Vert^{\frac{1}{D+1}}_{U^{2d-1}(\Rr)}+\frac{10}{\tilde{N}}
	\end{split}
\end{equation}
for all functions $f_{1},\dots,f_{2d}\colon\Rr\to \mathbb{C}$ with modulus at most 1 for some  $C:=C(a_{1},\dots,a_{2d})>0$. The proof of (\ref{temp8}) is a straightforward generalization of Lemma 10.7 in ~\cite{FH}, and so we are done. It is worth noting that in the proof of (\ref{temp8}), we need to use the fact that for all $a\in\O_{K}$ such that $\vert N_{\Bb}(a)\vert<\tilde{N}$, the map $\x\to\x A_{\Bb}(a) \mod\Rr$ is a bijection from $\Rr$ to itself.

\appendix 

\section{Equivalent definitions for aperiodic functions}\label{appe}
In this appendix,
we show that the two definitions of aperiodic functions (\ref{ap1}) and (\ref{ap2}) are equivalent. 


\begin{lem}
	Let $f\colon\Z\to\mathbb{C}$ be a function with modulus at most 1. Then 
	\begin{equation}\label{pa1}
		\lim_{N\to\infty}\frac{1}{N}\sum_{n=0}^{N-1} f(an+b)=0
	\end{equation}	
	for all $a,b\in\Z, a\neq 0$ if and only if
	\begin{equation}\label{pa2}
		\lim_{N\to\infty}\sup_{L\in\N_{+},a,b\in\Z,a\neq 0}\Bigl\vert\frac{1}{2N+1}\sum_{n=-N}^{N}\bold{1}_{P_{a,b,L}}(n)\cdot f(n)\Bigr\vert=0,
	\end{equation}
	where $P_{a,b,L}:=\{am+b\in\Z\colon 0\leq m\leq L-1\}$.
\end{lem}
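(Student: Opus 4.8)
The plan is to prove the two implications separately; the direction (\ref{pa2}) $\Rightarrow$ (\ref{pa1}) is essentially immediate, while (\ref{pa1}) $\Rightarrow$ (\ref{pa2}) requires a compactness/uniformity argument to upgrade the pointwise-in-$(a,b)$ hypothesis to the stated uniformity.

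For the easy direction, suppose (\ref{pa2}) holds. Fix $a,b\in\Z$ with $a\neq 0$; by replacing $f$ with $f(-\cdot)$ if necessary we may assume $a>0$. For each $N$, taking $L=N$ in (\ref{pa2}) and restricting the sum over $n\in\{-aN,\dots,aN\}$ (or rather choosing the window so that $P_{a,b,N}$ is contained in it), the average $\frac1N\sum_{n=0}^{N-1}f(an+b)$ differs from a window-average of the form appearing in (\ref{pa2}) by a bounded multiplicative constant (depending on $a$ but not on $N$); hence it tends to $0$. I would write this out carefully tracking the change of normalization between $\frac1N\sum_{n=0}^{N-1}$ and $\frac{1}{2N'+1}\sum_{n=-N'}^{N'}\bold 1_{P_{a,b,N}}(n)$ with $N'\approx aN$.

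For the hard direction, assume (\ref{pa1}) for all $a\neq 0$ and suppose for contradiction that (\ref{pa2}) fails: there exist $\e>0$, a sequence $N_k\to\infty$, and parameters $L_k\in\N_+$, $a_k,b_k\in\Z$ with $a_k\neq 0$ such that $\bigl\vert\frac{1}{2N_k+1}\sum_{n=-N_k}^{N_k}\bold 1_{P_{a_k,b_k,L_k}}(n)f(n)\bigr\vert\geq \e$. The key structural observation is that $P_{a_k,b_k,L_k}\cap[-N_k,N_k]$ is an arithmetic progression contained in a residue class $b_k \bmod a_k$, and the average over $[-N_k,N_k]$ of $\bold 1_{P_{a_k,b_k,L_k}}f$ can be rewritten, after reducing mod $a_k$, as a convex combination of averages of $f$ along the single residue class $b_k \bmod a_k$ over intervals. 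The main obstacle is that $a_k$ may be unbounded, so one cannot simply pass to a subsequence with $(a_k,b_k)$ constant and invoke (\ref{pa1}) directly. I would handle this by a two-case dichotomy: (i) if $\{a_k\}$ is bounded, pass to a subsequence with $a_k\equiv a$, $b_k\equiv b$ fixed and with $L_k$ possibly still varying; then the average along the progression $\{am+b\}$ over a sub-interval of length $\sim L_k$ is controlled by differences of averages $\frac1M\sum_{m=0}^{M-1}f(am+b)$, each $\to 0$ by (\ref{pa1}), so by a standard telescoping/Cesàro estimate the whole quantity $\to 0$, contradicting the lower bound $\e$; (ii) if $a_k\to\infty$ along the subsequence, then the progression $P_{a_k,b_k,L_k}\cap[-N_k,N_k]$ has at most $\frac{2N_k}{a_k}+1$ terms, so $\frac{1}{2N_k+1}\sum \bold 1_{P_{a_k,b_k,L_k}}(n)|f(n)| \leq \frac{1}{2N_k+1}\bigl(\frac{2N_k}{a_k}+1\bigr)\to 0$, again contradicting $\e$. (Here one should be slightly more careful: if $a_k$ is large but $L_k$ is also large, the progression can still be short relative to $N_k$ precisely because its step is $a_k$; the sparsity bound is what makes case (ii) trivial.) Combining the two cases yields the contradiction and completes the proof.

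One technical point to nail down in case (i): after fixing $a,b$, the inner sum is $\sum_{m}\bold 1_{[0,L_k-1]}(m)f(am+b)$ over $m$ ranging so that $am+b\in[-N_k,N_k]$, i.e. over an interval of $m$'s of length $\sim 2N_k/a$, intersected with $[0,L_k-1]$; this is an interval of consecutive integers $m$, and the average of $f(a\cdot+b)$ over any interval of consecutive integers tends to $0$ because (\ref{pa1}) gives it for intervals $[0,M-1]$ and one subtracts two such (with an $o(1)$ error from the differing lengths and normalizations). I expect this last estimate to be the only place where a short computation is genuinely needed; everything else is bookkeeping with the normalizations.
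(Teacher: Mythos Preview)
Your proposal is correct and follows essentially the same route as the paper's proof: contradict (\ref{pa2}), observe (via the sparsity bound in your case (ii), or equivalently the paper's direct estimate $|a_i|\leq 2/\e$) that the steps $a_k$ must stay bounded, pass to a subsequence with fixed $a$ and fixed residue $b\bmod a$, and then control the interval sum $\sum_{m=M_k}^{M_k'}f(am+b)$ by subtracting two partial sums and invoking (\ref{pa1}). One small point to tighten: in case (i) you write ``$b_k\equiv b$ fixed'', but only the residue $b_k\bmod a$ can be pinned down by pigeonhole (the $b_k$ themselves may be unbounded); your later description of the inner sum shows you have the right picture, so just make the reduction to a fixed residue class explicit.
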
	
\begin{proof}
   Equation (\ref{pa2}) obviously implies (\ref{pa1}). Now suppose that (\ref{pa2}) fails for some function $f$. Then there exist $\e>0$, $N_{i},L_{i}\in\N_{+}$ and $a_{i},b_{i}\in\Z, a_{i}\neq 0$ for $i\in\N$ such that $N_{i+1}>N_{i}$ and that  
    \begin{equation}\label{pa3}
    	\Bigl\vert\frac{1}{2N_{i}+1}\sum_{n=-N_{i}}^{N_{i}}\bold{1}_{P_{a_{i},b_{i},L_{i}}}(n)\cdot f(n)\Bigr\vert>\e
    \end{equation}
    for all $i\in\N$. Since $\vert f\vert\leq 1$, we get from (\ref{pa3}) that
    $$\e<\frac{1}{2N_{i}+1}\vert P_{a_{i},b_{i},L_{i}}\vert\leq \frac{\lfloor\frac{ 2N_{i}+1}{\vert a_{i}\vert}\rfloor+1}{2N_{i}+1}.$$
    So if $i$ is sufficiently large, then $\vert a_{i}\vert\leq \frac{2}{\e}$. Since $a_{i}$ only take finitely many values, there exist infinitely many $i$ such that these $a_{i}$ take a same value $a_{0}$, and $b_{i}\equiv b_{0} \mod a_{0}$ for some $0\leq b_{0}<\vert a_{0}\vert$. In conclusion, there exist an infinitely sequence of integers $N_{i}\in\N$ (which is still denoted by $N_{i}$), and $M_{i},M'_{i}\in\Z$ with $-N_{i}\leq a_{0}M_{i}+b_{0}\leq a_{0}M'_{i}+b_{0}\leq N_{i}$ such that 
    	\begin{equation}\label{pa4}
    		\Bigl\vert\frac{1}{2N_{i}+1}\sum_{n=M_{i}}^{M'_{i}}f(a_{0}n+b_{0})\Bigr\vert>\e.
    	\end{equation}
    	We may assume without loss of generality that $a_{0}>0$ as the other case is similar. Then $M_{i}-M'_{i}\leq 2N_{i}$. 
    	
    	Assume that (\ref{pa1}) holds for all $a,b\in\Z, a\neq 0$. Then there exists $M_{0}:=M_{0}(\e)>0$ such that for all $M>M_{0}$, we have that
    		\begin{equation}\nonumber
    			\Bigl\vert \sum_{n=0}^{M-1} f(a_{0}n+b_{0})\Bigr\vert,\Bigl\vert \sum_{n=-M+1}^{0} f(a_{0}n+b_{0})\Bigr\vert<M\e/2.
    		\end{equation}
    		In other words, for all $M\in\N_{+}$,
    		\begin{equation}\nonumber
    			\Bigl\vert \sum_{n=0}^{M-1} f(a_{0}n+b_{0})\Bigr\vert,\Bigl\vert \sum_{n=-M+1}^{0} f(a_{0}n+b_{0})\Bigr\vert<\max\{M\e/2,M_{0}\}.
    		\end{equation}	
    		So we may deduce from (\ref{pa4}) that
    		\begin{equation}\label{pa5}
    			(2N_{i}+1)\e<(M'_{i}-M_{i})\e/2+2M_{0}\leq (2N_{i}+1)\e/2 +2M_{0},
    		\end{equation}
    which is	a contradiction if $i$ is sufficiently large. This contradiction  implies that (\ref{pa1}) implies (\ref{pa2}).
\end{proof}

\end{document}